\newtheorem{theorem}{Theorem}[section]
\newtheorem{corollary}[theorem]{Corollary}
\newtheorem{lemma}[theorem]{Lemma}
\newtheorem{proposition}[theorem]{Proposition}
\theoremstyle{definition}
\newtheorem{definition}[theorem]{Definition}
\newtheorem{example}[theorem]{Example}
\theoremstyle{remark}
\newtheorem{remark}[theorem]{Remark}
\numberwithin{equation}{section}
\newcommand{\lie}{\mathfrak}
\newcommand{\ds}{\displaystyle}
\DeclareMathOperator{\Hom}{Hom}
\DeclareMathOperator{\Id}{Id}
\DeclareMathOperator{\ad}{ad}
\DeclareMathOperator{\supp}{supp}
\DeclareMathOperator{\ph}{ph}
\DeclareMathOperator{\Irr}{Irr}
\newcommand{\cop}{\mathrm{cop}}
\newcommand{\ind}{\mathrm{ind}}
\newcommand{\red}{\mathrm{r}}
\newcommand{\st}{\; | \;}
\newcommand{\defeq}{=}
\newcommand{\CC}{\mathbb{C}}
\newcommand{\NN}{\mathbb{N}}
\newcommand{\RR}{\mathbb{R}}
\newcommand{\ZZ}{\mathbb{Z}}
\newcommand{\KK}{\mathbb{K}}
\newcommand{\LL}{\mathbb{L}}
\newcommand{\JJ}{\mathbb{J}}
\renewcommand{\AA}{\mathbb{A}}
\newcommand{\sH}{\mathcal{H}}
\newcommand{\sE}{\mathcal{E}}
\newcommand{\sX}{\mathcal{X}}
\newcommand{\bD}{\mathbf{D}}
\newcommand{\bP}{\mathbf{P}}
\newcommand{\bQ}{\mathbf{Q}}
\newcommand{\half}{\frac{1}{2}}
\newcommand{\ip}[1]{\langle #1 \rangle}
\newcommand{\ket}[1]{| #1 \rangle}
\newcommand{\bigket}[1]{\left| #1 \right\rangle}
\newcommand{\bra}[1]{\langle #1 |}
\newcommand{\bigbra}[1]{\left\langle #1 \right|}
\newcommand{\braket}[2]{\langle #1 | #2 \rangle}
\newcommand{\bigbraket}[2]{\left.\left\langle #1 \right| #2 \right\rangle}
\newcommand{\into}{\hookrightarrow}
\newcommand{\onto}{\twoheadrightarrow}
\newcommand{\ideal}{\triangleleft}
\newcommand{\hit}{\rightharpoonup}
\newcommand{\slot}{\:\cdot\:}
\newcommand{\dual}{*}
\newcommand{\Univ}{\mathcal{U}}
\newcommand{\Uq}{\breve{\mathcal{U}}_q}
\newcommand{\Poly}{\mathcal{O}}
\newcommand{\SL}{\mathrm{SL}}
\newcommand{\SU}{\mathrm{SU}}
\newcommand{\U}{\mathrm{U}}
\newcommand{\SUq}{\SU_q}
\newcommand{\glx}{\lie{gl}}
\newcommand{\slx}{\lie{sl}}
\newcommand{\bigqqnumber}[2][q]{\left[\!\left[#2\right]\!\right]_{#1}}
\newcommand{\qchoose}[1]{\begin{bmatrix}#1\end{bmatrix}}
\newcommand{\qhypergeometric}[2]{{}_{#1}\overline{\phi}_{#2}}
\newcommand{\dq}{d_q}
\newcommand{\Dq}{D_q}
\newcommand{\legendre}{\mathrm{p}}
\newcommand{\GTs}[7]{
  \left( \begin{array}{cccccccccc}
  \multicolumn{2}{c}{#1} &
  \multicolumn{2}{c}{0} &
  \multicolumn{2}{c}{\cdots} &
  \multicolumn{2}{c}{0} &
  \multicolumn{2}{c}{#2} \\
&\multicolumn{2}{c}{#3}
&\multicolumn{4}{c}{0\cdots0}
&\multicolumn{2}{c}{#4} \\
&&\multicolumn{2}{c}{\ddots}
&&&\multicolumn{2}{c}{\adots}\\
&&&\multicolumn{2}{c}{#5}
&\multicolumn{2}{c}{#6} \\
&&&&\multicolumn{2}{c}{#7}
\end{array} \right)
}
\newcommand{\GTsO}[1]{
  \left( \begin{array}{cccccccccc}
  \multicolumn{2}{c}{#1} &
  \multicolumn{2}{c}{0} &
  \multicolumn{2}{c}{\cdots} &
  \multicolumn{2}{c}{0} &
  \multicolumn{2}{c}{\!\!-#1} \\
&\multicolumn{2}{c}{\;0\;}
&\multicolumn{4}{c}{\;0\,\cdots\,0\;}
&\multicolumn{2}{c}{\;0\;} \\
&&\multicolumn{2}{c}{\ddots}
&&&\multicolumn{2}{c}{\adots}\\
&&&\multicolumn{2}{c}{\;0}
&\multicolumn{2}{c}{0} \\
&&&&\multicolumn{2}{c}{0}
\end{array} \right)
}
\newcommand{\smallGTs}[2]{
  \left( \begin{array}{cccccc}
  \multicolumn{2}{c}{#1} &
  \multicolumn{2}{c}{0} &
  \multicolumn{2}{c}{\!\!-#1} \\
&\multicolumn{2}{c}{\;#2\;}
&\multicolumn{2}{c}{\!\!-#2\;}  \\
&&\multicolumn{2}{c}{0}
\end{array} \right)}
\newcommand{\smallGTsO}[1]{
  \left( \begin{array}{cccccc}
  \multicolumn{2}{c}{#1} &
  \multicolumn{2}{c}{0} &
  \multicolumn{2}{c}{\!\!-#1} \\
&\multicolumn{2}{c}{\;0}
&\multicolumn{2}{c}{0\;}  \\
&&\multicolumn{2}{c}{0}
\end{array} \right)}
\newcommand{\sCI}{\mathcal{CI}}
\newcommand{\sCC}{\mathcal{CC}}
\newcommand{\upp}{\uparrow}
\newcommand{\low}{\downarrow}
\newcommand{\bigupp}{{\displaystyle \uparrow}}
\newcommand{\biglow}{{\displaystyle \downarrow}}
\newcommand{\BGG}{\mathrm{BGG}}
\newcommand{\Roots}{\mathbf{\Delta}}
\newcommand{\counit}{\varepsilon}
\newcommand{\triv}{\mathbbm{1}}		
\newcommand{\rootset}{{I}}
\newcommand{\finitesubset}{\subset\!\subset}
\newcommand{\xket}[1]{\ket{\mathrm{x}_{#1}}}
\newcommand{\yket}[1]{\ket{\mathrm{y}_{#1}}}
\newcommand{\yxbraket}[2]{\braket{\mathrm{y}_{#1}}{\mathrm{x}_{#2}}}
\newcommand{\fixedvec}{\ket{(M_{(m,0,\ldots,0)})^\low}}
\newcommand{\Mult}[1]{\mathsf{M}_l(#1)}
\newcommand{\RMult}[1]{\mathsf{M}_r(#1)}
\newcommand{\coeff}[2]{\langle #1 | \slot | #2 \rangle}		
\newcommand{\Wts}{\mathrm{Wts}}
\newenvironment{bnum}
{\begin{list}{}
    {\setlength{\labelwidth}{15pt}
     \setlength{\leftmargin}{\labelwidth}
    }
}
{\end{list}}
\begin{document}

\title[Equivariant Fredholm modules]{Equivariant Fredholm modules for the full quantum flag manifold of $\SU_q(3)$}

\author{Christian Voigt}
\address{School of Mathematics and Statistics, University of Glasgow, 15 University Gardens, Glasgow G12 8QW, United Kingdom}
\email{christian.voigt@glasgow.ac.uk}

\author{Robert Yuncken}
\address{Clermont Universit\'e, Universit\'e Blaise Pascal, Laboratoire de Math\'ematiques, BP 10448, F-63000 Clermont-Ferrand, France}
\email{yuncken@math.univ-bpclermont.fr}

\thanks{This work was supported by the Engineering and Physical Sciences Research Council Grant EP/L013916/1.}

\subjclass[2010]{Primary 20G42; Secondary 46L80, 19K35.}

\maketitle

\begin{abstract}
We introduce $C^*$-algebras associated to the foliation structure of a quantum flag manifold. We use these to construct $\SL_q(3,\CC)$-equivariant 
Fredholm modules for the full quantum flag manifold $\sX_q = \SU_q(3)/T$ of $\SUq(3)$, based on an analytical version of the Bernstein-Gelfand-Gelfand 
complex. As a consequence we deduce that the flag manifold $ \sX_q $ satisfies Poincar\'e duality in equivariant $ KK $-theory. 
Moreover, we show that the Baum-Connes conjecture with trivial coefficients holds for the discrete quantum group dual to $\SU_q(3)$. 
\end{abstract}

\section{Introduction}

In noncommutative differential geometry \cite{Connesbook}, the notion of a smooth manifold is extended beyond its classical scope by adopting a 
spectral point of view. This is centred around the idea of constructing Dirac-type operators associated with possibly noncommutative algebras, capturing 
the underlying Riemannian structure of geometric objects for which ordinary differential geometry breaks down. The key concept in this theory, introduced 
by Connes, is the notion of a spectral triple \cite{Connesgravmat}. 

Quantum groups provide provide a large class of examples of noncommutative spaces, and they have been studied extensively within the framework 
of noncommutative differential geometry. Among the many contributions in this direction let us only mention a few. 
Chakraborty and Pal \cite{CPspectraltriple} defined an equivariant spectral triple on $ \SU_q(2) $, which was studied in detail by 
Connes \cite{Connessuq2}. Later Dabrowski, Landi, Sitarz, van Suijlekom 
and V\'arilly \cite{DLSSVdirac}, \cite{DLSSVindex} defined and studied a deformation of the classical Dirac operator on $ \SU(2) $, thus obtaining 
a different spectral triple on $ \SU_q(2) $. The techniques used in these papers rely on explicit estimates involving Clebsch-Gordan coefficients. 
In a different direction, 
Neshveyev and Tuset exhibited a general mechanism for transporting the Dirac operator on an arbitrary compact simple Lie group to its quantum 
deformation, based on Drinfeld twists and properties of the Drinfeld associator \cite{NT:Deformation}. 
The resulting spectral triples inherit various desirable properties from their classical counterparts, although unfortunately they are difficult to 
study directly since this requires a certain amount of control of the twisting procedure. 

This article is concerned with the quantized full flag manifolds associated to the $q$-deformations of compact semisimple Lie groups, and in particular 
the flag manifold of $\SU_q(3)$, the simplest example in rank greater than one.
In the rank-one case, that is for $ \SU_q(2) $, the flag manifold $ \SU_q(2)/T $ is known as the standard Podle\'s sphere, and Dirac 
operators on it have been defined and studied by several authors \cite{Owczarek, DS:Podles, SW}. A version of the local index formula 
for the Podle\'s sphere is exhibited in \cite{NT:Index, Wagner, RS:Index}, although slight modifications must be made to Connes' original formalism. 

The higher rank situation has proven to be considerably more difficult. 
Kr\"ahmer \cite{Krahmer} gave an algebraic construction of Dirac operators on quantized {\em irreducible} flag manifolds in 
higher rank. These retain a certain rank-one character in their geometry. In particular, the construction in \cite{Krahmer} does not 
cover the case of full flag manifolds. 
On the other hand, the Dirac operator defined by Neshveyev and Tuset can be used to write down spectral triples for arbitrary full quantum flag manifolds. 
However, the most direct way to do so, which was indicated already in \cite{NT:Deformation}, does not suffice to describe the equivariant $ K$-homology group of 
the quantum flag manifold using Poincar\'e duality. More precisely, one only obtains certain multiples of the class of the Dirac operator in this way. 

In this paper, we describe a construction of a Dirac-type class in equivariant $K$-homology for the full flag manifold $\sX_q = \SU_q(3)/T $ 
of $\SU_q(3)$ as a {\em bounded} Fredholm module. This does not give the full ``noncommutative Riemannian'' structure on $ \sX_q $ that a Connes-type spectral 
triple would give. In fact, a key philosophical point behind our construction is that the natural geometric structure on quantized 
flag manifolds in higher rank is not Riemannian but parabolic, in the sense of \cite{CS:Parabolic}.  

Correspondingly, the construction of our Dirac-type class is based not upon the Dirac or Dolbeault operator but upon the Bernstein-Gelfand-Gelfand (BGG) complex, 
see \cite{BGG, BasEas, CSS}. The quantum version of the BGG complex for $\SL_q(n,\CC)$, in its algebraic form, first appeared in \cite{Rosso:BGG}; 
see also \cite{HK2,HK3}. It has not been much studied from an analytical point of view so far.  
In fact, developing a complete unbounded noncommutative version of parabolic geometries seems to be difficult. 
For instance, the BGG complex is neither elliptic nor order $1$, although it does exhibit a kind of subellipticity. In the present work, we convert the BGG complex into a bounded $K$-homology cycle. Such a construction was achieved for a classical flag manifold in \cite{Yuncken:PsiDOs, Yuncken:BGG}. 
A major goal of the present work is to demonstrate that the necessary analysis can also be carried out for a quantized flag manifold. 

In particular, our $ K $-homology class is equivariant not only with respect to $ \SU_q(3) $, but with respect to the complex quantum 
group $ \SL_q(3,\CC) = \bD(\SU_q(3)) $, the Drinfeld double of $ \SU_q(3) $. Drinfeld doubles play an important role in the definition 
of equivariant Poincar\'e duality \cite{NVpoincare} and the proof of the Baum-Connes conjecture for the dual of $ \SU_q(2) $, see \cite{Voigtbcfo}. 
It is worth pointing out that the verification of $ \SL_q(3, \CC) $-equivariance of our cycle is somewhat simpler than in the classical situation. 
We also remark that in the construction of our $ K $-homology class we use some properties of principal series representations of $ \SL_q(3, \CC) $ 
which will be discussed in a separate paper \cite{VYprincipal}. 

Our main result can be formulated as follows. 

\begin{theorem} \label{thm:BGG_element}
The BGG complex for the full flag manifold $\sX_q = \SU_q(3)/T$ of $\SU_q(3)$ can be normalized to give a bounded equivariant $K$-homology cycle 
in the Kasparov group $KK^{\SL_q(3, \CC)}(C(\sX_q), \CC)$.  
The equivariant index of this element with respect to $ \SU_q(3) $ is the class of the trivial representation 
in $ KK^{\SU_q(3)}(\CC, \CC) = R(\SU_q(3))$.
\end{theorem}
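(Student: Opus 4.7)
The plan is to realize the cycle on a $\ZZ/2$-graded Hilbert space $\sH = \bigoplus_{w \in W} \sH_{w \cdot 0}$, where $W = S_3$ is the Weyl group of type $A_2$, each summand $\sH_\mu$ is the $L^2$-completion of the $\mu$-isotypical subspace of $L^2(\SU_q(3))$ under the right $T$-action (equivalently, $L^2$-sections of the quantum line bundle $L_\mu$ over $\sX_q$), and the grading is the parity of the length $\ell(w)$. Each $\sH_\mu$ carries a natural left action of $C(\sX_q)$ and an action of $\SL_q(3,\CC) = \bD(\SU_q(3))$ via the Drinfeld-double structure. For each covering relation $w \doubleedge w'$ in the Hasse diagram of $W$, I would construct a closed BGG operator $D_{w \doubleedge w'} : \sH_{w \cdot 0} \to \sH_{w' \cdot 0}$ using the intertwining operators for the $\SL_q(3,\CC)$ principal series developed in \cite{VYprincipal}; for the trivial coefficient system these are essentially powers of quantum root vectors.

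The next step is to normalize each $D_{w \doubleedge w'}$ to a bounded operator $F_{w \doubleedge w'}$, for instance by a polar-decomposition formula $F_{w \doubleedge w'} = D_{w \doubleedge w'} |D_{w \doubleedge w'}|^{-1}$ (extended by zero on the kernel), and to assemble these with their adjoints into an odd self-adjoint operator $F$ on $\sH$. The main obstacle is verifying the Kasparov cycle conditions: $F^2 - \Id$ and the commutators $[F,\pi(a)]$ must be compact for every $a \in C(\sX_q)$. The BGG operators are of mixed order and only subelliptic along the simple-root directions, so the standard pseudodifferential calculus does not directly apply. This is precisely what the foliation $C^*$-algebras introduced earlier in the paper are designed to handle: they encode the longitudinal structure of each edge of the BGG diamond, reducing compactness questions on $\sX_q$ to essentially rank-one problems along each leaf, where quantum harmonic analysis for $\SU_q(2)$-type subsystems can be applied. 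One must also check that consecutive normalized operators compose to zero modulo compacts; this is where the specific choice of normalization (and the cancellation between the two length-two paths in the hexagonal diamond) becomes essential.

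Equivariance under $\SL_q(3,\CC)$ should be essentially automatic, since the principal-series intertwiners commute with the Drinfeld-double action by design and the normalization preserves this; as indicated in the introduction, this step is in fact simpler in the quantum setting than in the classical one. For the final assertion on the equivariant index, I would decompose each $\sH_{w \cdot 0}$ into $\SU_q(3)$-isotypical components via the quantum Peter-Weyl theorem. On each fixed isotype the BGG complex restricts to a finite-dimensional algebraic chain complex, whose cohomology is computed by the quantum analogue of Kostant's version of the Borel-Weil-Bott theorem: for the trivial coefficient system only the trivial $\SU_q(3)$-representation survives, and only in degree zero. Summing the alternating Euler characteristic over all isotypes therefore yields $[\triv] \in R(\SU_q(3))$, as claimed.
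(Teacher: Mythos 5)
There is a genuine gap. Your overall architecture matches the paper's: the Hilbert space $\sH_\BGG$ is the sum of the six section spaces $L^2(\sE_\mu)$ over the shifted Weyl orbit, graded by Bruhat length; the edge operators are phases $\ph(E_i)^n = D_i^n|D_i^n|^{-1}$ of the intertwiners coming from the $\SL_q(3,\CC)$ principal series; $\SL_q(3,\CC)$-equivariance follows because these phases are intertwiners (or, at general parameter, intertwiners modulo $\JJ_i$, per Theorem~\ref{thm:SLq-equivariance}); and the index is read off from the classical Kostant/Borel--Weil--Bott computation since the $\SU_q(3)$-multiplicities are undeformed. All of this is right.

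What you are missing is the operator partition of unity of Lemma~\ref{lem:operator_po1}, and without it the construction does not produce a Kasparov cycle. After polar decomposition, the raw sum $\mathbf{T} = \sum_\gamma (T_\gamma + T_\gamma^*)$ fails the Fredholm-module axioms modulo $\KK(\sH_\BGG)$: the relations $\ph(F_i)^n\ph(E_i)^n \equiv \Id$ hold only modulo $\JJ_i$, the anticommutativity of the squares in the BGG diamond holds only modulo $\JJ_1 + \JJ_2$, and the commutators $[\mathbf{T}, \Mult{f}]$ lie in $\JJ_1 + \JJ_2$, not in $\KK$. Since $\JJ_1 \cap \JJ_2 = \KK$ but $\JJ_1 + \JJ_2 \supsetneq \KK$, these errors are \emph{not} compact. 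Your framing --- that the longitudinal $C^*$-algebras ``reduce compactness questions to rank-one problems along each leaf'' --- captures why $\mathbf{T}$ is an honest cycle modulo $\JJ_1+\JJ_2$, but gives no mechanism for converting that weaker statement into compactness. Moreover, $\mathbf{T}^2$ is not even approximately a multiple of $\Id$: a vertex with two incident arrows contributes $\approx 2\Id$ while a vertex with three contributes $\approx 3\Id$, so the naive sum fails already at the level of the Fredholm condition. The resolution is to replace $\mathbf{T}$ by $\mathbf{F} = \sum_\gamma N_\gamma(T_\gamma + T_\gamma^*)$, where the $N_\gamma$ (constructed via Kasparov's Technical Theorem and equivariant averaging) satisfy $\sum_{\gamma\ni v} N_\gamma^2 = \Id$ at each vertex, commute with everything relevant modulo $\KK$, and crucially satisfy $N_\gamma\, \JJ_i(\sH_\BGG) \subseteq \KK(\sH_\BGG)$ for $\alpha_i \in \supp(\gamma)$. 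It is this last property that promotes each $\JJ_i$-error produced by $T_\gamma$ into a genuine compact. The paper explicitly identifies this lemma as the single obstruction to extending the construction to $\SU_q(n)$, so omitting it leaves out the heart of the argument. Your phrase ``the specific choice of normalization... becomes essential'' gestures at this, but as written it suggests the issue is the choice of bounded transform rather than the need for an auxiliary family of commuting operators multiplying the edge maps.
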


We refer to Theorem \ref{thm:K-homology_class} for the precise statement of this result. The main idea behind 
our construction can be sketched as follows. Firstly, corresponding to each of the two simple roots of $\SU_q(3)$ there is a fibration of the quantized 
flag manifold whose fibres are Podle\'s spheres. These fibrations carry families of Dirac-type operators analogous to the operators constructed by Dabrowski-Sitarz. As is common in Kasparov's $KK$-theory, we replace these longitudinal operators by their bounded transforms. We then use 
a variant of the Kasparov product, inspired by the BGG complex, to assemble them into a single $\SL_q(3,\CC)$-equivariant $K$-homology cycle for $\sX_q$.

At present, there is only one ingredient which prevents us from carrying out our construction for the full flag manifold of $ \SU_q(n) $ for any $ n \geq 2 $, 
namely the operator partition of unity in Lemma \ref{lem:operator_po1}. We develop all the harmonic analysis in the generality of $ \SU_q(n) $.

\medskip

Using Theorem \ref{thm:BGG_element} we derive two consequences regarding equivariant $ KK $-theory.  
Firstly, we conclude that the quantum flag manifold $ \sX_q $ satisfies equivariant Poincar\'e duality in $ KK $-theory in the sense of \cite{NVpoincare}. 

\begin{corollary} \label{cor:PD}
The flag manifold $ \sX_q $ is $ \SU_q(3) $-equivariantly Poincar\'e dual to itself. That is, there is a natural isomorphism
$$
KK^{\bD(\SU_q(3))}_*(C(\sX_q) \boxtimes A, B) \cong KK^{\bD(\SU_q(3))}_*(A, C(\sX_q) \boxtimes B)
$$
for all $ \bD(\SU_q(3)) $-$ C^* $-algebras $ A $ and $ B $, where $ \boxtimes $ denotes the braided tensor product with respect to $ \SU_q(3) $. 
\end{corollary}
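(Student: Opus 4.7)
The plan is to apply the framework of equivariant Poincar\'e duality developed in \cite{NVpoincare}, with Theorem \ref{thm:BGG_element} supplying the essential geometric input. This parallels the strategy used in \cite{Voigtbcfo} to establish the analogous duality in the rank-one case.

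Equivariant Poincar\'e self-duality of $ C(\sX_q) $ reduces to producing a pair of Kasparov classes
\[
\alpha \in KK^{\bD(\SU_q(3))}(C(\sX_q) \boxtimes C(\sX_q), \CC), \qquad \beta \in KK^{\bD(\SU_q(3))}(\CC, C(\sX_q) \boxtimes C(\sX_q))
\]
which are inverse to one another under the Kasparov product over $ C(\sX_q) $. Given such a pair, the natural isomorphism of the corollary follows formally by taking external products with $ \alpha $ and $ \beta $ in opposite directions, so that the zig-zag identities exhibit the two resulting maps as mutually inverse.

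For the fundamental class $\alpha$, compose the BGG element $[D] \in KK^{\bD(\SU_q(3))}(C(\sX_q), \CC)$ provided by Theorem \ref{thm:BGG_element} with the multiplication homomorphism $m: C(\sX_q) \boxtimes C(\sX_q) \to C(\sX_q)$, which is $\bD(\SU_q(3))$-equivariant because $\sX_q$ is an $\SU_q(3)$-homogeneous space and the braided tensor product is compatible with this action. For the dual class $\beta$, construct a Bott-type element in $KK^{\bD(\SU_q(3))}(\CC, C(\sX_q) \boxtimes C(\sX_q))$ arising from the diagonal $\sX_q \hookrightarrow \sX_q \times \sX_q$; concretely this may be realized through a suitable projection in the braided tensor product, designed so that its Kasparov pairing with $\alpha$ produces the required relations, following the pattern of \cite{NVpoincare, Voigtbcfo}.

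The main obstacle lies in verifying the two zig-zag identities
\[
\alpha \otimes_{C(\sX_q)} \beta = 1_{C(\sX_q)}, \qquad \beta \otimes_{C(\sX_q)} \alpha = 1_{C(\sX_q)}.
\]
The index computation in Theorem \ref{thm:BGG_element} is the crucial analytic input: the identity $\ind_{\SU_q(3)}([D]) = [\triv]$ in $R(\SU_q(3))$ states that the composition $\CC \to C(\sX_q) \xrightarrow{[D]} \CC$ represents the unit of $R(\SU_q(3))$, which furnishes one zig-zag identity after reducing along the algebra unit. The second identity follows by exploiting the symmetry inherent in the Drinfeld double action on $C(\sX_q) \boxtimes C(\sX_q)$, together with standard associativity manipulations of the Kasparov product, exactly as in the rank-one argument of \cite{Voigtbcfo}.
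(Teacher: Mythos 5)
The paper's proof takes a different route, and your plan has a genuine gap in its central step.

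You propose to construct the dual class $\beta$ explicitly (``a Bott-type element\dots realized through a suitable projection in the braided tensor product, designed so that its Kasparov pairing with $\alpha$ produces the required relations'') and then to verify the two zig-zag identities directly, using the index formula from Theorem~\ref{thm:BGG_element}. But the construction of $\beta$ is never actually given — saying it is ``designed so that its pairing with $\alpha$ produces the required relations'' is circular, and for the full flag manifold of $\SU_q(3)$ the $K$-theory of $C(\sX_q)$ is $\ZZ^{|W|}$ with $|W|=6$, so there is no single Bott projection; what plays the role of $\eta_q$ is considerably more involved than in the Podle\'s-sphere case of \cite{NVpoincare}. More seriously, the claim that the index identity $\ind_{\SU_q(3)}([D]) = [\triv]$ ``furnishes one zig-zag identity after reducing along the algebra unit'' is incorrect: the zig-zag identity $\alpha \otimes_{C(\sX_q)} \beta = 1_{C(\sX_q)}$ lives in $KK^{\bD(\SU_q(3))}(C(\sX_q), C(\sX_q))$, involves $\beta$ in an essential way, and does not reduce to the composition $\CC \to C(\sX_q) \xrightarrow{[D]} \CC$. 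The appeal to ``symmetry'' and ``standard associativity manipulations'' for the second identity is likewise not an argument.

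The paper deliberately sidesteps both of these difficulties. After forming $[\mathbf{F}] \in KK^{\SL_q(3,\CC)}(C(\sX_q)\boxtimes C(\sX_q), \CC)$ via the multiplication map exactly as you do, it explicitly renounces any explicit description of the unit $\eta_q \in KK^{\bD(\SU_q(3))}(\CC, C(\sX_q)\boxtimes C(\sX_q))$. Instead it invokes the $\SU_q(n)$-equivariant continuous field structure of the $q$-deformed flag manifolds over $q \in (0,1]$ (from \cite{NTpoisson}, \cite{Yamashitaequivariantcomp}) to transport the duality data, and in particular $\eta_q$ and the zig-zag identities, from the classical case $q=1$, where Poincar\'e duality for $\sX_1 = \SU(3)/T$ is known. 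This reduction-to-$q=1$ argument, following the template of Theorem~6.5 in \cite{NVpoincare}, is the missing idea in your proposal and is what makes the proof go through without having to produce $\beta$ or verify any Kasparov products by hand.
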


For the definition and properties of braided tensor products we refer to \cite{NVpoincare}. 
We note that it is crucial here that the class obtained in Theorem \ref{thm:BGG_element} is equivariant with respect 
to $ \bD(\SU_q(3)) = \SL_q(3, \CC) $ and not just $ \SU_q(3) $. 

Secondly, we discuss an analogue of the Baum-Connes conjecture for the discrete quantum group 
dual to $ \SU_q(3) $. In \cite{MNtriangulated}, Meyer and Nest have developed an approach to the Baum-Connes conjecture \cite{BCH} which allows one to construct assembly maps in rather general circumstances, and which applies in particular to duals of $ q $-deformations. As already mentioned above, the
simplest case of $ \SU_q(2) $ was studied in \cite{Voigtbcfo}, and here we show how to go one step further as follows. 

\begin{corollary}
The Baum-Connes conjecture with trivial coefficients $ \mathbb{C} $ holds for the discrete quantum group dual to $ \SU_q(3) $.
\end{corollary}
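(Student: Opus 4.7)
The proof follows the triangulated category framework of Meyer-Nest \cite{MNtriangulated}, adapting the strategy used in \cite{Voigtbcfo} to establish Baum-Connes for the dual of $\SU_q(2)$. The key inputs are the BGG Fredholm module of Theorem \ref{thm:BGG_element} and the equivariant Poincar\'e duality of Corollary \ref{cor:PD}.

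First I would assemble a $\gamma$-element as follows. The BGG class $[D] \in KK^{\bD(\SU_q(3))}(C(\sX_q), \CC)$, together with the Poincar\'e dual $[\eta] \in KK^{\bD(\SU_q(3))}(\CC, C(\sX_q))$ provided by Corollary \ref{cor:PD}, combine via the Kasparov product to give
$$\gamma = [\eta] \otimes_{C(\sX_q)} [D] \in KK^{\bD(\SU_q(3))}(\CC, \CC).$$
Since $C(\sX_q)$ is induced from the closed quantum subgroup $T$, it is a compactly induced $\bD(\SU_q(3))$-$C^*$-algebra, and the resulting $\gamma$ factors through the localizing subcategory of compactly induced algebras that controls the Baum-Connes assembly map in the Meyer-Nest setup.

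Next I would verify that $\gamma = 1$ in $KK^{\bD(\SU_q(3))}(\CC, \CC)$. Restricting along the natural inclusion $\SU_q(3) \hookrightarrow \bD(\SU_q(3))$ and using the compatibility of Kasparov products with this restriction, the image of $\gamma$ in $KK^{\SU_q(3)}(\CC, \CC) = R(\SU_q(3))$ is the $\SU_q(3)$-equivariant index of $[D]$, which by Theorem \ref{thm:BGG_element} equals the class of the trivial representation. Promoting this identity to the Drinfeld-double level relies on the additional rigidity imposed by $\bD(\SU_q(3))$-equivariance, as well as on the compatibility of $[D]$ with the $\SL_q(3,\CC)$-action, which in turn rests on the principal series analysis of \cite{VYprincipal}, paralleling the role played by rank-one complex quantum group representation theory in \cite{Voigtbcfo}.

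The main obstacle is precisely this final lift from $R(\SU_q(3))$ to $KK^{\bD(\SU_q(3))}(\CC, \CC)$, since the forgetful functor between the two is not injective in general and therefore knowing the equivariant index alone is not formally sufficient to conclude $\gamma = 1$ at the Drinfeld-double level. Once this step is completed, the Meyer-Nest machinery, applied in the same manner as in \cite{Voigtbcfo}, implies that the Baum-Connes assembly map with trivial coefficients for the discrete quantum group $\widehat{\SU_q(3)}$ is an isomorphism, completing the proof.
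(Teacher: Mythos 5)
Your proof takes a genuinely different route from the paper, but it contains several gaps that prevent it from closing.

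The paper does not attempt a $\gamma$-element argument. Instead it works directly in the Meyer--Nest framework as in \cite{MNcompact}: after Baaj--Skandalis duality it suffices to show $C(K_q) \in \langle \mathcal{T}_{K_q} \rangle$, which (via the induction step $C(K_q) = \ind_T^{K_q}(C(T))$ and the Baum--Connes property of the classical torus $\hat{T}$) reduces to proving $C(K_q/T) \in \langle \mathbb{C} \rangle$. This is then established by exhibiting explicit inverse elements $\alpha_q \in KK^{K_q}(C(K_q/T), \mathbb{C}^{|W|})$ (built from Theorem \ref{thm:K-homology_class}) and $\beta_q \in KK^{K_q}(\mathbb{C}^{|W|}, C(K_q/T))$ (induced line bundles), using the $T$-equivariant continuous field structure of $\{C(K_q/T)\}_{q \in (0,1]}$ to transfer the $q=1$ computation to general $q$. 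Notably, only $K_q$-equivariance (and $T$-equivariance) of these elements is used; the $\bD(\SU_q(3))$-equivariance plays no role in the Baum--Connes argument, in contrast to the Poincar\'e duality result.

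Your construction of the class you call $[\eta]$ is not what Corollary \ref{cor:PD} provides: the unit of the Poincar\'e duality adjunction lives in $KK^{\bD(\SU_q(3))}(\mathbb{C}, C(\sX_q) \boxtimes C(\sX_q))$, not $KK^{\bD(\SU_q(3))}(\mathbb{C}, C(\sX_q))$, so the displayed Kasparov product is not well-posed as written. The claim that $C(\sX_q)$ is a compactly induced $\bD(\SU_q(3))$-$C^*$-algebra, and that this by itself places $\gamma$ in the localizing subcategory controlling the assembly map, is also not substantiated: in the Meyer--Nest formulation for the torsion-free dual quantum group, the compactly induced objects correspond under Baaj--Skandalis duality to trivial $K_q$-actions, and $C(\sX_q)$ is not among these. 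Finally, as you yourself flag, even granting a well-defined $\gamma$, the verification $\gamma = 1$ cannot be obtained merely from knowing the $\SU_q(3)$-equivariant index, since the restriction map $KK^{\bD(\SU_q(3))}(\mathbb{C},\mathbb{C}) \to R(\SU_q(3))$ need not be injective; you identify this obstacle but offer no mechanism to overcome it. The paper circumvents all of this by never forming a $\gamma$-element at the double level: the whole argument lives in $KK^{K_q}$ and $KK^T$, where the continuous field argument supplies the needed rigidity.
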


This result is significantly weaker than the analogous statement for the dual of $ \SU_q(2) $ in \cite{Voigtbcfo}. 
However, let us point out that one cannot hope to carry over the arguments used in \cite{Voigtbcfo} to the higher rank situation. 
Indeed, according to work of Arano \cite{Aranospherical}, the Drinfeld double of $ \SU_q(3) $ has property $ (T) $. 
This forbids the existence of continuous homotopies along the complementary series representations to the trivial representation 
in the unitary dual. Such homotopies are at the heart of the arguments in \cite{Voigtbcfo}. 
In other words, the problem is similar to well-known obstacles to proving the Baum-Connes conjecture with coefficients for the classical 
groups $\SL(n,\CC)$ in higher rank.

\medskip

Let us now explain how the paper is organized. In Section \ref{sec:prelim} we collect some preliminaries on quantum groups and fix our notation. Sections \ref{sec:operator_ideals} and \ref{sec:lattice_of_ideals} contain the definition and basic properties of certain ideals of $ C^* $-algebras associated to the canonical fibrations of a quantum flag manifold. These $C^*$-ideals are defined in terms of the harmonic analysis of the block diagonal quantum subgroups 
of $\SU_q(n)$, and are the basis of all the analysis that follows.  

In Section \ref{sec:statements} we formulate the main technical results about these ideals. These results are parallel to classical facts from the calculus of longitudinally elliptic pseudodifferential operators. The proofs are deferred to subsequent sections, which may be skipped on a first reading. 
Specifically, Section \ref{sec:Gelfand-Tsetlin} collects some facts about 
Gelfand-Tsetlin bases, and in particular the effect of reversing the order of roots used in their definition. 
Section \ref{sec:essorth} introduces the notion of essentially orthotypical quantum subgroups, in analogy with the considerations in \cite{Yuncken:PsiDOs}. 
In Section \ref{sec:PsiDOs2} the analytic properties of longitudinal pseudodifferential-type operators are established. 

Section \ref{sec:SLq} contains some definitions and facts related to complex quantum groups and their representations, and 
it is checked that our constructions are compatible with the natural action of $ \SL_q(n,\CC) $. 
In section \ref{sec:BGG} we describe the analytical quantum BGG complex for the flag manifold of $ \SU_q(3) $, 
and we prove our main theorem. 

The final section \ref{sec:bc} contains the corollaries stated above. That is, we show that $ \sX_q $ is equivariantly Poincar\'e dual 
to itself, and we verify the Baum-Connes conjecture with trivial coefficients for the dual of $ \SU_q(3) $. 

\medskip

Let us conclude with some remarks on notation. The dual of a vector space $V$ is denoted $V^\dual$. We write $ \LL(H, H') $ for the space of bounded
operators between Hilbert spaces $ H $ and $ H' $, and  $ \KK(H,H') $ denotes the space of compact operators.  When $H=H'$ we abbreviate these 
as $ \LL(H) $ and $ \KK(H) $. Depending on the context, the symbol $ \otimes $ denotes either an algebraic tensor product, the tensor product of Hilbert 
spaces or the minimal tensor product of $ C^* $-algebras. All Hilbert spaces in this paper are separable. 

\medskip 

It is a pleasure to thank Uli Kr\"ahmer for inspiring discussions on quantized flag manifolds.


\section{Preliminaries} \label{sec:prelim}

In this section we discuss some preliminaries on quantum groups in general and $ q $-deformations in particular. For more details 
and background we refer to the literature \cite{CPbook}, \cite{KS}, \cite{Majid}.

\subsection{Some notation}

Let $K=\SU(n)$ or $\U(n)$ with $n\geq2$. We write $T$ for the standard maximal torus of $K$, that is, the diagonal subgroup, and $\lie{t}$ for its Lie algebra.  We write $\slx_n$ for $\lie{sl}(n,\CC)$ and $\glx_n$ for $\lie{gl}(n,\CC)$. In either case we denote by $\lie{h} = \lie{t}_\CC$ the Cartan subalgebra. 
We write $\bP$ for the set of weights of $K$, viewed as a lattice in $\lie{h}^\dual$. If $V$ is a $K$-representation and $\mu\in\bP$, the subspace
of vectors of weight $\mu$ in $V$ will be denoted $V_\mu$.  

It will be convenient to identify the weight lattice of $\U(n)$ with $\ZZ^n$, where an element $\mu = (\mu_1,\ldots,\mu_n)\in\ZZ^n$ corresponds to the 
weight $\mu \in \lie{h}^\dual$ given by
$$
\mu(\mathrm{diag}(t_1,\ldots,t_n)) = \mu_1t_1 + \cdots + \mu_nt_n.
$$                                                                         
The corresponding character of $T$ will be denoted by $e^\mu\in C(T)$. We equip $\lie{h}^\dual$ with the  bilinear form which extends the standard pairing on $\bP \cong \ZZ^n$:
$$
( \,(\mu_1,\ldots, \mu_n), \, (\mu_1',\ldots,\mu_n') \, ) = \sum_i \mu_i \mu_i'.
$$

For $\SU(n)$, the weight lattice identifies with the quotient $\ZZ^n / \ZZ(1,\ldots,1)$, and the bilinear form on $\lie{h}^\dual$ is obtained from that above by identifying $\lie{h}^\dual$ with the orthogonal complement of $\CC(1,\ldots,1)$ in $\CC^n$.

We write $\Delta$ for the set of roots of $\SU(n)$ or $\U(n)$; they are the same in both cases.  We fix the set of simple roots $\Sigma = \{\alpha_1,\ldots,\alpha_{n-1}\}$ where $\alpha_i:  \mathrm{diag}(t_1,\ldots,t_n) \mapsto t_i - t_{i+1}$.

\subsection{Quantized universal enveloping algebras}
\label{sec:env_alg}

We shall use the quantized universal enveloping algebras which are denoted $\Uq(\glx_n)$ and $\Uq(\slx_n)$ in \cite{KS} (pages 212 and 164, respectively), 
since these are the versions used in the literature on Gelfand-Tsetlin theory. We briefly recall their definitions.

Fix $q\in(0,1)$. For any $a\in\CC$ we write $[a]_q = \frac{q^a-q^{-a}}{q-q^{-1}}$, and for $a\in\NN$,
\begin{align*}
[a]_q! &=  \prod_{k=1}^a [k]_q, & \qchoose{a \\ m}_q &= \frac{[a]_q!}{[a-m]_q!\,[m]_q!}.
\end{align*}
Often, we shall drop the subscript $q$ in the notation.

The Hopf $*$-algebra $\Uq(\glx_n)$%
is generated by elements $E_i, F_i$ ($i=1,\ldots,n - 1$) and $G_j, G_j^{-1}$ ($j=1,\ldots,n$) with the relations

$$
\begin{array}{ll}
 G_j G_k = G_k G_j, & G_j^{-1}G_j =1 = G_jG_j^{-1}\\[1.5ex]
 G_j E_i G_j^{-1} = \begin{cases}
		      q^{\half} E_i, & j=i,\\
		      q^{-\half} E_i, & j=i+1, \\
		      E_i,& \text{otherwise},
                    \end{cases} 
 \qquad &
 G_j F_i G_j^{-1} = \begin{cases}
		      q^{-\half} F_i, & j=i,\\
		      q^{\half} F_i, & j=i+1, \\
		      F_i,& \text{otherwise},
                    \end{cases}  \\[6ex]
 {[E_i,F_j]} = \ds\delta_{ij} \frac{G_i^2 G_{i+1}^{-2}-G_i^{-2} G_{i+1}^2}{q - q^{-1}}, \\[2.5ex]
\multicolumn{2}{l}{E_i^2 E_{i\pm1} - [2]_q E_i E_{i\pm1} E_i + E_{i\pm1} E_i^2 = 0 = F_i^2 F_{i\pm1} - [2]_q F_i F_{i\pm1} F_i + F_{i\pm1} F_i^2} \\[1.5ex]
\multicolumn{2}{l}{[E_i, E_j] = 0 = [F_i, F_j], |i - j| \geq 2.}
\end{array}
$$
The formulas for the coproduct $\hat{\Delta}: \Uq(\glx_n) \rightarrow \Uq(\glx_n) \otimes \Uq(\glx_n)$ are 
\begin{align*}
  \hat{\Delta}(E_i) &= E_i \otimes G_i G_{i+1}^{-1} + G_i^{-1} G_{i+1} \otimes E_i, \\
  \hat{\Delta}(F_i) &= F_i \otimes G_i G_{i+1}^{-1} + G_i^{-1} G_{i+1} \otimes F_i, \\
  \hat{\Delta}(G_i) &= G_i \otimes G_i, 
\end{align*} 
the counit $ \hat{\epsilon}: \Uq(\glx_n) \rightarrow \CC $ is given by 
\begin{align*}
\hat{\counit}(E_i)& = 0, \qquad \hat{\counit}(F_i) = 0, \qquad \hat{\counit}(G_i)= 1, 
\end{align*}
and the antipode is determined by 
$$
\hat{S}(E_i) = -q E_i, \qquad \hat{S}(F_i) = -q^{-1}F_i, \qquad \hat{S}(G_i) = G_{i}^{-1}. 
$$
Finally, the $*$-structure is given by
$$
E_i^* = F_i, \qquad G_i^* = G_i. 
$$
Throughout, we will use the Sweedler notation $ \hat{\Delta}(X) = X_{(1)} \otimes X_{(2)} $ for the coproduct. 
We note that with this definition of $\Uq(\glx_n)$, weight spaces are defined by saying that $G_i$ acts on vectors of weight $\mu=(\mu_1,\ldots,\mu_n)$ by multiplication by $q^{\half \mu_i}$.

The Hopf $*$-algebra $\Uq(\slx_n)$ is the Hopf $*$-subalgebra of $\Uq(\glx_n)$ generated by the elements $E_i$, $F_i$, $K_i = G_iG_{i+1}^{-1}$ 
and $K_i^{-1}$, for $i=1,\ldots,n-1$. The element $K_i$ acts on vectors of weight $\mu\in\bP$ by multiplication by $q^{\half(\alpha_i,\mu)}$.

\subsection{Quantized algebras of functions}

Fix $K_q = \SU_q(n)$ for $n\geq 2$. The quantized algebra of functions $\Poly(K_q) $ is the space of matrix coefficients of 
finite-dimensional type $1$ representations of $\Uq(\slx_n)$; see \cite{KS} for more details. 
If $\sigma$ is a type $1$ representation of $\Uq(\slx_n)$ and $\xi\in V^\sigma$, $\xi^\dual\in V^{\sigma\dual}$, we denote the associated 
matrix coefficient by the bra-ket notation
$$
\langle\xi^\dual|\slot|\xi\rangle : X \mapsto (\xi^\dual,\sigma(X)\xi), \qquad \text{for $X\in\Uq(\lie{g})$.}
$$

We shall use the $*$-Hopf algebra structure on $\Poly(K_q)$ which makes the evaluation map $\Uq(\slx_n) \times \Poly(\SU_q) \to \CC$ into 
a {\em skew}-pairing of $*$-Hopf algebras, or equivalently, a Hopf pairing of $\Uq(\slx_n)^\cop $ and $\Poly(K_q)$. 
Thus, for all $X,Y\in\Uq(\slx_n)$, $f,g\in\Poly(K_q)$,
\begin{align*}
(XY,f) &= (X,f_{(1)})(Y,f_{(2)}),\\
(X,fg) &= (X_{(1)},g)(X_{(2)},f) \\
(\hat{S}(X),f) &= (X,S^{-1}(f)), 
\end{align*}
where we use Sweedler notation $\Delta(f) = f_{(1)} \otimes f_{(2)}$ for $ f \in \Poly(K_q) $. 
In terms of matrix coefficients the multiplication is given by
\begin{equation}
\label{eq:product_of_coeffs}
\coeff{\xi^\dual}{\xi} \, \coeff{\eta^\dual}{\eta} 
= \coeff{\eta^\dual \otimes \xi^\dual}{\eta \otimes \xi}.
\end{equation}
where $\xi \in V^{\sigma}$, $\xi^\dual \in V^{\sigma\dual}$, $\eta\in V^{\tau}$, $\eta^\dual\in V^{\tau\dual}$ for type $1$ 
representations $\sigma$, $\tau$.  

The comultiplication of $\Poly(K_q) $ defines left and right corepresentations of $\Poly(K_q)$ on itself. They will play very different roles in what 
follows: the left regular corepresentation will be used to define representations of $K_q$, while the right regular representation will be used to 
define $K_q$-invariant differential operators and carry out their harmonic analysis.

The right regular corepresentation of $\Poly(K_q)$ gives rise to a left action of $\Uq(\slx_n)$ according to the formula 
\begin{equation}
\label{eq:hit}
X \hit f  = f_{(1)} (X, f_{(2)}), \qquad \text{for } X\in\Uq(\glx_n), f\in\Poly(\U_q(n)).
\end{equation}
We shall usually write this simply as $X f$. 

The Hilbert space $L^2(K_q)$ is the completion of $\Poly(K_q)$ with respect to the inner product 
$$
\ip{f,g} = \phi(f^*g),
$$
where $\phi$ is the Haar state of $\Poly(K_q)$.

The left and right multiplication action of $f\in\Poly(K_q)$ on $L^2(K_q)$ will be denoted by $ \Mult{f}$ and $ \RMult{f} $, respectively. 
The left multiplication action defines a $ * $-homomorphism $ \Poly(K_q) \rightarrow \LL(L^2(K_q)) $. By definition, 
the $C^*$-completion $C(K_q)$ of $\Poly(K_q)$ is the norm closure of the image of $ \Poly(K_q) $ under this representation. 
In this way one obtains the compact quantum group structure of $K_q$. 

The algebra $\Poly(\U_q(n))$ is defined analogously, as matrix coefficients of type $1$ representations of $\Uq(\glx_n)$. All the above 
constructions carry over to $\U_q(n)$.

\subsection{Representations and duality} 
\label{sec:dual_group}

Let $K_q = \SU_q(n)$. By definition, a unitary representation of $ K_q $ on a Hilbert space $ H $ is a unitary 
element $U \in M(C(K_q) \otimes \KK(H))$ such that $(\Delta \otimes \Id)(U) = U_{13} U_{23} $. 
Here we are using leg numbering notation. We shall often designate unitary $K_q$-representations simply by the Hilbert spaces underlying them. 
If $ H, H' $ are unitary representations of $ K_q $ we write $ \Hom_{K_q}(H,H') $ 
for the space of intertwiners, that is, for the set of all bounded linear maps $ T: H \rightarrow H' $ satisfying $ (\Id \otimes T)U = U'(\Id \otimes T) $. 

A unitary representation $ H $ of $ K_q $ is irreducible if and only if $\Hom_{K_q}(H,H) = \mathbb{C}$. 
All irreducible unitary representations of $ K_q $ are finite dimensional, and we write $\Irr(K_q)$ for the set of their equivalence classes. In the 
context of harmonic analysis, elements of $\Irr(K_q)$ will be referred to as $K_q$-types. 
We shall usually blur the distinction between a specific irreducible representation and its class in $\Irr(K_q)$. 
Unless otherwise stated, the Hilbert space underlying a $K_q$-representation 
$\sigma \in \Irr(K_q) $ will be denoted $V^\sigma$.

We use $\triv_{K_q}$ to denote the trivial representation of $K_q$. For $\sigma \in \Irr(K_q)$, we denote by $ \sigma^c$ the (unitary) conjugate 
representation. If a $K_q$-representation $\pi$ contains $\sigma$ as an irreducible subrepresentation, we write $\sigma \leq \pi$.

We define $C_c(\hat{K}_q)$ as the algebraic direct sum 
$$
C_c(\hat{K}_q) = \bigoplus_{\sigma\in\Irr(K_q)} \!\!\! \LL(V^\sigma). 
$$ 
Its enveloping $C^*$-algebra is denoted $C_0(\hat{K}_q)$, this identifies with the $ C^* $-algebra of functions on the dual discrete quantum 
group $ \hat{K}_q $. 

We will also work with the algebraic direct product 
$$
C(\hat{K}_q) = \prod_{\sigma\in\Irr(K_q)}  \!\!\! \LL(V^\sigma), 
$$ 
which can be identified with the algebraic dual space $\Poly(K_q)^*$ of $\Poly(K_q)$. 
In particular, the quantized universal enveloping algebra $\Uq(\slx_n)$ is naturally a $ * $-subalgebra of $C(\hat{K}_q)$, 
and we will routinely use the same notation for elements of $\Uq(\slx_n)$ and their images in $C(\hat{K}_q)$. 

In our context, the main reason to consider the algebra $C(\hat{K}_q)$ is that it contains some elements outside $\Uq(\slx_n)$ which we shall 
need. In particular, the universal enveloping algebra $\Univ(\lie{h})$ of the Cartan subalgebra $\lie{h}$ of $\slx_n$ 
embeds into $C(K_q)$ if we identify $X\in\lie{h}$ with the operator which acts as $\mu(X)$ on the weight 
space $(V^\sigma)_\mu$ for each $\sigma\in\Irr(K_q)$, $\mu \in \bP$.


\subsection{Quantum subgroups}
\label{sec:lattice_of_subgroups}

Let $K_q = \SU_q(n)$.  
Given a set $\rootset\subseteq\Sigma$ of simple roots, we let $\lie{h}^{\rootset\perp}$ denote the subspace of $\lie{h}$ annihilated by the $\alpha_i \in \rootset$, and let $\lie{h}^{\rootset}$ be its orthocomplement with respect to the invariant bilinear form.  We let $\lie{g}^{\rootset}$ denote the following block-diagonal Lie subalgebra of $\slx_n$:
$$
  \lie{g}^{\rootset} = \lie{h} \oplus \!\! \bigoplus_{\alpha\in\Roots\cap\ZZ\rootset} \!\!\! \lie{g}_\alpha.
$$
This subalgebra admits the decomposition $\lie{g}^\rootset = \lie{s}^\rootset \oplus \lie{h}^{\rootset\perp}$ where $\lie{s}^\rootset= \lie{h}^\rootset \oplus  \bigoplus_{\alpha\in\Roots\cap\ZZ\rootset} \lie{g}_\alpha$ is semisimple and $\lie{h}^{\rootset\perp}$ is central.  The subalgebra $\lie{k}^{\rootset} = \lie{g}^{\rootset} \cap \lie{su}_n$ is the Lie algebra of a block-diagonal subgroup $K^{\rootset} \subseteq \SU(n)$.  

The analogous families of closed quantum subgroups of $\SU_q(n)$ are defined as follows. Here, we use the notation ${\langle x_j \rangle}$ to denote the $\sigma(C(\hat{K}_q), \Poly(K_q))$-closed subalgebra of $C(\hat{K}_q)$ generated by a collection of elements $x_j \in C(\hat{K}_q)$.  For each $\rootset\subseteq\Sigma$, we define
$$
\begingroup
\renewcommand{\arraystretch}{1.5}
\begin{array}{cc}
 C(\hat{K}^\rootset_q) = {\langle  X\in\Univ(\lie{h}),~ E_i, F_i ~(i\in\rootset) \rangle} ,\quad&\quad
 C(\hat{S}^\rootset_q) = {\langle  X\in\Univ(\lie{h}^\rootset),~ E_i, F_i ~(i\in\rootset)\rangle} ,\\ 
 C(\hat{T}^\rootset) = {\langle X\in \Univ(\lie{h}^\rootset) \rangle}, &
 C(\hat{T}^{\rootset\perp}) = {\langle X\in \Univ(\lie{h}^{\rootset\perp}) \rangle} .
\end{array}
\endgroup
$$
We then define  $ \Poly(K^\rootset_q) $, $ \Poly(S^\rootset_q) $, $\Poly(T^\rootset)$ and $\Poly(T^{\rootset\perp})$ to be the images of $ \Poly(K_q) $  under the induced surjection of $C(\hat{K}_q)^\dual$ onto $C(\hat{K}^\rootset_q)^\dual$, $C(\hat{S}^\rootset_q)^\dual$, $C(\hat{T}^\rootset)^\dual$ and $C(\hat{T}^{\rootset\perp})^\dual$, respectively.   They are Hopf *-algebras under the induced operations.  

In particular, $\Poly(K^\emptyset_q)$ is isomorphic to $\Poly(T)$. We write $\pi_T$ for the projection homomorphism $\Poly(K_q) \onto \Poly(T)$, and 
for its extension to the $C^*$-algebras. At the other extreme, we have $\Poly(K^\Sigma_q) = \Poly(K_q)$.

The quantum subgroups corresponding to the singleton subsets $I = \{\alpha_i\}$ with $i=1,\ldots,n-1$ will play a particularly important role. In this case, we will write $K^i_q, S^i_q, T^i, T^{i\perp}$ for the above quantum groups. Note that $S^i_q \cong \SU_q(2)$. We will also write $\Uq(\lie{s}^i_q)$ for the Hopf 
subalgebra of $\Uq(\slx_n)$ generated by $E_i$, $F_i$, $K_i$ and $K_i^{-1}$.


\subsection{The quantized flag manifold}
\label{sec:flag_manifolds}

Here, we summarize the basic definitions and properties of quantum flag manifolds. For more details see \cite{CPbook}, \cite{DStok}, \cite{HK1}, \cite{St}.

The full flag manifold of $K_q = \SU_q(n)$ is the quantum space $\sX_q = K_q/T$, defined via its algebra of functions as follows. 
The algebra $\Poly(K_q)$ is a right $\Poly(T)$-comodule algebra by restriction of the canonical right coaction of $\Poly(K_q)$ along the projection 
homomorphism $\pi_T: \Poly(K_q) \rightarrow \Poly(T)$. By definition, the algebra $\Poly(\sX_q)$ is the $*$-subalgebra of $\Poly(T)$-coinvariant elements, 
that is, 
\begin{align*}
  \Poly(\sX_q) &= \{ f\in \Poly(K_q) \st (\Id \otimes \pi_T) \Delta(f) = f \otimes 1 \} \\
    & = \{ f\in \Poly(K_q) \st K_i  f = f \text{ for all }  i=1,\ldots,n-1 \}.
\end{align*}

More generally, for any $\mu=(m_1,\ldots,m_n)\in\bP$ we define the section space of the induced line bundle $\sE_\mu$ over $\sX_q$ by
\begin{align*}
\Poly(\sE_\mu) &= \{ f\in\Poly(K_q) \st (\Id \otimes \pi_T) \Delta(f) = f \otimes e^\mu \} \\
&=\{ f\in \Poly(K_q) \st K_i  f = q^{\half (m_i - m_{i+1})} f \text{ for all } i=1,\ldots,n-1 \}.
\end{align*}
In other words, $\Poly(\sE_\mu)$ is the $\mu$-weight space of the right regular action of $ T $. 
Similarly, $L^2(\sE_\mu)$ and $C(\sE_\mu)$ are the right $\mu$-weight spaces of  $L^2(\U_q(n))$ and $C(\U_q(n))$, respectively. They are the closures of $ \Poly(\sE_\mu) $ in $ L^2(\U_q(n))$ and $ C(\U_q(n)/T) $, respectively. We will abbreviate direct sums of the form 
$ \Poly(\sE_\mu)\oplus \Poly(\sE_\nu)$ as $ \Poly(\sE_\mu\oplus\sE_\nu)$, and use analogous notation for their completions.

Multiplication in $\Poly(K_q)$ restricts to a map $\Poly(\sE_\mu) \otimes \Poly(\sE_\nu) \to \Poly(\sE_{\mu+\nu})$ for any $\mu,\nu\in\bP$. In particular, each $\Poly(\sE_\mu)$ is a bimodule over $\Poly(\sE_0) = \Poly(\sX_q)$. 
These modules are projective as either left or right $\Poly(\sX_q)$-modules since $ \Poly(K_q/T) \subset \Poly(K_q) $ is a faithfully flat 
Hopf-Galois extension \cite{MS}. 

Later on we will need an analogue of trivializing partitions of unity for the line bundles $\sE_\mu$.
These are described in the following lemma, which is an immediate consequence of Hopf-Galois theory, see \cite{SHopfgalois}. 

\begin{lemma}
\label{lem:bundle_po1}
For any $\mu\in\bP$, there exists a finite collection of sections $f_1,\ldots,f_k \in \Poly(\sE_\mu)$ and $g_1, \dots, g_k \in \Poly(\sE_{-\mu}) $ 
such that $\sum_{j=1}^k f_j g_j = 1  \in \Poly(\sX_q)$.
\end{lemma}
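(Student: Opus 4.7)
My plan is to extract the identity directly from the unitarity of an appropriate matrix of representation coefficients. The more conceptual perspective of \cite{SHopfgalois}---whereby the statement asserts that $\Poly(\sE_\mu)$ and $\Poly(\sE_{-\mu})$ are mutually inverse invertible $\Poly(\sX_q)$-bimodules for the faithfully flat Hopf--Galois extension $\Poly(\sX_q) \subset \Poly(K_q)$---ultimately leads to the same concrete construction.

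First I would note that every element of the weight lattice $\bP$ occurs as a weight of some irreducible representation of $K_q$, since the Weyl group acts on the weights of a sufficiently high-weight module and sweeps through $\bP$. I would then choose a type~$1$ irreducible unitary representation $\sigma$ of $K_q$ together with a weight basis $\{e_1,\ldots,e_d\}$ of $V^\sigma$ and an index $k_0$ such that $e_{k_0}$ has weight $-\mu$. Setting $u_{ij} = \coeff{e_i^\dual}{e_j}$, a direct calculation using \eqref{eq:hit} and $\Delta(u_{ij}) = \sum_k u_{ik} \otimes u_{kj}$ shows that $u_{ij}$ has right $T$-weight equal to the weight of $e_j$; in particular the $k_0$-column $u_{i k_0}$ lies in $\Poly(\sE_{-\mu})$ for every $i$. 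Since the right $\Poly(T)$-coaction is a $*$-homomorphism and $(e^\nu)^\dual = e^{-\nu}$ in the character Hopf $*$-algebra $\Poly(T)$, the involution interchanges $\Poly(\sE_\nu)$ and $\Poly(\sE_{-\nu})$, so $u_{i k_0}^\dual \in \Poly(\sE_\mu)$.

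Finally, I would invoke the fact that $u = (u_{ij})$ is unitary in $M_d(C(K_q))$, so $u^\dual u = 1$; reading off the $(k_0,k_0)$-diagonal entry gives
$$
\sum_{i=1}^d u_{i k_0}^\dual \, u_{i k_0} = 1 \in \Poly(\sX_q),
$$
and setting $f_i = u_{i k_0}^\dual \in \Poly(\sE_\mu)$, $g_i = u_{i k_0} \in \Poly(\sE_{-\mu})$ furnishes the required partition of unity. I expect no serious obstacle; the only delicate point is the interaction of the involution with the right $T$-weight grading, which reduces to the fact that the right $\Poly(T)$-coaction is a $*$-coaction of a commutative $*$-Hopf algebra of characters.
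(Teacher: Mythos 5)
Your argument is correct, and it takes a genuinely different route from the paper, which gives no explicit proof at all but simply cites Hopf--Galois theory: from the fact that $\Poly(\sX_q)\subset\Poly(K_q)$ is a faithfully flat Hopf--Galois extension (see \cite{SHopfgalois}, and \cite{MS} as cited just above the lemma), each $\Poly(\sE_\mu)$ is automatically an invertible $\Poly(\sX_q)$-bimodule with inverse $\Poly(\sE_{-\mu})$, and the existence of a finite dual basis with $\sum f_j g_j = 1$ is part of that package. You instead produce the dual basis concretely: choose an irreducible unitary type~$1$ corepresentation $\sigma$ in which $-\mu$ occurs as a weight (taking the dominant Weyl representative of $-\mu$ as highest weight suffices), verify via \eqref{eq:hit} that the column $(u_{ik_0})_i$ of matrix coefficients attached to an orthonormal weight-$(-\mu)$ basis vector $e_{k_0}$ lies in $\Poly(\sE_{-\mu})$, note that the $*$-structure interchanges $\Poly(\sE_\nu)$ and $\Poly(\sE_{-\nu})$ because $(e^\nu)^*=e^{-\nu}$ in $\Poly(T)$, and then read the identity from the $(k_0,k_0)$-entry of $u^*u=1$. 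All of these steps check out, and the approach makes the ``trivializing partition of unity'' interpretation vivid and self-contained at the modest cost of not exhibiting the Morita-theoretic structure (invertibility, hence projectivity, of the bimodules $\Poly(\sE_\mu)$) that the Hopf--Galois citation provides and the paper uses elsewhere in Section~2.6. One cosmetic point: you write $u^{\dual}$ for the $*$-conjugate, which collides with the paper's convention that $\dual$ denotes linear duals; $u^{*}$ would be clearer.
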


We will be interested in operators arising from the action of $ \Uq(\lie{sl}_n) $ on the above line bundles. 
Let $X\in\Uq(\lie{g})$ be of weight $\nu = (k_1,\ldots, k_n)$ for the left adjoint action, {\em i.e.}, $K_i X K_i^{-1} = q^{\half (k_i-k_{i+1})}X$ for all $i$. Then the right regular action $X:\Poly(K_q) \to \Poly(K_q)$ given by $ X f = X \hit f $ restricts to a map $X:\Poly(\sE_\mu) \to \Poly(\sE_{\mu+\nu})$ 
for every $\mu\in\bP$. 
In this way, $X$ defines an unbounded operator from $L^2(\sE_\mu)$ to $L^2(\sE_{\mu+\nu})$ with dense domain $\Poly(\sE_\mu)$, such that $X^*X$ is essentially self-adjoint. It should be thought of as a $K_q$-invariant differential operator.


\section{Isotypical decompositions and associated $C^*$-categories}
\label{sec:operator_ideals}

In this section and the next, we introduce the fundamental analytical structures which will be used throughout the remainder of the paper.  The idea is to describe the behaviour of certain linear operators with respect to a decomposition into isotypical subspaces.

\subsection{Isotypical decompositions}

Let $K_q$ be a compact quantum group. Any unitary representation $\pi$ of $ K_q $ on a Hilbert space $H$ can be decomposed into 
a direct sum of its isotypical components, 
$$
H = \bigoplus_{\sigma\in \Irr(K_q)} H_\sigma,
$$
where $H_\sigma \cong \Hom_{K_q}(V^\sigma,H) \otimes V^\sigma$.
We denote by $p_\sigma$ the orthogonal projection onto $H_\sigma$. More generally, for any set $S \subseteq \Irr(K_q)$, we write  
$ p_S = \sum_{\sigma \in S} p_\sigma$, so that $ p_S $ is the orthogonal projection onto $H_S \defeq \bigoplus_{\sigma \in S} H_\sigma$.

An important observation for what follows is that certain sufficiently nice subspaces of $H$, such as weight spaces, still admit a $K_q$-isotypical decomposition even though they may not be $K_q$-subrepresentations.  This is the point of the following definition. 

\begin{definition}
\label{def:G-harmonic}
A {\em $K_q$-harmonic space} is a Hilbert space of the form $\sH = PH$, where $H$ is a unitary $K_q$-representation space and $P:H\to H$ is an orthogonal projection which commutes with every isotypical projection $p_\sigma$ for $\sigma\in\Irr(K_q)$. 
In this case each $p_\sigma$ restricts to a projection on $\sH$, and we call $\sH_\sigma = p_\sigma PH$ the {\em $\sigma$-isotypical subspace} of $\sH$.
\end{definition}

For us, the key example of a $K_q$-harmonic space will be the $L^2$-section space of a homogeneous line bundle over the quantized flag manifold. 
The corresponding Hilbert space is not a subrepresentation of the right regular representation of $\SU_q(n)$, but it is a $\SU_q(n)$-harmonic space 
with respect to the right regular representation, see Example \ref{ex:fully_harmonic} below.


\subsection{Harmonically finite and harmonically proper operators}
\label{sec:harmonically_finite_and_proper}

Let $\sH$, $\sH'$ be $K_q$-harmonic spaces and let $T\in\LL(\sH,\sH')$ be a bounded linear operator between them. We denote by 
$T_{\sigma\tau} \defeq p_\sigma T p_\tau $ for $\sigma,\tau\in\Irr(K_q)$ the matrix components of $T$ with respect to the $K_q$-isotypical 
decompositions.

\begin{definition}
\label{def:harmonically_proper}
Let $\sH$, $\sH'$ be $K_q$-harmonic spaces. With the notation above, we say that 
\begin{bnum}
\item[a)] $T$ is {\em $K_q$-harmonically finite} if $T_{\sigma\tau} = 0$ for all but finitely many pairs $\sigma,\tau\in\Irr(K_q)$;
\item[b)] $T$ is {\em $K_q$-harmonically proper} if the matrix of $T$ is row- and column-finite, that is, if for each fixed $\sigma$ we have $T_{\sigma\tau} = 0$ for all but finitely many $\tau\in\Irr(K_q)$ and $T_{\tau\sigma} = 0$ for all but finitely many $\tau\in\Irr(K_q)$.  
\end{bnum}
\end{definition}

\begin{definition}
\label{def:K}
Let $\sH$, $\sH'$ be $K_q$-harmonic spaces.  
\begin{bnum}
\item[a)] We define $\KK_{K_q}(\sH,\sH')$ to be the norm-closure of the set of $K_q$-harmonically finite operators in $\LL(\sH,\sH')$ 
\item[b)] We define $\AA_{K_q}(\sH,\sH')$ to be the norm-closure of the set of $K_q$-harmonically proper operators in $\LL(\sH,\sH')$.
\end{bnum}
If $\sH = \sH'$ we will simply write $\KK_{K_q}(\sH)$ and $\AA_{K_q}(\sH)$, respectively.  
\end{definition}

These definitions can be thought of as defining the $\Hom$-sets of $C^*$-categories $\KK_{K_q}$ and $\AA_{K_q}$ whose objects are ${K_q}$-harmonic spaces. 
This observation will serve us as a notational convenience, since it allows us to write statements such as $T\in \KK_{K_q}$ if the domain and 
target spaces of the operator $T$ are understood.

\begin{remark}
\label{rmk:Roe_algebras}
The above definitions can be reinterpreted in the language of coarse geometry.  A $K_q$-harmonic space $\sH$ is a geometric $|\Irr(K_q)|$-Hilbert space, which is merely to say that it admits a representation of $C_0(\Irr(K_q)) = Z(C_0(\hat{K}_q))$.  The algebra $\AA_{K_q}(\sH)$ is basically the Roe algebra with respect to the indiscrete coarse structure on the discrete space $\Irr(K_q)$, see \cite{Roe:coarse_book} for more information. The fact that Roe algebras over dual spaces enter into our $K$-homology construction is no surprise: see the discussions in \cite{Roe:dual_control}, \cite{Luu}, \cite{Yuncken:coarse}.
\end{remark}


\subsection{Alternative characterizations}

We write $S\finitesubset\Irr({K_q})$ if $S$ is a finite set of ${K_q}$-types.  Recall that we write $p_S \defeq \sum_{\sigma\in S} p_\sigma$.  It is convenient to regard $(p_S)_{S\finitesubset \Irr({K_q})}$ as a net of projections, where the indexing set is ordered by inclusion of subsets. 

The following two lemmas are exact analogues of Lemmas 3.4 and 3.5 of \cite{Yuncken:PsiDOs}, with essentially the same proofs.  
 
\begin{lemma}
\label{lem:K_equivalent_defns}
Let $T \in \LL(\sH,\sH')$, where $\sH$, $\sH'$ are ${K_q}$-harmonic spaces. The following conditions are equivalent: 
\begin{bnum}
\item[a)] $T\in\KK_{K_q}(\sH,\sH')$,
\item[b)] $\lim_{S\finitesubset\Irr({K_q})} (1-p_S) T = 0 = \lim_{S\finitesubset\Irr({K_q})} T (1-p_S)$ in the norm topology.
\item[c)] $\lim_{S\finitesubset\Irr({K_q})} p_S T p_S = T$ in the norm topology.
\end{bnum}
\end{lemma}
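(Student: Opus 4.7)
The plan is to prove the three-way equivalence by first handling (b) $\Leftrightarrow$ (c) by direct algebraic manipulation, then (a) $\Leftrightarrow$ (b) through a standard density argument. The core observation is the identity
\[
T - p_S T p_S = (1-p_S)T + p_S T (1-p_S),
\]
valid for any finite $S \finitesubset \Irr(K_q)$ (here $p_S$ restricts to an operator on $\sH$ and $\sH'$ because each $p_\sigma$ does, by the definition of a $K_q$-harmonic space). This identity immediately yields (b) $\Rightarrow$ (c) via the triangle inequality and $\|p_S\| \le 1$, while the converse follows from the two estimates $\|(1-p_S)T\| = \|(1-p_S)(T - p_S T p_S)\| \le \|T - p_S T p_S\|$ and symmetrically $\|T(1-p_S)\| \le \|T - p_S T p_S\|$, since $(1-p_S)p_ST p_S = 0$.

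For (a) $\Rightarrow$ (b), first observe that (b) holds trivially for any $K_q$-harmonically finite $T$: taking $S_0$ to be the union of the finitely many $K_q$-types $\sigma, \tau$ for which $T_{\sigma\tau} \ne 0$, one has $T = p_{S_0} T p_{S_0}$ and hence $(1-p_S)T = 0 = T(1-p_S)$ for every $S \supseteq S_0$. One then propagates this to the norm closure by an $\varepsilon/2$-argument: if $\|T - T_n\| < \varepsilon/2$ with $T_n$ harmonically finite, and $S_0$ is chosen as above for $T_n$, then for every $S \supseteq S_0$ one has $\|(1-p_S)T\| = \|(1-p_S)(T - T_n)\| \le \|T-T_n\| < \varepsilon/2$, and similarly for $\|T(1-p_S)\|$. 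This shows the net in (b) converges to $0$.

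For (b) $\Rightarrow$ (a), one notes that $p_S T p_S$ is automatically $K_q$-harmonically finite, since its $(\sigma,\tau)$-component vanishes unless both $\sigma, \tau \in S$. Since (b) and (c) are already equivalent, (b) implies $p_S T p_S \to T$ in norm, exhibiting $T$ as a norm limit of harmonically finite operators, hence $T \in \KK_{K_q}(\sH,\sH')$.

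I expect no serious obstacles; the argument is formal once the commuting-projection hypothesis in Definition \ref{def:G-harmonic} is used to justify that $p_S$ makes sense as an endomorphism of $\sH$ (so that composing it with $T$ on either side is meaningful). The only mildly delicate point is keeping track that the convergence is that of a net indexed by the directed set of finite subsets of $\Irr(K_q)$ rather than a sequence, but once this is flagged the proof proceeds by the standard $\varepsilon$-bookkeeping exactly as in the classical analogue \cite[Lemma 3.4]{Yuncken:PsiDOs}.
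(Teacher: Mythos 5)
Your proof is correct, and it is essentially the same argument the paper has in mind: the paper itself gives no proof but states that the lemma follows exactly as in Lemma 3.4 of \cite{Yuncken:PsiDOs}, which is precisely the decomposition $T - p_S T p_S = (1-p_S)T + p_S T(1-p_S)$ together with the density argument you use.
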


\begin{lemma}
\label{lem:A_equivalent_defns}
Let  $A \in \LL(\sH,\sH')$, where $\sH$, $\sH'$ are ${K_q}$-harmonic spaces. The following conditions are equivalent: 
\begin{bnum}
\item[a)] $A\in \AA_{K_q}(\sH,\sH')$,
\item[b)] For any finite set $S'\finitesubset \Irr({K_q})$, 
$$
\lim_{S\finitesubset\Irr({K_q})} (1-p_S) A p_{S'} = 0=\lim_{S\finitesubset\Irr({K_q})} p_{S'} A (1-p_S)
$$ 
in the norm topology,
\item[c)] For any finite set $S\finitesubset \Irr({K_q})$, $A p_S \in \KK_{K_q}(\sH,\sH')$ and $p_S A \in \KK_{K_q}(\sH,\sH')$.
\item[d)] $A$ is a two-sided multiplier of $\KK_{K_q}$, that is, for any ${K_q}$-harmonic space $\sH''$, $TA\in\KK_{K_q}(\sH,\sH'')$ for all $T\in\KK_{K_q}(\sH',\sH'')$ and $AT\in\KK_{K_q}(\sH'',\sH')$ for all $T\in\KK_{K_q}(\sH'',\sH)$.
\end{bnum}
\end{lemma}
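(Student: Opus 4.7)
The plan is to prove the cycle $(a)\Rightarrow(b)\Leftrightarrow(c)\Leftrightarrow(d)\Rightarrow(a)$, with Lemma~\ref{lem:K_equivalent_defns} supplying the main technical input through its tail-control characterization of $\KK_{K_q}$.

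For $(a)\Rightarrow(b)$, I first verify it when $A$ itself is harmonically proper: column-finiteness forces $Ap_{S'}$ to have finite row-support, so $(1-p_S)Ap_{S'}$ vanishes for $S$ sufficiently large, and symmetrically for $p_{S'}A(1-p_S)$; a uniform $\varepsilon/2$-argument then extends the conclusion to any norm-limit $A\in\AA_{K_q}$. The equivalence $(b)\Leftrightarrow(c)$ is an unpacking of definitions: $(b)$ says $p_SAp_{S'}\to Ap_{S'}$ in norm with harmonically finite approximants, so $Ap_{S'}\in\KK_{K_q}$, and the converse is Lemma~\ref{lem:K_equivalent_defns}(b) applied to $Ap_{S'}\in\KK_{K_q}$ (and likewise on the other side). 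For $(c)\Leftrightarrow(d)$, the direction $(d)\Rightarrow(c)$ follows by taking $T=p_S\in\KK_{K_q}$, while for $(c)\Rightarrow(d)$ I approximate $T\in\KK_{K_q}$ by harmonically finite $T_n=p_{S_n}T_np_{S_n'}$, write $AT_n=(Ap_{S_n})T_n$, and check via Lemma~\ref{lem:K_equivalent_defns}(b) that the product of a $\KK_{K_q}$-element with a harmonically finite operator stays in $\KK_{K_q}$ (the left tail is controlled because $Ap_{S_n}\in\KK_{K_q}$, the right tail vanishes once $S\supseteq S_n'$); then pass to the norm limit.

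The substantive direction is $(c)\Rightarrow(a)$, the construction of an explicit harmonically proper approximation. Given $\varepsilon>0$, I enumerate $\Irr(K_q)=\{\sigma_1,\sigma_2,\dots\}$ and pick $\delta_k>0$ with $\sum_k\delta_k^2<\varepsilon^2/4$, then use $(c)$ and Lemma~\ref{lem:K_equivalent_defns}(b) to choose finite sets $S_{\sigma_k}^{\mathrm{col}},S_{\sigma_k}^{\mathrm{row}}\subseteq\Irr(K_q)$ with $\|(1-p_{S_{\sigma_k}^{\mathrm{col}}})Ap_{\sigma_k}\|<\delta_k$ and $\|p_{\sigma_k}A(1-p_{S_{\sigma_k}^{\mathrm{row}}})\|<\delta_k$. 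Define $B$ by its isotypic blocks $B_{\sigma\tau}=p_\sigma Ap_\tau$ precisely when both $\sigma\in S_\tau^{\mathrm{col}}$ and $\tau\in S_\sigma^{\mathrm{row}}$, with $B_{\sigma\tau}=0$ otherwise; then $B$ is row- and column-finite by construction. Decompose $A-B=X+Y$, where $X=\sum_\sigma p_\sigma A(1-p_{S_\sigma^{\mathrm{row}}})$ captures the row-truncation error and $Y$ captures the subsequent column-truncation of $B':=A-X$. Orthogonality of the ranges of the $p_\sigma$ yields $\|X\|^2\le\sum_\sigma\delta_\sigma^2$. The key observation for $Y$ is that the row-projection built into $B'$ commutes with $(1-p_{S_\tau^{\mathrm{col}}})$ (both being sums of isotypic projections), giving $\|Yp_\tau\|\le\|(1-p_{S_\tau^{\mathrm{col}}})Ap_\tau\|<\delta_\tau$; a Cauchy--Schwarz estimate over the orthogonal $p_\tau$-domains then produces $\|Y\|\le(\sum_\tau\delta_\tau^2)^{1/2}$, and combining gives $\|A-B\|<\varepsilon$. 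The main obstacle is precisely this last step: a priori the two-sided joint truncation defining $B$ could amplify the one-sided bounds coming from $(c)$, and it is only the commuting-projection domination of the column-tail of $B'$ by that of $A$ that salvages the estimate --- the standard row/column bookkeeping alluded to in Lemma~3.5 of \cite{Yuncken:PsiDOs}.
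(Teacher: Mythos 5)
Your proof is correct, and it follows the same strategy as the argument in Lemma~3.5 of \cite{Yuncken:PsiDOs} to which the paper defers (the paper itself gives no proof). In particular, the two-step decomposition $A-B=X+Y$ with $\ell^2$-summable tolerances $\delta_k$, which handles the potentially problematic interaction between the row- and column-truncations in the direction $(c)\Rightarrow(a)$, is exactly the ``row/column bookkeeping'' that makes this work.
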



\subsection{Basic properties}
\label{sec:subgroups}


If the ${K_q}$-isotypical components of a $K_q$-harmonic space $\sH$ are all finite dimensional, we shall say that $\sH$ {\em has finite ${K_q}$-multiplicities}.  In this case, the family $(p_\sigma)_{\sigma\in\Irr({K_q})}$ is a complete system of mutually orthogonal finite-rank projections on $\sH$, so the following result follows from Lemma \ref{lem:K_equivalent_defns}.

\begin{lemma}
\label{lem:compact_operators}
If either $\sH$ or $\sH'$ has finite ${K_q}$-multiplicities then $\KK_{K_q}(\sH,\sH') = \KK(\sH,\sH')$, the set of compact operators from $\sH$ to $\sH'$.
\end{lemma}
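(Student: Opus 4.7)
The plan is to verify the set equality in both directions using the characterization of $\KK_{K_q}$ from Lemma \ref{lem:K_equivalent_defns} together with the standard fact that compactness is equivalent to approximability by finite-rank operators. Throughout, I will assume $\sH$ has finite $K_q$-multiplicities; the case of $\sH'$ is symmetric (using adjoints).

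For the inclusion $\KK_{K_q}(\sH,\sH') \subseteq \KK(\sH,\sH')$, I would start with a $K_q$-harmonically finite operator $T = \sum_{(\sigma,\tau)\in F} p_\sigma T p_\tau$ with $F \finitesubset \Irr(K_q)^2$. Since each isotypical projection $p_\tau$ on $\sH$ has finite rank (by the finite-multiplicity hypothesis), each summand $p_\sigma T p_\tau$ is finite rank, so $T$ itself is finite rank and in particular compact. Taking norm closures and using that $\KK(\sH,\sH')$ is norm-closed gives the inclusion.

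For the reverse inclusion $\KK(\sH,\sH') \subseteq \KK_{K_q}(\sH,\sH')$, I use Lemma \ref{lem:K_equivalent_defns}(c): it suffices to prove that $p_S T p_S \to T$ in operator norm along the net $S \finitesubset \Irr(K_q)$ for every compact $T$. Since $(p_\sigma)_{\sigma\in\Irr(K_q)}$ is a complete system of orthogonal projections on $\sH$, the net $(p_S)$ converges strongly to $1_\sH$; by finite multiplicity each $p_S$ is moreover of finite rank. The classical fact then applies: if $A \to 0$ strongly along a uniformly bounded net and $T$ is compact, then $\|TA\| \to 0$, because $T$ maps bounded sets to relatively compact sets, on which strong convergence is uniform. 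Applying this to $A = 1-p_S$ gives $\|T(1-p_S)\| \to 0$; applying the same to $T^*$ (which is compact) and taking adjoints gives $\|(1-p_S)T\| \to 0$. A triangle inequality
\[
\|T - p_S T p_S\| \le \|(1-p_S) T\| + \|T(1-p_S)\|
\]
then yields $p_S T p_S \to T$ in norm, as required.

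There is no real obstacle here: the argument is a standard compact-operator approximation combined with the built-in characterization from Lemma \ref{lem:K_equivalent_defns}. The only mild care needed is to handle both $\sH$ and $\sH'$ having finite multiplicities uniformly, which one can do either by a symmetric adjoint argument or by noting that in the case $\sH'$ has finite multiplicities, the projections $p_S$ on $\sH'$ are finite rank and the same strong-to-norm argument gives $p_S T p_S \to T$ in norm.
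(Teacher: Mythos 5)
Your proof is correct and spells out exactly the argument the paper leaves implicit (the paper simply observes that $(p_\sigma)$ is a complete system of mutually orthogonal finite-rank projections and defers to Lemma~\ref{lem:K_equivalent_defns}). One small caution about the ``classical fact'' as you stated it: the reasoning you give --- $T$ maps the unit ball into a relatively compact set, on which strong convergence is uniform --- proves $\|AT\|\to 0$, not $\|TA\|\to 0$. The latter is false in general for non-self-adjoint nets; e.g.\ on $\ell^2$ take $A_n = (S^*)^n$ (powers of the backward shift, which tend to $0$ strongly but whose adjoints do not), and $T$ diagonal with eigenvalues $1/k$: then $\|TA_n\|=1$ for all $n$. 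In your application this is harmless because $A=1-p_S$ is self-adjoint, so $\|T(1-p_S)\|=\|(1-p_S)T^*\|\to 0$ by the correct statement applied to $T^*$. The rest of the argument --- finite rank of harmonically finite operators from the finite-multiplicity hypothesis on one side, the triangle inequality $\|T-p_STp_S\|\le\|(1-p_S)T\|+\|T(1-p_S)\|$ for the converse --- is exactly right.
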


If $H$ is a $ {K_q} $-representation and $ {K'_q} $ is a closed quantum subgroup of ${K_q}$, then $H$ is a $ {K'_q}$-representation by restriction. 
We thus have projections $p_{S'}$ on $H$ for every $S'\subset\Irr({K'_q})$. The following result is a straightforward consequence of considering successive isotypical decompositions. 

\begin{lemma}
\label{lem:commuting_projections}
Let ${K'_q} \subseteq {K_q}$ be a closed quantum subgroup. For any $S \subseteq \Irr({K_q})$, $S' \subseteq \Irr({K'_q})$, 
the projections $p_{S}$ and $p_{S'}$ commute.
In particular, if $H$ is a unitary ${K_q}$-representation space and $\tau\in\Irr({K'_q})$ then $p_\tau H$ is a ${K_q}$-harmonic space. 
\end{lemma}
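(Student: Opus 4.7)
The plan is to use nested isotypical decompositions and show that the $K_q$- and ${K'_q}$-isotypical projections are compatible. First, decompose $H$ as a $K_q$-representation, $H = \bigoplus_{\sigma \in \Irr(K_q)} H_\sigma$. Each $H_\sigma$ is a closed $K_q$-invariant subspace, and this invariance is preserved when the $K_q$-representation is restricted to the closed quantum subgroup ${K'_q}$ via the surjection $C(K_q) \onto C({K'_q})$: if $U \in M(C(K_q) \otimes \KK(H))$ is the corepresentation then $p_\sigma$ commutes with $U$, and applying the restriction map gives that $p_\sigma$ also commutes with $U' = (\pi \otimes \Id)(U)$. Each $H_\sigma$ therefore admits its own ${K'_q}$-isotypical decomposition $H_\sigma = \bigoplus_{\tau \in \Irr({K'_q})} (H_\sigma)_\tau$, and iterating the two layers yields an orthogonal decomposition $H = \bigoplus_{\sigma, \tau} (H_\sigma)_\tau$.

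The crux is to identify the ${K'_q}$-isotypical decomposition of the whole $H$ with this double decomposition, by showing $H_\tau = \bigoplus_\sigma (H_\sigma)_\tau$ for every $\tau \in \Irr({K'_q})$. One inclusion is immediate: the right-hand side is a closed ${K'_q}$-invariant subspace whose irreducible summands are all copies of $\tau$, hence it lies in $H_\tau$. Summing these inclusions over $\tau$ gives $\bigoplus_\tau \bigoplus_\sigma (H_\sigma)_\tau \subseteq \bigoplus_\tau H_\tau$; both sides equal $H$, forcing each inclusion to be an equality. It follows that $p_\tau$ preserves each $H_\sigma$, so $p_\sigma p_\tau = p_\tau p_\sigma$, both equal to the orthogonal projection onto $(H_\sigma)_\tau$. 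The commutation $p_S p_{S'} = p_{S'} p_S$ for arbitrary $S \subseteq \Irr(K_q)$ and $S' \subseteq \Irr({K'_q})$ then follows by writing $p_S$ and $p_{S'}$ as strongly convergent sums of pairwise orthogonal projections and checking that both composites project onto the closed span of $\{(H_\sigma)_\tau \mid \sigma \in S, \tau \in S'\}$.

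For the ``in particular'' claim, take $\tilde H = H$ itself and the orthogonal projection $P = p_\tau$ in Definition \ref{def:G-harmonic}. By the commutation just established, $p_\tau$ commutes with every $p_\sigma$, so $p_\tau H$ qualifies as a $K_q$-harmonic space, with $\sigma$-isotypical subspace $p_\sigma p_\tau H = (H_\sigma)_\tau$. There is no genuine obstacle in this proof; the only non-formal step is the verification that $K_q$-invariance of a subspace passes to ${K'_q}$-invariance under restriction of the corepresentation, which is immediate from the construction $U' = (\pi \otimes \Id)(U)$. The rest is essentially bookkeeping, transparent because the isotypical decomposition of any unitary corepresentation of a compact quantum group is formally identical to the classical one.
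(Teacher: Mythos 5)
Your proof is correct and is exactly the argument the paper has in mind: the paper simply asserts the lemma is ``a straightforward consequence of considering successive isotypical decompositions,'' and you spell out that argument — restricting the $K_q$-decomposition to $K'_q$, refining each $H_\sigma$, and identifying $H_\tau$ with $\bigoplus_\sigma(H_\sigma)_\tau$ via a pairwise-orthogonality argument.
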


\begin{lemma}
\label{lem:subgroups}
Let ${K'_q} \subseteq K_q $ be a closed quantum subgroup. Suppose that $\sH_1$  and $\sH_2$ are simultaneously ${K_q}$-harmonic and ${K'_q}$-harmonic 
spaces, in the sense that $\sH_i = P_iH_i$ for $i = 1,2$ where $H_i$ is a unitary $ {K_q} $-representation, and $P_i:H_i\to H_i$ is an orthogonal projection 
which commutes with both the ${K_q}$- and the ${K'_q}$-isotypical projections. 
Then $\KK_{K_q}(\sH_1,\sH_2) \subseteq \KK_{{K_q}'}(\sH_1,\sH_2)$.  
\end{lemma}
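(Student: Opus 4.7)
My plan is to reduce to operators in a finite ``spectral window'' and then exploit branching from $K_q$ to $K'_q$. Since $\KK_{K'_q}(\sH_1,\sH_2)$ is norm-closed in $\LL(\sH_1,\sH_2)$ and $\KK_{K_q}(\sH_1,\sH_2)$ is by definition the norm-closure of the $K_q$-harmonically finite operators, it will suffice to show that every $K_q$-harmonically finite operator $T \in \LL(\sH_1,\sH_2)$ already lies in $\KK_{K'_q}(\sH_1,\sH_2)$.

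Fix such a $T$ and choose a finite set $S \finitesubset \Irr(K_q)$ with $T = p_S T p_S$. The key step will be to exhibit a finite set $S' \finitesubset \Irr(K'_q)$ such that $T = p_{S'} T p_{S'}$ as well. To produce $S'$, observe that each $\sigma \in S$ is finite-dimensional as a $K_q$-representation (Section \ref{sec:dual_group}), so its restriction to $K'_q$ decomposes into finitely many irreducible $K'_q$-types. Taking $S'$ to be the union over $\sigma \in S$ of the $K'_q$-types that occur yields a finite subset of $\Irr(K'_q)$. Writing $\sH_i = P_i H_i$ for the ambient $K_q$-representation $H_i$, the $K_q$-invariant subspace $p_S H_i$ is then contained in the $K'_q$-invariant subspace $p_{S'} H_i$ by construction; applying $P_i$, which by hypothesis commutes with both $p_S$ and $p_{S'}$, one gets $p_S \sH_i \subseteq p_{S'} \sH_i$ and hence $p_{S'} p_S = p_S$ on each $\sH_i$.

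Combining this with Lemma \ref{lem:commuting_projections}, which ensures that $p_S$ and $p_{S'}$ commute, I will then compute
$$
T \;=\; p_S T p_S \;=\; p_{S'} p_S T p_S p_{S'} \;=\; p_{S'} T p_{S'},
$$
identifying $T$ as $K'_q$-harmonically finite and hence an element of $\KK_{K'_q}(\sH_1,\sH_2)$. This gives the desired inclusion after passing to norm closures.

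The argument is essentially bookkeeping and presents no serious obstacle. The one point that requires care is the containment $p_S \sH_i \subseteq p_{S'} \sH_i$, which uses crucially that each irreducible $K_q$-representation is finite-dimensional, so that only finitely many $K'_q$-types arise upon restriction; without this, $S'$ might fail to be finite and the reduction would break down.
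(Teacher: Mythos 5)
Your argument is exactly the one in the paper: reduce to a $K_q$-harmonically finite $T = p_S T p_S$, take $S'$ to be the finitely many $K'_q$-types occurring in the restrictions of the $\sigma \in S$, and conclude $T = p_{S'} T p_{S'}$. You have merely spelled out the bookkeeping (the containment $p_S \sH_i \subseteq p_{S'} \sH_i$ and the resulting identity $p_{S'}p_S = p_S$) that the paper leaves implicit.
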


\begin{proof}
Let $T\in\LL(\sH_1,\sH_2)$ be ${K_q}$-harmonically finite, so $p_S T p_S = T $ for some finite set $S\finitesubset\Irr({K_q})$. 
Only finitely many $ {K'_q} $-types occur in each $ \sigma \in S $, so $ T $ is also ${K'_q}$-harmonically finite. 
The claim follows. 
\end{proof}

\subsection{Commuting generating quantum subgroups}
\label{sec:commuting_generating}

\begin{definition}
\label{def:commuting_generating}
Let $K_{1,q}$, $K_{2,q}$ be closed quantum subgroups of a compact quantum group ${K_q}$, defined by projections $\pi_i:\Poly({K_q})\onto \Poly(K_{i,q})$ for $i=1,2$.  We shall say that $K_{1,q}$ and $K_{2,q}$ are {\em commuting and generating} if $(\pi_1\otimes \pi_2)\Delta = (\pi_1\otimes\pi_2)\Delta^\cop$ holds, 
and this map is an injection of $\Poly({K})$ into $\Poly({K}_1)\otimes\Poly({K}_2)$.
\end{definition}

The subgroups ${K}_{i,q}$ give rise to injections $\pi_i^*:C(\hat{K}_{1,q}) \to C(\hat{K_q})$. 
One can check that $K_{1,q}$ and $K_{2,q}$ are commuting and generating if and only if $\pi_1^*(C(\hat{K}_{1,q}))$ and $\pi_2^*(C(\hat{K}_{2,q}))$ commute and generate a subalgebra of $C(\hat{K}_q)$ which is separating for $\Poly(K_q)$.  The latter condition is often easier to check.

Consider the direct product $ K_{1,q} \times K_{2,q} $ defined by the tensor product $ \Poly(K_{1,q} \times K_{2,q}) = \Poly(K_{1,q}) \otimes \Poly(K_{2,q}) $. 
Note that $\Irr(K_{1,q} \times K_{2,q}) = \Irr(K_{1,q}) \times \Irr(K_{2,q})$, where a pair $(\sigma_1, \sigma_2) \in \Irr(K_{1,q}) \times \Irr(K_{2,q})$ is identified with the obvious corepresentation $\sigma_1\times\sigma_2 $ of $\Poly(K_{1,q}) \otimes \Poly(K_{2,q})$ on $ \KK(V^{\sigma_1} \otimes V^{\sigma_2})$.  Thanks to the embedding $(\pi_1\otimes \pi_2)\Delta:\Poly(K_q) \to \Poly(K_{1,q}) \otimes \Poly(K_{2,q})$, any corepresentation $\sigma$ of $K_q$ defines a corepresentation $\tilde\sigma$ of $K_{1,q} \times K_{2,q}$. If $\sigma$ is irreducible, an application of Schur's Lemma shows 
that $\tilde\sigma=\sigma_1 \times \sigma_2$ for some $\sigma_i\in\Irr(K_{i,q})$, and moreover $\sigma$ is uniquely determined by  $(\sigma_1,\sigma_2)$.  
We therefore have an injection $\Irr(K_q) \into \Irr(K_{1,q}) \times \Irr(K_{2,q})$.

\begin{lemma}
\label{lem:product_groups}
Let $K_{1,q}, K_{2,q}$ be commuting and generating closed quantum subgroups of a compact quantum group $K_q$.  Then for any $K_q$-representations $H$, $H'$ we have
\begin{equation}
\label{eq:K_for_products}
\KK_{K_q}(\sH,\sH') = \KK_{K_{1,q}}(\sH,\sH')\cap\KK_{K_{2,q}}(\sH,\sH'),
\end{equation}
and
\begin{equation}
\label{eq:A_for_products}
\AA_{K_q}(\sH,\sH') \supseteq \AA_{K_{1,q}}(\sH,\sH')\cap\AA_{K_{2,q}}(\sH,\sH').
\end{equation}
\end{lemma}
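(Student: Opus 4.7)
The plan is to translate everything into statements about the projections $p^{(0)}_\sigma$, $p^{(1)}_{\sigma_1}$, $p^{(2)}_{\sigma_2}$ associated with isotypical decompositions under $K_q$, $K_{1,q}$ and $K_{2,q}$, and then to exploit the injection $\Irr(K_q)\into\Irr(K_{1,q})\times\Irr(K_{2,q})$ described just before the lemma, together with the commutativity of the projections from Lemma \ref{lem:commuting_projections}.

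The key book-keeping fact is the following. If $\sigma\in\Irr(K_q)$ corresponds to the pair $(\sigma_1,\sigma_2)$, then upon restriction $V^\sigma$ is a multiple of $\sigma_i$ as a $K_{i,q}$-representation, so $p^{(0)}_\sigma \leq p^{(1)}_{\sigma_1}$ and $p^{(0)}_\sigma \leq p^{(2)}_{\sigma_2}$. More generally, for finite sets $S_i\finitesubset\Irr(K_{i,q})$, if we write $S = \pi_1^{-1}(S_1)\cap\pi_2^{-1}(S_2)$ (a \emph{finite} subset of $\Irr(K_q)$, since $\Irr(K_q)$ embeds in the product), then
\[
p^{(1)}_{S_1}\,p^{(2)}_{S_2} \;=\; p^{(0)}_{S}.
\]
Conversely, for any finite $S\finitesubset\Irr(K_q)$, the image sets $S_i=\pi_i(S)$ are finite and $p^{(0)}_S \leq p^{(1)}_{S_1}$, $p^{(0)}_S \leq p^{(2)}_{S_2}$.

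For the equality in (\ref{eq:K_for_products}), the inclusion $\KK_{K_q}\subseteq\KK_{K_{1,q}}\cap\KK_{K_{2,q}}$ is immediate: every $K_q$-harmonically finite operator $T = p^{(0)}_S T p^{(0)}_S$ satisfies $T = p^{(1)}_{S_1} T p^{(1)}_{S_1}$ for $S_1=\pi_1(S)$, and similarly for $K_{2,q}$. For the reverse inclusion, given $T\in \KK_{K_{1,q}}(\sH,\sH')\cap\KK_{K_{2,q}}(\sH,\sH')$ and $\varepsilon>0$, Lemma \ref{lem:K_equivalent_defns}(b) applied to $K_{1,q}$ and $K_{2,q}$ furnishes finite $S_i\finitesubset\Irr(K_{i,q})$ with $\|(1-p^{(i)}_{S_i})T\| < \varepsilon/2$. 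Using the commutativity of $p^{(1)}_{S_1}$ and $p^{(2)}_{S_2}$ and the decomposition
\[
1 - p^{(0)}_S \;=\; 1 - p^{(1)}_{S_1}p^{(2)}_{S_2} \;=\; (1-p^{(1)}_{S_1}) + p^{(1)}_{S_1}(1-p^{(2)}_{S_2}),
\]
we get $\|(1-p^{(0)}_S)T\|<\varepsilon$, and symmetrically on the right. Applying Lemma \ref{lem:K_equivalent_defns}(b) to $K_q$ yields $T\in\KK_{K_q}(\sH,\sH')$.

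For the inclusion (\ref{eq:A_for_products}), I would use Lemma \ref{lem:A_equivalent_defns}(c): it suffices to show that for $A\in\AA_{K_{1,q}}\cap\AA_{K_{2,q}}$ and any finite $S\finitesubset\Irr(K_q)$, both $Ap^{(0)}_S$ and $p^{(0)}_S A$ lie in $\KK_{K_q}$. Writing $S_i=\pi_i(S)$ and $p^{(0)}_S = p^{(1)}_{S_1}p^{(0)}_S$, the hypothesis $A\in\AA_{K_{1,q}}$ together with Lemma \ref{lem:A_equivalent_defns}(c) gives $Ap^{(1)}_{S_1}\in\KK_{K_{1,q}}$, hence $Ap^{(0)}_S = (Ap^{(1)}_{S_1})p^{(0)}_S\in\KK_{K_{1,q}}$; and symmetrically $Ap^{(0)}_S\in\KK_{K_{2,q}}$. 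Applying the equality (\ref{eq:K_for_products}) just proved gives $Ap^{(0)}_S\in\KK_{K_q}$, and the argument for $p^{(0)}_S A$ is the same. The main (mild) obstacle is making sure that the projection identity $p^{(1)}_{S_1}p^{(2)}_{S_2} = p^{(0)}_{\pi_1^{-1}(S_1)\cap\pi_2^{-1}(S_2)}$ really does follow from the embedding $\Irr(K_q)\into\Irr(K_{1,q})\times\Irr(K_{2,q})$ and Lemma \ref{lem:commuting_projections}; everything else is a routine application of Lemmas \ref{lem:K_equivalent_defns} and \ref{lem:A_equivalent_defns}.
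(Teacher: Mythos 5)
Your proof is correct and follows essentially the same route as the paper: both proofs rest on the identity $p_\sigma = p_{\sigma_1}p_{\sigma_2}$ for $\sigma\in\Irr(K_q)$ corresponding to $(\sigma_1,\sigma_2)$, combined with the injection $\Irr(K_q)\into\Irr(K_{1,q})\times\Irr(K_{2,q})$, and both derive the $\AA$-inclusion from the multiplier characterization in Lemma \ref{lem:A_equivalent_defns}. The paper's statement of the argument for \eqref{eq:K_for_products} is actually a bit terse --- it asserts that the sets of harmonically finite operators coincide and concludes the equality of their norm closures, while you spell out the needed step (via Lemma \ref{lem:K_equivalent_defns}(b)) showing that an element of the intersection of the two closures is itself a limit of $K_q$-harmonically finite operators; this is a point worth filling in, and your $\epsilon$-argument does it cleanly.
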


\begin{proof}
For $\sigma \in \Irr(K_q)$, let $\tilde\sigma = \sigma_1 \times \sigma_2$ be the associated representation of $K_{1,q}\times K_{2,q}$. The isotypical projection for $\sigma$ is given by $p_\sigma = p_{\sigma_1} p_{\sigma_2}$.  It follows that an operator $T: H \to H'$ is $K_q$-harmonically finite if and only if it is both $K_{1,q}$- and $K_{2,q}$-harmonically finite.  This proves Equation \eqref{eq:K_for_products}. Equation \eqref{eq:A_for_products} follows from the characterization of $\AA_{K_q}$ as multipliers of $\KK_{K_q}$, as in Lemma \ref{lem:A_equivalent_defns}.
\end{proof}

\subsection{Harmonic properties of tensor products}
\label{sec:tensor_products}

If $\sH_1 = P_1H_1$ and $\sH_2 = P_2H_2$ are ${K_q}$-harmonic spaces, following the notation of Definition \ref{def:G-harmonic}, then the tensor product $\sH_1\otimes\sH_2 = (P_1\otimes P_2)H_1\otimes H_2$ is naturally a ${K_q}$-harmonic space with respect to the tensor product representation of $K_q$ on $H_1\otimes H_2$.

\begin{lemma}
\label{lem:K_tensor_K}
Let ${K_q}$ be a compact quantum group. Then $\KK_{K_q} \otimes \KK_{K_q} \subseteq \KK_{K_q}$, in the sense that for any ${K_q}$-harmonic spaces  $\sH_1,\sH_2,\sH'_1,\sH'_2$ and any $T_1\in\KK_{K_q}(\sH_1,\sH'_1)$ and $T_2\in\KK_{K_q}(\sH_2,\sH'_2)$ we have $T_1 \otimes T_2 \in \KK_{K_q}(\sH_1\otimes\sH_2,\sH'_1\otimes\sH'_2)$.

Similarly, $\AA_{K_q}\otimes\KK_{K_q}\subseteq\AA_{K_q}$ and $\KK_{K_q}\otimes\AA_{K_q} \subseteq\AA_{K_q}$. 
\end{lemma}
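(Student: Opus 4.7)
The plan is to prove the three inclusions by first handling the easy case where both factors are harmonically finite, then bootstrapping to the full statements by norm continuity and the multiplier characterization in Lemma \ref{lem:A_equivalent_defns}.

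The key representation-theoretic input is that for any $\tau, \sigma_2 \in \Irr(K_q)$ the set of $\sigma_1 \in \Irr(K_q)$ with $\tau \leq \sigma_1 \otimes \sigma_2$ is finite, since such $\sigma_1$ must lie in the finite-dimensional representation $\tau \otimes \sigma_2^c$. Using this, I would first show $\KK_{K_q} \otimes \KK_{K_q} \subseteq \KK_{K_q}$ as follows. Suppose $T_i \in \LL(\sH_i,\sH_i')$ is $K_q$-harmonically finite, so $T_i = p_{S_i} T_i p_{S_i}$ for finite $S_i \finitesubset \Irr(K_q)$. The domain support of $T_1 \otimes T_2$ is then contained in $(\sH_1)_{S_1} \otimes (\sH_2)_{S_2}$; decomposing each $(\sH_i)_{S_i}$ into $K_q$-isotypical components and applying the finiteness just mentioned, this space is contained in $(\sH_1\otimes\sH_2)_S$ for a finite $S$, and likewise for the range. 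Hence $T_1 \otimes T_2$ is $K_q$-harmonically finite. The general case follows because the tensor product of bounded operators is jointly norm continuous on bounded sets and $\KK_{K_q}$ is norm closed.

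For $\AA_{K_q} \otimes \KK_{K_q} \subseteq \AA_{K_q}$, I would first fix $A \in \AA_{K_q}(\sH_1,\sH_1')$ and a $K_q$-harmonically finite $T \in \KK_{K_q}(\sH_2,\sH_2')$ with $T = p_{S_2} T p_{S_2}$, and verify condition (c) of Lemma \ref{lem:A_equivalent_defns}. For any finite $S \finitesubset \Irr(K_q)$, the range of $p_S(A \otimes T)$ lies in $(\sH_1' \otimes (\sH_2')_{S_2})_S$, which by the finiteness property above is contained in $(\sH_1')_{R'} \otimes (\sH_2')_{S_2}$ for some finite $R' \finitesubset \Irr(K_q)$ depending only on $S$ and $S_2$. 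Thus $p_S(A \otimes T) = p_S(p_{R'}A \otimes T)$, and since $A \in \AA_{K_q}$ implies $p_{R'}A \in \KK_{K_q}$, the already-established inclusion $\KK_{K_q} \otimes \KK_{K_q} \subseteq \KK_{K_q}$ gives $p_S(A \otimes T) \in \KK_{K_q}$. The argument for $(A \otimes T)p_S$ is symmetric, using $Ap_R$ in place of $p_{R'}A$. Thus $A \otimes T \in \AA_{K_q}$ when $T$ is harmonically finite, and the general case follows by norm approximation together with norm closedness of $\AA_{K_q}$. The inclusion $\KK_{K_q} \otimes \AA_{K_q} \subseteq \AA_{K_q}$ is obtained by the mirror image of this argument.

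The main technical point to be careful about is the mismatch between the external $K_q$-isotypical decomposition of $\sH_1\otimes\sH_2$ (via the coproduct of $K_q$) and the internal tensor product decomposition by pairs $(\sigma_1,\sigma_2)$; everything reduces to the combinatorial finiteness that $\tau \leq \sigma_1 \otimes \sigma_2$ constrains $\sigma_1$ to a finite set once $\tau$ and $\sigma_2$ are fixed. No new analysis beyond Lemmas \ref{lem:K_equivalent_defns} and \ref{lem:A_equivalent_defns} is needed.
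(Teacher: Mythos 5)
Your proposal is correct and follows essentially the same line of argument as the paper's proof: reduce to harmonically finite (resp.\ proper) operators, use the combinatorial finiteness $\tau \leq \sigma_1\otimes\sigma_2 \iff \sigma_1 \leq \tau\otimes\sigma_2^c$ to produce a finite projection that can be inserted, appeal to $\KK_{K_q}\otimes\KK_{K_q}\subseteq\KK_{K_q}$, and verify condition (c) of Lemma~\ref{lem:A_equivalent_defns}. The only cosmetic difference is that you factor through a finite projection on the codomain side, starting from $p_S(A\otimes T)$, whereas the paper factors on the domain side, starting from $(A\otimes T)p_\sigma$; the two are mirror images of each other, and the closing appeal to norm closedness (which the paper leaves implicit) is correctly supplied.
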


\begin{proof}
Suppose $T_1$ and $T_2$ are ${K_q}$-harmonically finite, so that for $i=1,2$ there are finite sets $S_1$, $S_2 \subset \Irr({K_q})$ such that $p_{S_i} T_i p_{S_i} = T_i$. If $S$ denotes the set of all irreducible ${K_q}$-types which occur in some $\sigma_1\otimes\sigma_2$ with $\sigma_i\in S_i$, then $T_1 \otimes T_2 = p_S (T_1 \otimes T_2) p_S$.  From this we deduce $\KK_{K_q} \otimes \KK_{K_q} \subseteq \KK_{K_q}$.

Now suppose $A$ is ${K_q}$-harmonically proper and $T$ is ${K_q}$-harmonically finite. 
Fix $S\subset\Irr({K_q})$ a finite set of ${K_q}$-types such that $p_STp_S = T$. Take $\sigma\in\Irr({K_q})$ arbitrary. Then
$$ 
(A \otimes T)p_\sigma = (A\otimes T) (1 \otimes p_S) p_\sigma.
$$  
Let $\tau\in S$.  For any $\tau'\in\Irr(K_q)$, we have $\sigma \leq \tau'\otimes\tau$ if and only if $ \tau'\leq \sigma \otimes \tau^c $.   This implies that there are only 
finitely many $\tau' \in \Irr({K_q})$ for which $(p_{\tau'} \otimes p_S)p_\sigma \neq 0$.  Letting $S'\finitesubset\Irr({K_q})$ denote the set of such $\tau'$, we have
$$
(A \otimes T)p_\sigma = (A \otimes T)(p_{S'} \otimes p_S) p_\sigma ,
$$
From the ${K_q}$-harmonic properness of $A$ and $T$ we can deduce that $(A \otimes T) p_\sigma \in \KK_{K_q}$.  A similar argument 
shows $p_\sigma (A \otimes T) \in \KK_{K_q}$ for all $\sigma\in\Irr({K_q})$, whence Lemma \ref{lem:A_equivalent_defns} shows that $A \otimes T \in \AA_{K_q}$. 
Clearly, a similar argument works for $ T \otimes A $. 
\end{proof}

Recall that $\triv_{K_q}$ denotes the trivial representation of ${K_q}$. Later we shall make much use of the following trick, which allows us to replace 
an arbitrary isotypical projection by the trivial one.

\begin{lemma}
\label{lem:reduction_to_triv}
Let $\sigma\in\Irr({K_q})$ and let $V$ be any finite dimensional representation of ${K_q}$ which contains $\sigma$ as a subrepresentation. There exist 
intertwiners $\iota: \CC \to V^c \otimes V $ and $ \bar{\iota}: V^c \otimes V \to \CC$ such that on any unitary ${K_q}$-representation $H$, the isotypical projection $p_\sigma$ factorizes as
$$
 \xymatrix{
  p_\sigma:  H \ar[r]^-{\Id_H \otimes \iota} 
   & H \otimes V^c \otimes V \ar[r]^-{p_{\triv_{K_q}}\otimes\Id_{V}} 
   & H \otimes V^c \otimes V \ar[r]^-{\Id_H\otimes \bar{\iota}} 
   & H.
 }
$$ 
\end{lemma}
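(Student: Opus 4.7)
The plan is to build $\iota$ and $\bar\iota$ out of the canonical coevaluation and evaluation for $V$, precomposed with the $K_q$-intertwining projection onto the $\sigma$-isotypical component of $V$, with an appropriate normalization. Let $V_\sigma \subseteq V$ be the $\sigma$-isotypical subspace, let $m = \dim \Hom_{K_q}(\sigma, V)$ denote its multiplicity, and let $e_\sigma^V : V \to V$ be the orthogonal projection onto $V_\sigma$, which is a $K_q$-intertwiner. Writing $\mathrm{coev}_V : \CC \to V^c \otimes V$ and $\mathrm{ev}_V : V^c \otimes V \to \CC$ for the standard coevaluation and evaluation maps, I would set
$$
\iota = \frac{\dim(\sigma)}{m}\,(\Id_{V^c} \otimes e_\sigma^V) \circ \mathrm{coev}_V, \qquad \bar\iota = \mathrm{ev}_V \circ (\Id_{V^c} \otimes e_\sigma^V),
$$
both of which are $K_q$-intertwiners. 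The composition $F = (\Id_H \otimes \bar\iota)(p_{\triv_{K_q}} \otimes \Id_V)(\Id_H \otimes \iota)$ is then an intertwiner $H \to H$, so by Schur's lemma it acts as a scalar $c_\tau$ on each isotypical component $H_\tau$, and it suffices to show $c_\tau = \delta_{\tau,\sigma}$.

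For $\tau \not\cong \sigma$, observe that $\iota$ factors through $(V_\sigma)^c \otimes V_\sigma \subseteq V^c \otimes V$, so for $\xi \in H_\tau$ the element $\xi \otimes \iota(1)$ sits in the $K_q$-invariant subspace $H_\tau \otimes (V_\sigma)^c \otimes V_\sigma$. Since $V_\sigma$ is $\sigma$-isotypical and $\tau \not\cong \sigma$, one has $(H_\tau \otimes (V_\sigma)^c)^{K_q} \cong \Hom_{K_q}(V_\sigma, H_\tau) = 0$, so $p_{\triv_{K_q}}$ kills the first two tensor factors and $F(\xi) = 0$.

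For $\tau \cong \sigma$, I would pick an irreducible subspace $W \leq H$ with unitary intertwiner $\phi: \sigma \to W$, and compute $F$ on $\xi = \phi(v)$ directly. Choosing orthogonal isometric embeddings $\psi_\alpha : \sigma \hookrightarrow V_\sigma$ indexing the $m$ copies of $\sigma$, the space $(W \otimes (V_\sigma)^c)^{K_q}$ has orthogonal basis $\omega_\alpha = \sum_k \phi(e_k) \otimes \psi_\alpha(e_k)^c$, each of squared norm $\dim(\sigma)$. Expanding $(p_{\triv_{K_q}} \otimes \Id_V)(\phi(v) \otimes \iota(1))$ in this basis and then applying $\bar\iota$ is a short bookkeeping calculation which yields $F(\phi(v)) = (m/\dim(\sigma)) \cdot (\dim(\sigma)/m) \cdot \phi(v) = \phi(v)$; the factor $m$ arises from summing over the $m$ copies of $\sigma$ in $V_\sigma$, and the scalar in $\iota$ is chosen precisely to cancel it. Hence $F = p_\sigma$ on all of $H$, as required. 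The only subtle point is this final normalization, which reduces to Schur orthogonality of invariant vectors and presents no real obstacle.
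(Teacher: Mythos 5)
Your overall strategy—build $\iota,\bar\iota$ from the duality morphisms, observe that $F=(\Id_H\otimes\bar\iota)(p_{\triv}\otimes\Id_V)(\Id_H\otimes\iota)$ is a natural endomorphism of the identity functor, and use Schur's lemma to reduce to computing a single scalar on each isotypical component—is sound and is essentially what the paper means by ``standard facts about the contragredient representation.'' The vanishing argument for $\tau\not\cong\sigma$ is also correct. However, the explicit bookkeeping in the case $\tau\cong\sigma$ contains a genuine error in the quantum setting: the vectors $\omega_\alpha=\sum_k\phi(e_k)\otimes\psi_\alpha(e_k)^c$ are \emph{not} $K_q$-invariant when $q\neq 1$. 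For a compact quantum group that is not of Kac type, the canonical invariant vector in $V^\sigma\otimes(V^\sigma)^c$ (in that order) is twisted by a positive ``$F$-matrix'' operator—in the present paper's notation, by the action of $K_{2\rho}$, exactly as in the Schur orthogonality relation \eqref{eq:Schur_orthogonality}—and its squared norm is the quantum dimension $\dim_q(\sigma)$, not $\dim(\sigma)$. Consequently the scalar you obtain is not $m/\dim(\sigma)$, and the normalization $\dim(\sigma)/m$ in your $\iota$ does not produce $c_\sigma=1$. (The untwisted coevaluation $\sum_i\bar e_i\otimes e_i$ is invariant in $V^c\otimes V$, but not with the factors in the opposite order.)

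This is fixable, because the lemma only asserts the \emph{existence} of suitable $\iota,\bar\iota$: one can argue abstractly (e.g.\ via the conjugate equations, which show that $(\Id\otimes R^*)(p_{\triv}\otimes\Id)(\Id\otimes R)$ equals $\|\bar R\|^{-2}\,\Id$ on the $\sigma$-isotypical part) that $c_\sigma\neq 0$ and then divide $\iota$ by $c_\sigma$. But your concluding claim that the normalization ``reduces to Schur orthogonality of invariant vectors and presents no real obstacle'' conceals exactly the place where the quantum twist enters, and as written your displayed scalar and basis of invariant vectors are wrong. One further stylistic difference worth noting: the paper reduces the general $V$ to the case $V=V^\sigma$ by factoring through a \emph{single} irreducible copy (an isometric inclusion $V^\sigma\hookrightarrow V$ and the corresponding projection, together with their conjugates), which sidesteps the multiplicity $m$ entirely; your route through the full isotypical component $V_\sigma$ is more cumbersome without buying anything.
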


\begin{proof}
If $V = V^\sigma$, this follows from standard facts about the contragredient representation. 
If $ V^\sigma$ is merely a subrepresentation of $V$, then we can use the inclusion map $ V^\sigma \rightarrow V $ 
and the projection $ V \rightarrow V^\sigma $, as well as the corresponding maps for the contragredient representation, to reduce 
to the previous situation. 

\end{proof}


\section{The lattice of $C^*$-ideals}
\label{sec:lattice_of_ideals}

We now specialize to the quantum group ${K_q}=\SU_q(n)$, although we note that the constructions and results of this section translate naturally to more 
general $q$-deformed compact semisimple Lie groups. We recall the family of quantum subgroups $K^\rootset_q \subseteq K_q$ defined in Section \ref{sec:lattice_of_subgroups}.

\begin{definition}
\label{def:fully-harmonic}
A {\em fully $K_q$-harmonic space} is 
a Hilbert space of the form $\sH = PH$, where $H$ is a unitary representation of $K_q$ and $P$ is an orthogonal projection which commutes with 
all isotypical projections of each $K^\rootset_q$, $\rootset\subseteq\Sigma$.
\end{definition}

Thus, a fully $K_q$-harmonic space is simultaneously a $K_q^\rootset$-harmonic space for every $\rootset\subseteq\Sigma$.  Between fully $K_q$-harmonic spaces $\sH$ and $\sH'$, we have the spaces $\KK_{K^\rootset_q}(\sH,\sH')$ and $\AA_{K^\rootset_q}(\sH,\sH')$ for every $\rootset\subseteq\Sigma$.
To avoid unwieldy subscripts, we shall write $\KK_{\rootset}$ and $\AA_{\rootset}$ for $\KK_{K^\rootset_q}$ and $\AA_{K^\rootset_q}$ 
in the sequel.  When $\rootset=\{i\}$ is a singleton, we shall write $\KK_i$ and $\AA_i$.  

The only examples of fully $K_q$-harmonic spaces we shall actually need are the following.

\begin{example}
\label{ex:fully_harmonic}
\begin{bnum}
\item[a)] Any unitary representation of $K_q$ is a fully $K_q$-harmonic space.
\item[b)] By Lemma \ref{lem:commuting_projections}, any weight space of a $K_q$-representation is a fully $K_q$-harmonic space.
\item[c)] In particular, the $L^2$-section space $L^2(\sE_\mu)$ of a homogeneous line bundle over the quantized flag manifold of $K_q$ is a fully $K_q$-harmonic space.
Note that the harmonic structure here comes from the {\em right} regular corepresentation.  
\end{bnum}
\end{example}

Lemma \ref{lem:subgroups} shows that we have a whole lattice of $C^*$-categories $(\KK_\rootset)_{\rootset\subseteq\Sigma}$ for the fully $K_q$-harmonic spaces. Note that the ordering is reversed: $\KK_{\rootset_1} \subseteq \KK_{\rootset_2}$ if $\rootset_1 \supseteq \rootset_2$.

We point out, however, that this is typically not a lattice of $C^*$-ideals, that is, given sets $\rootset_1 \supset \rootset_2$ of simple roots and a fully $K_q$-harmonic space $\sH$ we typically do not have $\KK_{\rootset_1}(\sH) \triangleleft \KK_{\rootset_2}(\sH)$.
To obtain a lattice of ideals, we must reduce $\KK_\rootset$ slightly, by restricting the class of operators we are working with.

\begin{definition}
\label{def:J}
For fully $K_q$-harmonic spaces $\sH$, $\sH'$, we define 
$$
\AA(\sH,\sH') = \bigcap_{\rootset\subseteq\Sigma} \AA_\rootset(\sH,\sH').
$$
We also define
$$
\JJ_\rootset(\sH,\sH') \defeq \KK_\rootset(\sH,\sH') \cap \AA(\sH,\sH')
$$ 
for each $\rootset\subseteq\Sigma$.
\end{definition}
In other words, $\AA$ is the simultaneous multiplier category of all the $C^*$-categories $\KK_{\rootset}$.  
Again, we view the spaces defined in definition \ref{def:J} as the morphism sets of $C^*$-categories $\AA$ and $\JJ_\rootset$ whose objects are fully $K_q$-harmonic spaces. 
It is immediate from Lemma \ref{lem:subgroups} that the $\JJ_\rootset$ form a lattice of ideals, as we record in the following lemma. 

\begin{lemma}
\label{lem:lattice_of_ideals}
If $ \rootset_1 \supseteq \rootset_2$ then $\JJ_{\rootset_1} \ideal \JJ_{\rootset_2} $. 
\end{lemma}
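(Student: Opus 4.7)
The plan is to decompose the lemma into two claims: the containment $\JJ_{\rootset_1} \subseteq \JJ_{\rootset_2}$, and the two-sided ideal property.

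For the containment, I would simply observe that $\rootset_1 \supseteq \rootset_2$ implies $K_q^{\rootset_2} \subseteq K_q^{\rootset_1}$ as closed quantum subgroups, so Lemma \ref{lem:subgroups} gives $\KK_{\rootset_1}(\sH,\sH') \subseteq \KK_{\rootset_2}(\sH,\sH')$ on any pair of fully $K_q$-harmonic spaces. Since $\AA$ does not depend on $\rootset$, intersecting with it preserves the inclusion, yielding $\JJ_{\rootset_1} \subseteq \JJ_{\rootset_2}$.

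For the ideal property, fix $T \in \JJ_{\rootset_1}(\sH',\sH'')$ and $S \in \JJ_{\rootset_2}(\sH,\sH')$; I must show that $TS \in \JJ_{\rootset_1}(\sH,\sH'')$, and symmetrically on the other side for compositions in the opposite order. First, $TS \in \AA$: each $\AA_\rootset$ is closed under composition (a composition of harmonically proper operators is row- and column-finite with respect to any of the relevant isotypical decompositions, and norm closure preserves this), and $\AA = \bigcap_\rootset \AA_\rootset$ inherits this. Second, $TS \in \KK_{\rootset_1}$: by definition $T \in \KK_{\rootset_1}$, and $S \in \AA \subseteq \AA_{\rootset_1}$, so by the multiplier characterization of $\AA_{\rootset_1}$ in Lemma \ref{lem:A_equivalent_defns}(d), the product $TS$ lies in $\KK_{\rootset_1}$. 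Hence $TS \in \KK_{\rootset_1} \cap \AA = \JJ_{\rootset_1}$. A completely symmetric argument, with $T$ acting as a multiplier and $S \in \KK_{\rootset_1}$ after using the containment established above, handles compositions $UT$ for $U \in \JJ_{\rootset_2}$ on the appropriate harmonic spaces.

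There is no real obstacle here; the work has all been packaged into the preceding lemmas. The only point requiring a moment's care is making sure that membership in $\AA$ really does allow one to multiply into $\KK_{\rootset_1}$, which is where Lemma \ref{lem:A_equivalent_defns}(d) is doing the heavy lifting, and that the composition of two elements of $\AA$ remains in every $\AA_\rootset$ simultaneously — both verifications are immediate from the matrix-theoretic descriptions of harmonically finite and harmonically proper operators.
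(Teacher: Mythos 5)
Your argument is correct and is precisely the elaboration the paper has in mind when it calls the lemma ``immediate from Lemma~\ref{lem:subgroups}'': the containment comes from $\KK_{\rootset_1}\subseteq\KK_{\rootset_2}$ together with the fact that $\AA$ is $\rootset$-independent, and the ideal property comes from $\AA$ being closed under composition and being a multiplier of each $\KK_\rootset$ (Lemma~\ref{lem:A_equivalent_defns}(d)). One sentence is garbled, though: for the composition $UT$ with $U\in\JJ_{\rootset_2}$, $T\in\JJ_{\rootset_1}$, it is $U$ (not $T$) that plays the role of the multiplier in $\AA_{\rootset_1}$, and $T$ that sits in $\KK_{\rootset_1}$; the reference to ``$S\in\KK_{\rootset_1}$ after using the containment'' does not apply, since the containment you proved runs $\JJ_{\rootset_1}\subseteq\JJ_{\rootset_2}$ and gives no membership in $\KK_{\rootset_1}$ for an element of $\JJ_{\rootset_2}$. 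The fix is to run the same argument as your first case verbatim with the factors in the opposite order.
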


In particular, we have $\JJ_{\rootset_1\cup\rootset_2} \subseteq \JJ_{\rootset_1} \cap \JJ_{\rootset_2}$ for any $\rootset_1,\rootset_2 \subseteq \Sigma$. In fact, it will be shown later that $\JJ_{\rootset_1\cup\rootset_2} = \JJ_{\rootset_1} \cap \JJ_{\rootset_2}$, see Theorem \ref{thm:lattice_of_ideals} and its 
proof in Section \ref{sec:lattice_of_ideals_II}.

\begin{lemma}
\label{lem:J_compact_operators}
Let $\sH$, $\sH'$ be fully $K_q$-harmonic spaces. Then $ \JJ_\Sigma(\sH,\sH')= \KK_\Sigma(\sH,\sH')$. In particular, if either $\sH$ or $\sH'$ has 
finite $K_q$-multiplicities then $ \JJ_\Sigma(\sH,\sH')=\KK(\sH,\sH')$, the set of compact operators from $\sH$ to $\sH'$.
\end{lemma}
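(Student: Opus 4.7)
The plan is to unwind the definitions and reduce the statement to the two earlier structural lemmas, namely Lemma \ref{lem:compact_operators} and Lemma \ref{lem:subgroups}. Since $\JJ_\Sigma = \KK_\Sigma \cap \AA$ by definition, and since $\AA = \bigcap_{\rootset\subseteq\Sigma} \AA_\rootset$, the first equality $\JJ_\Sigma(\sH,\sH') = \KK_\Sigma(\sH,\sH')$ amounts to the inclusion
$$
\KK_\Sigma(\sH,\sH') \subseteq \AA_\rootset(\sH,\sH') \qquad \text{for every } \rootset \subseteq \Sigma.
$$
So all the work is in establishing this one inclusion; there is nothing to do in the other direction.

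To get that inclusion I would proceed in two small steps. First, I would note that $K_q^\rootset \subseteq K_q = K_q^\Sigma$, so that Lemma \ref{lem:subgroups} immediately gives $\KK_\Sigma(\sH,\sH') \subseteq \KK_\rootset(\sH,\sH')$: any operator which is $K_q$-harmonically finite is automatically $K_q^\rootset$-harmonically finite, because each $K_q$-isotypical component decomposes into finitely many $K_q^\rootset$-isotypicals. Second, I would observe that $\KK_\rootset \subseteq \AA_\rootset$ holds tautologically, since a $K_q^\rootset$-harmonically finite operator $T$ with $p_ST p_S = T$ for some finite $S\finitesubset\Irr(K_q^\rootset)$ is in particular row- and column-finite, i.e. $K_q^\rootset$-harmonically proper. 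Composing these two inclusions yields $\KK_\Sigma \subseteq \AA_\rootset$ for all $\rootset$, hence $\KK_\Sigma \subseteq \AA$, and therefore $\JJ_\Sigma = \KK_\Sigma$.

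For the second assertion, I would simply apply Lemma \ref{lem:compact_operators} to the case $K_q^\Sigma = K_q$: if either $\sH$ or $\sH'$ has finite $K_q$-multiplicities, then $\KK_\Sigma(\sH,\sH') = \KK_{K_q}(\sH,\sH') = \KK(\sH,\sH')$, and combined with the first part this gives $\JJ_\Sigma(\sH,\sH') = \KK(\sH,\sH')$.

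I do not expect any real obstacle here: the statement is a formal consequence of the definitions and of lemmas already in place. The only thing to be careful about is bookkeeping the two different directions in which the subgroup inclusion $K_q^\rootset \subseteq K_q^\Sigma$ acts on the categories $\KK_\bullet$ and $\AA_\bullet$, and remembering that harmonic finiteness is a strictly stronger condition than harmonic properness so that the inclusion $\KK_\rootset \subseteq \AA_\rootset$ is trivial rather than requiring further analysis.
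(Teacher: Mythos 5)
Your proposal is correct and follows exactly the same route as the paper's proof: apply Lemma \ref{lem:subgroups} to get $\KK_\Sigma \subseteq \KK_\rootset$ for all $\rootset$, use the tautological inclusion $\KK_\rootset \subseteq \AA_\rootset$ to conclude $\KK_\Sigma \subseteq \AA$, and then invoke Lemma \ref{lem:compact_operators} for the second statement.
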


\begin{proof}
By Lemma \ref{lem:subgroups}, we have 
$\KK_\Sigma \subseteq \bigcap_{\rootset\subseteq\Sigma} \KK_\rootset \subseteq \bigcap_{\rootset\subseteq\Sigma} \AA_\rootset = \AA$. 
This proves the first statement.  The second follows using Lemma \ref{lem:compact_operators}.
\end{proof}

The spaces of interest to us will have finite $K_q$-multiplicities, however they will usually not have finite $K^\rootset_q$-multiplicities 
for $\rootset\neq\Sigma$.


\section{Longitudinal pseudodifferential operators: statement of results}
\label{sec:statements}

In this section, we give statements of the necessary results concerning the lattice of $C^*$-categories $(\JJ_\rootset)_{\rootset\subseteq\Sigma}$ and the ``pseudodifferential'' operators $\ph(E_i)$ and $\ph(F_i)$. All of these results will be discussed at the generality of $\SU_q(n)$. 
The proofs of the theorems below will be deferred until Sections \ref{sec:lattice_of_ideals_II} and \ref{sec:PsiDOs2}. The reader willing to accept 
their veracity may safely skip forward to Section \ref{sec:SLq} after this section.

\medskip

We begin with general results on the lattice of ideals $(\JJ_\rootset)_{\rootset\subseteq\Sigma}$.

\begin{theorem}
\label{thm:lattice_of_ideals}
Let $K_q = \SU_q(n)$ for $n\geq2$ and let $\sH$, $\sH'$ be fully $K_q$-harmonic spaces.
\begin{bnum}
\item[a)] $\AA(\sH,\sH') = \bigcap_{i\in\Sigma} \AA_i(\sH,\sH')$.
\item[b)] For any $\rootset\subseteq\Sigma$ and any $\sigma\in\Irr(K^\rootset_q)$, $p_\sigma \in \JJ_\rootset(\sH)$.
\item[c)] For any $\rootset, \rootset' \subseteq \Sigma$, $\JJ_{\rootset}(\sH,\sH') \cap \JJ_{\rootset'}(\sH,\sH') = \JJ_{\rootset \cup \rootset'}(\sH,\sH')$.
\item[d)] If either $\sH$ or $\sH'$ has finite $K_q$-multiplicities then $\JJ_\Sigma(\sH,\sH') = \KK(\sH,\sH')$, the compact operators from $\sH$ to $\sH'$, and hence $\bigcap_{i\in\Sigma} \JJ_i(\sH,\sH') = \KK(\sH,\sH')$.
\end{bnum}
\end{theorem}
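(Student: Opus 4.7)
The plan is to handle parts (a)--(d) in sequence, using (a) and (b) as the main tools for (c) and (d).

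For (a), the forward inclusion $\AA \subseteq \bigcap_{i\in\Sigma}\AA_i$ is immediate from the definition. For the reverse, I would induct on $|\rootset|$, reducing to the step $\AA_{\rootset\setminus\{i\}} \cap \AA_i \subseteq \AA_\rootset$ for any $i \in \rootset$. The subtlety is that $K_q^{\rootset\setminus\{i\}}$ and $K_q^i$ share the maximal torus and so are not commuting-and-generating in the sense of Definition \ref{def:commuting_generating}. I would handle this by passing to the product decomposition $K_q^\rho \cong S_q^\rho \times T^{\rho\perp}$ and slicing by $T$-weight spaces, so that a version of the multiplier direction of Lemma \ref{lem:product_groups} applies on each $T$-weight block and can then be reassembled.

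For (b), $p_\sigma \in \KK_\rootset$ is immediate since $p_\sigma$ is $K_q^\rootset$-harmonically finite by definition. For $p_\sigma \in \AA$, part (a) reduces us to showing $p_\sigma \in \AA_i$ for each simple root $i$. When $i \in \rootset$ we have $K_q^i \subseteq K_q^\rootset$, so $p_\sigma$ is a $K_q^i$-intertwiner and therefore commutes with every $K_q^i$-isotypical projection, giving immediate $K_q^i$-harmonic finiteness of $p_\sigma p_\tau$ for each $\tau\in\Irr(K_q^i)$. When $i \notin \rootset$, I would apply Lemma \ref{lem:reduction_to_triv} in the form for $K_q^\rootset$ to factor $p_\sigma$ through $p_{\triv_{K_q^\rootset}}\otimes \Id_V$ with $K_q$-intertwiners in and out; combined with the tensor product stability of $\AA_i$ from Lemma \ref{lem:K_tensor_K}, the problem reduces to $p_{\triv_{K_q^\rootset}} \in \AA_i$, which I would verify by a direct analysis of $K_q^\rootset$-invariant vectors within a fixed $K_q^i$-isotypical subspace using the weight decomposition shared by both subgroups.

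For (c), the inclusion $\JJ_{\rootset\cup\rootset'} \subseteq \JJ_\rootset \cap \JJ_{\rootset'}$ is Lemma \ref{lem:lattice_of_ideals}. For the reverse, take $A \in \JJ_\rootset \cap \JJ_{\rootset'}$; since $A\in\AA$ already, it suffices to show $A \in \KK_{\rootset\cup\rootset'}$. Given $\epsilon>0$, use $A\in\KK_\rootset$ to choose a finite $S_\rootset \finitesubset \Irr(K_q^\rootset)$ with $\|A - p_{S_\rootset}Ap_{S_\rootset}\|<\epsilon/2$. Since $p_{S_\rootset}\in\AA$ by (b) and $\KK_{\rootset'}$ is an ideal in $\AA_{\rootset'}$, the truncation $B := p_{S_\rootset}Ap_{S_\rootset}$ lies in $\KK_{\rootset'}$, so a further approximation gives $\|B - p_{S_{\rootset'}}Bp_{S_{\rootset'}}\| < \epsilon/2$ for some finite $S_{\rootset'}$. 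Decomposing the result as a finite sum $\sum_{\sigma\in S_\rootset,\sigma'\in S_{\rootset'}} p_{\sigma'}p_\sigma Ap_\sigma p_{\sigma'}$, the main obstacle is to show that each such summand lies in $\KK_{\rootset\cup\rootset'}$. This requires a Gelfand--Tsetlin-style branching analysis: only finitely many $K_q^{\rootset\cup\rootset'}$-types can simultaneously contain a given $K_q^\rootset$-subrepresentation $\sigma$ and a given $K_q^{\rootset'}$-subrepresentation $\sigma'$ in their restrictions, the interlacing inequalities on highest weights pinning $\tau$ down to a finite set; combined with $A\in\AA_{\rootset\cup\rootset'}$, this yields the required $K_q^{\rootset\cup\rootset'}$-harmonic finiteness up to norm.

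For (d), the identity $\JJ_\Sigma(\sH,\sH') = \KK(\sH,\sH')$ under the finite $K_q$-multiplicity assumption is Lemma \ref{lem:J_compact_operators}. Iterating (c) across the simple roots then gives $\bigcap_{i\in\Sigma}\JJ_i = \JJ_{\bigcup_i\{i\}} = \JJ_\Sigma$. The main technical obstacle throughout the proof is the joint branching control needed in (c), which reconciles finite support with respect to two overlapping subgroup decompositions into finite support for their join; part (a) encodes the same phenomenon at the level of multipliers rather than compacts, and its own obstacle is the shared-torus issue that prevents a direct appeal to Lemma \ref{lem:product_groups}.
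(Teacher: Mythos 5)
Your treatment of part (d) is fine, but the overall proposal has a genuine gap that runs through parts (a)--(c): you are implicitly relying on a \emph{finiteness} of branching that does not hold, whereas the paper's argument rests on the weaker but correct property of \emph{essential orthotypicality} (Definition~\ref{def:essential_orthotypicality}, Proposition~\ref{prop:SUq_orthotypicality}), which is the substantive analytic content of Section~\ref{sec:essorth}.

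The clearest symptom is your step in part (c): you assert that ``only finitely many $K_q^{\rootset\cup\rootset'}$-types can simultaneously contain a given $K_q^\rootset$-subrepresentation $\sigma$ and a given $K_q^{\rootset'}$-subrepresentation $\sigma'$.'' This is false. Take $n=3$, $\rootset=\{1\}$, $\rootset'=\{2\}$, and $\sigma$, $\sigma'$ both trivial. The upper Gelfand--Tsetlin interlacing shows that the trivial $K_q^{\{1\}}$-type occurs in $V^{(a_1,a_2,a_3)}$ whenever $a_1\geq 0\geq a_2\geq 0\geq a_3$, forcing $a_2=0$ but leaving the other two parameters unconstrained; a symmetric analysis applies to $K_q^{\{2\}}$. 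So there is a one-parameter family (in fact infinitely many) of $K_q$-types containing both trivial types. The correct statement, which is exactly what Proposition~\ref{prop:SUq_orthotypicality} establishes, is not that the joint occurrence is \emph{finite} but that the operator $p_\sigma p_{\sigma'}$ is \emph{compact} on any unitary $K_q^{\rootset\cup\rootset'}$-representation, because the overlap of the two isotypical subspaces decays in norm as the ambient type grows. This asymptotic orthogonality is established by a careful computation of change-of-basis coefficients between Gelfand--Tsetlin bases, and it is precisely what lets the paper show $\KK_{\rootset_1}\cap\KK_{\rootset_2}=\KK_{\rootset_1\cup\rootset_2}$ (Lemma~\ref{lem:K_intesections}) and, via the multiplier characterization of $\AA$ (Lemma~\ref{lem:A_equivalent_defns}\,d)), also $\AA_{\rootset_1}\cap\AA_{\rootset_2}\subseteq\AA_{\rootset_1\cup\rootset_2}$, from which parts (a) and (c) follow directly.

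The same gap reappears, more hidden, in parts (a) and (b). For (a) you correctly observe that $K_q^{\rootset\setminus\{i\}}$ and $K_q^i$ are not commuting-and-generating and so Lemma~\ref{lem:product_groups} cannot be applied directly; but passing to $S^\rootset_q\times T^{\rootset\perp}$ and slicing by $T$-weights does not fix this, because the difficulty is the interaction of the two \emph{nontrivial} simple-root subgroups inside the same $\SU_q(m)$-block, not the torus overlap. In (b), for $i\notin\rootset$, the ``direct analysis of $K_q^\rootset$-invariant vectors within a fixed $K_q^i$-isotypical subspace'' that you defer is again exactly the content of essential orthotypicality; in the paper this appears as Corollary~\ref{cor:projections_in_A}. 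To make any of (a), (b), (c) rigorous you need to either import Proposition~\ref{prop:SUq_orthotypicality} as a black box, or reprove the decay estimate it encodes, rather than appealing to a nonexistent branching finiteness.
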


\medskip

Next we consider longitudinal pseudodifferential operators along various fibrations of the quantum flag manifold. 
The guiding philosophy is that for $\mu, \nu \in \bP$, $\AA_\rootset(L^2(\sE_\mu), L^2(\sE_{\nu}))$ should be thought of as containing 
the order zero longitudinal pseudodifferential operators along the leaves of the fibration $K_q/T \onto K_q/K_q^\rootset$, 
while $\KK_\rootset(L^2(\sE_\mu), L^2(\sE_{\nu}))$ should be thought of as the ideal of negative order longitudinal pseudodifferential operators. 
For instance, in the case $q = 1$ the space $\AA_\rootset(L^2(\sE_\mu), L^2(\sE_{\nu}))$ contains all order zero longitudinal pseudodifferential operators along the fibration, although it also contains many other operators, such as translations by the group action. Nevertheless, the reader should keep the analogy in mind when interpreting the next theorem.

Let $\mu\in\bP$ and $i\in\Sigma$. The unbounded operator 
$$
D_i = \begin{pmatrix} 0 & F_i \\ E_i & 0 \end{pmatrix} \qquad \text{on }L^2(\sE_\mu \oplus \sE_{\mu+\alpha_i})
$$
is an essentially self-adjoint operator with domain $\Poly(\sE_\mu \oplus \sE_{\mu+\alpha_i})$.
It is to be thought of as a longitudinal differential operator along the leaves of the fibration $\sX_q \twoheadrightarrow K_q / K_{i,q}$. 
Notice that these operators are essentially families of Dirac operators of the type considered by Dabrowski-Sitarz in \cite{DS:Podles}, over the base 
space $ K_q/K_{i,q} $. 
We denote the operator phase of $ D_i $ by
$$
\ph(D_i) = \begin{pmatrix} 0 & \ph(F_i) \\ \ph(E_i) & 0 \end{pmatrix}, 
$$ 
which should be thought of as a longitudinal pseudodifferential operator of order $0$.  

Recall that if $f\in C(\sE_\nu)$ for some $\nu\in\bP$, then the left and right multiplication actions $\Mult{f}$, $\RMult{f}$ define operators 
in $ \LL(L^2(\sE_\mu), L^2(\sE_{\mu+\nu}))$.

\begin{theorem}
\label{thm:PsiDOs}
Let $K_q = \SU_q(n)$ for $n\geq2$.  Let $\mu\in\bP$, $i\in\Sigma$ and $f\in \Poly(\sE_\nu)$ for some $\nu\in\bP$. Then the following hold. 
\begin{bnum}
\item[a)] $\Mult{f}$ and $\RMult{f}$ are in $\AA(L^2(\sE_\mu), L^2(\sE_{\mu+\nu}))$.
\item[b)] $\ph(D_i) \in \AA( L^2(\sE_\mu \oplus \sE_{\mu+\alpha_i}))$  
\item[c)] For any $\psi\in C_0(\RR)$, we have $\psi(D_i) \in \JJ_i(L^2(\sE_\mu \oplus \sE_{\mu+\alpha_i}))$, or equivalently,  $D_i$ has resolvent in $\JJ_i(L^2(\sE_\mu \oplus \sE_{\mu+\alpha_i}))$.
\item[d)] The following diagram 
$$
\xymatrix{
L^2(\sE_\mu\oplus\sE_{\mu+\alpha_i}) \ar[r]^{\ph(D_i)} \ar[d]_{\Mult{f}} 
      & L^2(\sE_\mu\oplus\sE_{\mu+\alpha_i}) \ar[d]^{\Mult{f}} \\
  L^2(\sE_{\mu+\nu} \oplus \sE_{\mu+\nu+\alpha_i}) \ar[r]_{\ph(D_i)} 
      & L^2(\sE_{\mu+\nu} \oplus \sE_{\mu+\nu+\alpha_i}) 
 }
$$
commutes up to an element of $ \JJ_i(L^2(\sE_\mu \oplus \sE_{\mu+\alpha_i}), L^2(\sE_{\mu+\nu} \oplus \sE_{\mu+\nu+\alpha_i}))$. 
\end{bnum}
\end{theorem}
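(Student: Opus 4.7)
The four parts jointly formalize the analogy between the $C^*$-categories $\AA_\rootset$ and $\JJ_\rootset$ and the algebras of order-zero and negative-order longitudinal pseudodifferential operators along the fibration $\sX_q \onto K_q/K_q^\rootset$, with $\ph(D_i)$ playing the role of a principal symbol for a family of Dirac operators on the $\SU_q(2)$-fibres. I expect part (d) to be the main obstacle. For part (a), the key observation is that any $f \in \Poly(\sE_\nu) \subset \Poly(K_q)$ is a finite sum of matrix coefficients of finite-dimensional $\Uq(\slx_n)$-representations. By the product formula \eqref{eq:product_of_coeffs}, multiplication by such an $f$ maps the $\sigma$-isotypical component of any fully $K_q$-harmonic space into a finite sum of isotypical components, determined by the Clebsch-Gordan decomposition of $V^\sigma$ tensored with the fixed finite-dimensional representation carrying $f$. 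Hence $\Mult{f}$ is $K_q$-harmonically proper, and the same argument applies to the restriction to each quantum subgroup $K_q^\rootset$, whose type content in a fixed finite-dimensional $K_q$-representation is again finite. By Lemma \ref{lem:A_equivalent_defns} this gives $\Mult{f} \in \bigcap_\rootset \AA_\rootset = \AA$, and identically for $\RMult{f}$.

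For parts (b) and (c), I would first exploit that $D_i$ is built entirely from $E_i, F_i$ and therefore preserves the $K_q^i$-isotypical decomposition; it is in fact block-diagonal. Consequently $\ph(D_i)$, defined by functional calculus, is also block-diagonal, yielding $\ph(D_i) \in \AA_i$ immediately. To promote this to $\ph(D_i) \in \AA = \bigcap_{j \in \Sigma} \AA_j$ using Theorem \ref{thm:lattice_of_ideals}(a), I would invoke the essentially orthotypical property of the pairs $(K_q^i, K_q^j)$ developed in Section \ref{sec:essorth}, together with the Gelfand-Tsetlin analysis of Section \ref{sec:Gelfand-Tsetlin}, which controls how $E_i, F_i$ interact with the $K_q^j$-isotypical projections. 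For (c), on each $K_q^i$-isotypical component $D_i$ reduces to a direct sum of copies of the Dabrowski-Sitarz Dirac operator on a Podle\'s sphere, whose spectrum is indexed by the $S_q^i$-type and diverges to infinity. Thus $\psi(D_i) = \sum_\tau \psi(\lambda_\tau)\, p_\tau$ is a norm-convergent sum of $K_q^i$-isotypical projections with coefficients vanishing at infinity, so $\psi(D_i) \in \KK_i$; combined with the $\AA$-containment from the (b)-argument applied to each $\psi \in C_0(\RR)$, this places $\psi(D_i) \in \JJ_i$.

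For (d), I would first compute directly from $\hat\Delta(E_i) = E_i \otimes K_i + K_i^{-1} \otimes E_i$ and the analogous formula for $F_i$ that, for $f \in \Poly(\sE_\nu)$, the operator $D_i$ satisfies the twisted commutation relation
$$
D_i \, \Mult{f} = q^{\half(\alpha_i, \nu)} \, \Mult{f} \, D_i + R,
$$
where $R$ is a bounded operator whose entries are products of left multiplications by the smooth sections $E_i \hit f, F_i \hit f$ and the scalar action of $K_i^{-1}$ on the ambient weight space. Passing to the phase via the functional-calculus representation $\ph(D_i) = \frac{2}{\pi}\int_0^\infty D_i(D_i^2+t^2)^{-1}\, dt$ and commuting $\Mult{f}$ through the integrand using the relation above produces terms each containing a factor of the resolvent $(D_i^2+t^2)^{-1}$, which lies in $\JJ_i$ by (c). The scalar twist $q^{\half(\alpha_i, \nu)}$ is harmless: the same computation applied to $|D_i|$ shows that $|D_i|^{-1}\, \Mult{f} - q^{-\half(\alpha_i, \nu)} \, \Mult{f} \, |D_i|^{-1}$ is also in $\JJ_i$, so the twist factors cancel in $\ph(D_i) = D_i |D_i|^{-1}$. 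The main technical difficulty, which I expect to require the detailed analysis of Section \ref{sec:PsiDOs2}, is controlling the convergence of the integral uniformly and ensuring that every remainder term produced by commuting $\Mult{f}$ past the integrand lands in $\JJ_i$ rather than merely in $\AA_i$.
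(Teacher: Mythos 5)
Parts (a) and (c) are essentially on track: your argument for (a) is the paper's argument, and for (c) the combination of $\KK_i$-membership (from the spectral picture on $S^i_q$-types) with $\AA$-membership is the right skeleton, although your formula $\psi(D_i) = \sum_\tau \psi(\lambda_\tau) p_\tau$ is only valid for \emph{even} $\psi$ (the eigenvalues of $D_i$ on a fixed $S^i_q$-type come in $\pm\lambda$ pairs, so $\psi(D_i)$ is not scalar on each block). The paper dodges this by a Stone--Weierstrass argument with the two generators $(1+x^2)^{-1}$ and $\ph(x)(1+x^2)^{-1}$; your version can be repaired by writing an odd $\psi$ as $\ph(x)\,\tilde\psi(x^2)$ and using $\AA\cdot\JJ_i\subseteq\JJ_i$.

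The serious gap is in (b). Your claim that $\ph(D_i)\in\AA_i$ is automatic is fine, but the promotion to $\AA_j$ for $j\neq i$ is the single hardest piece of analysis in the paper, and essential orthotypicality of $K_q^i$ and $K_q^j$ does not do it for you. Essential orthotypicality controls products of isotypical projections $p_{\tau_1}p_{\tau_2}$; it says nothing by itself about an operator like $\ph(E_i)$ which is not an isotypical projection. The paper's route goes through an explicit asymptotic evaluation (Proposition \ref{prop:phase_estimate}) of the matrix entries of $\ph(E_1)$ in the lower Gelfand--Tsetlin basis of the class-one representations of $\U_q(3)$, expressed in terms of little $q$-Legendre polynomials, and a norm estimate showing $\ph(E_1)p_{\triv_2}\in\KK_{\U_q^{\{2\}}(3)}$ (Lemma \ref{lem:phase_Ei_p_triv_in_K}); the general $\SU_q(n)$ case is then reduced to $\SU_q(3)$ in Proposition \ref{prop:phase_Ei_in_A}. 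Without some quantitative control of this kind your proposal of (b) does not close, and everything downstream — in particular (d), which uses $\ph(D_i)\in\AA$ — relies on it.

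For (d) you propose a genuinely different route: the exact twisted relation $E_i\Mult{f}=q^{\half(\alpha_i,\nu)}\Mult{f}E_i + q^{-\half(\alpha_i,\mu)}\Mult{E_i f}$ (which is correct) plugged into the integral representation of the phase. The paper does not do this. Its proof of Lemma \ref{lem:commutator_of_PsiDO_with_mult_op} instead (i) reduces to irreducible matrix coefficients, (ii) uses Corollary \ref{cor:ph_E_commutator} (which \emph{does} use the resolvent-integral trick, but to compare $\ph(\hat\Delta(D_i))$ with $\ph(D_i)\otimes\Id_V$ on a tensor product with a finite-dimensional $V$, where the difference is bounded), (iii) converts the resulting estimate into a bound on the $L^2$-norm of matrix coefficients via the Schur orthogonality estimate of Lemma \ref{lem:non-irreducible_matrix_coeff}, and (iv) handles general $g$ with a band-diagonal decomposition over $K_q$-types. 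Your integral-commutation strategy does produce a factor of $(D_i^2+t^2)^{-1}\in\KK_i$ in each remainder, but you then need norm-convergence of the integral in $\KK_i$ and you need to compare the two distinct longitudinal Dirac operators living on $L^2(\sE_\mu\oplus\sE_{\mu+\alpha_i})$ and $L^2(\sE_{\mu+\nu}\oplus\sE_{\mu+\nu+\alpha_i})$, which is precisely the point your last sentence flags as unresolved. In short: for (d) your approach is plausible but not carried out, and for (b) the essential ingredient — the asymptotic Gelfand--Tsetlin estimate — is absent.
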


\begin{remark}
\label{rmk:phEi_versions}
By slight abuse of notation, we will usually abbreviate part $d)$ of Theorem \ref{thm:PsiDOs} by writing
$ [\ph(D_i), \Mult{f}] \in \JJ_i(L^2(\sE_\mu \oplus \sE_{\mu+\alpha_i}), L^2(\sE_{\mu+\nu} \oplus \sE_{\mu+\nu+\alpha_i})) $ in the sequel. 
Notice that above statements about $\ph(D_i)$ can be restated as results about $\ph(E_i)$ and $\ph(F_i)$. 
In particular, part d) is equivalent to the commutativity of the diagrams
$$
 \xymatrix{
  L^2(\sE_\mu) \ar[r]^{\ph(E_i)} \ar[d]_{\Mult{f}} 
      & L^2(\sE_{\mu+\alpha_i}) \ar[d]^{\Mult{f}} 
  & L^2(\sE_{\mu+\alpha_i}) \ar[r]^{\ph(F_i)} \ar[d]_{\Mult{f}} 
      & L^2(\sE_\mu) \ar[d]^{\Mult{f}} \\ 
  L^2(\sE_{\mu+\nu}) \ar[r]_{\ph(E_i)} 
      & L^2( \sE_{\mu+\nu+\alpha_i}) 
  & L^2(\sE_{\mu+\nu+\alpha_i}) \ar[r]_{\ph(F_i)} 
      & L^2(\sE_{\mu+\nu}) 
 }
$$
modulo $ \JJ_i(L^2(\sE_\mu), L^2(\sE_{\mu+\nu + \alpha_i}))$ and $ \JJ_i(L^2(\sE_{\mu+\alpha_i}), L^2(\sE_{\mu+\nu})) $, respectively. 
\end{remark}


\section{Comparisons of Gelfand-Tsetlin bases}
\label{sec:Gelfand-Tsetlin}

This section and the next provide the technical results from harmonic analysis which will be used to prove Theorems \ref{thm:lattice_of_ideals} and \ref{thm:PsiDOs}. It will be convenient to work with the quantum group $\U_q(n)$ rather than $\SU_q(n)$.


\subsection{Quantum subgroups of $\U_q(n)$}
\label{sec:subgroups_of_U_q}

We first introduce notation for the block diagonal quantum subgroups of $\U_q(n)$. These definitions follow the notation and conventions of Sections \ref{sec:env_alg} and \ref{sec:lattice_of_subgroups}. We define $C(\widehat{\U_q(n)}) = \Poly(\U_q(n))^*$. For $I\subseteq\Sigma$, we define the $\sigma(C(\widehat{\U_q(n)}), \Poly(\U_q(n)) )$-closed subalgebra
$$
C(\widehat{\U^\rootset_q(n)}) = {\langle   E_i, F_i ~(i\in\rootset) , ~ G_j ~(j=1,\ldots,n) \rangle} .
$$
and denote the associated closed quantum subgroup of $\U_q(n)$ by $\U_q^\rootset(n)$.

In the particular cases $I = \{1,\ldots,k-1\}$, we will decompose $\U_q^\rootset(n)$ as follows. 
Let $ C(\widehat{\U^\upp_q(k)}) = {\langle E_i, F_i ~(i=1,\ldots,k-1), G_j ~(j=1,\ldots,k) \rangle} $ and 
$ C(\widehat{Z^\upp_k}) = {\langle G_j (j=k+1,\ldots,n) \rangle} $, and let $\U^\upp_q(k)$ and 
$Z^\upp_k$ be the dual closed quantum subgroups of $\U_q(n)$. Then $\U_q^{\{1,\ldots,k-1\}}(n) = \U^\upp_q(k) \times Z^\upp_k$. 
The superscript $\upp$ refers to the fact that $\U^\upp_q(k)\cong\U_q(k)$ is embedded in the ``upper-left corner'' of $\U_q(n)$. 
We likewise decompose $\U_q^{\{n-k+1,\ldots,n-1\}}(n) = \U_q^\low(k) \times Z^\low_k$ where the two components are dual 
to $ C(\widehat{\U^\low_q(k)}) = {\langle E_i, F_i ~(i=n-k+1,\ldots,n-1), G_j~ (j=n-k+1,\ldots,n) \rangle} $ and 
$ C(\widehat{Z^\low_k}) = {\langle G_j~ (j=1,\ldots,n-k) \rangle} $, respectively.

\subsection{Upper and lower Gelfand-Tsetlin bases}
\label{sec:alternative_GTs_bases}

Consider the nested family of quantum groups
$$
  T = \U_q^\emptyset(n) \subset \U_q^{\{1\}}(n) \subset \U_q^{\{1,2\}}(n) \subset \cdots \subset 
    \U_q^{\Sigma}(n) = \U_q(n). 
$$
The isotypical projections of these quantum subgroups are mutually commuting. Gelfand-Tsetlin theory is based upon the observation that the simultaneous isotypical decomposition for all of these subgroups yields components of dimension one, and thus provides a basis which is well-adapted for all of them.  We shall refer to this as the {\em upper} Gelfand-Tsetlin basis.  We recall the main facts about the Gelfand-Tsetlin basis here, and refer to \cite[\S7.3]{KS} for the details.

The highest weights of type $1$ representations of $\Uq(\glx_n)$ are given by those $\mu = (\mu_1,\ldots,\mu_n) \in \ZZ^n$ with $\mu_1\geq\mu_2\geq\cdots\geq \mu_n$.  We denote the irreducible representation with highest weight $\mu$ by $\sigma^\mu$.  The Gelfand-Tsetlin basis for $V^{\sigma^\mu}$ is indexed by tableaux of integers of the form
$$
(M) = \left( \begin{array}{cccccccccc}
  \multicolumn{2}{c}{m_{n,1}} &
  \multicolumn{2}{c}{m_{n,2}} &
  \multicolumn{2}{c}{\cdots} &
  \multicolumn{2}{c}{m_{n,n\!-\!1}} &
  \multicolumn{2}{c}{m_{nn}} \\
&\multicolumn{2}{c}{m_{n\!-\!1,1}}
&\multicolumn{4}{c}{\cdots}
&\multicolumn{2}{c}{m_{n\!-\!1,n\!-\!1}} \\
&&\multicolumn{2}{c}{\ddots}
&&&\multicolumn{2}{c}{\adots}\\
&&&\multicolumn{2}{c}{m_{21}}
&\multicolumn{2}{c}{m_{22}} \\
&&&&\multicolumn{2}{c}{m_{11}}
\end{array} \right),
$$
where the top row is equal to $\mu$ and the lower rows satisfy the interlacing conditions $m_{i\!+\!1,j} \leq m_{ij} \leq m_{i\!+\!1,j\!+\!1}$ for all $i,j$.  The corresponding basis element, which will be denoted $\ket{(M)^\upp}$, is determined up to phase by the fact that for each $k=1,\ldots,n$, the vector 
$\ket{(M)^\upp}$ belongs to a $\U_q^\upp(k)$-subrepresentation with highest weight $(m_{k1},\ldots,m_{kk})$.  Moreover, $\ket{(M)^\upp}$ is a weight vector with weight 
\begin{equation}
 \label{eq:GTs-weight}
 (s_1-s_0,s_2-s_1,\ldots,s_n-s_{n-1}), 
\end{equation}
where $s_i\defeq \sum_{j=1}^i m_{ij}$ is the sum of the $i$th row and $s_0=0$ by convention.

There is an alternative basis of $V^{\sigma^\mu}$ adapted to the lower-right inclusions
$$
  T = \U_q^\emptyset(n) \subset \U_q^{\{n-1\}}(n) \subset \U_q^{\{n-2,n-1\}}(n) \subset \cdots \subset
    \U_q^{\Sigma}(n) = \U_q(n).
$$
This basis is most easily introduced by invoking the Hopf $*$-automorphism $\Psi$ of $\Uq(\lie{gl}_n)$ defined by:
\begin{equation}
 \label{eq:automorphism}
  \Psi(G_j)= G_{n+1-j}^{-1}, \qquad \Psi(E_i)= E_{n-i}, \qquad \Psi(F_i) = F_{n-i}.
\end{equation}
Note that a highest weight vector for $\sigma^\mu$ is also a highest weight vector for the irreducible representation $\sigma^\mu\circ\Psi$, but with 
weight $\mu' = (-\mu_n,\ldots,-\mu_1)$. By Schur's Lemma, there is a unitary $\psi_\mu:V^{\sigma^{\mu'}} \to V^{\sigma^{\mu}}$ (unique up to scalar multiple) which intertwines $\sigma^{\mu'}\circ\Psi$ and $\sigma^\mu$.
We define the {\em lower} Gelfand-Tsetlin basis vectors by $\ket{(M)^\low} = \psi_\mu\ket{(M)^\upp}$, where $(M)$ is a Gelfand-Tsetlin tableau for the representation $\sigma^{\mu'}$.

\medskip

\subsection{Class $1$ representations}
\label{sec:class_1}

Often, we will only be interested in the irreducible $\U_q(n)$-representations which contain a trivial $\U_q^{\{1,\ldots,n-2\}}(n)$-subrepresentation. These are a special case of the {\em class $1$ representations} (see \cite[\S7.3.4]{KS}).  A Gelfand-Tsetlin vector is
contained in a trivial subrepresentation of $ \U_q^\upp(n-1) $
if and only if it is of the form
$$
  \xi_m \defeq \bigket{\GTs{m}{\!\!-m'}{\;0\;\;}{\;\;\;0\;}{\;0}{\;0}{0}^{\!\!\!\!\bigupp\;\;}}
$$
for some $m,m'\in\NN$.  To be contained in a trivial $\U_q^{\{1,\ldots,n-2\}}(n)$-representation, it must additionally be of weight $0$, which is to say $m=m'$.  Thus, the representations of interest are precisely those with highest weight of the form $\mu = (m,0,\ldots,0,-m)$. 
Note that in this case, $\sigma^\mu \cong \sigma^{\mu}\circ\Psi$, so that the upper and lower Gelfand-Tsetlin bases are indexed by the same set of tableaux.

We state the Gelfand-Tsetlin formulae for such representations; compare \cite[\S7.3.4]{KS}. The generic basis vector is
$$
\ket{(M)^\upp} = \bigket{\GTs{m_n}{-m'_n}{m_{n\!-\!1}}{m'_{n\!-\!1}}{m_2}{m'_2}{m_1}^{\!\!\!\!\bigupp\;\;}},
$$
where we are putting $m_n = m'_n = m$ for ease of notation.
We write $(M\pm\delta_{ij})$ to denote the Gelfand-Tsetlin tableau obtained from $(M)$ by adding $\pm1$ to the $(i,j)$-entry.
The action of the generators of $\Uq(\glx_n)$ is given by
\begin{align}
\lefteqn{ E_{k-1} \ket{(M)^\upp} }\quad  \nonumber \\
&= \textstyle
  \left( 
  \frac{[m_k-m_{k-1}][m_{k-1}-m_k'+k-1][m_{k-1}-m_{k-2}+1][m_{k-1}-m_{k-2}'+k-2]}
    {[m_{k-1}-m_{k-1}'+k-1][m_{k-1}-m_{k-1}'+k-2]}
  \right)^{\half} \ket{(M+\delta_{k-1,1})^\upp} \nonumber \\
&\quad + \textstyle \left( 
  \frac{[m_k-m_{k-1}'+k-2][m_{k-1}'-m_k'+1][m_{k-2}-m_{k-1}'+k-3][m_{k-2}'-m_{k-1}']}
    {[m_{k-1}-m_{k-1}'+k-2][m_{k-1}-m_{k-1}'+k-3]}
  \right)^{\half} \ket{(M+\delta_{k-1,k-1})^\upp},
\label{eq:GTs-E} \\
\lefteqn{ F_{k-1} \ket{(M)^\upp} } \quad \nonumber \\
&= \textstyle
  \left( 
  \frac{[m_k-m_{k-1}+1][m_{k-1}-m_k'+k-2][m_{k-1}-m_{k-2}][m_{k-1}-m_{k-2}'+k-3]}
    {[m_{k-1}-m_{k-1}'+k-2][m_{k-1}-m_{k-1}'+k-3]}
  \right)^{\half} \ket{(M-\delta_{k-1,1})^\upp} \nonumber \\
& \quad  + \textstyle \left( 
  \frac{[m_k-m_{k-1}'+k-1][m_{k-1}'-m_k'][m_{k-2}-m_{k-1}'+k-2][m_{k-2}'-m_{k-1}'+1]}
    {[m_{k-1}-m_{k-1}'+k-1][m_{k-1}-m_{k-1}'+k-2]}
  \right)^{\half} \ket{(M-\delta_{k-1,k-1})^\upp},
\label{eq:GTs-F} \\
\lefteqn{ G_i \ket{(M)^\upp} = q^{\half(s_i-s_{i-1})}\ket{(M)^\upp} ,} \quad
\label{eq:GTs-K}
\end{align}
where, as before, $s_i$ is the sum of the $i$th row of $(M)$ and $s_0=0$.


\subsection{Change of basis formula}
\label{sec:change_of_basis}

We now describe certain cases of the change of basis transformation between the upper and lower Gelfand-Tsetlin bases introduced in Section \ref{sec:alternative_GTs_bases}.  We shall concentrate entirely on the family of representations of highest weight $\mu \defeq (m,0,0,\ldots,0,-m)$ for $m\in\NN$.
In either choice of Gelfand-Tsetlin basis, the zero-weight subspace of $\sigma^\mu$ is spanned by the vectors $\ket{(M_\mathbf{m})^\upp}$ or $\ket{(M_\mathbf{m})^\low}$ with tableaux
$$
  M_\mathbf{m} = \GTs{m}{-m}{\!m_{n-1}}{\!\!\!-m_{n-1}}{\!m_2}{\!\!-m_2}{0}.
$$
Here we use $\mathbf{m}$ to denote the increasing $n$-tuple $\mathbf{m} \defeq (m_1=0, m_2, \ldots, m_n=m)$. 

Our first goal is to compute the coefficients of the $\U_q^\low(n-1)$-invariant vector $ \fixedvec $ 
with respect to the upper Gelfand-Tsetlin basis. We write 
\begin{equation}
\label{eq:fixed_vec_coefficients}
\fixedvec = \sum_\mathbf{m} a_\mathbf{m} \ket{(M_\mathbf{m})^\upp}.
\end{equation}
Let us apply $E_k$ to this. The Gelfand-Tsetlin formula \eqref{eq:GTs-E} shows that the coefficient 
of $\ket{(M_\mathbf{m} + \delta_{k,1})^\upp}$ in $E_{k} \fixedvec$ is
\begin{multline}
   \textstyle
  \left( 
  \frac{[m_{k+1}-m_{k}][m_{k}+m_{k+1}+k][m_{k}-m_{k-1}+1][m_{k}+m_{k-1}+k-1]}
    {[2m_{k}+k][2m_{k}+k-1]}
  \right)^{\half} a_\mathbf{m} \\
   + \textstyle \left( 
  \frac{[m_{k+1}+m_{k}+k][-m_{k}+m_{k+1}][m_{k-1}+m_{k}+k-1][-m_{k-1}+m_{k}+1]}
    {[2m_{k}+k+1][2m_{k}+k]}
  \right)^{\half} a_{\mathbf{m}+\delta_{k}}, 
 \label{eq:coefficient}
\end{multline}
where $\mathbf{m}+\delta_{k}$ denotes the $n$-tuple obtained by adding $1$ to the $k$th entry of $\mathbf{m}$.   Since $E_k \fixedvec =0$ for all  $2 \leq k \leq n - 1 $, we obtain the recurrence relation
\begin{equation}
\label{eq:recurrence_relation1}
a_{\mathbf{m}+\delta_{k}} = - \frac{[2m_{k}+k+1]^\half}{[2m_{k}+k-1]^\half} a_\mathbf{m}
\end{equation}
for all $\mathbf{m}$ and all $2 \leq k \leq n - 1 $.
This multi-parameter recurrence relation has the solution
\begin{equation}
\label{eq:recurrence_solution1}
a_\mathbf{m} = (-1)^{|\mathbf{m}|} A \prod_{k=2}^{n-1} [2m_k+k-1]^\half,
\end{equation}
where $|\mathbf{m}| = m_1 + \cdots + m_{n}$ and $A\in\CC$ is some overall constant.  
This constant is determined up to a phase by the fact that $\fixedvec$ has norm one. \linebreak We  will 
assume a choice of phases for the Gelfand-Tsetlin bases such that \linebreak $\braket{(M_{(m,0,\ldots,0)})^\low}{(M_{(m,0,\ldots,0)})^\upp}$ is positive. 
Then $A$ is positive. From \eqref{eq:recurrence_solution1}, we calculate 
$$
1 
= \sum_\mathbf{m} |a_\mathbf{m}|^2 = A^2 \sum_\mathbf{m}  \prod_{k=2}^{n-1} [2m_k+k-1],
$$
where the sum  is over all $n$-tuples $\mathbf{m}$ with $0=m_1 \leq m_2 \leq \cdots \leq m_n = m$. 
An inductive argument shows that
$$
 \sum_\mathbf{m} \prod_{k=2}^{n-1} [2m_k + k -1] = [n-2]! \qchoose{m_n+n-2 \\ n-2}^2,
$$
and one obtains that $A = [n-2]!^{-\half} \qchoose{m+n-2 \\ n-2}^{-1}$. In summary, we have proved the following formula. 

\begin{proposition}
\label{prop:GTs-comparison1}
In the irreducible representation of $\U_q(n)$ with highest weight $(m,0,\ldots,0,-m)$,
$$
\fixedvec = \sum_\mathbf{m} \frac{(-1)^{|\mathbf{m}|} \prod_{k=2}^{n-1} [2m_k+k-1]^\half}{[n-2]!^\half \qchoose{m+n-2 \\ n-2}} \ket{(M_\mathbf{m})^\upp},
$$ 
where the sum  is over all $n$-tuples $\mathbf{m}$ with $0=m_1 \leq m_2 \leq \cdots \leq m_n = m$.
\qed
\end{proposition}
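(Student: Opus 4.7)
The plan is to exploit the defining property of $\fixedvec$: being $\U_q^\low(n-1)$-invariant is equivalent, at the level of generators, to the annihilation conditions $E_i \fixedvec = 0 = F_i \fixedvec$ for $i = 2, \ldots, n-1$, and implies that $\fixedvec$ lies in the zero-weight subspace of $\sigma^\mu$. As noted in Section~\ref{sec:change_of_basis}, this weight-zero subspace, expanded in the upper Gelfand-Tsetlin basis, is spanned by exactly the symmetric tableaux $(M_\mathbf{m})$ parameterized by increasing tuples $0 = m_1 \leq \cdots \leq m_n = m$. So the problem reduces to determining the coefficients $a_\mathbf{m}$ in the expansion $\fixedvec = \sum_\mathbf{m} a_\mathbf{m} \ket{(M_\mathbf{m})^\upp}$.

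Next I would apply each $E_k$ for $k = 2, \ldots, n-1$ and read off the coefficient of a typical target vector $\ket{(M_\mathbf{m} + \delta_{k,1})^\upp}$ in $E_k \fixedvec$ using the explicit Gelfand-Tsetlin formula \eqref{eq:GTs-E}. Because the tableaux $(M_\mathbf{m})$ have the special symmetric shape and the action of $E_k$ on a Gelfand-Tsetlin vector only produces two terms, the only two ways of reaching $\ket{(M_\mathbf{m} + \delta_{k,1})^\upp}$ are from $\ket{(M_\mathbf{m})^\upp}$ (via the $\delta_{k,1}$-term) and from $\ket{(M_{\mathbf{m}+\delta_k})^\upp}$ (via the $\delta_{k,k}$-term, after reindexing). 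Setting this coefficient to zero yields a two-term recurrence of the form $a_{\mathbf{m}+\delta_k} = -\tfrac{[2m_k+k+1]^\half}{[2m_k+k-1]^\half}\, a_\mathbf{m}$, which separates across the free indices $m_2, \ldots, m_{n-1}$ and integrates to
$$
a_\mathbf{m} = (-1)^{|\mathbf{m}|}\, A \prod_{k=2}^{n-1} [2m_k + k - 1]^\half
$$
for some overall scalar $A$.

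The final step is to fix the constant $A$ using $\|\fixedvec\| = 1$ together with the sign convention that $\braket{(M_{(m,0,\ldots,0)})^\low}{(M_{(m,0,\ldots,0)})^\upp}$ is positive, which forces $A$ real and positive. This reduces to evaluating the sum $\sum_\mathbf{m} \prod_{k=2}^{n-1} [2m_k + k - 1]$ over increasing tuples with fixed endpoints $0$ and $m$. I expect this combinatorial identity to be the hardest step; my plan is an induction on $n$, at each stage summing out the largest free index $m_{n-1}$ against the already-known $(n-1)$-variable identity. This should collapse telescopically into the closed form $[n-2]!\,\qchoose{m+n-2 \\ n-2}^2$, from which $A = [n-2]!^{-\half}\qchoose{m+n-2 \\ n-2}^{-1}$ and the proposition follow.
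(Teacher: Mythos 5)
Your proposal follows the paper's proof essentially verbatim: expand $\fixedvec$ in the upper Gelfand--Tsetlin basis of the zero-weight space, read off the coefficient of $\ket{(M_\mathbf{m}+\delta_{k,1})^\upp}$ in $E_k\fixedvec$ from formula \eqref{eq:GTs-E}, set it to zero to obtain the two-term recurrence \eqref{eq:recurrence_relation1}, solve, and fix the normalization constant via the $q$-combinatorial identity $\sum_\mathbf{m}\prod_{k=2}^{n-1}[2m_k+k-1]=[n-2]!\,\qchoose{m+n-2 \\ n-2}^2$ together with the stated phase convention. The paper likewise disposes of that last identity by a brief inductive argument, so nothing is missing from your outline.
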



\subsection{Change of basis formula for $\U_q(3)$}

In the case of $\U_q(3)$, the above calculation gives the following change-of-basis coefficients for the trivial $\U_q^{\{2\}}(3)$-type:
\begin{equation}
\label{eq:GTs_comparison1_for_SU3}
\bigbraket{\smallGTsO{m}^\low}{\smallGTs{m}{j}^\upp} = (-1)^{j+m} \frac{[2j+1]^\half}{[m+1]}.
\end{equation}
The complete change-of-basis coefficients between the two Gelfand-Tsetlin bases of any $\U_q(3)$-representation were computed in \cite{MSK}. They are given by $q$-Racah coefficients.  We will only need the following special cases.

\begin{proposition}
 \label{prop:GTs_comparison2_for_SU3}
In the representation of $\U_q(3)$ with highest weight $(m,0,-m)$, consider the vectors
$$
\xket{j} = [2j+1]^{-\half} \bigket{\smallGTs{m}{j}^\upp}, \quad \yket{k} = [2k+1]^{-\half} \bigket{\smallGTs{m}{k}^\low}.
$$ 
Then
\begin{equation}
 \label{eq:GTs_comparison2_for_SU3}
 \yxbraket{k}{j} =  \frac{(-1)^{j+k+m}}{[m+1]}
  \qhypergeometric{4}{3} \left(\left. 
    \begin{array}{c}
      q^{-2k}, q^{2(k+1)}, q^{-2j}, q^{2(j+1)} \\
      q^{-2m}, q^{2(m+2)}, q^2
    \end{array}
  \right| q^2; q^2 \right),
\end{equation}
where $\qhypergeometric{4}{3}$ denotes the $q$-hypergeometric function.  
\end{proposition}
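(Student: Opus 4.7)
The strategy is to characterize the overlap coefficients $\yxbraket{k}{j}$ by a three-term recurrence in $j$ together with an initial condition, and to identify that recurrence as the one satisfied by the balanced $\qhypergeometric{4}{3}$ appearing on the right-hand side of \eqref{eq:GTs_comparison2_for_SU3}. The key point is that the two Gelfand-Tsetlin bases diagonalize two different Casimir elements inside $V^{\sigma^\mu}$.

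First, I would set up the spectral picture. The zero-weight subspace of $V^{\sigma^\mu}$ is spanned by $\{\xket{j}\}_{j=0}^m$ and equivalently by $\{\yket{k}\}_{k=0}^m$. The vector $\xket{j}$ spans the zero-weight line of a spin-$j$ irreducible subrepresentation of the $\SU_q(2)$-subalgebra generated by $E_1, F_1, K_1^{\pm 1}$, hence is an eigenvector of its quantum Casimir $C^\upp$ with eigenvalue depending only on $j$. Dually, $\yket{k}$ is an eigenvector of the Casimir $C^\low$ of the $\SU_q(2)$-subalgebra generated by $E_2, F_2, K_2^{\pm 1}$, with eigenvalue $\lambda_k$ depending only on $k$.

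Second, I would compute the action of $C^\low$ on the upper basis using \eqref{eq:GTs-E}--\eqref{eq:GTs-K}. Applying $E_2$ and $F_2$ to a weight-zero tableau with middle row $(j,-j)$ moves through intermediate weight-$(0,1,-1)$ tableaux and returns to weight-zero tableaux with middle rows $(j+1,-j-1)$, $(j,-j)$, or $(j-1,-j+1)$. Hence
$$
C^\low \xket{j} = \alpha_j \xket{j+1} + \beta_j \xket{j} + \gamma_j \xket{j-1}
$$
with coefficients $\alpha_j, \beta_j, \gamma_j$ expressible as explicit products of $q$-integers. Pairing with $\ybra{k}$ and using $C^\low \yket{k} = \lambda_k \yket{k}$ yields
$$
(\lambda_k - \beta_j)\,\yxbraket{k}{j} = \alpha_j\, \yxbraket{k}{j+1} + \gamma_j\, \yxbraket{k}{j-1}.
$$
The initial condition for this recurrence is provided by specializing Proposition \ref{prop:GTs-comparison1} to $n=3$, which via \eqref{eq:GTs_comparison1_for_SU3} and the normalization factor $[2j+1]^{-\half}$ gives $\yxbraket{0}{j} = (-1)^{j+m}/[m+1]$, in agreement with the right-hand side of \eqref{eq:GTs_comparison2_for_SU3} at $k=0$ (where the $\qhypergeometric{4}{3}$ series collapses to its first term).

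The main obstacle is the simplification of $\alpha_j, \beta_j, \gamma_j$ into the standard $q$-Racah form, which requires a nontrivial $q$-combinatorial identity equivalent to $q$-Pfaff--Saalsch\"utz. Rather than reproducing this calculation, I would invoke the result of \cite{MSK}, where the full change-of-basis matrix between the upper and lower Gelfand-Tsetlin bases of an arbitrary irreducible $\U_q(3)$-representation is identified with a $q$-Racah coefficient; the formula \eqref{eq:GTs_comparison2_for_SU3} is the specialization of that general result to the class-$1$ representation with highest weight $(m,0,-m)$, with the overall normalization uniquely determined by the initial condition above.
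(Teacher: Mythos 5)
Your proposal is essentially the same as the paper's proof in spirit: derive a three-term recurrence for $\yxbraket{k}{j}$ by sandwiching a carefully chosen operator between the two Gelfand--Tsetlin bases, normalize using Proposition~\ref{prop:GTs-comparison1}, and recognize the answer as a $q$-Racah coefficient. The differences are in the details and in the degree of self-containment. The paper sandwiches the operator $E_1^*E_1$, which acts diagonally on the upper basis (with eigenvalue $[j][j+1]$) and tridiagonally in $k$ on the lower basis. This produces a recurrence in the index $k$ with $j$ held fixed, which meshes directly with the available initial data $\yxbraket{0}{j}$: starting from $k=0$ (where one coefficient vanishes), the recursion marches upward in $k$. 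You instead sandwich a Casimir $C^\low$ for the $\{E_2,F_2,K_2^{\pm1}\}$-copy of $\Uq(\slx_2)$, giving a recurrence in $j$ with $k$ held fixed. This is a perfectly valid dual route, but it creates a small mismatch with your stated initial condition: to run a recurrence in $j$ you need $\yxbraket{k}{0}$ for each $k$, not $\yxbraket{0}{j}$. This is easy to fix -- either observe the symmetry $\yxbraket{k}{j}=\yxbraket{j}{k}$ (which follows from the self-duality of the intertwiner $\psi_\mu$ under the involutive automorphism $\Psi$), or work directly with a recurrence in $k$ as the paper does -- but as written the step is not quite closed.

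The other, larger difference is that you defer the identification of the recurrence coefficients with a $q$-Racah weight/recurrence to \cite{MSK}, whereas the paper deliberately avoids this. The authors remark that it is ``rather cumbersome to reconcile the notation and terminology of \cite{MSK} with ours,'' and instead rewrite the recurrence in the non-symmetric $q$-number form and match it term-by-term with Equation~(14.2.3) of \cite{KLS}, pinning down the parameters $\alpha=q^{2(m+1)}$, $\beta=q^{-2(m+1)}$, $\gamma=\delta=1$, $N=m$. Your route is legitimate but shifts the burden to a translation exercise that the authors found not worth the effort; what their approach buys is a short, self-contained proof, at the cost of the explicit but routine re-expression of the coefficients $a(k)$ and $c(k)$.
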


In the $q$-Racah notation of \cite[\S14.2]{KLS}, this translates as
$$
\yxbraket{k}{j} =  (-1)^{j+k+m} [m+1]^{-1} R_k(\mu(j) ; q^{2(m+1)}, q^{-2(m+1)}, 1, 1 | q^2 ),
$$
though we shall not actually use this.

It is rather cumbersome to reconcile the notation and terminology of \cite{MSK} with ours. For this reason, we outline a short proof of Proposition \ref{prop:GTs_comparison2_for_SU3} in Appendix \ref{sec:change_of_basis_proof}.

\subsection{Action of the phase of $ E_1 $ on the lower Gelfand-Tsetlin basis}
\label{sec:asymptotics}

The final task of this section is to compute the action of $\ph(E_1)$ with respect to the lower Gelfand-Tsetlin basis.
Obtaining an explicit formula is difficult. Instead, we compute the action asymptotically as the highest weight $\mu$ goes to infinity, which is 
all that will be necessary for our purposes.

We shall make use of the following basic estimate for products of values near $1$, whose proof we leave to the reader.

\begin{lemma}
\label{lem:product_estimate}
Fix $q\in(0,1)$ and $N\in\NN$.  There is a constant $c_N$ with the following property: For any real numbers $0 \leq d_1,\ldots,d_N \leq q$ and  $-1\leq r_1,\ldots,r_N \leq1$, 
$$
  \left| 1 - \prod_{i=1}^N (1-d_i)^{r_i} \right| \leq c_N \sum_{i=1}^N |r_i| d_i.
$$ 
\end{lemma}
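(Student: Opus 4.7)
The plan is to reduce the bound on the product to a bound on each individual factor, and then telescope.

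First I would establish a single-factor estimate: for $d \in [0,q]$ and $r \in [-1,1]$,
$$
\bigl| 1 - (1-d)^r \bigr| \leq C\, |r|\, d,
$$
with $C = (1-q)^{-2}$. This follows directly from the mean value theorem applied to $f(t) = (1-t)^r$ on $[0,d]$: the derivative is $-r(1-\xi)^{r-1}$ for some $\xi \in [0,d] \subseteq [0,q]$, and since $r-1 \in [-2,0]$ and $1-\xi \in [1-q,1]$, we have $(1-\xi)^{r-1} \leq (1-q)^{-2}$.

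Next I would telescope. Writing $P_k = \prod_{i=1}^{k}(1-d_i)^{r_i}$ with $P_0 = 1$, we have
$$
1 - P_N \;=\; \sum_{k=1}^{N} (P_{k-1} - P_k) \;=\; \sum_{k=1}^{N} P_{k-1}\bigl(1 - (1-d_k)^{r_k}\bigr).
$$
Since each factor $(1-d_i)^{r_i}$ lies in $[(1-q), (1-q)^{-1}]$ (as $1-d_i \in [1-q,1]$ and $r_i \in [-1,1]$), we get $|P_{k-1}| \leq (1-q)^{-(N-1)}$ for all $k \leq N$. Combining with the single-factor estimate,
$$
|1 - P_N| \;\leq\; \sum_{k=1}^{N} (1-q)^{-(N-1)} \cdot (1-q)^{-2} \, |r_k|\, d_k
\;\leq\; (1-q)^{-(N+1)} \sum_{k=1}^{N} |r_k|\, d_k,
$$
so we may take $c_N = (1-q)^{-(N+1)}$.

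There is no real obstacle here; the statement is an elementary calculus estimate. The only thing to pay attention to is that the constant $C$ in the single-factor estimate must accommodate negative exponents $r_i$ (which is where the factor $(1-q)^{-2}$ arises rather than the naive $1$), and that the product of $N$ such factors remains bounded uniformly in the data by a constant depending only on $q$ and $N$. An alternative route — taking logarithms, using $|\log(1-d)| \leq -\log(1-q)\cdot d/q$ for $d \in [0,q]$, and then $|1-e^x|\leq |x|e^{|x|}$ — gives the same conclusion with a slightly different constant.
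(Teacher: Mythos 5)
The paper explicitly leaves this lemma's proof to the reader, so there is no proof in the paper to compare against. Your argument is correct: the mean-value-theorem bound on a single factor $(1-d)^r$ over $[0,q]$ with exponent in $[-1,1]$ gives the right $|r|\,d$ dependence with constant $(1-q)^{-2}$, the telescoping sum is valid, and the uniform bound $|P_{k-1}|\le(1-q)^{-(N-1)}$ holds because each factor lies in $[1-q,(1-q)^{-1}]$, yielding $c_N=(1-q)^{-(N+1)}$. This is exactly the kind of elementary calculus argument the authors had in mind when omitting the proof.
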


Next we prove an estimate on the change of basis coefficients from Proposition \ref{prop:GTs_comparison2_for_SU3} which will allow us to reduce the $q$-hypergeometric function from $\qhypergeometric{4}{3}$ to $\qhypergeometric{2}{1}$.

\begin{lemma}
\label{lem:legendre_approximation2}
Fix $k\in\NN$.  There is a constant $C(k)$ such that for all $j,m\in\NN$
\begin{equation}
\label{eq:legendre_approximation2}
  \left| (-1)^{j+k+m} \textstyle \frac{[2j+1]}{[j]^\half [j+1]^\half} \yxbraket{k}{j}
   \;-\; q^{m-j+\half} (q-q^{-1}) \legendre_k(q^{2(m-j)}|q^2) \right|
    \;\leq\; C(k) q^{m+j},
\end{equation}
where $\xket{j}$ and $\yket{k}$ are as in Proposition \ref{prop:GTs_comparison2_for_SU3}, and $\legendre_k(\,\cdot\,|q^2)$ is the little $q^2$-Legendre polynomial:
$$
\legendre_k(x|q^2) \defeq \qhypergeometric{2}{1} \left(\left. 
    \begin{array}{c}
      q^{-2k}, q^{2(k+1)} \\
      q^2
    \end{array}
  \right| q^2; q^2 x \right).
$$
\end{lemma}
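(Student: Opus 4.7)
The plan is a direct asymptotic analysis of the explicit formula for $\yxbraket{k}{j}$ in Proposition \ref{prop:GTs_comparison2_for_SU3}, with all error terms controlled by Lemma \ref{lem:product_estimate}. I would first restrict to the regime $0 \leq k \leq j \leq m$, since outside this range either $\xket{j}$ or $\yket{k}$ fails to exist in the representation $\sigma^{(m,0,-m)}$: in the cases $j > m$ with $k \leq m$, or $k > m$, we have $\yxbraket{k}{j} = 0$, and the bounded quantity $q^{m-j+\half}\legendre_k(q^{2(m-j)}|q^2)$ can be absorbed into $C(k)$; the sub-regime $j \leq k \leq m$ is handled by swapping the roles of $j$ and $m$ in all estimates, using the $j \leftrightarrow m$ symmetry of the error bound $q^{m+j}$.

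Next, because of the factor $(q^{-2k}; q^2)_n$, the series $\qhypergeometric{4}{3}$ in \eqref{eq:GTs_comparison2_for_SU3} terminates after $k+1$ terms. I would extract the dominant $q$-power from each Pochhammer with negative base exponent by writing $(q^{-2j}; q^2)_n = (-1)^n q^{n(n-1)-2jn}(q^{2(j-n+1)}; q^2)_n$ and similarly for $(q^{-2m}; q^2)_n$; after cancellation of the $(-1)^n q^{n(n-1)}$ factors between numerator and denominator, the ratio at the $n$-th term becomes $q^{2(m-j)n} R_n(j,m)$, where $R_n(j,m)$ is a product and quotient of $4n$ factors of the form $1 - q^{2r}$ with $r \geq j - k + 1$ (using $j \leq m$). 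Lemma \ref{lem:product_estimate} then bounds $|R_n(j,m) - 1|$ by a $k$-dependent constant times $q^{2(j-k+1)}$. Summing over $n = 0, \ldots, k$ I would identify the resulting truncated hypergeometric sum with $\legendre_k(q^{2(m-j)}|q^2)$, so that $\qhypergeometric{4}{3}$ differs from $\legendre_k(q^{2(m-j)}|q^2)$ by at worst a $k$-dependent constant times $q^{2(j-k+1)}$.

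Finally, combining with the elementary asymptotics $\frac{1}{[m+1]} = (q-q^{-1})\cdot\frac{-q^{m+1}}{1-q^{2(m+1)}}$ and $\frac{[2j+1]}{[j]^\half[j+1]^\half} = q^{-j-\half}\cdot\frac{1-q^{2(2j+1)}}{((1-q^{2j})(1-q^{2(j+1)}))^\half}$ --- to both of which Lemma \ref{lem:product_estimate} applies, giving correction factors of the form $1 + O(q^{2j}) + O(q^{2m})$ --- the two copies of $(-1)^{j+k+m}$ cancel, the leading factors assemble to $q^{m-j+\half}(q-q^{-1})\legendre_k(q^{2(m-j)}|q^2)$, and the accumulated error is bounded by a $k$-dependent constant times $q^{m-j+\half}\cdot q^{2(j-k+1)} = q^{m+j-2k+5/2}$, which for the fixed $k$ absorbs into $C(k) q^{m+j}$. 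I expect the only genuine obstacle to be the book-keeping required to verify that the three independent sources of error --- the remainder $R_n$ and the corrections to $1/[m+1]$ and to $[2j+1]/([j]^\half[j+1]^\half)$, each naturally of size $O(q^{2\min(j,m)})$ --- when multiplied by the prefactor $q^{m-j+\half}$, combine into a bound of the symmetric form $C(k) q^{m+j}$; no deeper conceptual difficulty appears to be required.
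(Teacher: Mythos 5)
Your core calculation for the regime $k \leq j \leq m$ matches the paper's proof: the paper likewise expands both sides term-by-term, comparing the $l$-th summand $A_l$ of $(-1)^{j+k+m}\frac{[2j+1]}{[j]^\half[j+1]^\half}\yxbraket{k}{j}$ with the $l$-th summand $B_l$ of $q^{m-j+\half}(q-q^{-1})\legendre_k(q^{2(m-j)}|q^2)$, writes the ratio $A_l/B_l$ as a product of factors $(1-q^{2r})^{\pm 1}$ with all exponents bounded below by $2(j-l)$, and controls $|A_l/B_l - 1|$ via Lemma~\ref{lem:product_estimate}. Your separation of the correction into three pieces (the ratio $R_n$, the normalization of $1/[m+1]$, and that of $[2j+1]/([j]^\half[j+1]^\half)$) assembles exactly the same factors that the paper treats in a single application of Lemma~\ref{lem:product_estimate}; that part of the argument is sound.

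The edge cases, however, contain genuine gaps. First, your claim that for $j > m$ (with $k \leq m$) the quantity $q^{m-j+\half}\legendre_k(q^{2(m-j)}|q^2)$ is "bounded" is false: it grows like $q^{-(2k+1)(j-m)}$ as $j-m \to \infty$, while the target $q^{m+j}$ shrinks; this cannot be absorbed into $C(k)$. (In practice the inner product is only defined for $j,k \leq m$, so the case is empty, but the justification as written is wrong.) Second, and more seriously, handling the sub-regime $j < k$ by "swapping the roles of $j$ and $m$" does not work: neither side of \eqref{eq:legendre_approximation2} is symmetric under $j \leftrightarrow m$, and the swap lands you in the regime $m \leq k \leq j$, which is no better. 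In this sub-regime your argument genuinely breaks down, because for terms $n$ with $j < n \leq k$ the factor $(q^{2(j-n+1)};q^2)_n$ in $R_n(j,m)$ contains exponents $2(j-n+1) \leq 0$, so Lemma~\ref{lem:product_estimate} no longer applies. What actually saves the day, and what the paper uses, is that for $j < l \leq k$ the Pochhammer symbol $(q^{-2j};q^2)_l$ in the numerator of the $\qhypergeometric{4}{3}$ vanishes identically, hence $A_l = 0$, and one bounds $|A_l - B_l| = |B_l|$ directly by $C_1(k) q^{m-j}$, which is $\leq C_1(k)q^{-2k} q^{m+j}$ precisely because $j < k$. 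Without this observation the proof is incomplete for small $j$.
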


\begin{proof}
Proposition \ref{prop:GTs_comparison2_for_SU3} says
\begin{multline}
\label{eq:Racah_Pochammer_sum}
 (-1)^{j+k+m}  \frac{[2j+1]}{[j]^\half [j+1]^\half} \yxbraket{k}{j} \\
  = \sum_{l=0}^k \frac{1}{[m+1]} \frac{[2j+1]}{[j]^\half [j+1]^\half}
   \frac{(q^{-2k},q^{2(k+1)},q^{-2j},q^{2(j+1)}; q^2)_l}{(q^{-2m},q^{2(m+2)},q^2,q^2; q^2)_l} q^{2l}.
\end{multline}
On the other hand
\begin{multline}
\label{eq:Legendre_Pochammer_sum}
  q^{m-j+\half} (q-q^{-1}) \legendre_k(q^{2(m-j)}|q^2) \\
  = \sum_{l=0}^k  (q-q^{-1})
   \frac{(q^{-2k},q^{2(k+1)}; q^2)_l}{(q^2,q^2; q^2)_l} q^{(2l+1)(m-j)+(2l+\half)}.
\end{multline}

Denote by $A_l$ and $B_l$ the $l$th summand of \eqref{eq:Racah_Pochammer_sum} and \eqref{eq:Legendre_Pochammer_sum}, respectively.  Note that
$
  |B_l| \leq C_1(k) q^{m-j}
$
for some constant $C_1(k)$.  

If $j<l$, 
then the Pochammer symbol in the numerator of $|A_l|$ is zero, so 
\begin{align}
|A_l-B_l| &= |B_l| \leq C_1(k) q^{m-j} 
 \leq C_1(k) q^{-2k} q^{m+j}. \label{eq:Legendre_estimate3}
\end{align}
If $j\geq l$, we have 
\begin{eqnarray*}
|A_l - B_l| &=&
|B_l| \, \left|1 \;-\; \frac{q^{-(2l+1)(m-j) -\half}}{(q-q^{-1})} \frac{1}{[m+1]} \frac{[2j+1]}{[j]^\half [j+1]^\half} \frac{(q^{-2j},q^{2(j+1)}; q^2)_l}{(q^{-2m},q^{2(m+2)}; q^2)_l}\right| \\
   & = & |B_l| \left| 1 \;-\; \frac{1}{(1-q^{2(m+1)})} \frac{(1-q^{2(2j+1)})}{(1- q^{2j} )^\half  (1-q^{2(j+1)})^\half} \right. \\
   && \hspace{4cm} \left. \times \frac{ \prod_{i=1}^l (1-q^{2j-2i+2}) \prod_{i=1}^l (1-q^{2j+2i}) }{ \prod_{i=1}^l (1-q^{2m-2i+2}		) \prod_{i=1}^l (1-q^{2m+2i+2}) } \right|.
\end{eqnarray*}
In the latter expression, all the exponents of $q$ are positive and bounded below by $2(j-l)$.  The estimate of Lemma \ref{lem:product_estimate} yields
\begin{align}
  |A_l - B_l| &\leq  C_1(k) q^{m-j}  (4l+3)q^{2(j-l)} \nonumber \\
    & \leq C_1(k) q^{m-j}  (4k+3) q^{2(j-k)} \nonumber \\
    & \leq C_1(k) (4k+3) q^{-2k} q^{m+j}.    \label{eq:Legendre_estimate4}
\end{align}

Taken together, the estimates \eqref{eq:Legendre_estimate3} and \eqref{eq:Legendre_estimate4} yield a constant $C_2(k)$ such that $|A_l - B_l| \leq C_2(k) q^{m+j}$ for all $l,j,m$.  The left hand side of \eqref{eq:legendre_approximation2} is then bounded by
$$
  \sum_{l=0}^k \left| A_l - B_l \right| \leq k C_2(k) q^{m+j}.
$$
This yields the claim. \end{proof}

Finally, we describe the coefficients of $\ph(E_1) \bigket{\smallGTsO{m}^{\!\!\low}}$ asymptotically as $m\to\infty$.

\begin{proposition}
\label{prop:phase_estimate}
For any $k\in\NN$,
\begin{multline}
\label{eq:phase_estimate}
\lim_{m\to\infty}    \bigbra{
    \left( \begin{array}{cccccc}
  \multicolumn{2}{c}{m} &
  \multicolumn{2}{c}{0} &
  \multicolumn{2}{c}{-m} \\
&\multicolumn{2}{c}{\!\!k}
&\multicolumn{2}{c}{-k\!\!+\!\!1}  \\
&&\multicolumn{2}{c}{0}
\end{array} \right)^{\!\!\low}}
 \ph(E_1) \bigket{\smallGTsO{m}^{\!\!\low}}  \\
 =
 (-1)^k \frac{[k]}{[2k]^\half} \left( \left[ \frac{2k-1}{2} \right]^{-1} - \left[ \frac{2k+1}{2} \right]^{-1} \right).
\end{multline}
\end{proposition}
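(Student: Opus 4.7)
The strategy is a two-step reduction to data controlled by Lemma~\ref{lem:legendre_approximation2}: we first express $\ph(E_1)\bigket{\smallGTsO{m}^\low}$ in the upper GT basis, then use the polar-decomposition identity $\ph(E_1)^{*}=|E_1|^{-1}F_1$ together with the $\psi_\mu$-intertwining to bring the remaining change-of-basis computation back to the weight-zero subspace governed by Proposition~\ref{prop:GTs_comparison2_for_SU3}.

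For the first step, Proposition~\ref{prop:GTs-comparison1} at $n=3$ combined with the elementary $\SU_q(2)$ action on $\U_q^\upp(2)$-subrepresentations gives $\ph(E_1)\bigket{\smallGTs{m}{j}^\upp}=\bigket{(M'_j)^\upp}$ for $j\geq 1$ and $0$ for $j=0$, where $(M'_j)$ has middle row $(j,-j)$ and bottom $1$. Consequently the matrix element in~\eqref{eq:phase_estimate} equals $\sum_{j\geq 1}a_j\braket{(M_{k,k-1})^\low}{(M'_j)^\upp}$ with $a_j:=(-1)^{j+m}[2j+1]^{1/2}/[m+1]$. For the second step, applying $\ph(E_1)^{*}=|E_1|^{-1}F_1$, the intertwining $F_1\psi_\mu=\psi_\mu F_2$ coming from~\eqref{eq:automorphism}, and formula~\eqref{eq:GTs-F} in $V^{\sigma^{\mu'}}$ yields
$$
F_1\bigket{(M_{k,k-1})^\low}=A_{k,m}\bigket{(M_{k-1})^\low}+B_{k,m}\bigket{(M_k)^\low}
$$
for explicit coefficients $A_{k,m},B_{k,m}$, where $(M_l)^\low$ denotes the lower GT tableau with middle $(l,-l)$ and bottom $0$. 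Hence $\braket{(M_{k,k-1})^\low}{(M'_j)^\upp}$ is a linear combination of the weight-zero inner products $\braket{(M_j)^\upp}{(M_l)^\low}$ for $l=k-1,k$, for which Proposition~\ref{prop:GTs_comparison2_for_SU3} supplies an explicit $\qhypergeometric{4}{3}$ expression.

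Inserting the large-$m$ asymptotic from Lemma~\ref{lem:legendre_approximation2}, the sign $(-1)^{j+l+m}$ coming from the lemma pairs with $(-1)^{j+m}$ in $a_j$ to produce the overall $(-1)^k$ of~\eqref{eq:phase_estimate}. Setting $i=m-j$ and letting $m\to\infty$, the sum over $j$ converges to $\sum_{i\geq 0}q^{i+1/2}\legendre_l(q^{2i}|q^2)$ for $l\in\{k-1,k\}$; an elementary term-by-term expansion of $\legendre_l$ as a polynomial in $x$ and summation of the resulting geometric series in $q^i$ shows each of these equals $-[(2l+1)/2]^{-1}/(q-q^{-1})$. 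The leading large-$m$ behaviour of $A_{k,m},B_{k,m}$, extracted directly from~\eqref{eq:GTs-F}, then combines with the $[2l+1]^{1/2}$ factors coming from Proposition~\ref{prop:GTs_comparison2_for_SU3} to produce the telescoping difference $[(2k-1)/2]^{-1}-[(2k+1)/2]^{-1}$ with overall prefactor $(-1)^k[k]/[2k]^{1/2}$, which is precisely~\eqref{eq:phase_estimate}.

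The principal technical obstacle is the sign and phase bookkeeping in the application of~\eqref{eq:GTs-F}: several bracket factors such as $[m_{k-1}-m_k'+k-2]$ become negative once the tableau indices lie strictly in the interior of their range, so the coefficients $A_{k,m},B_{k,m}$ are not the literal positive square roots of the displayed expressions. One must carefully reconcile these phases with the conjugation $\overline{(\,\cdot\,)}$ and with the Gelfand--Tsetlin phase convention fixed in Section~\ref{sec:change_of_basis}, so that the limits of $A_{k,m}/[m+1]$ and $B_{k,m}/[m+1]$ produce exactly the factors $[k]/\sqrt{[2k][2k\mp 1]}$ needed for the final cancellation.
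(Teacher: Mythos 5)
Your proposal follows essentially the same route as the paper's proof: expand $\bigket{\smallGTsO{m}^\low}$ in the upper Gelfand--Tsetlin basis using \eqref{eq:GTs_comparison1_for_SU3}, use that $|E_1|=(F_1E_1)^{1/2}$ is diagonal there with eigenvalue $[j]^{1/2}[j+1]^{1/2}$, move $E_1^*=F_1$ onto the bra where it acts by the lower GT formula for $\Psi(F_1)=F_2$ (producing precisely the two coefficients you call $A_{k,m},B_{k,m}$), and then invoke Lemma~\ref{lem:legendre_approximation2} to pass to the large-$m$ limit. The arithmetic you sketch of the resulting $q$-Legendre series and the prefactor matches the paper's.

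Two small comments on the places where your proposal departs from the paper. First, for the evaluation $\sum_{i\geq0}q^{i+1/2}\legendre_l(q^{2i}|q^2)=-\left[\tfrac{2l+1}{2}\right]^{-1}/(q-q^{-1})$ you propose a direct term-by-term polynomial expansion with geometric summation. This is correct as a formula, but unwinding the polynomial coefficients of $\legendre_l$ and resumming leads to a terminating $\qhypergeometric{3}{2}$ with a half-integral shift, which is not one of the ``elementary'' summable cases; the paper instead realizes the sum as a $q$-integral and evaluates it cleanly via the Rodrigues formula and $q$-integration by parts (Appendix~\ref{sec:integral_identity}). If you keep your route, be prepared to quote a nontrivial $q$-hypergeometric summation rather than just geometric series. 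Second, the ``principal technical obstacle'' you flag — the possibility of negative bracket factors in \eqref{eq:GTs-F} forcing phase corrections — does not in fact arise here: for the class-$1$ tableaux appearing in this calculation all the arguments under the square roots are nonnegative whenever the target tableau is admissible, so $A_{k,m},B_{k,m}$ are the literal positive square roots $\frac{[m-k+1]^{1/2}[m+k+1]^{1/2}[k]}{[2k\mp1]^{1/2}[2k]^{1/2}}$ with no hidden signs, exactly as the paper writes them.
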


\begin{proof}
From Equation \eqref{eq:GTs_comparison1_for_SU3} we have
$$
 \bigket{\smallGTsO{m}^{\!\!\low}} = \sum_{j=0}^m (-1)^{j+m} \frac{[2j+1]}{[m+1]} \xket{j},
$$
where the $\xket{j}$ are as defined in Proposition \ref{prop:GTs_comparison2_for_SU3}.
Now $\xket{j}$ belongs to a $\U_q^\upp(2)$-subrepresentation of highest weight $ (j,-j)$, and it has weight $0$.  By the standard formulae for $\Uq(\slx_2)$-representations, 
$\xket{j}$ is an eigenvector of $|E_1| = (F_1E_1)^\half$ with eigenvalue $[j]^\half [j+1]^\half$.
Thus,
\begin{equation*}
 \ph(E_1) \bigket{\smallGTs{m}{0}^{\!\!\low}} = E_1 \cdot \left(
   \sum_{j=1}^m \frac{(-1)^{j+m}}{[m+1]} \frac{[2j+1]}{[j]^\half [j+1]^\half} \xket{j} \right).
\end{equation*}
Hence the inner product in the statement of the proposition is equal to
\begin{equation}
\label{eq:phase_estimate1}
 \sum_{j=1}^m \frac{(-1)^{j+m}}{[m+1]} \frac{[2j+1]}{[j]^\half [j+1]^\half} 
  \left. \left. \left\langle
    \left(\begin{array}{cccccc}
  \multicolumn{2}{c}{m} &
  \multicolumn{2}{c}{0} &
  \multicolumn{2}{c}{-m} \\
&\multicolumn{2}{c}{\!\!k}
&\multicolumn{2}{c}{-k\!\!+\!\!1}  \\
&&\multicolumn{2}{c}{0}
\end{array} \right)^{\!\!\low} \right| E_1 \right| \;\mathrm{x}_j \right\rangle.
\end{equation}
Now $E_1^*=F_1$ acts on the lower Gelfand-Tsetlin basis by Formula \eqref{eq:GTs-F} for $\Psi(F_1) = F_2$:
\begin{align*}
  E_1^* \bigket { \left( \begin{array}{cccccc}
  \multicolumn{2}{c}{m} &
  \multicolumn{2}{c}{0} &
  \multicolumn{2}{c}{-m} \\
&\multicolumn{2}{c}{\!\!k}
&\multicolumn{2}{c}{-k\!\!+\!\!1}  \\
&&\multicolumn{2}{c}{0}
\end{array} \right)^{\!\!\low}} 
 &= \textstyle \frac{[m-k+1]^\half [m+k+1]^\half [k]}{[2k-1]^\half [2k]^\half} 
   \bigket{  \left( \begin{array}{cccccc}
  \multicolumn{2}{c}{m} &
  \multicolumn{2}{c}{0} &
  \multicolumn{2}{c}{-m} \\
&\multicolumn{2}{c}{\!\!k\!\!-\!\!1}
&\multicolumn{2}{c}{\!\!-k\!\!+\!\!1}  \\
&&\multicolumn{2}{c}{0}
\end{array} \right)^{\!\!\low} } \\
 & \qquad  \textstyle + \frac{[m-k+1]^\half [m+k+1]^\half [k]}{[2k+1]^\half [2k]^\half} 
   \bigket{ \smallGTs{m}{k}^{\!\!\low} } \\
 & =  \textstyle \frac{[m-k+1]^\half [m+k+1]^\half [k]}{[2k]^\half} 
   \left(\yket{k-1} + \yket{k}\right).
\end{align*}
Putting this into \eqref{eq:phase_estimate1} yields
\begin{multline}
\label{eq:phase_estimate2}
 \bigbra{
    \left( \begin{array}{cccccc}
  \multicolumn{2}{c}{m} &
  \multicolumn{2}{c}{0} &
  \multicolumn{2}{c}{-m} \\
&\multicolumn{2}{c}{\!\!k}
&\multicolumn{2}{c}{-k\!\!+\!\!1}  \\
&&\multicolumn{2}{c}{0}
\end{array} \right)^{\!\!\low}}
 \ph(E_1) \bigket{\smallGTs{m}{0}^{\!\!\low}}  \\
  =  \textstyle\frac{[m-k+1]^\half [m+k+1]^\half}{[m+1]} \frac{[k]}{[2k]^\half} 
    \left( \ds \sum_{j=1}^m (-1)^{j+m} \textstyle \frac{[2j+1]}{[j]^\half [j+1]^\half} \left(\yxbraket{k-1}{j} + \yxbraket{k}{j}\right) \right).
\end{multline}

Now we let $m\to\infty$.  Since $\lim_{m\to\infty} \textstyle\frac{[m-k+1]^\half [m+k+1]^\half}{[m+1]}  =1$,
it only remains to estimate the sum in \eqref{eq:phase_estimate2}.  Lemma \ref{lem:legendre_approximation2} gives us the estimate
\begin{equation}
\label{eq:phase_estimate3}
 \sum_{j=1}^m {\textstyle (-1)^{j+m} \frac{[2j+1]}{[j]^\half [j+1]^\half} \yxbraket{k}{j}}
  = \sum_{j=1}^m {\textstyle (-1)^k q^{m-j+\half} (q-q^{-1}) \legendre_k(q^{2(m-j)}|q^2) }
    \; +\; R_m,
\end{equation}
where 
$$
 |R_m| \leq \sum_{j=1}^m C(k)q^{m+j}
 \longrightarrow  0 \qquad \text{as $m\to\infty$}.
$$
Also,
\begin{align*}
\lefteqn{\sum_{j=1}^m (-1)^k q^{m-j+\half} (q-q^{-1}) \legendre_k(q^{2(m-j)}|q^2) } 
 \qquad\qquad \\
 &= (-1)^{k+1} q^{-\half} (1-q^2) \sum_{i=0}^{m-1} q^{i} \legendre_k(q^{2i}|q^2)
\end{align*}
which is a partial sum for the $q$-integral
\begin{align*}
(-1)^{k+1} q^{-\half} \int_0^1 x^{-\half} \legendre_k(x|q^2) \; d_{q^2} x
= (-1)^{k+1} \left[{\textstyle k+\half } \right]_q^{-1}. 
\end{align*}
The calculation of this $q$-integral is explained in Appendix \ref{sec:integral_identity}.
Putting all this into \eqref{eq:phase_estimate2}, we arrive at
\begin{multline*}
\lim_{m\to\infty} \bigbra{
    \left( \begin{array}{cccccc}
  \multicolumn{2}{c}{m} &
  \multicolumn{2}{c}{0} &
  \multicolumn{2}{c}{-m} \\
&\multicolumn{2}{c}{\!\!k}
&\multicolumn{2}{c}{-k\!\!+\!\!1}  \\
&&\multicolumn{2}{c}{0}
\end{array} \right)^{\!\!\low}}
 \ph(E_1) \bigket{\smallGTs{m}{0}^{\!\!\low}} \\
 = (-1)^k \frac{[k]}{[2k]^\half} \left( \left[{\textstyle k-\half }\right]_q^{-1} - \left[ {\textstyle k+\half} \right]_q^{-1} \right).
\end{multline*}
This finishes the proof. 
\end{proof}


\section{Essential orthotypicality}
\label{sec:essorth}

The notion of essential orthotypicality was introduced in \cite{Yuncken:PsiDOs} as a tool for studying harmonic analysis on manifolds with multiple fibrations.  In brief, two closed subgroups of a compact group are essentially orthotypical if their isotypical subspaces are approximately mutually orthogonal. In \cite{Yuncken:orthotypical} it is shown that in a compact Lie group two subgroups are essentially orthotypical if and only if they generate the entire 
group. Since we do not have an analogous characterization in the quantum case, we shall prove essential orthotypicality for quantum subgroups 
of $\SU_q(n)$ by direct calculation.

\subsection{Definitions and basic properties}

\begin{definition}
\label{def:essential_orthotypicality}
Two closed quantum subgroups $K_{1,q}$, $K_{2,q}$ of a compact quantum group ${K_q}$ are {\em essentially orthotypical} if for any $\tau_1\in\Irr(K_{1,q})$, $\tau_2\in\Irr(K_{2,q})$ 
and any $\epsilon>0$ there are only finitely many $\sigma\in\Irr({K_q})$ for which
$$
\sup \{ |\ip{p_{\tau_1}\xi,p_{\tau_2}\eta}| \st  \xi,\eta\in V^\sigma,~ \|\xi\| = \|\eta\| = 1 \} \geq \epsilon.
$$

\end{definition}

\begin{lemma}
\label{prop:essential_orthotypicality_equivalent_defns}
Let $K_{1,q}$, $K_{2,q}$ be closed quantum subgroups of a compact quantum group ${K_q}$. The following conditions are equivalent.
\begin{bnum}
\item[a)] $K_{1,q}$ and $K_{2,q}$ are essentially orthotypical.
\item[b)] For any $\tau_1\in\Irr(K_{1,q})$ and any $\epsilon>0$, there are only finitely many irreducible unitary ${K_q}$-representations $\sigma\in\Irr({K_q})$ for which
$$
\sup \{ |\ip{p_{\tau_1}\xi,p_{\triv_{K_{2,q}}}\eta}| \st  \xi,\eta\in V^\sigma,~ \|\xi\| = \|\eta\| = 1 \} \geq \epsilon,
$$
where $\triv_{K_{2,q}}$ denotes the trivial representation of $K_{2,q}$.
\item[c)] For any finite sets $S_1 \finitesubset \Irr(K_{1,q})$ and $S_2 \finitesubset \Irr(K_{2,q})$, $p_{S_2}p_{S_1} \in \KK_{K_q}(H)$ on any unitary ${K_q}$-representation space $H$.
\end{bnum}
\end{lemma}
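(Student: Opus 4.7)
The plan is to prove the three equivalences in the order (a) $\Leftrightarrow$ (c), (a) $\Rightarrow$ (b), and (b) $\Rightarrow$ (a); the first two are essentially bookkeeping, while the third is the main content.

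For (a) $\Leftrightarrow$ (c): since each $K_q$-isotypical component $H_\sigma$ of a unitary $K_q$-representation $H$ is automatically $K_{i,q}$-invariant by restriction, the isotypical projections $p_{\tau_i}$ commute with every $p_\sigma$. Consequently $p_{S_2} p_{S_1}$ is block-diagonal with respect to the $K_q$-isotypical decomposition, with the norm of each $\sigma$-block equal to the supremum in (a) taken over $\tau_i \in S_i$. Lemma \ref{lem:K_equivalent_defns} identifies $p_{S_2} p_{S_1} \in \KK_{K_q}(H)$ with the condition that only finitely many blocks exceed any given $\epsilon$ in norm, and running this on $H = \bigoplus_{\sigma \in \Irr(K_q)} V^\sigma$ shows it equivalent to (a). The implication (a) $\Rightarrow$ (b) is immediate by setting $\tau_2 = \triv_{K_{2,q}}$.

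For (b) $\Rightarrow$ (a), it suffices (by the equivalence just proved) to show that (b) implies $p_{\tau_2} p_{\tau_1} \in \KK_{K_q}(H)$ for every unitary $K_q$-representation $H$ and every pair $\tau_i \in \Irr(K_{i,q})$. Choose a finite-dimensional $K_q$-representation $W$ containing $\tau_2$ as a $K_{2,q}$-subrepresentation; this is possible because the surjection $\Poly(K_q) \onto \Poly(K_{2,q})$ forces each $K_{2,q}$-type to appear upon restriction in some irreducible $K_q$-representation. Applying the $K_{2,q}$-analogue of Lemma \ref{lem:reduction_to_triv} produces $K_{2,q}$-intertwiners $\iota: \CC \to W^c \otimes W$ and $\bar\iota: W^c \otimes W \to \CC$ with $p_{\tau_2}^H = (\Id_H \otimes \bar\iota)\,(p_{\triv_{K_{2,q}}}^{H \otimes W^c} \otimes \Id_W)\,(\Id_H \otimes \iota)$. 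Since $(\Id_H \otimes \iota) p_{\tau_1}^H = (p_{\tau_1}^H \otimes \Id_{W^c \otimes W})(\Id_H \otimes \iota)$, we obtain
$$
 p_{\tau_2}^H p_{\tau_1}^H = (\Id_H \otimes \bar\iota)\,\bigl[\,p_{\triv_{K_{2,q}}}^{H \otimes W^c}(p_{\tau_1}^H \otimes \Id_{W^c}) \otimes \Id_W\,\bigr]\,(\Id_H \otimes \iota).
$$

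It remains to show the bracketed operator lies in $\KK_{K_q}(H \otimes W^c \otimes W)$, as the bookend maps $\Id_H \otimes \iota$ and $\Id_H \otimes \bar\iota$ belong to $\AA_{K_q}$ by Lemma \ref{lem:K_tensor_K} (tensoring $\Id_H \in \AA_{K_q}(H)$ with the finite-rank, hence $\KK_{K_q}$-class, maps $\iota, \bar\iota$), and Lemma \ref{lem:A_equivalent_defns}(d) then closes out the sandwich. For the bracketed operator, decompose $W^c|_{K_{1,q}}$ into its finitely many $K_{1,q}$-isotypical summands, and let $F \finitesubset \Irr(K_{1,q})$ be the $K_{1,q}$-types appearing in $V^{\tau_1} \otimes W^c$. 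Then the image of $p_{\tau_1}^H \otimes \Id_{W^c}$ is contained in $(H \otimes W^c)_F$, yielding $p_{\tau_1}^H \otimes \Id_{W^c} = p_F^{H \otimes W^c}(p_{\tau_1}^H \otimes \Id_{W^c})$. Assumption (b) applied on $H \otimes W^c$ to each $\tau_1' \in F$, together with the $*$-closure of $\KK_{K_q}$, gives $p_{\triv_{K_{2,q}}}^{H \otimes W^c} p_F^{H \otimes W^c} \in \KK_{K_q}(H \otimes W^c)$. Multiplying on the right by $p_{\tau_1}^H \otimes \Id_{W^c} \in \AA_{K_q}(H \otimes W^c)$ — the $K_q$-block-diagonal projection $p_{\tau_1}^H$ lies in $\AA_{K_q}(H)$ by Lemma \ref{lem:A_equivalent_defns}(c) — and then tensoring with $\Id_W \in \KK_{K_q}(W)$ preserve $\KK_{K_q}$ by Lemmas \ref{lem:A_equivalent_defns}(d) and \ref{lem:K_tensor_K}. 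The main subtlety is that $\iota$ and $\bar\iota$ are only $K_{2,q}$-intertwiners, so the $K_q$-harmonic structure cannot be transported directly through them; the fix is to route everything through the multiplier category $\AA_{K_q}$, exploiting the fact that all finite-dimensional pieces and all $K_q$-block-diagonal projections automatically lie in $\AA_{K_q}$.
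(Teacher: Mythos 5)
Your proof is correct and follows essentially the same route as the paper: the key implication $(b)\Rightarrow(c)$ is handled via the same tensor trick, using the $K_{2,q}$-analogue of Lemma \ref{lem:reduction_to_triv} to factor $p_{\tau_2}$ through the trivial $K_{2,q}$-projection, identifying a finite set of $K_{1,q}$-types after tensoring with $W^c$, and invoking Lemmas \ref{lem:K_tensor_K} and \ref{lem:A_equivalent_defns} for the Roe-algebraic bookkeeping. You additionally write out the elementary equivalences $(a)\Leftrightarrow(c)$ and $(a)\Rightarrow(b)$ that the paper delegates to the prior reference, and you correctly route the bookend maps $\Id_H\otimes\iota$, $\Id_H\otimes\bar\iota$ through $\AA_{K_q}\otimes\KK_{K_q}\subseteq\AA_{K_q}$ rather than relying on them preserving $K_q$-types (which, since $\iota,\bar\iota$ are only $K_{2,q}$-intertwiners, they need not).
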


\begin{proof}
This is essentially Lemma 5.1 from \cite{Yuncken:PsiDOs}. Here, we will only prove the implication $ b) \Rightarrow c) $. 
The other implications can easily be adapted from the proof in \cite{Yuncken:PsiDOs}.

Let $\tau_1\in\Irr(K_{1,q})$. Fix $\epsilon>0$ and let $S\finitesubset\Irr({K_q})$ be the finite set of ${K_q}$-types which satisfy the condition in $ b) $.  Then $ (1-p_S) p_{\triv_{K_{2,q}}}p_{\tau_1} = p_{\triv_{K_{2,q}}}p_{\tau_1}(1-p_S) $ has norm at most $\epsilon$.  
By Lemma \ref{lem:K_equivalent_defns}, we therefore have $p_{\triv_{K_{2,q}}}p_{\tau_1} \in \KK_{K_q}(H)$.  From this, we obtain $p_{\triv_{K_{2,q}}}p_{S_1} \in \KK_{K_q}(H)$ for any 
finite set $S_1 \finitesubset \Irr(K_{1,q})$.  

Now let $\tau_2\in\Irr(K_{2,q})$ be arbitrary.    
Choose a finite-dimensional ${K_q}$-representation $V$ in which $\tau_2$ occurs as a $K_{2,q}$-type. By Lemma \ref{lem:reduction_to_triv} there are 
linear maps $\iota:\CC\to V^c \otimes V$ and $ \bar{\iota}: V^c \otimes V \to \CC$ so that $p_{\tau_2}$ factorizes as
$$
 \xymatrix{
  p_{\tau_2}:  H \ar[r]^-{\Id_H \otimes \iota} 
   & H \otimes V^c \otimes V \ar[r]^-{p_{\triv_{K_{2,q}}}\otimes\Id_{V}} 
   & H \otimes V^c \otimes V \ar[r]^-{\Id_H\otimes \bar{\iota}} 
   & H.
 }
$$ 
Let $S_1$ be the finite set of all $K_{1,q}$-types occurring in $p_{\tau_1}(H) \otimes V^c$. 
We get the factorization
$$
 \xymatrix{
  p_{\tau_2}p_{\tau_1}:  H \ar[r]^-{p_{\tau_1} \otimes \iota} 
   & H \otimes V^c \otimes V \ar[r]^-{p_{\triv_{K_{2,q}}}p_{S_1}\otimes\Id_{V}} 
   & H \otimes V^c \otimes V \ar[r]^-{\Id_H\otimes \bar{\iota}} 
   & H.
 }
$$ 
But $p_{\triv_{K_{2,q}}}p_{S_1} \in \KK_{K_q}(H\otimes V^c)$ by the preceding paragraph, and since $V$ is finite-dimensional, we obtain $p_{\triv_{K_{2,q}}}p_{S_1} \otimes \Id_{V} \in \KK_{K_q}(H\otimes V^c \otimes V)$.  We also have $p_{\tau_1} \otimes \iota \in \AA_{K_q}(H,H\otimes V^c \otimes V)$ and $\Id_H\otimes \bar{\iota} \in \AA_{K_q}(H\otimes V^c \otimes V, H)$, since they preserve $K_q$-types.
We deduce that $p_{\tau_2}p_{\tau_1} \in \KK_{K_q}(H,H)$.
\end{proof}

\begin{corollary}
\label{cor:orthotypicality_and_intersection_of_Ks}
Let $K_{1,q}$ and $K_{2,q}$ be essentially orthotypical quantum subgroups of ${K_q}$.  Suppose $H,H',H''$ are unitary ${K_q}$-representations and that $H'$ has finite ${K_q}$-multiplicities. Then $ \KK_{K_{2,q}}(H',H'') \KK_{K_{1,q}}(H,H') \subseteq \KK(H,H'') \subseteq \KK_{K_q}(H,H'')$. 
\end{corollary}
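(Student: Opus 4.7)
The plan is to prove the two inclusions separately, with the first being the substantive one.

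\smallskip

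For the inclusion $\KK_{K_{2,q}}(H',H'') \KK_{K_{1,q}}(H,H') \subseteq \KK(H,H'')$, I would first reduce to the case of harmonically finite operators by continuity: since composition is jointly norm-continuous on bounded sets, it suffices to prove that $T_2 T_1$ is compact whenever $T_1 \in \LL(H,H')$ is $K_{1,q}$-harmonically finite and $T_2 \in \LL(H',H'')$ is $K_{2,q}$-harmonically finite. By definition of harmonic finiteness I can choose finite sets $S_1, S_1' \finitesubset \Irr(K_{1,q})$ and $S_2, S_2' \finitesubset \Irr(K_{2,q})$ such that
\[
T_1 = p_{S_1}\,T_1\,p_{S_1'}, \qquad T_2 = p_{S_2}\,T_2\,p_{S_2'},
\]
where the middle projections $p_{S_1}$ and $p_{S_2'}$ both act on $H'$. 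Then
\[
T_2 T_1 = (p_{S_2} T_2)\,\bigl(p_{S_2'} p_{S_1}\bigr)\,(T_1 p_{S_1'}).
\]
By essential orthotypicality (Lemma \ref{prop:essential_orthotypicality_equivalent_defns}(c)), the middle factor $p_{S_2'}p_{S_1}$ lies in $\KK_{K_q}(H')$, and since $H'$ has finite $K_q$-multiplicities, Lemma \ref{lem:compact_operators} identifies $\KK_{K_q}(H') = \KK(H')$. Hence $p_{S_2'}p_{S_1}$ is compact on $H'$, and so $T_2 T_1$ is a product of bounded operators with a compact operator in the middle, which is compact.

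\smallskip

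For the inclusion $\KK(H,H'') \subseteq \KK_{K_q}(H,H'')$, I would argue by approximation. Finite-rank operators are dense in $\KK(H,H'')$, so by linearity it is enough to show that any rank-one operator $|\eta\rangle\langle\xi|$ with $\xi \in H$, $\eta \in H''$ lies in $\KK_{K_q}(H,H'')$. For finite $S, S' \finitesubset \Irr(K_q)$ one has $p_S |\eta\rangle\langle\xi|p_{S'} = |p_S\eta\rangle\langle p_{S'}\xi|$, which is $K_q$-harmonically finite, and because the isotypical projections $p_S$ converge strongly to the identity on $H''$ (respectively $p_{S'}$ on $H$), the norm estimate
\[
\bigl\||\eta\rangle\langle\xi| - |p_S\eta\rangle\langle p_{S'}\xi|\bigr\| \;\leq\; \|\eta - p_S\eta\|\,\|\xi\| + \|p_S\eta\|\,\|\xi - p_{S'}\xi\|
\]
tends to $0$ as $S,S' \to \Irr(K_q)$. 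Thus $|\eta\rangle\langle\xi| \in \KK_{K_q}(H,H'')$, completing the second inclusion.

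\smallskip

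The main obstacle is the first inclusion, and its key input is the factorization of $T_2T_1$ through the product projection $p_{S_2'}p_{S_1}$ on $H'$: everything hinges on the essential orthotypicality making this product $K_q$-harmonically compact, combined with the hypothesis of finite $K_q$-multiplicities of the intermediate space $H'$, which promotes $K_q$-harmonic compactness to honest compactness. The second inclusion is routine density/approximation.
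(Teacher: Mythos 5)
Your proposal is correct and follows essentially the same route as the paper's proof: factor the composite through the middle projection $p_{S_2'}p_{S_1}$ on $H'$, apply essential orthotypicality and the finite-multiplicity hypothesis (via Lemmas \ref{prop:essential_orthotypicality_equivalent_defns} and \ref{lem:compact_operators}) to conclude that projection is compact, and pass to norm-closures by continuity. The only difference is cosmetic — the paper uses the one-sided factorizations $A = p_{S_1}A$ and $B = Bp_{S_2}$, which already give the same middle factor — and you spell out the routine second inclusion $\KK \subseteq \KK_{K_q}$ which the paper leaves implicit.
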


\begin{proof}
Suppose that $A \in \KK_{K_{1,q}}(H,H') $ is $K_{1,q}$-harmonically finite and that
$B \in \KK_{K_{2,q}}(H', H'') $ is $K_{2,q}$-harmonically finite. Then there are finite sets $S_1\finitesubset\Irr(K_{1,q})$, $S_2\finitesubset\Irr(K_{2,q})$ such that $A = p_{S_1}A$ and $B=Bp_{S_2}$. By essential orthotypicality and Lemma \ref{lem:compact_operators}
we have $p_{S_2}p_{S_1} \in \KK_{K_q}(H') = \KK(H')$.  Thus $BA = Bp_{S_2}p_{S_1}A$ is compact. The result follows.
\end{proof}

\begin{remark}
\label{rmk:counterexample}
It is important that $H'$ has finite ${K_q}$-multiplicities in the above statement. Corollary \ref{cor:orthotypicality_and_intersection_of_Ks} 
can fail when $H'$ has infinite ${K_q}$-multiplicities.
\end{remark}

\begin{lemma}
\label{lem:essential_orthotypicality_for_supergroups}
Let $K_{1,q}$, $K_{1,q}'$, $K_{2,q}$, $K_{2,q}'$ be closed quantum subgroups of ${K_q}$, with $K_{1,q} \subseteq K_{1,q}'$ and $K_{2,q}\subseteq K_{2,q}'$. If $K_{1,q}$ and $K_{2,q}$ are 
essentially orthotypical then $K_{1,q}'$ and $K_{2,q}'$ are essentially orthotypical.
\end{lemma}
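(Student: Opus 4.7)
The plan is to work with characterization $c)$ of essential orthotypicality from Lemma \ref{prop:essential_orthotypicality_equivalent_defns}, since it is much easier to manipulate than the $\epsilon$-definition. Concretely, given finite sets $S_1' \finitesubset \Irr(K_{1,q}')$ and $S_2' \finitesubset \Irr(K_{2,q}')$, and a unitary $K_q$-representation space $H$, I want to show that $p_{S_2'} p_{S_1'} \in \KK_{K_q}(H)$.

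The key observation is that since $K_{1,q} \subseteq K_{1,q}'$, any irreducible $K_{1,q}'$-representation $\tau_1' \in S_1'$ is finite dimensional and so, upon restriction to $K_{1,q}$, decomposes into a finite set of $K_{1,q}$-types; call this set $S_1(\tau_1') \finitesubset \Irr(K_{1,q})$. The $\tau_1'$-isotypical subspace of $H$ is a direct sum of copies of $V^{\tau_1'}$ as a $K_{1,q}'$-module, hence as a $K_{1,q}$-module it only sees $K_{1,q}$-types in $S_1(\tau_1')$. This means $p_{\tau_1'}(H) \subseteq p_{S_1(\tau_1')}(H)$, i.e.\ $p_{S_1(\tau_1')} p_{\tau_1'} = p_{\tau_1'}$. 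By the analogous argument on the other side, $p_{\tau_2'} = p_{\tau_2'} p_{S_2(\tau_2')}$ for a finite set $S_2(\tau_2') \finitesubset \Irr(K_{2,q})$.

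Putting these together, for each pair $(\tau_1',\tau_2') \in S_1' \times S_2'$,
$$
p_{\tau_2'} p_{\tau_1'} \;=\; p_{\tau_2'}\, p_{S_2(\tau_2')} p_{S_1(\tau_1')}\, p_{\tau_1'}.
$$
By the hypothesis of essential orthotypicality of $K_{1,q}$ and $K_{2,q}$, characterization $c)$ gives $p_{S_2(\tau_2')} p_{S_1(\tau_1')} \in \KK_{K_q}(H)$. Since $\KK_{K_q}$ is a two-sided ideal in $\AA_{K_q}$ (and in particular absorbs bounded operators from both sides in the $C^*$-category sense of Lemma \ref{lem:A_equivalent_defns}), the product $p_{\tau_2'} p_{\tau_1'}$ lies in $\KK_{K_q}(H)$. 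Summing the finitely many such terms yields $p_{S_2'} p_{S_1'} = \sum_{\tau_1' \in S_1', \tau_2' \in S_2'} p_{\tau_2'} p_{\tau_1'} \in \KK_{K_q}(H)$, and applying the equivalence of $a)$ and $c)$ once more gives the essential orthotypicality of $K_{1,q}'$ and $K_{2,q}'$.

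There is no real obstacle here; the proof is purely formal once one recognises that condition $c)$ is the natural one to manipulate, and that a finite-dimensional representation of a larger compact quantum subgroup restricts to only finitely many irreducibles of a smaller one. The only thing to be slightly careful about is the order of the projections in the factorisation $p_{\tau_1'} = p_{S_1(\tau_1')} p_{\tau_1'}$, which depends on the $K_{i,q}$- and $K_{i,q}'$-isotypical projections commuting on $H$; this is exactly Lemma \ref{lem:commuting_projections} applied to $K_{i,q} \subseteq K_{i,q}' \subseteq K_q$.
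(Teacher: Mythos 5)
Your proof is correct and follows essentially the same strategy as the paper's: factor each $K_{i,q}'$-isotypical projection through the finite set of $K_{i,q}$-types occurring in it, and then invoke condition $c)$ of Lemma \ref{prop:essential_orthotypicality_equivalent_defns} together with the ideal property of $\KK_{K_q}$. The paper works directly with a single pair of types $\tau_1,\tau_2$ rather than summing over finite sets $S_1',S_2'$, but the argument is the same.
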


\begin{proof}
For $i=1,2$, let $\tau_i$ be an irreducible $K_{i,q}'$-type, and let $S_i\subseteq\Irr(K_{i,q})$ be the finite collection of $K_{i,q}$-types which occur non-trivially in $\tau_i$. Then on any $ {K_q} $-representation $ H $, we have 
$$
p_{\tau_1}p_{\tau_2} = p_{\tau_1} p_{S_1} p_{S_2} p_{\tau_2}.
$$
The product $p_{S_1} p_{S_2}$ belongs to $\KK_{K_q}(H)$ by Lemma \ref{prop:essential_orthotypicality_equivalent_defns}, and the other projections belong to $\AA_{K_q}(H)$ since they commute 
with ${K_q}$-isotypical projections. Hence the claim follows from Lemma \ref{prop:essential_orthotypicality_equivalent_defns}. 
\end{proof}

\subsection{Essential orthotypicality of subgroups of $\U_q(n)$}
\label{sec:Uqn_orthotypicality}

We now specialize to $ \U_q(n) $. Recall that we defined block-diagonal quantum subgroups $ \U_q^{\rootset}(n) $ for 
any $ \rootset \subseteq \Sigma $ in section \ref{sec:subgroups_of_U_q}.  

\begin{lemma}
\label{lem:essential_orthotypicality_base_case}
The quantum subgroups $\U_q^{\{1,\ldots,n-2\}}(n)$ and $\U_q^{\{2,\ldots,n-1\}}(n)$ are essentially orthotypical in $\U_q(n)$.
\end{lemma}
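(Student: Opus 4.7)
The plan is to verify condition (b) of Lemma \ref{prop:essential_orthotypicality_equivalent_defns} with $K_{1,q} = \U_q^{\{1,\ldots,n-2\}}(n) = \U^\upp_q(n-1) \times Z^\upp_{n-1}$ and $K_{2,q} = \U_q^{\{2,\ldots,n-1\}}(n) = \U^\low_q(n-1) \times Z^\low_{n-1}$. Analysis of the lower Gelfand-Tsetlin basis, together with the automorphism $\Psi$ of \eqref{eq:automorphism} linking it to the upper one, shows that an irreducible $\U_q(n)$-representation $V^\sigma$ admits a nonzero $K_{2,q}$-invariant subspace exactly when $\sigma$ has highest weight of the form $(m,0,\ldots,0,-m)$ for some integer $m \geq 0$; in that case the invariant subspace is one-dimensional, spanned by the lower Gelfand-Tsetlin vector $v^\low_m := \ket{(M_{(m,0,\ldots,0)})^\low}$. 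The task therefore reduces to showing that, for each fixed $\tau_1 \in \Irr(K_{1,q})$, $\|p_{\tau_1} v^\low_m\| \to 0$ as $m \to \infty$.

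The next step is to expand $v^\low_m$ in the upper Gelfand-Tsetlin basis of $V^{\sigma^{(m,0,\ldots,0,-m)}}$ using Proposition \ref{prop:GTs-comparison1}. Since $v^\low_m$ has weight zero, the interlacing conditions force only the vectors $\ket{(M_\mathbf{m})^\upp}$ of Section \ref{sec:change_of_basis} to appear---those indexed by increasing sequences $0 = m_1 \leq m_2 \leq \cdots \leq m_n = m$, with each row of $M_\mathbf{m}$ of the balanced form $(m_k,0,\ldots,0,-m_k)$. Each such vector belongs to a $\U^\upp_q(n-1)$-subrepresentation of highest weight $(m_{n-1},0,\ldots,0,-m_{n-1})$ and has trivial $Z^\upp_{n-1}$-character, so the only $K_{1,q}$-types occurring in the decomposition of $v^\low_m$ are those whose $\U^\upp_q(n-1)$-highest weight has the balanced form $(a,0,\ldots,0,-a)$ and whose $Z^\upp_{n-1}$-character is trivial. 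For all other $\tau_1$, $p_{\tau_1} v^\low_m$ vanishes identically and condition (b) is trivially satisfied.

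For $\tau_1$ of the permitted form, parametrised by some $a_0 \in \NN$, I would compute $\|p_{\tau_1} v^\low_m\|^2$ by summing the squared coefficients from Proposition \ref{prop:GTs-comparison1} over those indices $\mathbf{m}$ with $m_{n-1} = a_0$. Pulling out the $k = n-1$ factor $[2a_0+n-2]$ and applying the combinatorial identity from the proof of Proposition \ref{prop:GTs-comparison1} (with $n$ replaced by $n-1$ and $m$ replaced by $a_0$) to the remaining inner sum produces a quotient whose numerator depends only on $a_0$ and whose denominator is a constant multiple of $\qchoose{m+n-2 \\ n-2}^2$. Since this binomial grows without bound as $m \to \infty$, the norm vanishes in the limit, and the verification of condition (b) is complete. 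The main analytic effort has already been expended in Proposition \ref{prop:GTs-comparison1}, so no serious obstacle remains; the principal point requiring care is the combination of the weight-zero constraint with the $K_{1,q}$-isotypical decomposition to pin down precisely which types can occur.
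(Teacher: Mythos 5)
Your proof is correct and follows essentially the same strategy as the paper: identify the representations carrying a nonzero $K_{2,q}$-invariant vector, expand the lower Gelfand-Tsetlin vector $\fixedvec$ in the upper basis via Proposition \ref{prop:GTs-comparison1}, and observe that the projection onto a fixed $K_{1,q}$-type tends to zero as $m\to\infty$ because of the $\qchoose{m+n-2 \\ n-2}^{-1}$ normalization. The only minor difference is organizational: the paper first invokes Lemma \ref{lem:essential_orthotypicality_for_supergroups} to reduce to $\U_q^\upp(n-1)$, then bounds each coefficient and notes the number of contributing tableaux is bounded independently of $m$, whereas you work directly with $K_{1,q}$ and evaluate $\|p_{\tau_1}\fixedvec\|^2$ exactly by reusing the combinatorial identity with $n$ replaced by $n-1$; the latter yields a clean closed form but is not fundamentally different.
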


\begin{proof}
By Lemma \ref{lem:essential_orthotypicality_for_supergroups}, it suffices to prove that the quantum subgroups $\U_q^\upp(n-1)$ 
and $\U_q^{\{2,\ldots,n-1\}}(n)$ are essentially orthotypical.
To show this, we will verify condition $b)$ of Lemma \ref{prop:essential_orthotypicality_equivalent_defns}. 

Fix $\tau_1$ an irreducible  representation of $\U_q^\upp(n-1)$ and $\epsilon>0$.  Let $\sigma$ be an irreducible $\U_q(n)$-representation.  Arguing as in Section \ref{sec:class_1}, we observe that
the trivial $\U_q^{\{2,\ldots,n-1\}}(n)$-type does not occur in $\sigma$ unless $ \sigma $ has highest weight of the form $\mu= (m,0,\ldots,0,-m)$ for some $m\in\NN$, in which case the trivial $\U_q^{\{2,\ldots,n-1\}}(n)$-isotypical subspace is spanned by the lower Gelfand-Tsetlin vector
$$
 \fixedvec = \bigket{\GTsO{m}^{\!\!\!\!\biglow\;\;}}.
$$
By proposition \ref{prop:GTs-comparison1},
\begin{multline}
\label{eq:fixedvec}
   \fixedvec  \\
  = \hspace{-3ex} \sum_{0\leq m_2 \leq \cdots \atop \cdots \leq m_{n-1} \leq m_n=m} \hspace{-3ex} 
      \frac{(-1)^{|\mathbf{m}|} \prod_{k=2}^{n-1} [2m_k+k-1]^\half}
	{ [n-2]!^{\half} \qchoose{m+n-2 \\ n-2} }
      \bigket{\GTs{m}{-m}{m_{n-1}\!\!}{\!\!-m_{n-1}}{m_2}{\!-m_2}{0}^ {\!\!\!\!\bigupp\;\;}}.
\end{multline}
We see that the $\tau_1$-isotypical subspace of $\sigma$ will be orthogonal to $\fixedvec$ unless $\tau_1$ has highest weight of the form $(m_{n-1},0,\ldots,0,-m_{n-1})$ for some $m_{n-1} \in \NN$.  

So, let $ \tau_1 $ be the $ \U_q^\upp(n-1) $-type with highest weight $(m_{n-1},0,\ldots,0,-m_{n-1})$. Regardless of $m$, the sum 
in \eqref{eq:fixedvec} contains at most a fixed finite number of vectors of $ \U_q^\upp(n-1) $-type $ \tau_1 $, since they all 
have to verify $m_2 \leq \cdots \leq m_{n-2} \leq m_{n-1}$. Moreover, the coefficient of each of these terms is bounded in absolute value by
$$
  \frac{  [2m_{n-1}+(n-1)-1]^{\half(n-2)} }{ [n-2]!^{\half} \qchoose{m+n-2 \\ n-2} },
$$
which tends to zero as $m\to\infty$.  It follows that there are only finitely many $m\in\NN$ for which there exists a unit vector $\xi$ of $\Uq(\lie{gl}_{n-1}^\upp)$-type $\tau_1$ such that $|\braket{\xi}{(M_{m,0,\ldots,0})^\low} | > \epsilon$.  This completes the proof.
\end{proof}

\begin{proposition}
\label{thm:essential_orthotypicality_in_SUq}
Let $n\geq 2$ and let $\rootset_1,\rootset_2$ be sets of simple roots of $\U_q(n)$. If $\rootset_1\cup\rootset_2 = \Sigma$ then $\U_q^{\rootset_1}(n)$ and $\U_q^{\rootset_2}(n)$ are 
essentially orthotypical in $\U_q(n)$.
\end{proposition}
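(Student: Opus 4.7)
The plan is to prove this by induction on $n$, treating Lemma \ref{lem:essential_orthotypicality_base_case} as the key building block and Lemma \ref{lem:essential_orthotypicality_for_supergroups} as the main tool for propagating essential orthotypicality. The base case $n=2$ is immediate: $\Sigma = \{\alpha_1\}$, so $\rootset_1 \cup \rootset_2 = \Sigma$ forces at least one $\rootset_i = \Sigma$, and hence $\U_q^{\rootset_i}(2) = \U_q(2)$, with which any closed quantum subgroup is trivially essentially orthotypical.

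For $n \geq 3$, Lemma \ref{lem:essential_orthotypicality_for_supergroups} says that essential orthotypicality ascends through inclusions of subgroups, so it suffices to exhibit, inside $\U_q^{\rootset_1}(n)$ and $\U_q^{\rootset_2}(n)$, a pair of closed quantum subgroups that is already known to be essentially orthotypical. Using the automorphism $\Psi$ of \eqref{eq:automorphism} to reverse the Dynkin diagram, we may normalize the configuration of the extremal simple roots $\alpha_1, \alpha_{n-1}$. In the ``favorable'' sub-case where (possibly after $\Psi$) $\rootset_1 \supseteq \{1,\ldots,n-2\}$ and $\rootset_2 \supseteq \{2,\ldots,n-1\}$, the result follows immediately by combining Lemma \ref{lem:essential_orthotypicality_base_case} with Lemma \ref{lem:essential_orthotypicality_for_supergroups}.

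The main obstacle is the general case, in particular the ``minimal'' situation where $\rootset_1$ and $\rootset_2$ are disjoint and interleaved in the Dynkin diagram, so that neither contains a full base set $\{1,\ldots,n-2\}$ or $\{2,\ldots,n-1\}$. Here the plan is to exploit the inductive hypothesis: by removing a suitable extremal simple root $\alpha_j$ and restricting to the corresponding Levi $\U_q^\upp(n-1)$ or $\U_q^\low(n-1)$, the traces of $\rootset_1, \rootset_2$ on the smaller Dynkin diagram still cover it, so the inductive hypothesis yields essential orthotypicality inside $\U_q(n-1)$. The hard part will be transferring this up to $\U_q(n)$, since essential orthotypicality does not automatically ascend through an embedding of the ambient group; this step is likely to require a direct Gelfand-Tsetlin estimate in the spirit of Lemma \ref{lem:essential_orthotypicality_base_case}, showing that the change-of-basis coefficients between $\U_q^{\rootset_1}$- and $\U_q^{\rootset_2}$-adapted bases in an irreducible $\U_q(n)$-representation decay as the highest weight grows.
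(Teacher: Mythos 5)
Your proposal correctly handles the base case $n=2$, correctly identifies the ``favorable'' configuration where one of the $\rootset_i$ contains $\{1,\ldots,n-2\}$ (up to applying $\Psi$), and correctly diagnoses that the hard part is transferring essential orthotypicality from $\U_q(n-1)$ up to $\U_q(n)$. But at exactly that point you stop, declaring that the transfer ``is likely to require a direct Gelfand-Tsetlin estimate in the spirit of Lemma \ref{lem:essential_orthotypicality_base_case}.'' That is a genuine gap: the step you identify as the crux is left unsolved, and your prediction about the mechanism is off the mark.

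What actually resolves it is \emph{not} a new Gelfand-Tsetlin computation; Lemma \ref{lem:essential_orthotypicality_base_case} remains the only explicit Gelfand-Tsetlin estimate in the whole proof. The mechanism is a two-step projection-shrinking argument. First prove, via the inductive hypothesis and the product decomposition $\U_q^{\{1,\ldots,n-2\}}(n) = \U_q^\upp(n-1) \times Z^\upp_{n-1}$ together with Lemma \ref{lem:product_groups}, the claim that $\U_q^{J_1}(n)$ and $\U_q^{J_2}(n)$ are essentially orthotypical in $\U_q^{\{1,\ldots,n-2\}}(n)$ whenever $J_1\cup J_2 = \{1,\ldots,n-2\}$. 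Then, assuming WLOG $n-1\in\rootset_1$, use the claim to find a finite set $F_1$ of $\U_q^{\{1,\ldots,n-2\}}(n)$-types with $\|p_{\tau_1}p_{\tau_2} - p_{\tau_1}p_{F_1}p_{\tau_2}\|<\epsilon$. Now the crucial combinatorial observation you are missing: because $n-1\in\rootset_1$, the sets $\rootset_1\setminus\{1\}$ and $\{2,\ldots,n-2\}$ still cover $\{2,\ldots,n-1\}$, so a second application of the (mirrored) claim yields a finite set $F_2$ of $\U_q^{\{2,\ldots,n-1\}}(n)$-types with $\|p_{\tau_1}p_{F_1}p_{\tau_2} - p_{\tau_1}p_{F_2}p_{F_1}p_{\tau_2}\|<\epsilon$. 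At this point $p_{F_2}p_{F_1}$ involves the two canonical maximal Levi subgroups, and Lemma \ref{lem:essential_orthotypicality_base_case} (the one and only hard estimate) gives $p_{F_2}p_{F_1}\in\KK_{\U_q(n)}(H)$, hence $p_{\tau_1}p_{\tau_2}\in\KK_{\U_q(n)}(H)$ since $\epsilon$ was arbitrary. So the transfer is done by bootstrapping the base-case orthotypicality through two finite-projection approximations, not by new decay estimates on change-of-basis coefficients. Until you supply an argument of this kind (or an actual new estimate), the proof is incomplete.
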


\begin{proof}
The result is trivial for $n=2$.  Suppose now that $n>2$ and that the result has been proven for $\U_q(n-1)$.  

We claim first that $\U_q^{J_1}(n)$ and $\U_q^{J_2}(n)$ are essentially orthotypical in $\U_q^{\{1,\ldots,n-2\}}(n)$ whenever $J_1 \cup J_2 = \{1,\ldots,n-2\}$.  Recall that $\U_q^{\{1,\ldots,n-2\}}(n) = \U_q^\upp(n-1) \times Z^\upp_{n-1}$.  Moreover, we have $\U_q^{J}(n) = \U_q^{J}(n-1) \times Z^\upp_{n-1}$ for any $J\subseteq\{1,\ldots,n-2\}$. Note that $\U_q^J(n-1)$ and $Z^\upp_{n-1}$ are commuting and generating quantum subgroups of $\U_q^J(n)$. Using Lemma \ref{lem:product_groups}, the inductive hypothesis implies that the quantum subgroups $\U_q^{J_1}(n), \U_q^{J_2}(n) \subseteq \U_q^{\{1,\ldots,n-2\}}(n)$ satisfy condition $c)$ of Lemma \ref{prop:essential_orthotypicality_equivalent_defns} whenever $J_1 \cup J_2 = \{1,\ldots,n-2\}$.  This proves the claim.

Suppose now that $\rootset_1\cup\rootset_2 = \{1, \ldots, n - 1\} $. Assume without loss of generality that $n-1 \in \rootset_1$.
Let $\tau_i \in \Irr(\U_q^{\rootset_i}(n))$ for $i=1,2$. Moreover, let $S_i$ denote the set of $\U_q^{\rootset_i \setminus \{n-1\}}(n)$-types that occur in $\tau_i$, so that we have $p_{\tau_i} = p_{\tau_i}p_{S_i} $ on any $\U_q(n)$-representation $H$. By the claim above, $p_{S_1}p_{S_2}$ is in $\KK_{\U_q^{\{1,\ldots,n-2\}}(n)}(H)$, so for any $\epsilon>0$ there exists a finite set $F_1 \finitesubset \Irr(\U_q^{\{1, \ldots, n-2\}}(n))$ such that $\|(1-p_{F_1})p_{S_1}p_{S_2}\| < \epsilon$.  Note that $p_{F_1}$ commutes with both $p_{S_1}$ and $p_{S_2}$. We therefore obtain
\begin{equation}
 \label{eq:essential_orthotypicality_estimate1}
\|  p_{\tau_1}p_{\tau_2} - p_{\tau_1}p_{F_1}p_{\tau_2}\|
  = \|  p_{\tau_1}p_{S_1}(1-p_{F_1})p_{S_2}p_{\tau_2}\| < \epsilon.
\end{equation}

Now we repeat this trick, this time removing the first simple root instead of the last.  Let $T_1$ denote the finite collection of $\U_q^{\rootset_1\setminus\{1\}}(n)$-types which occur in $\tau_1$, and let $T_2$ denote the finite collection of $\U_q^{\{2,\ldots, n-2\}}(n)$-types which occur in any of the $\U_q^{\{1,\ldots, n -2 \}}(n) $-types in $F_1$. Since we assumed that $n-1\in\rootset_1$, we have $(\rootset_1\setminus\{1\}) \cup\{2,\ldots, n-2\}= \{2,\ldots, n-1\}$. Another application of the above claim implies that $p_{T_1}p_{T_2} \in \KK_{\U_q^{\{2,\ldots, n-1\}}(n)}(H)$.  Thus, there is a finite subset $F_2\finitesubset\Irr({\U_q^{\{2,\ldots, n-1\}}(n)})$ such that $\|(1-p_{F_2})p_{T_1}p_{T_2}\| < \epsilon$, and we obtain
\begin{equation}
 \label{eq:essential_orthotypicality_estimate2}
\|  p_{\tau_1}p_{F_1}p_{\tau_2} - p_{\tau_1}p_{F_2}p_{F_1}p_{\tau_2}\|
  = \|  p_{\tau_1}p_{T_1}(1-p_{F_2})p_{T_2}p_{F_1}p_{\tau_2}\| < \epsilon.
\end{equation}

Combining Equations \eqref{eq:essential_orthotypicality_estimate1} and \eqref{eq:essential_orthotypicality_estimate2} gives
$$
\|  p_{\tau_1}p_{\tau_2} - p_{\tau_1}p_{F_2}p_{F_1}p_{\tau_2}\| < 2\epsilon.
$$
By Lemma \ref{lem:essential_orthotypicality_base_case}, $p_{F_2}p_{F_1} \in \KK_{\U_q(n)}(H)$, so $p_{\tau_1}p_{F_2}p_{F_1}p_{\tau_2}\in \KK_{\U_q(n)}(H)$.  Since $\epsilon$ was arbitrary, $p_{\tau_1}p_{\tau_2} \in \KK_{\U_q(n)}(H)$.  This completes the proof.
\end{proof}

\begin{lemma}
\label{lem:essential_orthogonality}
For any $\rootset_1,\rootset_2 \subseteq \Sigma$, $\U_q^{\rootset_1}(n)$ and $\U_q^{\rootset_2}(n)$ are essentially orthotypical as quantum subgroups of $\U_q^{\rootset_1\cup\rootset_2}(n)$.
\end{lemma}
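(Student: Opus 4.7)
Setting $\rootset = \rootset_1 \cup \rootset_2$, the plan is to use the natural block decomposition of $\U_q^{\rootset}(n)$ to reduce to Proposition \ref{thm:essential_orthotypicality_in_SUq}. Decompose $\rootset$ into its connected components $\rootset = C_1 \sqcup \cdots \sqcup C_r$ in the Dynkin diagram of type $A_{n-1}$; correspondingly $\{1,\ldots,n\}$ splits into a partition $B_1 \sqcup \cdots \sqcup B_r$, where $B_a$ consists of the consecutive indices connected by $C_a$ (isolated indices forming singleton blocks). Because generators $E_i, F_i, G_j$ belonging to distinct blocks mutually commute inside $\Uq(\glx_n)$, the defining subalgebra of $C(\widehat{\U_q^{\rootset}(n)})$ factors as a commuting product of copies of $C(\widehat{\U_q(|B_a|)})$, one per block. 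This identifies $\U_q^{\rootset}(n) \cong \prod_a \U_q(|B_a|)$ as a direct product of compact quantum groups whose factors are commuting and generating in the sense of Definition \ref{def:commuting_generating}, and analogously $\U_q^{\rootset_i}(n) \cong \prod_a \U_q^{\rootset_i \cap C_a}(|B_a|)$ for $i = 1,2$. Within each block $a$ we have $(\rootset_1\cap C_a) \cup (\rootset_2\cap C_a) = C_a$, which is the \emph{full} set of simple roots of $\U_q(|B_a|)$, so Proposition \ref{thm:essential_orthotypicality_in_SUq} yields essential orthotypicality of $\U_q^{\rootset_1 \cap C_a}(|B_a|)$ and $\U_q^{\rootset_2 \cap C_a}(|B_a|)$ in $\U_q(|B_a|)$ (and the singleton blocks give the trivial case).

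It remains to combine the per-block results into a single statement for the product. I would verify condition (c) of Lemma \ref{prop:essential_orthotypicality_equivalent_defns}. For irreducible types $\tau = \tau_1 \times \cdots \times \tau_r$ and $\sigma = \sigma_1 \times \cdots \times \sigma_r$ of the two product subgroups, the product structure yields a factorization $p_\tau p_\sigma = \prod_a (p_{\tau_a} p_{\sigma_a})$ with factors from distinct blocks commuting. The block essential orthotypicality gives $p_{\tau_a} p_{\sigma_a} \in \KK_{\U_q(|B_a|)}$ for each $a$. Since each $p_{\tau_b} p_{\sigma_b}$ with $b \neq a$ commutes with all $\U_q(|B_a|)$-isotypical projections, it lies in $\AA_{\U_q(|B_a|)}$, so the ideal property in Lemma \ref{lem:A_equivalent_defns}(d) forces $p_\tau p_\sigma \in \KK_{\U_q(|B_a|)}$ for every $a$. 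An iterated application of Lemma \ref{lem:product_groups} identifies $\bigcap_a \KK_{\U_q(|B_a|)}$ with $\KK_{\U_q^{\rootset}(n)}$, which concludes the verification.

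The main obstacle is the combination step. The per-block essential orthotypicalities follow immediately once the decomposition is in place, but one cannot simply multiply norm approximations by harmonically finite operators across blocks, since the resulting product is not harmonically finite in the product group. The right viewpoint is to use the commutation of block projections together with the characterization of $\AA_{K_q}$ as multipliers of $\KK_{K_q}$, and then Lemma \ref{lem:product_groups} to patch the resulting ideal memberships into one. The combinatorial block decomposition itself is straightforward but should be stated carefully, particularly to accommodate isolated indices and to handle the central Cartan generators $G_j$ which are shared across blocks.
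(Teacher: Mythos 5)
Your proposal matches the paper's proof exactly: decompose $\U_q^{\rootset}(n)$ into block-diagonal factors $\prod_a \U_q(|B_a|)$, apply Proposition~\ref{thm:essential_orthotypicality_in_SUq} inside each block, and combine via Lemma~\ref{lem:product_groups}; your extra detail on the patching step via condition (c) of Lemma~\ref{prop:essential_orthotypicality_equivalent_defns} and the multiplier characterization of $\AA$ correctly fills in the paper's terse ``repeated application'' of that lemma. One small correction to your final caution: the $G_j$ are \emph{not} shared across blocks --- each $G_j$ lies in exactly one block factor --- so that worry does not arise.
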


\begin{proof}
Write $\rootset = \rootset_1 \cup \rootset_2$.  The quantum group $\U_q^\rootset(n)$ has a block diagonal decomposition, which we 
shall write as $\U_q^\rootset(n) = \prod_k \U_q(n_k)$.  
Let $\Sigma_k \subseteq \Sigma$ be the set simple roots of the block $\U_q(n_k)$, and put $\rootset_{i,k}= \rootset_i \cap \Sigma_k$ for $i=1,2$.  We obtain decompositions $\U_q^{\rootset_i}(n) = \prod_k \U_q^{\rootset_{i,k}}(n_k)$.  For each $k$, $\rootset_{1,k} \cup \rootset_{2,k} = \Sigma_k$, so Proposition \ref{thm:essential_orthotypicality_in_SUq} says that the subgroups $\U_q^{\rootset_{1,k}}(n_k)$ and $\U_q^{\rootset_{2,k}}(n_k)$ are essentially orthotypical in $\U_q(n_k)$. A repeated application of Lemma \ref{lem:product_groups} completes the proof.
\end{proof}

We finish this section with the analogous result for quantum subgroups of $K_q=\SU_q(n)$. 

\begin{proposition}
\label{prop:SUq_orthotypicality}
For any $\rootset_1,\rootset_2 \subseteq \Sigma$, $K_q^{\rootset_1}$ and $K_q^{\rootset_2}$ are essentially orthotypical as quantum subgroups of $K_q^{\rootset_1\cup\rootset_2}$.
\end{proposition}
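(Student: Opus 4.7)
The plan is to deduce the proposition from Lemma \ref{lem:essential_orthogonality} by lifting to the $\U_q(n)$ side via the quantum subgroup inclusions $K_q^\rootset \subseteq \U_q^\rootset(n)$ (and analogously $K_q^{\rootset_i} \subseteq \U_q^{\rootset_i}(n)$). The key structural fact is that the quotient $\U_q^\rootset(n)/K_q^\rootset$ is the one-dimensional central circle realized by the total-determinant character. Hence every irreducible $K_q^\rootset$-representation $\sigma$ extends to an irreducible $\U_q^\rootset(n)$-representation $\tilde\sigma$ on the same Hilbert space, and two such extensions differ by tensoring with a power of $\det$, which shifts the highest weight by a multiple of $(1,\ldots,1) \in \ZZ^n$, and hence the total determinant by a multiple of $n$. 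I would therefore encode $\sigma$ by the invariant $d(\sigma) \in \ZZ/n$, the residue of the total determinant of any extension, and define $d(\tau_i) \in \ZZ/n$ analogously for $\tau_i \in \Irr(K_q^{\rootset_i})$.

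The main technical observation is as follows. On any $\tilde\sigma \in \Irr(\U_q^\rootset(n))$, the total-determinant central element acts by a single scalar; since this element is also central in $\U_q^{\rootset_i}(n) \subseteq \U_q^\rootset(n)$, every $\U_q^{\rootset_i}(n)$-subrepresentation of $\tilde\sigma|_{\U_q^{\rootset_i}(n)}$ inherits the same total determinant. Two distinct $\U_q^{\rootset_i}(n)$-types restricting to the same $K_q^{\rootset_i}$-type must differ in total determinant by a nonzero multiple of $n$, so the restriction map is injective on the $\U_q^{\rootset_i}(n)$-types appearing in $V^{\tilde\sigma}$. It follows that the $\U_q^{\rootset_i}(n)$- and $K_q^{\rootset_i}$-isotypical decompositions of $V^\sigma = V^{\tilde\sigma}$ coincide, with equal isotypical projections.

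Given this comparison, the argument proceeds as follows. Fix $\tau_i \in \Irr(K_q^{\rootset_i})$ for $i = 1, 2$ and $\epsilon > 0$. Unless $d(\tau_1) = d(\tau_2)$, no $\sigma \in \Irr(K_q^\rootset)$ contains both $\tau_i$'s in its $K_q^{\rootset_i}$-decomposition, and the orthotypicality condition is vacuous. Setting $d := d(\tau_1) = d(\tau_2)$, only $\sigma$ with $d(\sigma) = d$ are relevant. For each such $\sigma$ I would choose the extension $\tilde\sigma$ whose total determinant is the representative of $d$ in $[0, n)$, and let $\tilde\tau_i^{(d)}$ denote the unique extension of $\tau_i$ with the same total determinant. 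By the comparison above, $\|p_{\tau_1} p_{\tau_2}|_{V^\sigma}\| = \|p_{\tilde\tau_1^{(d)}} p_{\tilde\tau_2^{(d)}}|_{V^{\tilde\sigma}}\|$, and Lemma \ref{lem:essential_orthogonality} applied to $\U_q^{\rootset_1}(n)$ and $\U_q^{\rootset_2}(n)$ inside $\U_q^\rootset(n)$ shows that only finitely many such $\tilde\sigma$ (hence $\sigma$) verify the large-norm condition.

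The main delicacy is the central-character bookkeeping required to make the correspondences $\sigma \leftrightarrow \tilde\sigma$ and $\tau_i \leftrightarrow \tilde\tau_i^{(d)}$ compatible across the different quantum subgroups. This works cleanly because the total-determinant central element is the same element of $C(\widehat{\U_q(n)})$ regardless of the block-diagonal quantum subgroup in which it is viewed, so the mod-$n$ compatibility $d(\sigma) = d(\tau_i)$ is precisely what guarantees that $\tilde\tau_i^{(d)}$ actually appears in $\tilde\sigma|_{\U_q^{\rootset_i}(n)}$.
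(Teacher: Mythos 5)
Your argument is correct, but it takes a genuinely different (and somewhat more elaborate) route than the paper's. The paper first invokes the equivalence criterion of Lemma \ref{prop:essential_orthotypicality_equivalent_defns} (condition $b)$) to reduce to the case where $\tau_2$ is the \emph{trivial} $K_q^{\rootset_2}$-type. Once $\tau_2$ is trivial, if $\sigma \in \Irr(K_q^{\rootset_1\cup\rootset_2})$ contains the trivial $K_q^{\rootset_2}$-subrepresentation, Schur's lemma forces the finite central subgroup $T\cap Z \cong \ZZ/n\ZZ$ to act trivially on all of $V^\sigma$, so both $\sigma$ and $\tau_1$ have a \emph{canonical} extension to $\U_q$-representations on which the centre $Z$ of $\U(n)$ acts trivially, i.e.\ total determinant zero. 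This trivializes the mod-$n$ bookkeeping that you carry out: you prove the same statement working in the general congruence sector $d(\tau_1)=d(\tau_2)=d$ rather than reducing to $d=0$ first. Both approaches work; the paper's shortcut buys brevity by using a lemma already on the shelf, while yours is more self-contained (it only needs the bare Definition \ref{def:essential_orthotypicality} and Lemma \ref{lem:essential_orthogonality}) and makes the central-character mechanism fully explicit. One step you pass over quickly is the claim that the $\U_q^{\rootset_i}(n)$- and $K_q^{\rootset_i}$-isotypical decompositions of $V^{\tilde\sigma}$ coincide: this requires both the injectivity of restriction on the types occurring in $V^{\tilde\sigma}$ (which you establish via the constant-determinant argument) \emph{and} the fact that each irreducible $\U_q^{\rootset_i}(n)$-type restricts irreducibly to $K_q^{\rootset_i}$ because the quotient is a central circle; the second point is worth stating since otherwise the decompositions could be related by a nontrivial refinement rather than equality.
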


\begin{proof}
Let $T$ be the diagonal maximal torus of $\SU(n)$ and let $Z$ be the centre of $\U(n)$. Both $T$ and $Z$ can be naturally identified with subgroups of the maximal 
torus of $\U_q(n)$.

Fix $\tau_1\in\Irr(K_q^{\rootset_1})$ and let $\triv_2$ denote the trivial representation of $K_q^{\rootset_2}$.  Suppose $\sigma \in \Irr(K_q^{\rootset_1 \cup \rootset_2})$ contains both of these as subrepresentations.  Then in particular, $T$ acts trivially on the trivial $K_q^{\rootset_2}$-isotypical subspace, and by Schur's Lemma $T\cap Z$ acts trivially on all of $V^\sigma$.  By comparison with the representation theory of the classical groups, we therefore know that $\sigma$ and $\tau_1$ extend uniquely to representations $\tilde\sigma \in \Irr(\U_q^{\rootset_1 \cup \rootset_2}(n))$ and $\tilde\tau_1\in\Irr(\U_q^{\rootset_1}(n))$, respectively, in which $Z$ acts trivially.  Denote by $\tilde\triv_{2}$ the trivial representation of $\U_q^{\rootset_2}(n)$.  By Lemma \ref{lem:essential_orthogonality}, for any $\epsilon>0$, there are only finitely many  $\tilde{\sigma}\in\Irr(\U_q^{\rootset_1 \cup \rootset_2}(n))$ for which
$$
\sup \{ |\ip{p_{\tilde\tau_1}\xi,p_{\tilde\triv_2}\eta}| \st  \xi,\eta\in V^{\tilde\sigma},~ \|\xi\| = \|\eta\| = 1 \} \geq \epsilon.
$$
The result therefore follows from Lemma \ref{prop:essential_orthotypicality_equivalent_defns}.
\end{proof}

\begin{corollary}
\label{cor:projections_in_A}
Let $I\subseteq\Sigma$ and let $H$ be a unitary representation of $K_q$.  For any $\tau\in\Irr(K_q^I)$ the isotypical projection $p_\tau$ belongs to $\AA(H)$.
\end{corollary}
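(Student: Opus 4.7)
The goal is to show that for every $\rootset \subseteq \Sigma$, the projection $p_\tau$ lies in $\AA_\rootset(H)$; then $p_\tau$ will belong to the intersection $\AA(H) = \bigcap_{\rootset} \AA_\rootset(H)$ by definition. The natural route is to verify condition $c)$ of Lemma \ref{lem:A_equivalent_defns}: that is, to check that for any finite $S \finitesubset \Irr(K_q^\rootset)$ both products $p_S p_\tau$ and $p_\tau p_S$ lie in $\KK_\rootset(H)$. Since $\KK_\rootset(H)$ is closed under taking adjoints and $p_S, p_\tau$ are self-adjoint, it suffices to treat $p_S p_\tau$.

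The main input will be essential orthotypicality. Set $\rootset' = I \cup \rootset$, so that $K_q^I$ and $K_q^\rootset$ are both closed quantum subgroups of $K_q^{\rootset'}$. By Proposition \ref{prop:SUq_orthotypicality}, they are essentially orthotypical in $K_q^{\rootset'}$. Restricting the $K_q$-representation $H$ along $K_q^{\rootset'} \subseteq K_q$ makes $H$ a unitary $K_q^{\rootset'}$-representation, so the characterization in Lemma \ref{prop:essential_orthotypicality_equivalent_defns}$(c)$ applied to the finite singleton $\{\tau\} \finitesubset \Irr(K_q^I)$ and the finite set $S \finitesubset \Irr(K_q^\rootset)$ yields
\[
p_S \, p_\tau \;\in\; \KK_{K_q^{\rootset'}}(H).
\]

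To finish, we must pass from $K_q^{\rootset'}$-harmonic compactness to $K_q^\rootset$-harmonic compactness. Since $\rootset \subseteq \rootset'$, we have the inclusion of quantum subgroups $K_q^\rootset \subseteq K_q^{\rootset'}$, and Lemma \ref{lem:subgroups} gives $\KK_{K_q^{\rootset'}}(H) \subseteq \KK_{K_q^\rootset}(H) = \KK_\rootset(H)$. Therefore $p_S p_\tau \in \KK_\rootset(H)$, and by adjoints $p_\tau p_S \in \KK_\rootset(H)$ as well. Applying Lemma \ref{lem:A_equivalent_defns}$(c)$ we conclude $p_\tau \in \AA_\rootset(H)$ for every $\rootset \subseteq \Sigma$, hence $p_\tau \in \AA(H)$.

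There is no real obstacle in this argument beyond correctly identifying the ambient quantum group in which to invoke essential orthotypicality; the rest is a formal reshuffling of definitions. The only subtlety worth flagging is the direction of the inclusion $\KK_{K_q^{\rootset'}} \subseteq \KK_{K_q^\rootset}$ coming from a subgroup inclusion $K_q^\rootset \subseteq K_q^{\rootset'}$, which is precisely the contravariance recorded after Definition \ref{def:J}.
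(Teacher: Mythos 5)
Your proof is correct and follows essentially the same route as the paper: apply Proposition \ref{prop:SUq_orthotypicality} together with Lemma \ref{prop:essential_orthotypicality_equivalent_defns}(c) to land $p_S p_\tau$ in $\KK_{I\cup\rootset}$, use the contravariant inclusion $\KK_{I\cup\rootset}\subseteq\KK_\rootset$, and conclude via Lemma \ref{lem:A_equivalent_defns}(c). The only cosmetic difference is that the paper works with single types $\tau'$ rather than finite sets $S$, which is equivalent since $p_S$ is a finite sum of the $p_{\tau'}$.
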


\begin{proof}
Let $I'\subseteq\Sigma$.  For any $\tau'\in\Irr(K_q^{I'})$, Proposition \ref{prop:SUq_orthotypicality} implies that $p_\tau p_{\tau'}$ and $p_{\tau'} p_\tau$ are in $\KK_{I\cup I'}(H) \subseteq \KK_{I'}(H)$. From Lemma \ref{lem:A_equivalent_defns} we deduce that $p_\tau \in \AA_{I'}(H)$.  Since $I'$ was arbitrary $p_\tau\in\AA(H)$.
\end{proof}


\subsection{Application to the lattice of ideals}
\label{sec:lattice_of_ideals_II}

Essential orthotypicality is the crucial property for proving Theorem \ref{thm:lattice_of_ideals}.

\begin{lemma}
\label{lem:K_intesections}
Let $ K_q = \SU_q(n) $ and let $\rootset_1, \rootset_2 \subseteq \Sigma$.  Then
\begin{bnum}
\item[a)] $\KK_{\rootset_1} \cap \KK_{\rootset_2} = \KK_{\rootset_1\cup\rootset_2}$,
\item[b)] $\AA_{\rootset_1} \cap \AA_{\rootset_2} \subseteq \AA_{\rootset_1\cup\rootset_2}$.  
\end{bnum}
\end{lemma}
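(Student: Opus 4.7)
The plan is to prove part (a) directly and then derive part (b) from (a) via the multiplier characterization of $\AA_\rootset$. One inclusion in (a) is immediate from the general machinery: since $K_q^{\rootset_i} \subseteq K_q^{\rootset_1\cup\rootset_2}$ for $i=1,2$, Lemma \ref{lem:subgroups} yields $\KK_{\rootset_1\cup\rootset_2} \subseteq \KK_{\rootset_1}\cap\KK_{\rootset_2}$ with no further work.

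The substantive content is the reverse inclusion in (a). Given $T \in \KK_{\rootset_1}(\sH,\sH')\cap\KK_{\rootset_2}(\sH,\sH')$ and $\epsilon>0$, I would use Lemma \ref{lem:K_equivalent_defns}(b) to choose finite sets $S_i\finitesubset\Irr(K_q^{\rootset_i})$ ($i=1,2$) making all four norms $\|(1-p_{S_i})T\|$ and $\|T(1-p_{S_i})\|$ less than $\epsilon$. The decisive input is then Proposition \ref{prop:SUq_orthotypicality}: essential orthotypicality of $K_q^{\rootset_1}$ and $K_q^{\rootset_2}$ inside $K_q^{\rootset_1\cup\rootset_2}$, combined with Lemma \ref{prop:essential_orthotypicality_equivalent_defns}(c), forces the products $p_{S_1}p_{S_2}$ and $p_{S_2}p_{S_1}$ to belong to $\KK_{\rootset_1\cup\rootset_2}$ on the relevant harmonic spaces. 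One may therefore pick $F\finitesubset\Irr(K_q^{\rootset_1\cup\rootset_2})$ so large that $\|(1-p_F)p_{S_1}p_{S_2}\|$ and $\|p_{S_2}p_{S_1}(1-p_F)\|$ are both less than $\epsilon$. The telescoping identity
$$
I - p_{S_1}p_{S_2} = (I - p_{S_1}) + p_{S_1}(I - p_{S_2})
$$
(together with its right-handed analogue) gives $\|T - p_{S_1}p_{S_2}T\| < 2\epsilon$, and then
$$
\|(I-p_F)T\| \;\leq\; \|T - p_{S_1}p_{S_2}T\| + \|(I-p_F)p_{S_1}p_{S_2}\|\,\|T\| \;<\; 2\epsilon + \epsilon\|T\|,
$$
with a symmetric estimate bounding $\|T(I-p_F)\|$. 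A final appeal to Lemma \ref{lem:K_equivalent_defns}(b) places $T$ in $\KK_{\rootset_1\cup\rootset_2}$.

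For part (b), I would invoke the characterization of $\AA_\rootset$ as the $C^*$-category of two-sided multipliers of $\KK_\rootset$ given by Lemma \ref{lem:A_equivalent_defns}(d). For $A\in\AA_{\rootset_1}\cap\AA_{\rootset_2}$ and $T\in\KK_{\rootset_1\cup\rootset_2}$, part (a) puts $T$ in $\KK_{\rootset_1}\cap\KK_{\rootset_2}$; hence $AT$ and $TA$ each lie in $\KK_{\rootset_1}\cap\KK_{\rootset_2}$, which by (a) again equals $\KK_{\rootset_1\cup\rootset_2}$. This exhibits $A$ as a multiplier of $\KK_{\rootset_1\cup\rootset_2}$, proving $A\in\AA_{\rootset_1\cup\rootset_2}$. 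The main conceptual hurdle is really Proposition \ref{prop:SUq_orthotypicality} itself; once essential orthotypicality is available, the remaining argument is a routine approximation, the only subtlety being that $\KK_\rootset(\sH,\sH')$ demands simultaneous control on both $\sH$ and $\sH'$ sides, so the telescoping estimate must be carried out on both ends of $T$.
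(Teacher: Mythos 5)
Your proof is correct and takes essentially the same route as the paper's: the easy inclusion via Lemma \ref{lem:subgroups}, the reverse inclusion via Proposition \ref{prop:SUq_orthotypicality} and an approximation argument, and part (b) via the multiplier characterization of Lemma \ref{lem:A_equivalent_defns}(d). The only cosmetic difference is that you use the one-sided formulation of Lemma \ref{lem:K_equivalent_defns}(b) with a telescoping identity, whereas the paper sandwiches $T$ between $p_{S_1}p_{S_2}$ and $p_{S_2}p_{S_1}$ on both sides and uses Lemma \ref{lem:K_equivalent_defns}(c); both lead to the same conclusion.
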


\begin{proof}
From Lemma \ref{lem:subgroups} we have $\KK_{\rootset_1\cup\rootset_2} \subseteq \KK_{\rootset_1} \cap \KK_{\rootset_2}$. 
For the reverse inclusion, suppose 
$T\in\KK_{\rootset_1}(\sH,\sH') \cap \KK_{\rootset_2} (\sH,\sH')$ for some  fully $K_q$-harmonic spaces  $\sH$ and $\sH'$. Thus for any $\epsilon>0$, there are finite sets $S_i \finitesubset \Irr(K^{\rootset_i}_q)$ such that $\| T - p_{S_i} T p_{S_i} \| <\epsilon$ for  $i=1,2$, and we obtain $\| T - p_{S_1} p_{S_2} T p_{S_2} p_{S_1} \| < 2\epsilon$. By Proposition \ref{prop:SUq_orthotypicality}, there is a finite subset $F \subseteq \Irr(K_q^{I_1\cup I_2})$ such that $\|p_{S_1} p_{S_2} - p_F p_{S_1} p_{S_2} p_F \| < \epsilon / \|T\|$, from which
$$
 \|T - p_{F}p_{S_1}p_{S_2}p_F Tp_F p_{S_2}p_{S_1}p_{F} \| < 4\epsilon.
$$
This proves the first statement. The second claim follows by using the characterization of $\AA_{I}$ as multipliers
of $\KK_I$ in Lemma \ref{lem:A_equivalent_defns}.
\end{proof}

Now we are ready to assemble the above results in order to prove Theorem \ref{thm:lattice_of_ideals}. 
Indeed, parts $a)$ and $c)$ of the theorem now follow as a corollary of Lemma \ref{lem:K_intesections}, and part $d)$ is contained in Lemma \ref{lem:J_compact_operators}. 
To prove part $b)$, note that if $\sigma\in\Irr(K_q^I)$ for some $I\subseteq\Sigma$ then $p_\sigma$ is in $\KK_I(\sH)$ for any fully $K_q$-harmonic space $\sH$, so the result follows from Corollary \ref{cor:projections_in_A}. This completes the proof of Theorem \ref{thm:lattice_of_ideals}.


\section{Longitudinal pseudodifferential operators}
\label{sec:PsiDOs2}

In this section we prove Theorem \ref{thm:PsiDOs}.

\subsection{Multiplication operators}

We shall begin with Theorem \ref{thm:PsiDOs} $a)$, which is a consequence of the next proposition. Let us recall once again that we are 
equipping $L^2(K_q)$, and its weight spaces $L^2(\sE_\mu)$ for $\mu\in\bP$, with the structure of a fully $K_q$-harmonic space coming from 
the {\em right} regular representation.  Thus, if $\tau\in \Irr(K_q^I)$ for some $I\subseteq\Sigma$ and $g = \coeff{\eta^\dual}{\eta}$ is a 
matrix coefficient, then $p_\tau g = \coeff{\eta^\dual}{p_\tau\eta}$.

\begin{proposition}
\label{prop:mult_ops_in_A}
For any $f\in \Poly(K_q)$, the left and right multiplication operators $\Mult{f}$ and $\RMult{f}$ belong to $\AA(L^2(K_q))$.
\end{proposition}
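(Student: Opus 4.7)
The plan is to reduce to a single matrix coefficient by linearity, and then verify $K_q^i$-harmonic properness one simple root at a time.

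First, by Theorem \ref{thm:lattice_of_ideals}(a) it suffices to show $\Mult{f}, \RMult{f} \in \AA_i(L^2(K_q))$ for each $i\in\Sigma$. Since $\Poly(K_q)$ is spanned by matrix coefficients and $\AA_i$ is linear and norm-closed, I would assume without loss of generality that $f = \coeff{\xi^\dual}{\xi}$ with $\xi\in V^\sigma$, $\xi^\dual\in V^{\sigma,\dual}$ for some $\sigma\in\Irr(K_q)$.

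The key computation is the product formula \eqref{eq:product_of_coeffs}: if $g = \coeff{\eta^\dual}{\eta}$ with $\eta\in V^\pi$, then
$$
\Mult{f}\,g \;=\; \coeff{\eta^\dual \otimes \xi^\dual}{\eta \otimes \xi}, \qquad \RMult{f}\,g \;=\; \coeff{\xi^\dual \otimes \eta^\dual}{\xi \otimes \eta}.
$$
The harmonic structure on $L^2(K_q)$ comes from the right regular action, which acts only on the ``ket'' slot via the relevant tensor product representation. Since $V^\sigma$ is a finite-dimensional $K_q$-representation, its restriction to $K_q^i$ decomposes into finitely many $K_q^i$-types, say $\sigma|_{K_q^i} \leq \bigoplus_{\rho\in F} V^\rho$ for some finite $F \finitesubset \Irr(K_q^i)$.

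From this I would deduce column-finiteness of the matrix of $\Mult{f}$: if $g$ lies in the $\tau$-isotypical component for $\tau\in\Irr(K_q^i)$, then the ket slot of $\Mult{f}\,g$ lies inside $V^\tau \otimes V^\sigma$, whose $K_q^i$-isotypical decomposition involves only the finitely many types occurring in $\tau \otimes \rho$ for $\rho \in F$. For row-finiteness I observe that $\tau'$ occurs in $V^\tau \otimes V^\sigma|_{K_q^i}$ if and only if $\tau$ occurs in $V^{\tau'} \otimes V^{\sigma,c}|_{K_q^i}$; since $V^{\sigma,c}$ is also a finite-dimensional $K_q$-representation, for fixed $\tau'$ only finitely many $\tau$ qualify. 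Hence $\Mult{f}$ is $K_q^i$-harmonically proper, so $\Mult{f}\in\AA_i(L^2(K_q))$. The argument for $\RMult{f}$ is identical after swapping the order in the product formula.

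The proof is essentially a pigeon-holing argument and the only mild obstacle is bookkeeping: keeping track of which slot of the matrix coefficient is acted on by the right regular representation, and using the finite-dimensionality of $V^\sigma$ (and its contragredient) in the right place to bound both the column supports and the row supports of the matrix. No analytic estimates are needed, since every relevant operator turns out to be harmonically proper exactly (before taking norm closures).
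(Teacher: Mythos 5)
Your proof is correct and follows essentially the same route as the paper's: both hinge on the product formula \eqref{eq:product_of_coeffs}, the observation that the harmonic structure acts on the ket slot via the right regular corepresentation, and the finiteness of the $K_q^{\rootset}$-isotypical decomposition of the finite-dimensional representation $V^\sigma$. The paper works with a general $\rootset\subseteq\Sigma$ directly (rather than reducing to singletons via Theorem~\ref{thm:lattice_of_ideals}(a)), further restricts $\xi$ to a single $K_q^{\rootset}$-type, and establishes row-finiteness by taking adjoints and invoking Lemma~\ref{lem:A_equivalent_defns}, whereas you argue row-finiteness directly with the contragredient; these are only bookkeeping differences.
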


\begin{proof}
Fix $\rootset \subseteq \Sigma$. We may assume that $f=\coeff{\xi^\dual}{\xi}$ is a matrix coefficient of an irreducible $K_q$-representation. 
Moreover we may assume that $\xi$ belongs to a $K^\rootset_q$ subrepresentation, say of type $\sigma$. 

Let $\tau\in\Irr(K^\rootset_q)$. From the formula \eqref{eq:product_of_coeffs} for the product of matrix coefficients, one sees that $\Mult{f}p_\tau = p_S \Mult{f}p_\tau$ where $S$ is the finite set of $K^\rootset_q$-types which occur in $\tau\otimes\sigma$. 
This means $ \Mult{f}p_\tau \in \KK_i(L^2(K_q)) $. 

We therefore obtain $ \Mult{\Poly(K_q)} p_\tau \subseteq \KK_i(L^2(K_q)) $. 
Taking adjoints shows that $ p_\tau \Mult{\Poly(K_q)} \subseteq \KK_i(L^2(K_q)) $. By Lemma \ref{lem:A_equivalent_defns}, 
this implies $\Mult{\Poly(K_q)}\in\AA_\rootset(L^2(K_q)) $. 
Since $\rootset$ was arbitrary this yields the claim for left multiplication operators. The proof for right multiplication operators is similar.
\end{proof}

\subsection{Basic properties of the phase of $E_i$, $F_i$}
\label{sec:tensor_operators}

Before specializing to the section spaces of bundles over the quantum flag manifold, we will first consider the abstract properties of the operators
$$ 
D_i = 
\begin{pmatrix} 0 & F_i \\ E_i & 0 \end{pmatrix} 
\qquad \text{on } H_\mu\oplus H_{\mu+\alpha_i}
$$ 
for any $K_q$-representation $H$ and any $\mu\in\bP$. This operator $D_i$ is essentially self-adjoint with domain the linear span of the $K_q$-isotypical components.  

\begin{lemma}
\label{lem:K_i-resolvent}
With the notation above, $\psi(D_i)\in\KK_i(H_\mu\oplus H_{\mu+\alpha_i})$ for any function $\psi\in C_0(\RR)$, and $\phi(D_i)\in\AA_i(H_\mu\oplus H_{\mu+\alpha_i})$ for any bounded function $\phi\in C_b(\RR)$.
\end{lemma}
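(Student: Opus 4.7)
The plan is to exploit the fact that $D_i$ has matrix entries $E_i, F_i \in \Uq(\lie{s}^i_q)$, so that it preserves each $K^i_q$-isotypical subspace of $H_\mu \oplus H_{\mu+\alpha_i}$. Consequently $\phi(D_i)$, for any bounded Borel $\phi$, commutes with every $K^i_q$-isotypical projection $p_\tau$, and
\[
\phi(D_i) = \sum_{\tau \in \Irr(K^i_q)} p_\tau\,\phi(D_i)\,p_\tau
\]
in the strong operator sense. The claim $\phi(D_i) \in \AA_i$ for $\phi \in C_b(\RR)$ then falls out immediately: for any finite $S \finitesubset \Irr(K^i_q)$, $\phi(D_i)p_S = \sum_{\tau \in S}p_\tau \phi(D_i) p_\tau$ is $K^i_q$-harmonically finite and hence lies in $\KK_i$, and symmetrically for $p_S \phi(D_i)$; Lemma \ref{lem:A_equivalent_defns} finishes the job.

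For the $\KK_i$-statement with $\psi \in C_0(\RR)$, by Lemma \ref{lem:K_equivalent_defns} it is enough to show that $\|p_\tau\psi(D_i)p_\tau\| \to 0$ as $\tau$ leaves every finite subset of $\Irr(K^i_q)$. Here I would use that $S^i_q \cong \SU_q(2)$ and $T^{i\perp}$ are commuting and generating quantum subgroups of $K^i_q$ (this is immediate from the definitions in Section \ref{sec:lattice_of_subgroups}, since $\lie{h}^{\{i\}\perp}$ annihilates $\alpha_i$), so that each $\tau \in \Irr(K^i_q)$ takes the form $\tau = \sigma_j \times \chi$ with $j$ an $\SU_q(2)$-spin and $\chi$ a character of $T^{i\perp}$. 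Since $\alpha_i$ vanishes on $\lie{h}^{\{i\}\perp}$, the weights $\mu$ and $\mu + \alpha_i$ share the same $T^{i\perp}$-character $\chi_0$, so only the one-parameter family $(\tau_j = \sigma_j \times \chi_0)_{j}$ contributes non-trivially to $H_\mu \oplus H_{\mu+\alpha_i}$. On each such $\tau_j$ the operator $D_i$ acts as $\Id \otimes D_i^{\tau_j}$, where $D_i^{\tau_j}$ is a self-adjoint operator on the at-most-two-dimensional space $V^{\sigma_j}_\mu \oplus V^{\sigma_j}_{\mu + \alpha_i}$, and the norm of this tensor factor is just the norm of the finite matrix $D_i^{\tau_j}$.

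The only real content is then to read off the spectrum of $D_i^{\tau_j}$ from the standard $\Uq(\slx_2)$ weight-string formulas. Writing $m$ for the $\slx_2$-weight of $\mu$, one finds that whenever $j \geq \max(|m|, |m+1|)$ both weights lie in the interior of the $j$-string, and $D_i^{\tau_j}$ has eigenvalues $\pm [j-m]_q^{1/2}[j+m+1]_q^{1/2}$, which grow like $q^{-j}$ as $j \to \infty$. For the finitely many boundary values of $j$ in which only one of $\mu, \mu + \alpha_i$ occurs, $D_i^{\tau_j}$ is one-dimensional and zero, but these types can simply be thrown into the exceptional finite set. Given $\psi \in C_0(\RR)$ and $\epsilon > 0$, choosing $S$ to contain these boundary types together with all $\tau_j$ with $j \leq N$ for $N$ sufficiently large produces $\|(1 - p_S)\psi(D_i)\| < \epsilon$, whence $\psi(D_i) \in \KK_i$. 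The main obstacle is really just the bookkeeping around the boundary strings; once the $\Uq(\slx_2)$ eigenvalue formula is in hand everything else is dictated by the block-diagonal structure of $D_i$.
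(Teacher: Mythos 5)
Your proof is correct and follows essentially the same route as the paper's: one shows that $D_i$ is block-diagonal with respect to the $K^i_q$-isotypical decomposition, that on the weight pair $(\mu,\mu+\alpha_i)$ this coincides with the $S^i_q$-isotypical decomposition, and that the eigenvalues of $D_i^2$ on the $S^i_q$-types tend to infinity, which gives the $\KK_i$-statement for compactly supported $\psi$ (and hence for $\psi\in C_0(\RR)$ by density) and the $\AA_i$-statement immediately for $\phi\in C_b(\RR)$. The only difference is cosmetic: the paper simply asserts that on a weight space the $S^i_q$- and $K^i_q$-isotypical decompositions agree and that the spectrum of $D_i^2$ is discrete, whereas you spell out the $S^i_q\times T^{i\perp}$ factorization of $K^i_q$ and the explicit $\Uq(\slx_2)$ eigenvalue growth of order $q^{-j}$ — fine as a justification, though the paper gets away with less.
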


\begin{proof}
Recall from Section \ref{sec:lattice_of_subgroups} the subgroup $S_q^i \cong \SU_q(2)$ which is associated to the simple root $\alpha_i$. The standard formulae for irreducible $\Uq(\lie{sl}(2))$-representations 
show that the $S^i_q$-isotypical subspaces of $H_\mu\oplus H_{\mu+\alpha_i}$ are precisely the eigenspaces for $D_i^2$, and that the spectrum of $D_i^2$ is discrete. It follows that if the 
function $\psi $ has compact support, then $\psi(D_i)$ annihilates all but finitely many $S^i_q$-isotypical subspaces. 
On a weight space, the $S^i_q$-isotypical and the $K^i_q$-isotypical decompositions are identical, so $\psi(D_i) \in \KK_i(H_\mu\oplus H_{\mu+\alpha_i})$. By density, $\psi(D_i) \in \KK_i(H_\mu\oplus H_{\mu+\alpha_i}) $ for any $ \psi \in C_0(\RR)$.

If $\phi $ is a bounded function then $\phi(D_i)$ is bounded, and it preserves $K^i_q$-types. This proves the second statement.
\end{proof}

\begin{remark}
\label{rmk:C_0(D2)_in_JJ}
It follows from this proof that if $\psi\in C_c(\RR)$, then $\psi(D_i^2)$ is a finite linear combination of isotypical projections for $K_{i,q}$. By Corollary \ref{cor:projections_in_A}, these isotypical projections are in $\AA(H)$.  We can therefore deduce that $\psi(D_i) \in \JJ_i(H)$ for any even function $\psi\in C_0(\RR)$.  Unfortunately, showing that $\psi(D_i) \in \JJ_i(H)$ for an odd $\psi \in C_0(\RR)$, as required for Theorem \ref{thm:PsiDOs} $c)$,
is more difficult.
\end{remark}

Now let $V$ be a finite dimensional unitary representation of $ K_q $. Then $D_i$ acts on $H\otimes V$, and in particular 
on $(H\otimes V)_\mu\oplus (H\otimes V)_{\mu+\alpha_i}$, as
$$
\hat{\Delta}(D_i) = \begin{pmatrix} 0 & F_i \otimes K_i\\ E_i\otimes K_i & 0 \end{pmatrix}  + \begin{pmatrix} 0 & K_i^{-1} \otimes F_i \\K_i^{-1} \otimes  E_i & 0 \end{pmatrix}.
$$
We will abbreviate this expression as $D_i \otimes K_i + K_i^{-1}\otimes D_i$.  

\begin{lemma}
\label{lem:ph_D_commutator}
Let $H$, $V$ be unitary $ K_q $-representations with $ V $ finite dimensional and let $ \mu \in \bP $. 
As operators on $(H\otimes V)_\mu\oplus (H\otimes V)_{\mu+\alpha_i}$, we have 
$\ph(\hat{\Delta}(D_i)) \equiv \ph(D_i) \otimes \Id_V$ modulo $\KK_i((H\otimes V)_\mu\oplus (H\otimes V)_{\mu+\alpha_i})$. 
\end{lemma}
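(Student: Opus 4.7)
The plan is to combine Lemma \ref{lem:K_i-resolvent} with a Baaj--Julg-style perturbation estimate and a direct spectral analysis of the low eigenvalues of $D_i$. First, since $V$ is finite dimensional, I would decompose $V = \bigoplus_\alpha V^{(\alpha)}$ into irreducible $U = S_q^i$-representations. Both $\hat{\Delta}(D_i)$ and $D_i \otimes \Id_V$ respect this decomposition, so it suffices to treat the case when $V$ is irreducible. Applying Lemma \ref{lem:K_i-resolvent} to the $K_q$-representation $H \otimes V$ then gives $(i + \hat{\Delta}(D_i))^{-1} \in \KK_i$, while applying it to each of the finitely many summands $H_{\mu - \lambda} \oplus H_{\mu + \alpha_i - \lambda}$ (over the weights $\lambda$ of $V$) and tensoring with $\Id_{V_\lambda}$ yields $(i + D_i \otimes \Id_V)^{-1} \in \KK_i$. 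Their difference hence lies in $\KK_i$, and a Stone--Weierstrass argument over the $C^*$-algebra generated by the resolvents shows $\psi(\hat{\Delta}(D_i)) - \psi(D_i \otimes \Id_V) \in \KK_i$ for every $\psi \in C_0(\RR)$.

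Next I would upgrade this to the bounded transform $F(x) = x(1+x^2)^{-1/2}$ via a Baaj--Julg integral representation, which writes $F(D_1) - F(D_0)$ as a norm-convergent integral involving resolvents of $D_0$ and $D_1$ sandwiching the perturbation $D_1 - D_0$. Here the perturbation $\hat{\Delta}(D_i) - D_i \otimes \Id_V = D_i \otimes (K_i - 1) + K_i^{-1} \otimes D_i$ is unbounded, but because $V$ is finite dimensional, both $D_i|_V$ and $K_i|_V - 1$ are bounded; after a single resolvent sandwich the expression becomes bounded and lies in $\AA_i$, and multiplying on the other side by $(1 + \hat{\Delta}(D_i)^2)^{-1/2} \in \KK_i$ keeps us in $\KK_i$. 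Uniform-in-$s$ estimates on the integrand would then yield $F(\hat{\Delta}(D_i)) - F(D_i \otimes \Id_V) \in \KK_i$.

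It remains to handle the discontinuous remainder $R(x) := \ph(x) - F(x)$, which is bounded, vanishes at $\pm\infty$, but jumps at $0$. Here I would use the $SU_q(2)$-Casimir formula $F_iE_i|_{\text{spin }j,\text{ weight }m} = [j-m][j+m+1]$: the spectra of both $\hat{\Delta}(D_i)$ and $D_i \otimes \Id_V$ on the relevant weight spaces consist of values $\pm([j-m][j+m+1])^{1/2}$, and for any $\epsilon > 0$ only finitely many spins $j$ can produce values in $(-\epsilon, \epsilon)$. Combining this with the Clebsch--Gordan constraint coupling spins in $H \otimes V$ to spins in $H$ (which is finite because $V$ has bounded spin), the spectral projections $\mathbbm{1}_{(-\epsilon,\epsilon)}(\hat{\Delta}(D_i))$ and $\mathbbm{1}_{(-\epsilon,\epsilon)}(D_i \otimes \Id_V)$ are each supported on finitely many $K_q^i$-isotypical components, and hence lie in $\KK_i$. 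Since $R$ vanishes at infinity, $R$ applied to either operator individually lies in $\KK_i$, and the two phases therefore agree modulo $\KK_i$.

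The main obstacle will be the Baaj--Julg step in the middle paragraph: one has to verify carefully that the unbounded perturbation $D_i \otimes (K_i - 1) + K_i^{-1} \otimes D_i$ is sufficiently ``relatively $\KK_i$-compact'' with respect to both Dirac operators for the integral representation to converge in the $\KK_i$-norm. This is precisely where the hypothesis that $V$ is finite dimensional is essential, since it is needed to bound $D_i|_V$ and the twisting factor $K_i|_V - 1$ uniformly.
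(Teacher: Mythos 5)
Your overall template (reduce to comparing bounded transforms via a resolvent-integral formula, then handle the jump of $\ph - F$ at zero by spectral-gap considerations) is the same as the paper's. The decisive difference is the choice of comparison operator, and that difference is exactly where your argument has a gap.

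You compare $A := \hat{\Delta}(D_i)$ directly to $D_0 := D_i \otimes \Id_V$, so the perturbation $A - D_0 = D_i \otimes (K_i - 1) + K_i^{-1}\otimes D_i$ is genuinely unbounded: the $D_i \otimes (K_i-1)$ piece grows with $D_0$. You flag the resulting issue yourself in your last paragraph, but you never overcome it, and this is precisely the hard step. To run the integral formula in $\KK_i$-norm one needs, among other things, that $D_0(1+A^2)^{-1/2}$ is not merely bounded but lies in $\AA_i$ with a uniform bound; establishing this requires commutator manipulations (e.g.\ $[A, 1\otimes K_i^{-1}]$ is bounded but does not vanish) and an argument that those commutator factors are $\AA_i$-multipliers, none of which you supply. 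The paper sidesteps all of this with a single observation: compare $A$ to $B := D_i \otimes K_i$ instead. Because $K_i$ is a \emph{strictly positive} operator, $\ph(B) = \ph(D_i) \otimes \Id_V$ on the nose, so $B$ is an admissible surrogate for $D_0$; and $A - B = K_i^{-1}\otimes D_i$ is already bounded because $V$ is finite dimensional. After this substitution the integral-formula estimates become routine: every factor is manifestly either a uniformly bounded $\AA_i$-multiplier or a $\KK_i$-element with an integrable norm bound. Your paragraphs~1 and~3 are fine in substance (paragraph~1 is largely redundant, since each resolvent already lies in $\KK_i$ by Lemma~\ref{lem:K_i-resolvent}, but that isn't a mistake; paragraph~3 captures the paper's treatment of the discontinuity of $\ph$ at zero). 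The missing idea is the replacement $D_i\otimes\Id_V \rightsquigarrow D_i\otimes K_i$, which converts an unbounded-perturbation problem into a bounded one and makes the whole lemma elementary.
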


\begin{proof}
 
Since $K_i$ is strictly positive, we have $\ph(D_i \otimes K_i) = \ph(D_i) \otimes \Id_V$. Let us set $A = \hat{\Delta}(D_i)$, $B = D_i \otimes K_i$. 
Then $A-B = K_i^{-1} \otimes D_i$, which is bounded on $(H\otimes V)_\mu \oplus (H\otimes V)_{\mu+\alpha_i}$ since $V$ is finite dimensional, and so $A-B \in \AA_i((H\otimes V)_\mu \oplus (H\otimes V)_{\mu+\alpha_i})$.

Let $\phi\in C_b(\RR)$ be the function $\phi(x) = x(1+x^2)^{-\half}$. Lemma \ref{lem:K_i-resolvent} implies 
that $\ph(A) \equiv \phi(A)$ modulo $\KK_i((H\otimes V)_\mu \oplus (H\otimes V)_{\mu+\alpha_i})$.  We claim that also $\ph(B)  \equiv \phi(B)$ modulo $\KK_i((H\otimes V)_\mu \oplus (H\otimes V)_{\mu+\alpha_i})$.
To see this, note that $(H\otimes V)_\mu \oplus (H\otimes V)_{\mu+\alpha_i} = \bigoplus_{\nu} (H_{\mu-\nu} \oplus H_{\mu+\alpha_i-\nu})\otimes V_\nu$, where the sum is over all weights of $V$.  This decomposition is invariant for $B$, and on each summand $B$ acts as $q^{\half(\alpha_i,\nu)} D_i\otimes \Id_{V_\nu}$.   Lemma \ref{lem:K_i-resolvent} implies that $\psi(B) \in \KK_i((H\otimes V)_\mu \oplus (H\otimes V)_{\mu+\alpha_i})$ for any $\psi\in C_0(\RR)$, and the claim follows.

Therefore, it suffices to prove that $\phi(A) - \phi(B) \in \KK_i((H\otimes V)_\mu\oplus (H\otimes V)_{\mu+\alpha_i})$.  Now,
$$
\phi(A) - \phi(B) =  (A-B)(1+A^2)^{-\half} + B ((1+A^2)^{-\half} - (1+B^2)^{-\half} ).
$$
The first term $(A-B)(1+A^2)^{-\half} $ is contained in $ \KK_i((H\otimes V)_\mu \oplus (H\otimes V)_{\mu+\alpha_i}) $ by Lemma \ref{lem:K_i-resolvent}. For the second term we use the integral formula
$$
(1+x^2)^{-\half} = \frac{1}{\pi} \int_0^\infty t^{-\half} (1+x^2+t)^{-1} \,dt,
$$
which gives
\begin{align}
\lefteqn{ B ( (1+A^2)^{-\half} - (1+B^2)^{-\half} ) } \qquad  \nonumber \\
&=  \frac{1}{\pi} B \int_0^\infty t^{-\half}  ( (1+A^2+t)^{-1}  - (1+B^2+t)^{-1} )\,dt \nonumber \\
&=  \frac{1}{\pi} B \int_0^\infty t^{-\half}  (1+B^2+t)^{-1} (B^2 - A^2) (1+A^2+t)^{-1} \,dt \nonumber \\
&=  \frac{1}{\pi} B \int_0^\infty t^{-\half}   (1+B^2+t)^{-1} B(B-A) (1+A^2+t)^{-1} \,dt \nonumber \\
&\qquad + \frac{1}{\pi} B \int_0^\infty t^{-\half}   (1+B^2+t)^{-1} (B-A)A (1+A^2+t)^{-1} \,dt.  \label{eq:integralformula}
\end{align}
By Lemma \ref{lem:K_i-resolvent}, we have $B(1+B^2+t)^{-1}B \in \AA_i$ with norm at most $1$, $B-A\in\AA_i$, and $(1+A^2+t)^{-1} \in \KK_i$ with norm at most $(1+t)^{-1}$, so the first integral on the right hand 
side of equation \eqref{eq:integralformula} converges in norm in $\KK_i((H\otimes V)_\mu \oplus (H\otimes V)_{\mu+\alpha_i}) $.   For the second integral, we can write 
\begin{align*}
\lefteqn{ B (1+B^2+t)^{-1} (B-A)A (1+A^2+t)^{-1}  }  \qquad \\
 & = B(1+B^2+t)^{-\half} (1+B^2+t)^{-\half} (B-A) A (1+A^2+t)^{-\half} (1+A^2+t)^{-\half},
\end{align*}
where $B(1+B^2+t)^{-\half}$ and $A (1+A^2+t)^{-\half}$ are in $\AA_i$ with norm at most $1$, $B-A\in\AA_i$, and $(1+B^2+t)^{-\half}$ and $(1+A^2+t)^{-\half}$ are in $\KK_i$ with norm at most $(1+t)^{-\half}$, so we again have norm convergence in $\KK_i((H\otimes V)_\mu \oplus (H\otimes V)_{\mu+\alpha_i}) $. This completes the proof.
\end{proof}

Considering the matrix entries of the operators in Lemma \ref{lem:ph_D_commutator} gives the following result for $\ph(E_i)$ and $\ph(F_i)$.
 
\begin{corollary}
\label{cor:ph_E_commutator}
Let $H$ and $V$ be unitary $K_q$-representations, with $V$ finite dimensional. For any weight $\mu\in\bP$ the operators $(\ph(\hat{\Delta}(E_i)) - \ph(E_i)\otimes\Id_V)p_\mu$ and $p_\mu(\ph(\hat{\Delta}(E_i)) - \ph(E_i)\otimes\Id_V)$ belong to $\KK_i(H\otimes V)$. Likewise, $(\ph(\hat{\Delta}(F_i)) - \ph(F_i)\otimes\Id_V)p_\mu$ and $p_\mu(\ph(\hat{\Delta}(F_i)) - \ph(F_i)\otimes\Id_V)$ belong to $\KK_i(H\otimes V)$.
\end{corollary}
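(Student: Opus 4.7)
The plan is to deduce the corollary directly from Lemma \ref{lem:ph_D_commutator} by extracting matrix entries of the block operators appearing there, then converting these matrix-entry statements into the desired statements on all of $H \otimes V$ using the weight projections.

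First I would observe that, as operators on $(H\otimes V)_\mu \oplus (H\otimes V)_{\mu+\alpha_i}$, both $\hat{\Delta}(D_i)$ and $D_i\otimes\Id_V$ have the block-antidiagonal self-adjoint form
$$
\begin{pmatrix} 0 & T^* \\ T & 0 \end{pmatrix},
$$
with $T = \hat{\Delta}(E_i)$ in the first case (using that $E_i^* = F_i$ and $\hat{\Delta}$ is a $*$-homomorphism) and $T = E_i\otimes\Id_V$ in the second.  The square of such an operator is block diagonal, so the uniqueness of polar decomposition together with the Borel functional calculus shows that the phase inherits the antidiagonal structure.  This gives
$$
\ph(\hat{\Delta}(D_i)) = \begin{pmatrix} 0 & \ph(\hat{\Delta}(F_i)) \\ \ph(\hat{\Delta}(E_i)) & 0 \end{pmatrix}, \qquad
\ph(D_i)\otimes\Id_V = \begin{pmatrix} 0 & \ph(F_i)\otimes\Id_V \\ \ph(E_i)\otimes\Id_V & 0 \end{pmatrix}.
$$

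Next, Lemma \ref{lem:ph_D_commutator} asserts that the difference of these two block operators lies in $\KK_i((H\otimes V)_\mu \oplus (H\otimes V)_{\mu+\alpha_i})$.  Extracting the $(2,1)$-entry amounts to sandwiching on the left by the weight projection $p_{\mu+\alpha_i}$ and on the right by $p_\mu$.  These weight projections are $T$-isotypical and therefore commute with all $K_q^{\{i\}}$-isotypical projections, so they lie in $\AA_i(H\otimes V)$ by Corollary \ref{cor:projections_in_A}, and $\AA_i$ multiplies $\KK_i$ into itself.  This yields
$$
p_{\mu+\alpha_i}\bigl(\ph(\hat{\Delta}(E_i)) - \ph(E_i)\otimes\Id_V\bigr) p_\mu \in \KK_i(H\otimes V).
$$
Since $\ph(\hat{\Delta}(E_i))$ and $\ph(E_i)\otimes\Id_V$ both raise weight by $\alpha_i$, the leading $p_{\mu+\alpha_i}$ is redundant, and we obtain the first desired statement $(\ph(\hat{\Delta}(E_i)) - \ph(E_i)\otimes\Id_V)p_\mu \in \KK_i(H\otimes V)$.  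Running the same argument at the pair of weights $(\mu-\alpha_i, \mu)$ and again reading off the $(2,1)$-entry produces $p_\mu(\ph(\hat{\Delta}(E_i)) - \ph(E_i)\otimes\Id_V) \in \KK_i(H\otimes V)$.  The two $F_i$ statements follow identically by extracting the $(1,2)$-entries instead.

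Given Lemma \ref{lem:ph_D_commutator}, the argument is essentially formal and I do not anticipate a genuine obstacle; the only point requiring care is the compatibility of the polar decomposition with the block-antidiagonal structure, which is a direct consequence of the block-diagonality of $S^2$ and the uniqueness of polar decomposition for self-adjoint operators.
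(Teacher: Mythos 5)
Your argument is correct and is essentially the paper's own one-line proof (``Considering the matrix entries of the operators in Lemma \ref{lem:ph_D_commutator} gives the following result''), fleshed out. The key observations you make explicit — that the phase of a self-adjoint block-antidiagonal operator is again block-antidiagonal with entries the phases of the corners, and that the $(2,1)$-entry is what the corollary asks for (after noting $p_{\mu+\alpha_i}$ on the left is redundant because both operators raise weight by $\alpha_i$) — are exactly the content the paper leaves implicit. Two minor remarks: invoking Corollary \ref{cor:projections_in_A} for the weight projections is more than you need, since commutation with the $K_q^{\{i\}}$-isotypical projections puts $p_\mu$ in $\AA_i$ directly from the definition; and there is an implicit extension-by-zero step when passing from an operator in $\KK_i$ of the two-weight-space direct sum to an operator in $\KK_i(H\otimes V)$, which is justified because that direct sum is itself a $K_q^{\{i\}}$-harmonic subspace commuting with the isotypical projections. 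Neither point affects correctness.
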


\subsection{The phase of the longitudinal Dirac operators}
\label{sec:order_zero}

In this section we prove Theorem \ref{thm:PsiDOs} $b)$ and $c)$. It is easy to see that $\ph(E_i)$ and $\ph(F_i)$ are multipliers of $\KK_i$, but Theorem \ref{thm:PsiDOs} $b)$ claims a more subtle fact, namely that $\ph(E_i)$ and $\ph(F_i)$ are multipliers of $\KK_j$ for every $j\in\Sigma$. We will prove this fact 
in a series of Lemmas, beginning with the case of $\SU_q(3)$.  

We use the notation for subgroups of $\U_q(3)$ which was introduced in Section \ref{sec:subgroups_of_U_q}.

\begin{lemma}
\label{lem:phase_Ei_p_triv_in_K}
Let $\triv_2$ be the trivial representation of $\U_q^{\{2\}}(3)$.
On any unitary $\U_q(3)$-representation $H$, the operators $\ph(E_1) p_{\triv_2}$ and $\ph(F_1)p_{\triv_2}$ belong to $\KK_{\U_q^{\{2\}}(3)}(H)$.
\end{lemma}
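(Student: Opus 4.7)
The plan is to decompose the image of $\ph(E_1) p_{\triv_2}$ into $\U_q^{\{2\}}(3)$-isotypical pieces on the range side and show, via Proposition 6.8, that all but finitely many of them contribute a uniformly small operator norm. By Lemma 3.6, since $p_{\triv_2}$ already projects onto a single $\U_q^{\{2\}}(3)$-type on the domain, this will place $\ph(E_1) p_{\triv_2}$ in $\KK_{\U_q^{\{2\}}(3)}(H)$.

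First I would pin down the structure. Arguing as in the proof of Lemma 7.2, only irreducible $\U_q(3)$-subrepresentations of $H$ with highest weight $(m,0,-m)$ contain a trivial $\U_q^{\{2\}}(3)$-subrepresentation, and there the trivial isotypical subspace is spanned by $\bigket{\smallGTsO{m}^\low}$. Applying $\ph(E_1)$ lands in the $\U_q(3)$-weight $\alpha_1 = (1,-1,0)$ subspace, with lower Gelfand-Tsetlin expansion
$$
\ph(E_1)\bigket{\smallGTsO{m}^\low} \;=\; \sum_{k=1}^{m} a_m^k \bigket{\smallGTs{m}{k}{-k+1}^{\!\!\low}}.
$$
Since $\U_q^{\{2\}}(3) = \U_q^\low(2) \times Z_2^\low$ and the $Z_2^\low$-character is constant on the weight-$\alpha_1$ subspace, the summand indexed by $k$ lies in a distinct $\U_q^{\{2\}}(3)$-type labelled by the $\U_q^\low(2)$-highest weight $(k,-k+1)$. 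Moreover, since the $V^{\sigma^{(m,0,-m)}}$ are pairwise orthogonal, the tail satisfies
$$
\bigl\|(1-p_{S_K}) \ph(E_1) p_{\triv_2}\bigr\| \;=\; \sup_m \Bigl(\sum_{k=K+1}^{m}|a_m^k|^2\Bigr)^{\!1/2},
$$
where $p_{S_K}$ projects onto the trivial $\U_q^{\{2\}}(3)$-type together with the $K$ types corresponding to $(k,-k+1)$, $1 \le k \le K$.

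For each fixed $k$, Proposition 6.8 gives $\lim_{m\to\infty} a_m^k = c_k = (-1)^k \frac{[k]}{[2k]^{1/2}}\bigl([\tfrac{2k-1}{2}]^{-1} - [\tfrac{2k+1}{2}]^{-1}\bigr)$, a sequence decaying like $q^k$ and so lying in $\ell^2$ with exponentially small tails. The main obstacle, which is really the heart of the argument, is that pointwise convergence $a_m^k \to c_k$ and the trivial bound $\sum_k |a_m^k|^2 \le 1$ are insufficient to control the supremum above. The plan is to sharpen the argument of Proposition 6.8 into an estimate of the form $|a_m^k| \le C q^k$ with $C$ independent of both $m$ and $k$: the prefactor $[m-k+1]^{1/2}[m+k+1]^{1/2}/[m+1]$ is uniformly bounded for $1 \le k \le m$, the coefficient $[k]/[2k]^{1/2}$ is uniformly bounded in $k$, and the parenthesised $j$-sum is controlled by the Legendre-polynomial estimate of Lemma 6.6 once its $k$-dependence is tracked through the $q$-integral, yielding $O(q^k)$ uniformly in $m$. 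Given such a bound, $\sup_m \sum_{k>K}|a_m^k|^2 = O(q^{2K}) \to 0$, finishing the approximation.

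Finally, the corresponding statement for $\ph(F_1) p_{\triv_2}$ is handled by the symmetric argument: either one repeats the above analysis with the Gelfand-Tsetlin formula for $F_1$ acting on $\bigket{\smallGTsO{m}^\low}$, or one invokes a Hopf $*$-automorphism of $\Uq(\glx_3)$ interchanging the roles of $E_1$ and $F_1$ to reduce to the already-established case.
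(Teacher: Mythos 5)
Your setup matches the paper: only irreducibles of highest weight $(m,0,-m)$ carry the trivial $\U_q^{\{2\}}(3)$-type, and after applying $\ph(E_1)$ the relevant coefficients $a_m^k$ are exactly those computed asymptotically in Proposition \ref{prop:phase_estimate}. Where your proposal goes astray is in the diagnosis of the difficulty and the resulting detour. You claim that pointwise convergence $a_m^k\to c_k$ together with the bound $\sum_k|a_m^k|^2\leq 1$ is insufficient, and then propose to establish a uniform estimate of the form $|a_m^k|\leq Cq^k$ with $C$ independent of both $m$ and $k$. This overlooks two facts that make the direct argument work and that the paper uses crucially: first, $\ph(E_1)$ is a partial isometry and $\ket{(M_{(m,0,0)})^\low}$ is not in its kernel for $m\geq1$, so the bound is in fact an \emph{equality} $\sum_k|a_m^k|^2=1$; second, the limiting coefficients satisfy $\sum_{k=1}^{l}|c_k|^2=1-\frac{[\half]^2}{[l+\half]^2}$ by the telescoping computation in the paper, hence $\sum_k|c_k|^2=1$. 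Given these two equalities, pointwise convergence of the partial sums $\sum_{k\leq l}|a_m^k|^2\to\sum_{k\leq l}|c_k|^2$ (a finite sum of pointwise limits) immediately controls the tail: fix $l$ with $\sum_{k>l}|c_k|^2<\epsilon/2$, then for $m$ large the head exceeds $1-\epsilon$, and so the tail $\sum_{k>l}|a_m^k|^2=1-\sum_{k\leq l}|a_m^k|^2<\epsilon$. No uniform-in-$k$ pointwise bound on $a_m^k$ is needed.

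Your alternative route is also not readily available from the paper's lemmas: the error bound in Lemma \ref{lem:legendre_approximation2} is $C(k)q^{m+j}$ where the constant $C(k)$ incorporates factors like $q^{-2k}$ and is not uniformly bounded in $k$, so pushing that estimate to a bound $|a_m^k|\leq Cq^k$ uniform in $k$ is a nontrivial additional claim, not a mere sharpening. This is the genuine gap: you replaced the paper's simple ``head converges to $1$, total equals $1$'' argument with a harder uniform estimate whose validity you have not established and which the cited lemmas do not directly supply. A minor further point: there is no Hopf $*$-\emph{automorphism} of $\Uq(\glx_3)$ interchanging $E_1$ and $F_1$ (that map is an anti-automorphism), so the symmetry reduction for $\ph(F_1)p_{\triv_2}$ should rather be done by taking adjoints (treating $p_{\triv_2}\ph(E_1)$) or by repeating the Gelfand-Tsetlin computation for $F_1$, which is what the paper intends by ``similar.''
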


\begin{proof}
We will show that for any $\epsilon > 0$ there is a finite collection $S \finitesubset \Irr(\U_q^{\{2\}}(3))$ such that on every irreducible $\U_q(3)$-representation $V^\sigma$ the estimate 
\begin{equation}
\label{eq:phase_Ei_p_triv_in_K}
  \|(1 - p_S) \ph(E_1) p_{\triv_2} \| < \epsilon
\end{equation}
holds. Since $S$ does not depend on $\sigma$ in this statement, the lemma will follow by decomposing $H$ into irreducibles for $\U_q(3)$.

As before, we write $\sigma^\mu$ for the irreducible $\U_q(3)$-representation with highest weight $\mu$.
It follows from Section \ref{sec:class_1} that the operator $p_{\triv_2}$ is zero on $V^{\sigma^\mu}$ unless $\mu= (m,0,-m)$ for some $m\in\NN$, in 
which case $p_{\triv_2}V^\sigma$ is spanned by the lower Gelfand-Tsetlin vector
$$
\ket{(M_{(m,0,0)})^\low} = \bigket{ \smallGTsO{m}^{\!\!\!\low\;} }.
$$
Note that $ \ket{(M_{(m,0,0)})^\low} $ has weight $ 0 $, so that 
$\ph(E_1) \ket{(M_{(m,0,0)})^\low}$ is contained in the weight space $(V^{\sigma^\mu})_{\alpha_1}$, which is spanned by the vectors 
\begin{equation}
 \label{eq:alpha_1-weight-basis}
  \bigket{
    \left( \begin{array}{cccccc}
  \multicolumn{2}{c}{m} &
  \multicolumn{2}{c}{0} &
  \multicolumn{2}{c}{-m} \\
&\multicolumn{2}{c}{\!\!k}
&\multicolumn{2}{c}{-k\!\!+\!\!1}  \\
&&\multicolumn{2}{c}{0}
\end{array} \right)^{\!\!\!\low\;}}
\end{equation}
for $k = 1,\ldots,m$.  

Let us denote by $\tau_k$ the $\U_q^{\{2\}}(3)$-type of the vector \eqref{eq:alpha_1-weight-basis}, and let
$S_l = \{\tau_1, \ldots, \tau_l\}$.  On $V^{(m,0,-m)}$, the operator $p_{S_l} \ph(E_1) p_{\triv_2}$ satisfies
$$
  \| p_{S_l} \ph(E_1) p_{\triv_2} \|^2 
    = \sum_{k=1}^l \left\langle \left. \left.
    \left( \begin{array}{cccccc}
  \multicolumn{2}{c}{m} &
  \multicolumn{2}{c}{0} &
  \multicolumn{2}{c}{-m} \\
&\multicolumn{2}{c}{\!\!k}
&\multicolumn{2}{c}{-k\!\!+\!\!1}  \\
&&\multicolumn{2}{c}{0}
\end{array} \right)^{\!\!\low\;} \right|
 \ph(E_1)  \right| {(M_{(m,0,0)})^\low} \right\rangle ^2.
$$
Using Proposition \ref{prop:phase_estimate}, we have that
\begin{align*}
\lefteqn{ \lim_{m\to\infty} \| p_{S_l} \ph(E_1) p_{\triv_2} \|^2  } \hspace{2cm} \\
  &= \sum_{k=1}^l \frac{ [k]^2 }{[2k]} \left( \left[ k-\half \right]^{-1} - \left[ k+\half \right]^{-1} \right)^2 \\
  &=  \sum_{k=1}^l \frac{ 1}{[2k] \left[ k-\half \right]^2 \left[ k+\half \right]^2} \left( [k]\left[ k+\half \right] - [k]\left[ k-\half \right] \right)^2 \\
    &=  \sum_{k=1}^l \frac{ 1}{[2k] \left[ k-\half \right]^2 \left[ k+\half \right]^2} \left( \left[\half\right][2k]\right)^2 \\
    &= \sum_{k=1}^l \left[ \half \right]^2 \left( \frac{1}{[k-\half]^2} - \frac{1}{[k+\half]^2} \right) \\
    &= 1 - \frac{[\half]^2}{[l+\half]^2}.
\end{align*}
Let $l$ be sufficiently large that $\frac{[\half]^2}{[l+\half]^2} < \half\epsilon$.  Then
$$
\lim_{m\to\infty} \| p_{S_l} \ph(E_1) p_{\triv_2} \|^2 > 1-\half\epsilon.
$$
This implies that for all $m$ greater than some $m_0$ we have
$$
\| p_{S_l} \ph(E_1) p_{\triv_2} \|^2 > 1-\epsilon
$$
on the representation $V^{\sigma^{(m,0,-m)}}$. Therefore we obtain
$$
\|(1-p_{S_l}) \ph(E_1) p_{\triv_2}\|^2 = \|\ph(E_1) p_{\triv_2}\|^2 - \|p_{S_l} \ph(E_1) p_{\triv_2}\|^2 < \epsilon
$$
for all $ m > m_0 $. 

Let $S \finitesubset \Irr(\U_q^{\{2\}}(3))$ be the finite set containing $S_l$ as well as the finite collection of $\U_q^{\{2\}}(3)$-types which 
appear in any of the representations of highest weight $(m,0,-m)$ for $m=0,\ldots,m_0$.   By construction, we have:
\begin{itemize}
\item $(1-p_S) \ph(E_1) p_{\triv_2}=0$ on every $V^{\sigma^\mu}$ with $\mu$ not of the form $(m,0,-m)$;
\item $(1-p_S) \ph(E_1) p_{\triv_2}=0$ on every  $V^{\sigma^{(m,0,-m)}}$ with $m\leq m_0$;
\item  
$
  \|(1-p_S) \ph(E_1) p_{\triv_2}\| < \sqrt{\epsilon}
$
on every $V^{\sigma^{(m,0,-m)}}$ with $m> m_0$. 
\end{itemize}
We conclude that on any unitary $\U_q(3)$-representation $H$, the operator $\ph(E_1) p_{\triv_2}$ is approximated to 
within $\sqrt{\epsilon}$ by $p_S \ph(E_1) p_{\triv_2}$. This proves that $\ph(E_1) p_{\triv_2} \in \KK_{\U_q^{\{2\}}(3)}(H)$.
The proof that $\ph(F_1) p_{\triv_2}\in\KK_{\U_q^{\{2\}}(3)}(H)$ is similar.
\end{proof}

\begin{corollary}
\label{cor:phase_Ei_p_triv_in_K}
With $K_q = \SU_q(3)$, let $\triv_2$ denote the trivial corepresentation of $K_q^{\{2\}}$.
On any unitary $K_q$-representation $H$, the operators $\ph(E_1) p_{\triv_2}$ and $\ph(F_1)p_{\triv_2}$ belong to $\KK_2(H)$.
\end{corollary}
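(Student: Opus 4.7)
My plan is to reduce the statement to Lemma \ref{lem:phase_Ei_p_triv_in_K} by lifting $H$ to a unitary $\U_q(3)$-representation $\tilde H$ on the same underlying Hilbert space. This is possible because every irreducible $\SU_q(3)$-representation extends (non-canonically) to a $\U_q(3)$-irreducible by a choice of central character, and these choices can be made independently on each $\SU_q(3)$-isotypical component of $H$. The operators $\ph(E_1)$, $\ph(F_1)$, and the projection $p_{\triv_2}$ act identically on $H$ and on $\tilde H$ since $E_1, F_1 \in \Uq(\slx_3)$ and the trivial $K_q^{\{2\}}$-subspace is an intrinsic notion. Hence it will suffice to verify $\ph(E_1) p_{\triv_2} \in \KK_{\U_q^{\{2\}}(3)}(\tilde H)$ and invoke Lemma \ref{lem:subgroups}, which gives $\KK_{\U_q^{\{2\}}(3)}(\tilde H) \subseteq \KK_{K_q^{\{2\}}}(\tilde H) = \KK_2(H)$.

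Next, I would decompose $p_{\triv_2}$ on $\tilde H$ using the Levi-type splitting $\U_q^{\{2\}}(3) = \U_q(1) \times \U_q(2)$. The trivial $K_q^{\{2\}} = \SU_q(2)$-subspace of any $\U_q(2)$-representation is precisely the direct sum of its one-dimensional subrepresentations, so on $\tilde H$ we obtain
$$
p_{\triv_2} = \sum_{(c, a) \in \ZZ^2} p_{\chi_{c, a}},
$$
where $\chi_{c, a} = (c) \boxtimes (a, a)$ runs over the one-dimensional $\U_q^{\{2\}}(3)$-characters. For $(c, a) = (0, 0)$ this is the trivial $\U_q^{\{2\}}(3)$-type, and Lemma \ref{lem:phase_Ei_p_triv_in_K} applies directly. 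For general $(c, a)$ the analogous estimate is obtained by rerunning the explicit Gelfand--Tsetlin computations from Section \ref{sec:asymptotics}—in particular Propositions \ref{prop:GTs_comparison2_for_SU3} and \ref{prop:phase_estimate}, and Lemma \ref{lem:legendre_approximation2}—but centred on an $E_2, F_2$-invariant vector of $\U_q(3)$-weight $(c, a, a)$ rather than $(0, 0, 0)$. The relevant $\U_q(3)$-irreducibles contributing to $\chi_{c, a}$ have highest weights obtained from $(m, 0, -m)$ by a shift by $(c, a, a)$, and the key telescoping cancellation $[k]([k + \tfrac{1}{2}] - [k - \tfrac{1}{2}]) = [\tfrac{1}{2}][2k]$ that produces the bound $1 - [\tfrac{1}{2}]^2/[l + \tfrac{1}{2}]^2$ carries over unchanged.

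The main obstacle will be to secure uniformity of the estimate simultaneously over all $(c, a) \in \ZZ^2$ and all ambient $\U_q(3)$-highest weights, so as to produce a single finite set $S \finitesubset \Irr(K_q^{\{2\}})$ with $\|(1 - p_S) \ph(E_1) p_{\triv_2}\| < \epsilon$. The needed uniformity should follow from the observation that the matrix-coefficient asymptotics controlling the proof of Lemma \ref{lem:phase_Ei_p_triv_in_K} depend only on the intrinsic $K_q^{\{2\}}$-branching data, and are insensitive to the choice of $\U_q(3)$-central character used to lift each isotypical component. The case of $\ph(F_1)$ is entirely analogous, by either adjointness or the symmetric argument based on the automorphism $\Psi$ of $\Uq(\glx_3)$.
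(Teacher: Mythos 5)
Your plan starts from the right observation (lifting to $\U_q(3)$ and invoking Lemma \ref{lem:phase_Ei_p_triv_in_K}), but two issues make it shakier than it needs to be.

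First, the identification $K_q^{\{2\}} = \SU_q(2)$ is not what the paper defines. By the conventions of Section \ref{sec:lattice_of_subgroups}, $K_q^{\{2\}}$ is generated by $E_2$, $F_2$ \emph{and the whole Cartan}; the rank-one subgroup isomorphic to $\SU_q(2)$ is $S_q^{\{2\}}$, which is strictly smaller. The trivial $K_q^{\{2\}}$-type therefore has $\SU_q(3)$-weight zero, so in your proposed decomposition $p_{\triv_2}$ is \emph{not} $\sum_{(c,a)\in\ZZ^2} p_{\chi_{c,a}}$: only characters with $\U_q(3)$-weight of the form $(a,a,a)$ contribute, i.e.\ the sum runs over the diagonal $c=a$ and not all of $\ZZ^2$. (The off-diagonal characters $\chi_{c,a}$ with $c\neq a$ restrict to nontrivial $T$-characters and hence to nontrivial $K_q^{\{2\}}$-types.)

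Second, and more importantly, allowing an arbitrary central character on each isotypical component forces you into re-running the Gelfand--Tsetlin asymptotics of Section \ref{sec:asymptotics} for every $a$ and then proving a uniform bound; you acknowledge this but defer it. You also remark that the estimates are ``insensitive to the choice of $\U_q(3)$-central character,'' which is true --- but the efficient way to use that observation is to make a canonical choice at the start. Since the trivial $K_q^{\{2\}}$-type occurs only in weight zero, any irreducible $\SU_q(3)$-representation containing it has $T\cap Z$ (the centre of $\SU_q(3)$) acting trivially, by Schur. So one may first replace $H$ by $H' = p_{\triv_{T\cap Z}}H$ (on ${H'}^\perp$ the operator $p_{\triv_2}$ vanishes), and then extend $H'$ to a $\U_q(3)$-representation in which $Z$ acts \emph{trivially}. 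On that lift $\KK_Z(H')=\LL(H')$, the quantum subgroups $Z$ and $K_q^{\{2\}}$ are commuting and generating in $\U_q^{\{2\}}(3)$, and the trivial $K_q^{\{2\}}$-type is exactly the trivial $\U_q^{\{2\}}(3)$-type. Lemma \ref{lem:phase_Ei_p_triv_in_K} then applies \emph{once}, and Lemma \ref{lem:product_groups} gives $\KK_{\U_q^{\{2\}}(3)}(H') = \KK_Z(H')\cap\KK_2(H') = \KK_2(H')$, finishing the argument with no case analysis or uniformity estimate at all. This is the proof in the paper; your approach can be repaired to give the same conclusion, but only after fixing the decomposition and doing the extra asymptotic work you have left open.
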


\begin{proof}
We reuse the notation from the proof of Proposition \ref{prop:SUq_orthotypicality}.  Recall that $T\cap Z$ acts trivially on any irreducible $K_q$-representation which contains the trivial $K_q^{\{2\}}$-type.  Putting $H' = p_{\triv_{T\cap Z}} H$, we have $p_{\triv_2}=0$ on ${H'}^\perp$, so it suffices to prove the result with $H'$ in place of $H$.

The $K_q$-representation on $H'$ extends to a $\U_q(3)$-representation in which $Z$ acts trivially.  With this extension, $\KK_Z(H') = \LL(H')$.  One can check that $Z$ and $K^{\{2\}}_q$ are commuting and generating quantum subgroups of $\U_q^{\{2\}}(3)$, so the result follows from Lemma \ref{lem:phase_Ei_p_triv_in_K} and Lemma \ref{lem:product_groups}.
\end{proof}

\begin{lemma}
\label{lem:SUq3_case}
With $K_q= \SU_q(3)$, let $H$ be a unitary $K_q$-representation. Then we have $\ph(E_i) \in \AA(H)$ and $\ph(F_i)\in \AA(H)$ for $i=1,2$.
\end{lemma}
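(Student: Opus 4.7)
For $K_q = \SU_q(3)$ with simple roots $\Sigma = \{\alpha_1, \alpha_2\}$, Theorem \ref{thm:lattice_of_ideals}(a) gives $\AA(H) = \AA_1(H) \cap \AA_2(H)$. The containment $\ph(E_i), \ph(F_i) \in \AA_i(H)$ is immediate from Lemma \ref{lem:K_i-resolvent}, since these are bounded matrix entries of $\ph(D_i) \in \AA_i$. The Hopf $*$-automorphism $\Psi$ of Section \ref{sec:alternative_GTs_bases} sends $E_i \mapsto E_{n-i}$, which for $n=3$ swaps $\alpha_1$ and $\alpha_2$; so it suffices to establish $\ph(E_1), \ph(F_1) \in \AA_2(H)$. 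By Lemma \ref{lem:A_equivalent_defns}(c), and using that $\KK_2(H)$ is $*$-closed, this in turn reduces to proving $\ph(E_1) p_\tau \in \KK_2(H)$ and $\ph(F_1) p_\tau \in \KK_2(H)$ for every $\tau \in \Irr(K_q^{\{2\}})$.

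The case $\tau = \triv_2$ is precisely Corollary \ref{cor:phase_Ei_p_triv_in_K}. For general $\tau$ the plan is to reduce to the trivial case by Lemma \ref{lem:reduction_to_triv}. I would choose a finite-dimensional $K_q$-representation $V$ containing $V^\tau$ as a $K_q^{\{2\}}$-subrepresentation; applying the lemma to the subgroup $K_q^{\{2\}}$ yields $K_q^{\{2\}}$-equivariant maps $\iota\colon\CC\to V^c\otimes V$ and $\bar\iota\colon V^c\otimes V\to\CC$ with
$$
p_\tau = (\Id_H\otimes\bar\iota)(p_{\triv_2}^{H\otimes V^c}\otimes\Id_V)(\Id_H\otimes\iota).
$$
Since $\ph(E_1)$ acts only on the $H$-factor, it commutes with $\Id\otimes\bar\iota$, giving
$$
\ph(E_1)p_\tau = (\Id_H\otimes\bar\iota)\bigl[\bigl((\ph(E_1)\otimes\Id_{V^c})\,p_{\triv_2}^{H\otimes V^c}\bigr)\otimes\Id_V\bigr](\Id_H\otimes\iota).
$$
The outer maps $\Id\otimes\iota$ and $\Id\otimes\bar\iota$ preserve $K_q^{\{2\}}$-types and involve only finite-dimensional tensor factors, so they belong to $\AA_2$. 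Because $\KK_2$ is a two-sided $\AA_2$-ideal (Lemma \ref{lem:A_equivalent_defns}(d)), it is enough to show $(\ph(E_1)\otimes\Id_{V^c})\,p_{\triv_2}^{H\otimes V^c} \in \KK_2(H\otimes V^c)$.

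Applying Corollary \ref{cor:phase_Ei_p_triv_in_K} to the $K_q$-representation $H\otimes V^c$ gives $\ph(\hat\Delta(E_1))\,p_{\triv_2}^{H\otimes V^c}\in\KK_2(H\otimes V^c)$. The remaining and genuinely hard step is to control the error term
$$
\bigl(\ph(E_1)\otimes\Id_{V^c}-\ph(\hat\Delta(E_1))\bigr)\,p_{\triv_2}^{H\otimes V^c}.
$$
Corollary \ref{cor:ph_E_commutator} only guarantees that the difference inside the parentheses lies in $\KK_1(H\otimes V^c)$ when multiplied by any fixed weight projection $p_\mu$, which is a priori weaker than the required $\KK_2$-bound. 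The resolution exploits essential orthotypicality of $K_q^{\{1\}}$ and $K_q^{\{2\}}$ in $K_q$ from Proposition \ref{prop:SUq_orthotypicality}: for any finite $S_1\finitesubset\Irr(K_q^{\{1\}})$, the product $p_{S_1}p_{\triv_2}$ lies in $\KK_\Sigma(H\otimes V^c)\subseteq\KK_2(H\otimes V^c)$. The plan is to approximate the difference weight-space by weight-space by $K_q^{\{1\}}$-harmonically finite operators $p_{S_1}(\,\cdot\,)p_{S_1}$, and use that weight projections commute with both the $K_q^{\{1\}}$-isotypical projections and $p_{\triv_2}$ to write the error term as a product of a bounded operator with factors of the form $p_{S_1}p_{\triv_2}$. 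Since these lie in $\KK_\Sigma\subseteq\KK_2$ and $\AA$-multipliers act on $\KK_\Sigma$, the pointwise $\KK_1$-bound is promoted to a global $\KK_\Sigma$-bound for the error after the crucial multiplication by $p_{\triv_2}$. This yields $\ph(E_1)p_\tau\in\KK_2(H)$; the argument for $\ph(F_1)p_\tau$ is identical.
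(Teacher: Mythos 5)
Your proposal tracks the paper's proof closely through the reductions: reducing to $\ph(E_1),\ph(F_1)\in\AA_2(H)$ via the automorphism $\Psi$, then to $\ph(E_1)p_\tau,\ph(F_1)p_\tau\in\KK_2(H)$ via Lemma \ref{lem:A_equivalent_defns}, then to the trivial type $\triv_2$ via Lemma \ref{lem:reduction_to_triv}, then splitting $(\ph(E_1)\otimes\Id_{V^c})p_{\triv_2}$ into the main term $\ph(\hat\Delta(E_1))p_{\triv_2}$ and an error. The main term is handled exactly as in the paper by Corollary \ref{cor:phase_Ei_p_triv_in_K}. But your treatment of the error term is where the proposal stops being a proof: you write ``the plan is \ldots'' and then describe a vague weight-by-weight approximation scheme that you never carry out, and whose justifications (``$\AA$-multipliers act on $\KK_\Sigma$,'' commutativity of weight projections with $p_{S_1}$ and $p_{\triv_2}$) do not quite assemble into the required statement. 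This is a genuine gap.

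Two concrete observations close it. First, you miss that $p_{\triv_2}$ is supported in a \emph{single} weight space: since $T\subseteq K_q^{\{2\}}$, the trivial $K_q^{\{2\}}$-type is in particular $T$-trivial, so $p_{\triv_2} = p_0\,p_{\triv_2}$. Corollary \ref{cor:ph_E_commutator} applied to the one weight $\mu=0$ then gives immediately that the error term is $F\,p_{\triv_2}$ with $F = (\ph(E_1)\otimes\Id - \ph(\hat\Delta(E_1)))p_0 \in\KK_1$ — no sum over weights is needed. Second, you drop the paper's reduction to finite $K_q$-multiplicities, which is how the paper finishes: after first assuming $H$ has finite $K_q$-multiplicities, the product $F\,p_{\triv_2}\in\KK_1\cdot\KK_2$ is compact by essential orthotypicality (Corollary \ref{cor:orthotypicality_and_intersection_of_Ks}, whose proof explicitly uses finite multiplicities, cf.\ Remark \ref{rmk:counterexample}); the general $H$ is then recovered by embedding into $L^2(K_q)\otimes\ell^2(\NN)$ with trivial action on $\ell^2(\NN)$. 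Your sketch — approximating $F$ by $p_{S_1}Fp_{S_1}$ and invoking $p_{S_1}p_{\triv_2}\in\KK_\Sigma$ from Proposition \ref{prop:SUq_orthotypicality} — can in fact be fleshed out into a valid alternative that bypasses the finite-multiplicity reduction, but to do so one must verify that $p_{S_1}F\in\AA_\Sigma$ (which holds because $p_{S_1}$ commutes with $K_q$-isotypical projections and both $\ph(E_1)\otimes\Id$ and $\ph(\hat\Delta(E_1))$ are $K_q$-harmonically proper). As written you assert neither the estimate nor this multiplier property, so the error term remains unbounded.
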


\begin{proof}
 
Let us first assume that $H$ has finite $K_q$-multiplicities. 

In order to prove $\ph(E_1)\in\AA(H)$, we only need to show $\ph(E_1)\in\AA_2(H)$ since $\ph(E_1)\in\AA_1(H)$ is clear. 
Let $\tau \in \Irr(K_q^{\{2\}})$. Choose a finite dimensional $K_q$-representation $V$ which contains $\tau$ as a $K_{q}^{\{2\}}$-type, and use Lemma \ref{lem:reduction_to_triv} to factorize $\ph(E_1)p_\tau$ on $H$ as
\begin{align}
\ph(E_1) p_\tau  &=  \ph(E_1)  (\Id_H \otimes \bar{\iota}) (p_{\triv_2}\otimes\Id_{V}) 
 (\Id_H \otimes \iota)\nonumber \\
& = (\Id_H \otimes \bar{\iota})(\ph(E_1)\otimes \Id_{V^c} \otimes \Id_{V}) (p_{\triv_2}\otimes\Id_{V}) 
 (\Id_H \otimes \iota).
\label{eq:phE1_tensor_trick}
\end{align}
Write
$$  
(\ph(E_1)\otimes \Id_{V^c}) p_{\triv_2} = \ph(\hat{\Delta}(E_1)) p_{\triv_2} + (\ph(E_1)\otimes \Id_{V^c} - \ph(\hat{\Delta}(E_1))) p_{\triv_2}.
$$
We have $ \ph(\hat{\Delta}(E_1)) p_{\triv_2} \in\KK_2(H\otimes V)$ by Corollary \ref{cor:phase_Ei_p_triv_in_K}.  We also have that 
$(\ph(E_1)\otimes \Id_{V^c} - \ph(\hat{\Delta}(E_1)))p_{\triv_2} \in \KK_2(H\otimes V)\,\KK_1(H\otimes V)$ by Corollary \ref{cor:ph_E_commutator}, and since $H\otimes V$ has finite $K_q$-multiplicities, Lemma \ref{lem:compact_operators} shows that this is 
in $\KK(H\otimes V) \subseteq \KK_2(H \otimes V)$. We conclude that $\ph(E_1) p_\tau \in \KK_2(H)$.  

One can similarly show that $\ph(F_1)p_\tau \in \KK_2(H) $. Moreover, by taking adjoints, we obtain $p_\tau \ph(E_1),~ p_\tau\ph(F_1) \in \KK_2(H)$. 
Using Lemma \ref{lem:A_equivalent_defns}, we conclude that $\ph(E_1)$ and $\ph(F_1)$ are in $\AA_2(H)$.

Suppose now that $H$ does not necessarily have finite $K_q$-multiplicities. We can embed $H$ into the universal $ K_q $-representation 
$ H_0 = L^2(K_q) \otimes \ell^2(\NN) $, where $\ell^2(\NN)$ is equipped with the trivial $K_q$-representation. Since $\ell^2(\NN)$ contains only the trivial $K_q^{\{2\}}$-type we have $\Id_{\ell^2(\NN)} \in \KK_2(\ell^2(\NN))$.  
Now $\ph(E_1)$ acts on $H_0$ as $ \ph(E_1) \otimes \Id_{\ell^2(\NN)}$. This operator belongs to $\AA_2(L^2(K_q)) \otimes \KK_2(\ell^2(\NN))$, 
and hence to $\AA_2(H_0)$ by Lemma \ref{lem:K_tensor_K}. 
It follows that the restriction of $\ph(E_1)$ to $H$ belongs to $\AA_2(H)$. A similar argument shows that $\ph(F_1) \in \AA_2(H)$.  

To show that $\ph(E_2),\ph(F_2) \in \AA_1(H)$ it suffices to use the automorphism $\Psi$ of Equation \eqref{eq:automorphism} to interchange the simple roots. 
This completes the proof. 
\end{proof}

\begin{proposition}
\label{prop:phase_Ei_in_A}
Let $K_q = \SU_q(n)$ and let $H$ be any unitary $K_q$-representation.  For each $i=1,\ldots,n-1$, the operators $ \ph(E_i) : H \to H$ and $\ph(F_i) : H \to H$ belong to $\AA(H)$.
\end{proposition}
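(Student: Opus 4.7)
The plan is to reduce the proposition to the rank-two case already established in Lemma~\ref{lem:SUq3_case}. By Theorem~\ref{thm:lattice_of_ideals}(a), it suffices to verify that $\ph(E_i)$ and $\ph(F_i)$ belong to $\AA_j(H)$ for every simple root $\alpha_j$. I would organise the argument into three cases determined by the position of $\alpha_j$ relative to $\alpha_i$ in the Dynkin diagram of $\SU_q(n)$.

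The first two cases are essentially automatic. When $j = i$, the operators $\ph(E_i)$ and $\ph(F_i)$ are bounded Borel functions of the self-adjoint operator $D_i$ and therefore commute with every $K^i_q$-isotypical projection, placing them in $\AA_i(H)$ without further work. When $|i - j| \geq 2$, the Serre relations give $[E_i, E_j] = [E_i, F_j] = 0$ and the weight-shift relations give $[E_i, K_j^{\pm 1}] = 0$, so $E_i$ commutes with the entire subalgebra $\Uq(\lie{s}^j_q)$; consequently $\ph(E_i)$ commutes with every $K^j_q$-isotypical projection and lies in $\AA_j(H)$, and the same argument treats $\ph(F_i)$.

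The substantive case is $|i - j| = 1$, which is where Lemma~\ref{lem:SUq3_case} will enter. The plan is as follows: the Hopf $\ast$-subalgebra of $\Uq(\slx_n)$ generated by $E_i, F_i, E_j, F_j$ together with $\Univ(\lie{h}^{\{i,j\}})$ is a copy of $\Uq(\slx_3)$, so the associated quantum subgroup $S^{\{i,j\}}_q \subseteq K_q$ is isomorphic to $\SU_q(3)$. Restricting the $K_q$-action on $H$ along this inclusion, I would apply Lemma~\ref{lem:SUq3_case} to obtain $\ph(E_i), \ph(F_i) \in \AA_{K^j_{q,\mathrm{loc}}}(H)$, where $K^j_{q,\mathrm{loc}}$ denotes the singleton-type subgroup of $S^{\{i,j\}}_q$ corresponding to $\alpha_j$. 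To promote this to membership in $\AA_{K^j_q}(H)$ for the ambient $K^j_q \subseteq K_q$, I would use the orthogonal decomposition $\lie{h} = \lie{h}^{\{i,j\}} \oplus \lie{h}^{\{i,j\}\perp}$ to realise $K^j_q$ as being generated by the two commuting subgroups $K^j_{q,\mathrm{loc}}$ and $T^{\{i,j\}\perp}$, apply Lemma~\ref{lem:product_groups}, and then separately check the easy fact that $\ph(E_i) \in \AA_{T^{\{i,j\}\perp}}(H)$, which holds because $E_i$ commutes with $\Univ(\lie{h}^{\{i,j\}\perp})$.

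The only genuine obstacle is the rank-two case $|i-j|=1$, and that has already been overcome in Lemma~\ref{lem:SUq3_case} through the asymptotic analysis of $q$-Racah coefficients in Section~\ref{sec:Gelfand-Tsetlin}. The present proposition is therefore essentially a packaging result, with the only care needed being the commuting-and-generating verification required to apply Lemma~\ref{lem:product_groups} and the identification of the relevant $\Uq(\slx_3)$-subalgebra inside $\Uq(\slx_n)$, both of which are routine.
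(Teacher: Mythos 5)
Your approach is essentially the same as the paper's: case analysis over $j\in\Sigma$, reducing the $|i-j|=1$ case to the $\SU_q(3)$ result of Lemma~\ref{lem:SUq3_case}, and invoking Lemma~\ref{lem:product_groups} to glue a commuting-and-generating decomposition of $K^j_q$. The $j=i$ case is fine (although ``bounded Borel functions of $D_i$'' is loose: $\ph(E_i)$ is an off-diagonal block of $\ph(D_i)$, not itself a function of $D_i$; the correct justification is simply that $E_i,F_i\in C(\hat{K}^i_q)$, and $K^i_q$-isotypical projections are central in $C(\hat{K}^i_q)$). The $|i-j|=1$ case is also fine, because $\alpha_i$ vanishes on $\lie{h}^{\{i,j\}\perp}$.

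However, there is a genuine error in the case $|i-j|\geq 2$. You claim that because $E_i$ commutes with $\Uq(\lie{s}^j_q)$ it follows that ``$\ph(E_i)$ commutes with every $K^j_q$-isotypical projection.'' This is false. The group $K^j_q$ contains the full torus $T$, not just $T^j$, so a $K^j_q$-type is a pair (an $S^j_q$-type, a $T^{j\perp}$-character). While $\ph(E_i)$ does preserve $S^j_q$-types, it shifts the $T^{j\perp}$-character by $\alpha_i\restriction_{\lie{h}^{j\perp}}$, which is nonzero whenever $i\neq j$ (if $\alpha_i$ vanished on the kernel of $\alpha_j$, then $\alpha_i$ would be proportional to $\alpha_j$). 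So $\ph(E_i)$ does \emph{not} commute with $K^j_q$-isotypical projections. The conclusion $\ph(E_i)\in\AA_j(H)$ is still true, but for a different reason: the $T^{j\perp}$-shift is by a fixed character, so the operator is $K^j_q$-harmonically proper (its matrix with respect to $K^j_q$-types has exactly one nonzero block per row and column), not $K^j_q$-diagonal.

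The paper handles this more uniformly and thereby avoids the trap: it factors $K^j_q = S^j_q\times T^{j\perp}$ as commuting-and-generating subgroups for \emph{every} $j$ at the start, observes once that $\ph(E_i)$ shifts $T^{j\perp}$-types by a fixed amount (hence lies in $\AA_{T^{j\perp}}(H)$), and then carries out the three-case analysis only for membership in $\AA_{S^j_q}(H)$. This organization is worth adopting: in your version the $T^\perp$-split appears only in the $|i-j|=1$ case (where you happen to be able to use the smaller $T^{\{i,j\}\perp}$ and get genuine commutation), which left the $|i-j|\geq 2$ case without the central-shift observation it actually requires.
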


\begin{proof}
We need to prove $\ph(E_i) \in \AA_j(H)$ for every $j\in\Sigma$.  Note that $S^j_q$ and $T^{j\perp}$ are commuting and generating quantum subgroups of $K^j_q$.   If $v\in H$ is of $T^{j\perp}$-type $\lambda$, then $\ph(E_i)v$ is of $T^{j\perp}$-type $\lambda + \alpha_i|_{T^{j\perp}}$, where by abuse of notation we are identifying $\alpha_i\in\bP$ with its exponential in $\hat{T}$.  It follows that $\ph(E_i) \in \AA_{T^{j\perp}}(H)$.  By Lemma \ref{lem:product_groups} it remains only to prove $\ph(E_i) \in \AA_{S^j_q}(H)$.

If $i=j$, this is immediate.  If $|i-j|=1$, then $E_i,F_i,E_j,F_j$ belong to a subalgebra of $\Uq(\slx_n)$ isomorphic to $\Uq(\slx_3)$, and the result follows from Lemma \ref{lem:SUq3_case}.  Finally, if $|i-j|>1$, then $\ph(E_i)$ commutes with $\Uq(\lie{s}^j)$ so it preserves $S^j_q$-types and the result follows.  

By taking adjoints, we also obtain $\ph(F_i) \in \AA(H)$.  
\end{proof}

We can now prove parts $ b) $ and $ c) $ of Theorem \ref{thm:PsiDOs}. Consider $D_i = \begin{pmatrix} 0 & F_i \\ E_i & 0\end{pmatrix}$ acting on $L^2(\sE_\mu \oplus \sE_{\mu+\alpha_i})$ for some $\mu\in\bP$. Theorem \ref{thm:PsiDOs} $ b) $ follows directly from Proposition \ref{prop:phase_Ei_in_A}. 
In order to prove part $c)$ let $\phi\in C_b(\RR)$ be a continuous odd function such that $\phi(D_i) = \ph(D_i)$. We know from Remark \ref{rmk:C_0(D2)_in_JJ} that $(1+D_i^2)^{-1} \in \JJ_i(L^2(\sE_\mu \oplus \sE_{\mu+\alpha_i}))$. Since $\phi(D_i) \in \AA(L^2(\sE_\mu \oplus \sE_{\mu+\alpha_i}))$ we also have $\phi(D_i)(1+D_i^2)^{-1} \in \JJ_i(L^2(\sE_\mu \oplus \sE_{\mu+\alpha_i}))$. 
By the Stone-Weierstrass Theorem the functions $x\mapsto (1+x^2)^{-1}$ and  $x\mapsto \phi(x) (1+x^2)^{-1}$ generate a dense subalgebra of $C_0(\RR)$, so Theorem \ref{thm:PsiDOs} $c)$ follows.


\subsection{Commutator of functions with the phase of a longitudinal Dirac operator}
\label{sec:PsiDO_commutators}

In this subsection we prove Theorem \ref{thm:PsiDOs} $d)$. 

For any $\lambda\in\lie{h}^*$ one may define an element $K_\lambda$ in $C(\hat{K}_q)$ by declaring 
that $ K_\lambda $ acts on the weight $\nu $ subspace of any irreducible $K_q$-representation by multiplication 
by $q^{\half(\lambda,\mu)}$. If $\lambda = \alpha_i$ is a simple root, then $K_{\alpha_i}$ is the generator $K_i$ of $\Uq(\slx_n)$. 

The element $K_{2\rho} \in C(\hat{K}_q)$, where $\rho$ is the half-sum of all positive roots, shows up in the Schur orthogonality relations. Specifically, the $L^2$-norms of the matrix coefficients of an 
irreducible unitary representation $\sigma$ of $K_q$ satisfy
\begin{equation}
\label{eq:Schur_orthogonality}
 \| \coeff{\xi^\dual}{\xi} \| = \frac{1}{\dim_q(\sigma)^\half}
 \| K_{2\rho} \cdot \xi^\dual \| \|\xi\|
\end{equation}
for $ \xi \in V^\sigma $ and $ \xi^\dual \in V^{\sigma \dual} $, 
where $ \dim_q $ denotes the quantum dimension.
We remark that the Hilbert space structure on $V^{\sigma\dual}$ is 
induced from the canonical isometric isomorphism of $ V^{\sigma\dual} $ with the conjugate Hilbert space of $ V^\sigma $. 
Moreover $K_{2\rho} \in C(\hat{K}_q)$ acts by the transpose action on $V^{\sigma \dual}$. 

Let us derive an estimate on slightly more general matrix coefficients.

\begin{lemma}
\label{lem:non-irreducible_matrix_coeff}
Fix $\sigma\in\Irr(K_q)$. For any $\tau\in\Irr(K_q)$ and $\zeta\in V^\tau \otimes V^\sigma$, $\zeta^\dual \in V^{\tau\dual} \otimes V^{\sigma\dual}$, we have
$$
  \| \coeff{\zeta^\dual}{\zeta} \| \leq \frac{\dim_q(\sigma)^\half}{\dim_q(\tau)^\half}
   \| K_{2\rho} \cdot \zeta^\dual \| \, \| \zeta \|.
$$
Here all norms are Hilbert space norms. 
\end{lemma}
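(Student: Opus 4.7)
The plan is to decompose the tensor product $V^\tau \otimes V^\sigma$ into its $K_q$-isotypical components and estimate each piece using Schur orthogonality. Writing $V^\tau \otimes V^\sigma = \bigoplus_{\pi \in \Irr(K_q)} (V^\tau \otimes V^\sigma)_\pi$ and analogously for $V^{\tau\dual} \otimes V^{\sigma\dual}$, I decompose $\zeta = \sum_\pi \zeta^\pi$ and $\zeta^\dual = \sum_\pi \zeta^{\pi,\dual}$. Because $K_{2\rho}$ is grouplike in $C(\hat{K}_q)$, it acts as $K_{2\rho}\otimes K_{2\rho}$ on tensor products and commutes with every $K_q$-intertwiner, so it preserves each isotypical component; consequently $\|\zeta\|^2 = \sum_\pi \|\zeta^\pi\|^2$ and $\|K_{2\rho}\zeta^\dual\|^2 = \sum_\pi \|K_{2\rho}\zeta^{\pi,\dual}\|^2$.

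The orthogonality of distinct isotypical components makes the cross terms in $\coeff{\zeta^\dual}{\zeta}(X) = \ip{\zeta^\dual, (\tau\otimes\sigma)(X)\zeta}$ vanish, so $\coeff{\zeta^\dual}{\zeta} = \sum_\pi f_\pi$ where each $f_\pi$ is a matrix coefficient of the irreducible $\pi$. By Schur orthogonality in $L^2(K_q)$ the $f_\pi$ are mutually orthogonal, giving $\|\coeff{\zeta^\dual}{\zeta}\|^2 = \sum_\pi \|f_\pi\|^2$. Fixing $\pi$ with multiplicity $m_\pi$ in $\tau\otimes\sigma$, I would choose orthonormal intertwiners $\iota_1, \dots, \iota_{m_\pi}: V^\pi \to V^\tau\otimes V^\sigma$ together with their duals $\bar{\iota}_j: V^{\pi\dual} \to V^{\tau\dual}\otimes V^{\sigma\dual}$, and expand $\zeta^\pi = \sum_j \iota_j \xi_j$, $\zeta^{\pi,\dual} = \sum_j \bar{\iota}_j \xi_j^\dual$ with $\xi_j \in V^\pi$, $\xi_j^\dual \in V^{\pi\dual}$. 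Since the $\iota_j$ are intertwiners, $\coeff{\bar{\iota}_j\xi_j^\dual}{\iota_j\xi_j}$ equals the matrix coefficient $\coeff{\xi_j^\dual}{\xi_j}$ of $\pi$ itself, so $f_\pi = \sum_j \coeff{\xi_j^\dual}{\xi_j}$.

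This $f_\pi$ is the image of $\sum_j \xi_j^\dual \otimes \xi_j$ under the Schur isometry, which by \eqref{eq:Schur_orthogonality} identifies $V^{\pi\dual} \otimes V^\pi$ (equipped with the tensor-product inner product twisted by $K_{2\rho}$ in the first slot and rescaled by $\dim_q(\pi)^{-1/2}$) isometrically with a subspace of $L^2(K_q)$. Combining this with the triangle inequality and Cauchy--Schwarz in the ordinary tensor-product Hilbert space yields
$$
\|f_\pi\|^2 = \frac{1}{\dim_q(\pi)} \Big\| \textstyle\sum_j (K_{2\rho}\xi_j^\dual) \otimes \xi_j \Big\|^2 \leq \frac{1}{\dim_q(\pi)}\|K_{2\rho}\zeta^{\pi,\dual}\|^2 \|\zeta^\pi\|^2.
$$
The final ingredient is the bound $\dim_q(\pi) \geq \dim_q(\tau)/\dim_q(\sigma)$ whenever $\pi$ occurs in $\tau \otimes \sigma$: by Frobenius reciprocity $\tau$ then appears in $\pi \otimes \sigma^c$, so $\dim_q(\tau) \leq \dim_q(\pi)\dim_q(\sigma^c) = \dim_q(\pi)\dim_q(\sigma)$. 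Substituting this estimate and using $\sum_\pi a_\pi b_\pi \leq (\sum_\pi a_\pi)(\sum_\pi b_\pi)$ for non-negative sequences yields
$$
\|\coeff{\zeta^\dual}{\zeta}\|^2 \leq \frac{\dim_q(\sigma)}{\dim_q(\tau)} \|K_{2\rho}\zeta^\dual\|^2 \|\zeta\|^2,
$$
which is the claim.

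The main nuisance will be the case of multiplicity $m_\pi > 1$: distinct embeddings of $V^\pi$ into $V^\tau \otimes V^\sigma$ produce matrix coefficients of the \emph{same} irreducible $\pi$, which are not $L^2$-orthogonal. Consequently $\|f_\pi\|^2$ cannot be expressed as a plain sum of diagonal Schur contributions and must instead be controlled through the tensor-product Hilbert-space estimate above.
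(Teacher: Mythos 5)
Your proof is correct, and it is actually more careful than the argument in the paper. The paper decomposes $\tau\otimes\sigma = \bigoplus_j\tau_j$ into individual irreducible subrepresentations and then writes $\|\coeff{\zeta^\dual}{\zeta}\|^2 = \sum_j \dim_q(\tau_j)^{-1}\|K_{2\rho}\cdot\zeta_j^\dual\|^2\|\zeta_j\|^2$, implicitly treating the matrix coefficients $\coeff{\zeta_j^\dual}{\zeta_j}$ as pairwise orthogonal in $L^2(K_q)$. As you note in your closing remark, this orthogonality fails once a $K_q$-type $\pi$ occurs with multiplicity greater than one: the matrix coefficients arising from two orthogonal summands $\tau_j\cong\tau_k\cong\pi$ land in the same $\pi$-isotypic block of $L^2(K_q)$, and their $L^2$ inner product is governed by the inner products of the corresponding vectors in the abstract $V^\pi$, which has nothing to do with the orthogonality of $V^{\tau_j}$ and $V^{\tau_k}$ inside $V^\tau\otimes V^\sigma$. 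Your remedy closes exactly this gap: by grouping into isotypic classes first, the decomposition $\|\coeff{\zeta^\dual}{\zeta}\|^2=\sum_\pi\|f_\pi\|^2$ genuinely holds, and each $\|f_\pi\|^2$ is then controlled by the Schur isometry together with the bound $\|\sum_j a_j\otimes b_j\|^2 \leq (\sum_j\|a_j\|^2)(\sum_j\|b_j\|^2)$ (triangle inequality plus Cauchy--Schwarz), which absorbs the multiplicity. The remaining ingredients --- the quantum-dimension estimate $\dim_q(\tau)\leq\dim_q(\pi)\dim_q(\sigma)$ via Frobenius reciprocity and the elementary $\sum_\pi a_\pi b_\pi\leq(\sum_\pi a_\pi)(\sum_\pi b_\pi)$ --- are the same as in the paper. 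One minor caveat: depending on conventions, the full Schur orthogonality relation may make the Schur map conjugate-linear in one slot, so the identity for $\|f_\pi\|^2$ should be read with $\overline{\xi_j}$ in place of $\xi_j$; this is harmless for the Cauchy--Schwarz estimate since conjugation preserves norms.
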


\begin{proof}
Take an orthogonal decomposition $\tau\otimes\sigma = \bigoplus_j \tau_j$ where the $\tau_j$ are irreducible $K_q$-subrepresentations of $\tau\otimes\sigma$.  Correspondingly, we decompose $\zeta = \sum_j \zeta_j$ and $\zeta^\dual = \sum_j \zeta_j^\dual$ where $\zeta_j \in V^{\tau_j}$, $\zeta_j^\dual \in V^{\tau_j\dual}$. Since $\tau \leq \tau_j\otimes\sigma^c$ we have $\dim_q(\tau) \leq \dim_q(\tau_j) \dim_q(\sigma)$. We obtain
$$ 
 \| \coeff{\zeta^\dual}{\zeta} \|^2
=  \sum_j \frac{1}{\dim_q(\tau'_j)} \|K_{2\rho} \cdot \zeta^\dual_j\|^2 \| \zeta_j \|^2
\leq  \sum_j \frac{\dim_q(\sigma)}{\dim_q(\tau)} \|K_{2\rho} \cdot \zeta^\dual_j\|^2 \| \zeta_j \|^2 .
$$
The result follows.
\end{proof}

Let $\mu,\nu\in\bP$, $i\in\Sigma$ and $f\in C(\sE_\nu)$. We will use the bracket $[\ph(E_i), \Mult{f}]$ to denote the operator $\ph(E_i)\Mult{f} - \Mult{f}\ph(E_i): L^2(\sE_\mu) \to L^2(\sE_{\mu+\nu+\alpha_i})$ of Theorem \ref{thm:PsiDOs} $d)$.  From the other parts of Theorem \ref{thm:PsiDOs}, 
we know that this operator belongs to $\AA(L^2(\sE_\mu), L^2(\sE_{\mu+\nu+\alpha_i}))$, so Theorem \ref{thm:PsiDOs} $d)$ is a consequence of the 
following lemma.

\begin{lemma}
\label{lem:commutator_of_PsiDO_with_mult_op}
Fix $\mu,\nu\in\bP$, $i\in\Sigma$ and $f\in C(\sE_\nu)$. Then $[\ph(E_i),\Mult{f}] \in \KK_i(L^2(\sE_\mu), L^2(\sE_{\mu+\nu+\alpha_i}))$.
Similarly, $[\ph(F_i),\Mult{f}] \in \KK_i(L^2(\sE_{\mu+\alpha_i}), L^2(\sE_{\mu+\nu}))$.
\end{lemma}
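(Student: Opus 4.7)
The plan is to focus on $\ph(E_i)$, since the statement for $\ph(F_i)$ will follow by an entirely parallel argument (or alternatively by taking adjoints). Since $\KK_i$ is norm-closed and $\Poly(\sE_\nu)$ is norm-dense in $C(\sE_\nu)$, the first step will be to reduce to the case $f \in \Poly(\sE_\nu)$, and then by linearity to the case where $f = \coeff{\xi^\dual}{\xi}$ is a single matrix coefficient of some $\sigma \in \Irr(K_q)$, with $\xi \in V^\sigma$ a weight vector of right weight $\nu$.

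The main idea is to factorize $\Mult{f}$ through the tensor product $L^2(K_q) \otimes V^\sigma$ in a manner that brings Corollary~\ref{cor:ph_E_commutator} into play. To this end I would introduce the bounded linear maps
\begin{equation*}
\iota_\xi : L^2(\sE_\mu) \to L^2(K_q) \otimes V^\sigma, \quad g \mapsto g \otimes \xi,
\end{equation*}
and
\begin{equation*}
M_{\xi^\dual} : L^2(K_q) \otimes V^\sigma \to L^2(K_q), \quad h \otimes \eta \mapsto \coeff{\xi^\dual}{\eta} \cdot h,
\end{equation*}
and observe that formula \eqref{eq:product_of_coeffs} yields the factorization $\Mult{f} = M_{\xi^\dual} \circ \iota_\xi$.

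The heart of the argument will be a pair of equivariance relations. Using the Leibniz-type identity $X \hit (ab) = (X_{(2)} \hit a)(X_{(1)} \hit b)$ (which comes from the skew Hopf pairing convention used in the paper), together with the standard identity $X \hit \coeff{\xi^\dual}{\eta} = \coeff{\xi^\dual}{X \eta}$, a direct calculation shows that $M_{\xi^\dual}$ algebraically intertwines $\hat{\Delta}(E_i)$ with $E_i$, and likewise $\hat{\Delta}(F_i)$ with $F_i$. Functional calculus then upgrades this to the bounded identity $M_{\xi^\dual} \circ \ph(\hat{\Delta}(E_i)) = \ph(E_i) \circ M_{\xi^\dual}$. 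On the other hand, $\iota_\xi$ trivially satisfies $\iota_\xi \circ \ph(E_i) = (\ph(E_i) \otimes \Id_{V^\sigma}) \circ \iota_\xi$. Combining these gives the key formula
\begin{equation*}
[\ph(E_i), \Mult{f}] \;=\; M_{\xi^\dual} \circ \bigl( \ph(\hat{\Delta}(E_i)) - \ph(E_i) \otimes \Id_{V^\sigma} \bigr) \circ \iota_\xi.
\end{equation*}

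Finally, since $\xi$ has weight $\nu$, the image of $\iota_\xi$ lies in the weight $\mu + \nu$ subspace of $L^2(K_q) \otimes V^\sigma$. Corollary~\ref{cor:ph_E_commutator} then provides exactly what is needed: the operator $(\ph(\hat{\Delta}(E_i)) - \ph(E_i) \otimes \Id_{V^\sigma}) \, p_{\mu+\nu}$ lies in $\KK_i(L^2(K_q) \otimes V^\sigma)$. A short weight-counting argument shows that the full composition lands in $L^2(\sE_{\mu+\nu+\alpha_i})$, after which the ideal property of $\KK_i$ (Lemma~\ref{lem:A_equivalent_defns}) will deliver the desired conclusion. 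The main technical hurdle I anticipate is the passage from the algebraic intertwining $M_{\xi^\dual} \hat{\Delta}(E_i) = E_i M_{\xi^\dual}$ (together with its counterpart for $F_i$) to the bounded intertwining for the phase; this should be handled by observing that the algebraic direct sum of weight spaces is a common core for $E_i$ and $F_i$ (and their counterparts on the tensor product) and is preserved by $M_{\xi^\dual}$, so that $M_{\xi^\dual}$ intertwines the polar decompositions of the unbounded operators.
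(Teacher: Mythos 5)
Your proof is correct and takes a genuinely different route from the one in the paper. The paper establishes the result by an $\epsilon$-argument: for each irreducible matrix coefficient $g$ it bounds $\|[\ph(E_i),\Mult{f}](\Id-p_{S'})g\|$ using the Schur orthogonality estimate of Lemma~\ref{lem:non-irreducible_matrix_coeff}, and then deals with the fact that $\Mult{f}$ does not respect the $K_q$-isotypical decomposition via the band-diagonal decomposition \eqref{eq:band-diagonal}. Your approach sidesteps both of these technicalities by exhibiting the explicit factorization $[\ph(E_i),\Mult{f}] = M_{\xi^\dual}\circ(\ph(\hat{\Delta}(E_i)) - \ph(E_i)\otimes\Id_{V^\sigma})\circ\iota_\xi$, which realizes the commutator as a $\KK_i$-element sandwiched between two maps in $\AA_i$ (the maps $\iota_\xi$ and $M_{\xi^\dual}$, restricted to the relevant weight spaces, are $K_q^i$-harmonically proper). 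This makes the role of Corollary~\ref{cor:ph_E_commutator} sharper and avoids Lemma~\ref{lem:non-irreducible_matrix_coeff} entirely; it also more closely mirrors the factorization strategy of Lemma~\ref{lem:reduction_to_triv} that the paper uses elsewhere.

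One caveat about the justification you offer for the ``main technical hurdle.'' The assertion that a bounded map intertwining two operators on a common core must intertwine their polar decompositions is false in general; a $2\times2$ example already fails (take $A=\left(\begin{smallmatrix}0&1\\0&0\end{smallmatrix}\right)$, $B=2A$, $M=\mathrm{diag}(2,1)$, so $MA=BM$ but $M\ph(A)\neq\ph(B)M$). What actually makes your step go through is the additional structure you yourself invoke: $M_{\xi^\dual}$ simultaneously intertwines $E_i$, $F_i$ and $K_i$ (the last because $M_{\xi^\dual}$ preserves right weights), so it is a bounded intertwiner of the two unitary $S^i_q$-representations. Such an intertwiner respects the $S^i_q$-isotypical decomposition, acting on each block as $\Id_{V_l}\otimes T_l$ for some bounded $T_l$ on the multiplicity space, and this manifestly commutes with $\ph(E_i)=\ph(E_i|_{V_l})\otimes\Id$. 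This is the correct mechanism; the common-core phrasing is a red herring and, stated as you did, would not survive scrutiny. With that argument substituted, the proof is complete.
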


\begin{proof}
Again, we write $\sigma^\lambda$ for the irreducible $K_q$-representation with highest weight $\lambda\in\bP^+$. Let $\Wts(\sigma^\lambda) \subset \bP$ 
denote the set of weights occurring in $\sigma^\lambda$.

We will assume that $f = \coeff{\xi^\dual}{\xi}$ is a matrix coefficient of $\sigma^\lambda\in\Irr(K_q)$, and that $\xi$ has weight $\nu$ and $K^i_q$-type $\beta$ for some $\beta\in\Irr(K^i_q)$.  Such $f$ span a dense subspace of $C(\sE_\nu)$. 

Let $\epsilon>0$.  By Corollary \ref{cor:ph_E_commutator} we can find a finite set $S\finitesubset \Irr(K^i_q)$ such that for any unitary $K_q$-representation $H$ we have
\begin{align*}
 \lefteqn{ \| (\ph(\hat{\Delta} E_i) - \ph(E_i)\otimes \Id_{V^{\sigma^\lambda}})  p_{\mu+\nu} (\Id - p_S) \|}
 \hspace{4cm} & \\
   & < \epsilon /
  (| \Wts(\sigma^\lambda) |  \dim_q({\sigma^\lambda})^\half \|K_{2\rho}\cdot\xi^\dual\| \|\xi\| ), \\
 \lefteqn{\| (\Id - p_S) (\ph(\hat{\Delta} E_i) - \ph(E_i)\otimes \Id_{V^{\sigma^\lambda}}) p_{\mu+\nu} \|}
  \hspace*{4cm} & \\
  & < \epsilon /
  (| \Wts(\sigma^\lambda) | \dim_q({\sigma^\lambda})^\half \|K_{2\rho}\cdot\xi^\dual\| \|\xi\| ) ,
\end{align*}
as operators on $H\otimes V^{\sigma^\lambda}$.
Let $S'$ be the set of all $\gamma'\in\Irr(K^i_q)$ such that $\gamma' \otimes\beta$ contains some $K^i_q$-type $\gamma$ belonging to $S$.  This is a finite set, since any such $\gamma'$ is a subrepresentation of $\gamma\otimes\beta^c$ with $\gamma\in S$.

Let $g\in L^2(\sE_\mu)$ be a matrix coefficient $g = \coeff{\eta^\dual}{\eta}$ of an irreducible $K_q$-representation $\sigma^\kappa$.  Using Lemma \ref{lem:non-irreducible_matrix_coeff} we obtain
\begin{align}
 \lefteqn{ \| [\ph(E_i) , \Mult{f}] (\Id - p_{S'}) g \| } \qquad \nonumber \\
  & = \| \coeff{\eta^\dual \otimes \xi^\dual}{
   (\ph(\hat{\Delta} E_i) - \ph(E_i)\otimes\Id)  (((\Id-p_{S'})\eta)\otimes\xi)} \| \nonumber \\
  & = \| \coeff{\eta^\dual \otimes \xi^\dual}{
   (\ph(\hat{\Delta} E_i) - \ph(E_i)\otimes\Id) p_{\mu+\nu} (\Id- p_S) ((\Id-p_{S'})\eta)\otimes\xi} \| \nonumber \\
  & \leq \frac{\dim_q({\sigma^\lambda})^\half}{\dim_q({\sigma^\kappa})^\half} \| K_{2\rho}\cdot\eta^\dual \| \| K_{2\rho}\cdot\xi^\dual \|
   \| (\ph(\hat{\Delta} E_i) - \ph(E_i)\otimes\Id) p_{\mu+\nu} (\Id- p_S) \| \| \eta\| \|\xi\| \nonumber \\
  & < \frac{\epsilon}{| \Wts(\sigma^\lambda) | \, \dim_q({\sigma^\kappa})^\half} \| K_{2\rho}\cdot\eta^\dual \| \| \eta\| \nonumber \\
  & =  \frac{\epsilon}{| \Wts(\sigma^\lambda) | } \| g \|.
 \label{eq:irred_coeff_estimate1}
\end{align}
A similar calculation shows that
\begin{equation}
\label{eq:irred_coeff_estimate2}
\| (\Id - p_{S}) [\ph(E_i) , \Mult{f}]  g \| <  \frac{\epsilon}{| \Wts(\sigma^\lambda) | } \|g\| .
\end{equation}

If $g\in L^2(\sE_\mu)$ is instead a sum of irreducible matrix coefficients, then the inequalities \eqref{eq:irred_coeff_estimate1} and \eqref{eq:irred_coeff_estimate2} do not necessarily hold.  To resolve this, we will take advantage of the fact that the operator $[\ph(E_i) , \Mult{f}] (\Id - p_{S'})$ is band-diagonal with respect to $K_q$-types in the following sense.  The operators $\ph(E_i)$ and $(\Id - p_{S})$ commute with the $K_q$-isotypical projections, while $\Mult{f}$ satisfies $p_{\sigma^{\kappa'}} \Mult{f} p_{\sigma^{\kappa}} = 0$ unless $\sigma^{\kappa'}$ occurs as an irreducible subrepresentation of $\sigma^\kappa \otimes \sigma^\lambda$.  We note that if $\sigma^{\kappa'} \leq \sigma^\kappa\otimes \sigma^\lambda$ then $\kappa' = \kappa + \omega$ for some weight $\omega$ of $\sigma^\lambda$.  Therefore we have a decomposition
\begin{equation}
 \label{eq:band-diagonal}
  [\ph(E_i) , \Mult{f}] (\Id - p_{S'}) = \sum_{\omega \in \Wts(\sigma^\lambda)}
   \left( \sum_{\kappa\in\bP^+} p_{\sigma^{\kappa+\omega}} [\ph(E_i) , \Mult{f}] (\Id - p_{S'}) p_{\sigma^{\kappa}} \right),
\end{equation}
where we take the convention that $p_{\sigma^{\kappa+\omega}} = 0$ if $\kappa+\omega$ is not dominant.  By the calculation \eqref{eq:irred_coeff_estimate1} for irreducible matrix coefficients,  $p_{\sigma^{\kappa+\omega}} [\ph(E_i) , \Mult{f}] (\Id - p_{S'}) p_{\sigma^{\kappa}} $ is norm bounded by $\epsilon / | \Wts(\sigma^\lambda) | $ for each $\kappa, \omega$.  Since the projections $p_{\sigma^\kappa}$ are mutually orthogonal, the sum in parentheses in \eqref{eq:band-diagonal} is bounded by $\epsilon / | \Wts(\sigma^\lambda) |$ for each fixed $\omega$.  We conclude that $\| [\ph(E_i) , \Mult{f}] (\Id - p_{S'}) \| < \epsilon $.  Similarly, one obtains  $\| (\Id - p_{S}) [\ph(E_i) , \Mult{f}] \| < \epsilon $. 
This completes the proof that $[\ph(E_i),\Mult{f}] \in \KK_i(L^2(\sE_\mu), L^2(\sE_{\mu+\nu+\alpha_i}))$ for all $f\in C(\sE_\mu)$. 

Finally, one can obtain $[\ph(F_i),\Mult{f}] \in \KK_i(L^2(\sE_{\mu+\alpha_i}), L^2(\sE_{\mu+\nu}))$ by taking adjoints.
\end{proof}


\section{The action of $\SL_q(n,\CC)$}
\label{sec:SLq}

In this section we recall the definition of the quantum group $\SL_q(n,\CC)$ and its principal series representations, 
and prove some estimates that will be used later. 

\subsection{The complex quantum group $\SL_q(n,\CC)$}
\label{sec:SLqdef}

The quantized complex semisimple Lie group $ G_q = \SL_q(n,\CC)$ is defined as the quantum double of $K_q = \SU_q(n)$. 
More precisely, the $ C^* $-algebra of functions on $ G_q $ is given by 
$$ 
C_0(G_q) = C(K_q) \otimes C_0(\hat{K}_q)  
$$ 
with the comultiplication 
$$
\Delta_{G_q} = (\Id \otimes \sigma \otimes \Id)(\Id \otimes \ad(W) \otimes \Id)(\Delta \otimes \hat{\Delta}), 
$$
where $\ad(W)$ is conjugation with the multiplicative unitary $W \in M(C(K_q) \otimes C^*(K_q))$ of $K_q$ 
and $\sigma$ denotes the flip map. In the special case $n = 2$ this quantum group has been studied in detail by Podle\'s and Woronowicz \cite{PWlorentz}. 

The unitary representations of $ G_q $ are in one-to-one correspondence with unitary Yetter-Drinfeld modules 
for $K_q$, compare \cite{NVpoincare}. Passing to the subspace of $ K_q $-finite vectors, one can study 
such representations algebraically, namely in terms of Yetter-Drinfeld modules over the Hopf algebra $\Poly(K_q)$. 
We recall that a Yetter-Drinfeld module over $ \Poly(K_q) $ is a vector space $ V $
equipped with both a left action and a left coaction of $ \Poly(K_q) $ in the purely algebraic sense, satisfying the compatibility condition 
$$
f_{(1)} \xi_{(-1)} S(f_{(3)}) \otimes f_{(2)} \cdot \xi_{(0)} = (f \cdot \xi)_{(-1)} \otimes (f \cdot \xi)_{(0)}
$$ 
for $ f \in \Poly(K_q) $ and $ \xi \in V $. Here we use the Sweedler notation $ \xi \mapsto \xi_{(-1)} \otimes \xi_{(0)} $ for 
the coaction $  V \rightarrow \Poly(K_q) \otimes V $, and we write $ f \cdot v $ for the action of $ f \in \Poly(K_q) $ on $ v \in V $.

\subsection{Principal series representations of $\SL_q(n,\CC)$}
\label{sec:principal_series}

As mentioned before,  For $\lambda\in\lie{h}^*$ one may define an element $K_\lambda$ of $C(\hat{K_q})$ by declaring 
that $ K_\lambda $ acts on the weight $\nu $ subspace of any irreducible $K_q$-representation by multiplication 
by $q^{\half(\lambda,\mu)}$. We note 
that $K_\lambda = K_{\lambda'} $ if $\lambda \equiv \lambda'$ modulo $2i\hbar^{-1}\bQ$, where $\hbar = \frac{\log(q)}{2\pi}$ and $\bQ$ is the root lattice. 
We write $\lie{h}^*_q =  \lie{h}^*/2i\hbar^{-1}\bQ$ and $ i \lie{t}^*_q = i \lie{t}^*/2i\hbar^{-1}\bQ $. 
Our conventions here are adjusted to the quantized enveloping algebra $\Uq(\slx_n)$; recall that in the notation 
of \cite[\S6.1.2]{KS} the element $K_i$ in $\Univ_q(\slx_n)$ corresponds to $K_i^2$ in $\Uq(\slx_n)$. 

The principal series Yetter-Drinfeld modules are parametrized by pairs $(\mu,\lambda) \in \bP \times \lie{h}^*_q$. We will denote the 
principal series Yetter-Drinfeld module with parameter $(\mu,\lambda)$ by $\Poly(\sE_{\mu, \lambda})$. As a $K_q$-representation it is 
just $\Poly(\sE_\mu)$, and the action of $\Poly(K_q)$ is given by
\begin{equation}
\label{eq:YD_action}
\pi_{\mu,\lambda}(a) g = \, a_{(1)} \,g\, (K^2_{\lambda+\rho} \hit S(a_{(2)}))
\end{equation}
for $a\in \Poly(K_q)$ and $g\in \Poly(\sE_\mu)$.

For $\lambda\in i \lie{t}^*_q $ the representations $ \pi_{\mu, \lambda} $ are $ * $-representations with respect to the standard inner 
product on $ \Poly(\sE_\mu) $, and the resulting unitary representations of $ G_q $ on the completion $ L^2(\sE_\mu) $ of $ \Poly(\sE_\mu) $ are 
called unitary principal series representations. We will write $ L^2(\sE_{\mu,\lambda})=L^2(\sE_\mu)$ if we want to emphasize the corresponding 
Yetter-Drinfeld structure. 

We will only need the unitary principal series representations with parameters $(\mu,0)$, and we shall 
make use of the following properties.  

\begin{theorem}
\label{thm:intertwiners}
\begin{bnum}
\item[a)] The representations $L^2(\sE_{\mu,0})$ are irreducible.
\item[b)] The representations $L^2(\sE_{\mu,0})$ and $L^2(\sE_{\nu,0})$ are equivalent if and only if $\mu$ and $\nu$ belong to the same orbit of the Weyl group action on $\bP$.  In particular, if $\nu = w_i\mu$ where $w_i$ is the reflection associated to a simple root $\alpha_i$, then the representations are intertwined by the operator
\begin{align*}
 \ph(E_i)^n : L^2(\sE_{\mu,0}) \stackrel{\cong}{\longrightarrow} L^2(\sE_{w_i\mu,0}), \qquad &\text{if } w_i\mu - \mu = n\alpha_i \text{ with } n > 0, \\
\ph(F_i)^n : L^2(\sE_{\mu,0}) \stackrel{\cong}{\longrightarrow} L^2(\sE_{w_i\mu,0}), \qquad &\text{if } w_i\mu - \mu = n\alpha_i \text{ with } n < 0.
\end{align*}
\end{bnum}
\end{theorem}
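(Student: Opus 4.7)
My plan is to build the intertwiners of part (b) from the unbounded operators $E_i^n$ and $F_i^n$, and to deduce irreducibility (a) and the ``only if'' direction of (b) from an infinitesimal character argument.

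First I would verify that the unbounded operator $E_i^n : \Poly(\sE_\mu) \to \Poly(\sE_{\mu+n\alpha_i})$ intertwines the Yetter-Drinfeld actions $\pi_{\mu,0}$ and $\pi_{\mu+n\alpha_i,0}$ whenever $w_i\mu - \mu = n\alpha_i$. Unpacking
$$
\pi_{\mu,0}(a)g \;=\; a_{(1)}\, g\, (K^2_\rho \hit S(a_{(2)})),
$$
the required identity rests on the twisted Leibniz rule for $E_i$ acting via $E_i \hit$ on products in $\Poly(K_q)$, together with $\hat\Delta(E_i) = E_i \otimes K_i + K_i^{-1} \otimes E_i$. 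The scalar twists introduced when commuting $E_i^n$ past left multiplication by $a_{(1)}$ and past right multiplication by $K^2_\rho \hit S(a_{(2)})$ are both controlled by the action of $K_i^n$ on weights, and they cancel exactly when the target weight is the reflected weight $w_i\mu$. This is the quantum analogue of the classical fact that, at a distinguished parameter, the Knapp-Stein intertwiner between Weyl-related principal series of a complex semisimple Lie group is an iterated differential operator along the corresponding root direction.

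Next I would pass from the unbounded intertwiner $E_i^n$ to its phase $\ph(E_i)^n$. Taking adjoints in the intertwining relation yields a corresponding intertwining for $F_i^n$, from which it follows that $|E_i^n|^2 = F_i^n E_i^n \in \Uq(\lie{s}^i)$ commutes with the action $\pi_{\mu,0}$ on $L^2(\sE_\mu)$. By functional calculus, so does $|E_i^n|$, and the polar decomposition then produces $\ph(E_i)^n$ as a partial isometric intertwiner; its kernel and cokernel are automatically $\pi_{\mu,0}$-invariant, and once irreducibility is established they must vanish, making $\ph(E_i)^n$ a unitary isomorphism. The case $n < 0$ is handled symmetrically via $\ph(F_i)^{|n|}$. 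For part (a), I would invoke the Harish-Chandra isomorphism for the Drinfeld double $G_q$: the infinitesimal character of $\pi_{\mu,0}$ determines it up to Weyl orbit in $\bP \times \lie{h}^*_q$, and at the regular parameter $\lambda = 0$ a matching of $K_q$-types and their multiplicities against those of any proposed proper invariant subspace rules out reducibility. The ``only if'' direction of (b) is then an immediate consequence of the same infinitesimal character argument.

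The main obstacle will be the careful algebraic verification that $E_i^n$ intertwines the Yetter-Drinfeld actions, since the bookkeeping of scalar twists from the iterated twisted Leibniz rule is delicate. A cleaner conceptual route is to recast the principal series action in terms of the universal R-matrix of the Drinfeld double, so that the intertwining property becomes a manifestation of quasitriangularity; this is essentially the approach announced for the companion paper \cite{VYprincipal}. Accordingly, in the present paper I would cite the needed structural results from \cite{VYprincipal} and verify only that the explicit operators $\ph(E_i)^n$ and $\ph(F_i)^n$ at $\lambda = 0$ provide the claimed representatives of the Weyl-reflection intertwiners.
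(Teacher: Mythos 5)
The paper does not prove Theorem \ref{thm:intertwiners}: immediately after the statement the authors write that the facts ``are at least partially known to experts'' and defer entirely to the companion paper \cite{VYprincipal}. Your proposal eventually reaches the same conclusion -- cite \cite{VYprincipal} and carry the intertwiners into the present argument -- and that is the correct strategy for this paper. So in terms of what actually appears here, you and the authors are aligned.

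That said, the sketch you offer before deferring has one step that would not survive if you tried to carry it out. You propose to establish irreducibility of $L^2(\sE_{\mu,0})$ by ``a matching of $K_q$-types and their multiplicities against those of any proposed proper invariant subspace.'' A multiplicity count of $K_q$-types cannot, by itself, rule out a proper invariant subspace: nothing forces an invariant subspace to consist of complete isotypic components, and even a decomposition into full isotypic blocks could in principle be $G_q$-invariant without contradicting any multiplicity constraint. Already in the classical complex case (Bruhat, Zhelobenko, Wallach) irreducibility of the unitary principal series requires a genuine argument -- cyclicity of a distinguished vector, or a careful analysis of standard intertwining operators and their composites -- rather than an isotypic bookkeeping exercise, and the quantum situation is no simpler. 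By contrast, the other ingredients of your sketch are sound: the verification that $E_i^n$ intertwines $\pi_{\mu,0}$ and $\pi_{w_i\mu,0}$ at the special parameter $\lambda=0$ via the twisted Leibniz rule, the passage to $\ph(E_i)^n$ through polar decomposition (which behaves well because $E_i$ is a weighted shift along the $K^i_q$-isotypic ladder, and $(E_i^n)^* = F_i^n$), and the observation that when $w_i\mu - \mu = n\alpha_i$ with $n>0$ the weight $\mu$ sits at the reflected position on every $\alpha_i$-string so $\ph(E_i)^n$ is actually unitary, are all reasonable and consistent with what the present paper uses elsewhere (compare Lemma \ref{lem:almost_symmetry}). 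The real mathematical content of irreducibility and of the ``only if'' direction of (b) is precisely what is outsourced to \cite{VYprincipal}, so the honest thing to do -- and what you ultimately say you would do -- is to cite rather than reprove.
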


The above facts are at least partially ``known to experts,'' although they do not appear in this form in the literature. 
We refer to \cite{VYprincipal} for a detailed exposition.

\subsection{Almost $\SL_q(n,\CC)$-equivariance of the phases of $E_i$ and $F_i$}
\label{sec:SLq-equivariance}

A straightforward computation shows that the multiplication operators on $L^2(\sE_\mu)$ satisfy 
the following covariance property with respect to principal series representations. 

\begin{lemma}
\label{lem:YD_covariance}
Let $\mu, \nu \in \bP$ and $f\in \Poly(\sE_{\nu})$, so that $\Mult{f}$ defines an operator from $L^2(\sE_\mu)$ to $L^2(\sE_{\mu+\nu})$. 
Then for any $a \in \Poly(K_q)$,
$$
\, \pi_{\mu+\nu, 0}(a) \: \Mult{f} 
     = \Mult{a_{(1)}f S(a_{(2)})} \: \pi_{\mu, 0}(a_{(3)}).
$$
\end{lemma}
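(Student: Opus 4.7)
The identity is an equality of bounded operators $L^2(\sE_\mu) \to L^2(\sE_{\mu+\nu})$, and since both sides are continuous, it suffices to verify it on the dense subspace $\Poly(\sE_\mu)$. My plan is to apply both sides to an arbitrary element $g \in \Poly(\sE_\mu)$, unwind the definition \eqref{eq:YD_action} of the principal series action on each side, and then reduce the resulting expression using the Hopf algebra axioms.

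More concretely, for the left hand side I would compute
$$
\pi_{\mu+\nu,0}(a)\Mult{f} g \;=\; \pi_{\mu+\nu,0}(a)(fg) \;=\; a_{(1)}\, f g\, (K_{\rho}^2 \hit S(a_{(2)})),
$$
using simply that $\Mult{f}$ is left multiplication in $\Poly(K_q)$. For the right hand side, writing out $\pi_{\mu,0}(a_{(3)})g$ with the inner coproduct and then applying left multiplication by $a_{(1)} f S(a_{(2)})$, one gets
$$
a_{(1)}\, f\, S(a_{(2)})\, a_{(3)}\, g\, (K_{\rho}^2 \hit S(a_{(4)})),
$$
where I have used a four-leg Sweedler expansion $\Delta^{(3)}(a) = a_{(1)} \otimes a_{(2)} \otimes a_{(3)} \otimes a_{(4)}$. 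The antipode axiom $S(a_{(2)})a_{(3)} = \counit(a_{(2)})\mathbf{1}$ followed by the counit axiom then collapses this back to $a_{(1)}\, fg\, (K_{\rho}^2 \hit S(a_{(2)}))$, matching the left hand side.

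In short, the whole statement is essentially a routine ``add and subtract'' manipulation with the Hopf algebra structure of $\Poly(K_q)$, together with the observation that the $\hit$-action is an honest algebra action so that it respects the products appearing in \eqref{eq:YD_action}. There is no genuine analytic obstacle; the only points to watch are: (i) keeping the Sweedler indices straight when $\pi_{\mu,0}(a_{(3)})$ is itself expanded via its definition, and (ii) verifying that the product $a_{(1)}f S(a_{(2)})$ is indeed an element of $\Poly(\sE_\nu)$, which follows from the fact that the right regular action of $T$ on $\Poly(K_q)$ is an algebra automorphism and the two $\Poly(K_q)$-factors $a_{(1)}$ and $S(a_{(2)})$ are $T$-balanced. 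Once those bookkeeping points are handled, the equality holds on $\Poly(\sE_\mu)$ and extends to $L^2(\sE_\mu)$ by continuity of the operators involved.
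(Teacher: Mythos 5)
Your computation is correct and is exactly the ``straightforward computation'' the paper has in mind (the paper itself gives no further details). Unwinding \eqref{eq:YD_action} on both sides, using the antipode axiom $S(a_{(2)})a_{(3)} = \counit(a_{(2)})1$ and the counit identity to collapse the four-leg Sweedler expansion back to two legs is precisely the intended argument, and the reduction to $\Poly(\sE_\mu)$ followed by extension by continuity is the standard framework. Your remark (ii) about verifying $a_{(1)}fS(a_{(2)}) \in \Poly(\sE_\nu)$ is the one genuine checkpoint; it follows because $\Delta$ is an algebra map, $\Delta S = (S \otimes S)\Delta^{\cop}$, $\pi_T$ is a Hopf algebra morphism, and $\Poly(T)$ is commutative, so that $(\Id \otimes \pi_T)\Delta\bigl(a_{(1)}fS(a_{(2)})\bigr) = a_{(1)}fS(a_{(2)}) \otimes e^\nu$ after one more application of the antipode and counit axioms.
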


The next result will be used in the proof of the equivariance properties of our $K$-homology cycle.

\begin{theorem}
\label{thm:SLq-equivariance}
Let $K_q = \SU_q(n)$ for $n\geq2$. Moreover let $\mu\in\bP$ and $i\in\Sigma$.
\begin{bnum}
\item[a)] For any $a\in\Poly(K_q)$, we have $\pi_{\mu,0}(a) \in \AA(L^2(\sE_\mu))$.
\item[b)] The operators $\ph(E_i): L^2(\sE_{\mu,0}) \to L^2(\sE_{\mu+\alpha_i,0})$ and $\ph(F_i): L^2(\sE_{\mu+\alpha_i,0}) \to L^2(\sE_{\mu,0})$ are $\SL_q(n,\CC)$-equivariant modulo $\JJ_i$, in the sense that for any $a\in\Poly(K_q)$,
\begin{align}
\label{eq:E_i-equivariance}
  \, \pi_{\mu+\alpha_i,0}(a) \ph(E_i)  -  \ph(E_i) \pi_{\mu,0}(a)
    &\in \JJ_i(L^2(\sE_\mu), L^2(\sE_{\mu+\alpha_i})), \\
\label{eq:F_i-equivariance} 
 \, \pi_{\mu,0}(a) \ph(F_i)  -  \ph(F_i) \pi_{\mu+\alpha_i,0}(a)
    &\in \JJ_i(L^2(\sE_{\mu+\alpha_i}), L^2(\sE_{\mu})).
\end{align}
\end{bnum}
\end{theorem}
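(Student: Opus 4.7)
\emph{Part (a)} follows directly from the formula \eqref{eq:YD_action}. Writing
\[
\pi_{\mu,0}(a) = \sum \Mult{a_{(1)}}\, \RMult{K^2_\rho \hit S(a_{(2)})}
\]
as a finite Sweedler-sum of compositions of left and right multiplication operators by elements of $\Poly(K_q)$ (the right-hand factor stays in $\Poly(K_q)$ because the right regular action of $\Uq(\slx_n)$ and of $C(\hat{K_q})$ preserves $\Poly(K_q)$), Theorem \ref{thm:PsiDOs}(a) places each factor in $\AA$ after decomposition into weight components. Since $\AA$ is closed under composition and finite sums, the claim follows.

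\emph{Reduction for part (b).} Both $\pi_{\mu,0}(a)$ by (a) and $\ph(E_i)$ by Theorem \ref{thm:PsiDOs}(b) already lie in $\AA$, so the commutator $[\pi_{\mu+\alpha_i,0}(a), \ph(E_i)]$ automatically lies in $\AA$ and the task reduces to showing it belongs to $\KK_i$. Using the factorization above and applying Lemma \ref{lem:commutator_of_PsiDO_with_mult_op} to each $\Mult{a_{(1)}}$, I can move $\ph(E_i)$ past every left-multiplication factor at the cost of an element of $\JJ_i$, leaving the expression
\[
\sum \Mult{a_{(1)}}\,\bigl[\ph(E_i),\RMult{K^2_\rho \hit S(a_{(2)})}\bigr] \pmod{\JJ_i}.
\]
Since $\JJ_i$ is an ideal in $\AA$ and $\Mult{a_{(1)}} \in \AA$, it suffices to establish $[\ph(E_i), \RMult{b}] \in \KK_i$ for every $b \in \Poly(K_q)$; the corresponding statement for $\ph(F_i)$ will then follow by taking adjoints, using $\ph(F_i) = \ph(E_i)^*$ together with the second half of Lemma \ref{lem:commutator_of_PsiDO_with_mult_op}.

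\emph{The main obstacle} is establishing this right-multiplication analogue of Lemma \ref{lem:commutator_of_PsiDO_with_mult_op}, which is genuinely more delicate than the left-multiplication case because $\ph(E_i)$ is built from the right regular action of $\Uq(\slx_n)$, which commutes with left multiplication but only twist-commutes with right multiplication. Specifically, the skew-pairing together with the coproduct $\hat{\Delta}(E_i) = E_i \otimes K_i + K_i^{-1} \otimes E_i$ gives the unbounded identity
\[
E_i\, \RMult{b} = \RMult{E_i \hit b}\, K_i + \RMult{K_i^{-1} \hit b}\, E_i.
\]
My plan is to promote this identity to a statement about bounded phases modulo $\KK_i$ by adapting the strategy of the proof of Lemma \ref{lem:commutator_of_PsiDO_with_mult_op}. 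The key inputs are Corollary \ref{cor:ph_E_commutator}, which controls $\ph(\hat{\Delta} E_i) - \ph(E_i)\otimes\Id$ modulo $\KK_i$; the Schur-orthogonality estimate of Lemma \ref{lem:non-irreducible_matrix_coeff} applied to the matrix coefficient $b$; and the band-diagonality of all operators involved with respect to the $K_q$-isotypical decomposition of $L^2(K_q)$. The $K_i$-twists on weight subspaces act as bounded diagonal scalars and pose no difficulty, while the second summand $\RMult{K_i^{-1}\hit b}\,E_i$ absorbs cleanly into the phase analysis. Band-diagonality then concentrates the commutator onto finitely many $K_q$-type transitions per weight of $b$, and the $\KK_i$-estimate on each block yields the required membership in $\KK_i$.
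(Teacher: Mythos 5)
Your part (a) is essentially the paper's argument (factorize $\pi_{\mu,0}(a)$ into left and right multipliers and cite Proposition~\ref{prop:mult_ops_in_A}). For part (b), however, you diverge substantially from the paper and your approach has a genuine gap. The paper sidesteps the entire right-multiplication issue: it uses Theorem~\ref{thm:intertwiners} (from \cite{VYprincipal}) to observe that $\ph(E_i):L^2(\sE_{\rho-\alpha_i,0})\to L^2(\sE_{\rho,0})$ is an \emph{exact} principal series intertwiner, so the commutator vanishes for that special weight; then for general $\mu$ it conjugates by a Hopf--Galois partition of unity from Lemma~\ref{lem:bundle_po1} and transports the Yetter--Drinfeld action through $\Mult{f}$ via Lemma~\ref{lem:YD_covariance}, reducing to the known base case. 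Your route instead requires proving that $[\ph(E_i),\RMult{b}]\in\KK_i$ for all $b\in\Poly(K_q)$, a right-multiplication analogue of Lemma~\ref{lem:commutator_of_PsiDO_with_mult_op} which the paper never states or proves.

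Two problems arise here. First, you cite Corollary~\ref{cor:ph_E_commutator}, which controls $\ph(\hat{\Delta}E_i)-\ph(E_i)\otimes\Id_V$ modulo $\KK_i$; but for a right multiplier the fixed tensor factor sits on the \emph{left}, as in $\coeff{\zeta^\dual\otimes\eta^\dual}{\zeta\otimes\eta}$, so what you actually need is $\ph(\hat{\Delta}E_i)-\Id_V\otimes\ph(E_i)\in\KK_i$. This is the mirror statement, which does not follow formally from Corollary~\ref{cor:ph_E_commutator} as stated --- it needs its own proof (essentially re-running Lemma~\ref{lem:ph_D_commutator} using $\ph(K_i^{-1}\otimes D_i)=\Id_V\otimes\ph(D_i)$ instead of $\ph(D_i\otimes K_i)=\ph(D_i)\otimes\Id_V$, and checking that $D_i\otimes K_i$ rather than $K_i^{-1}\otimes D_i$ is the bounded perturbation on weight spaces). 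Second, even granting the corrected corollary, the band-diagonality and Schur-orthogonality estimate of Lemma~\ref{lem:commutator_of_PsiDO_with_mult_op} would need to be re-derived with the roles of the two tensor legs swapped (including the mirrored version of Lemma~\ref{lem:non-irreducible_matrix_coeff}, decomposing $\sigma\otimes\tau$ instead of $\tau\otimes\sigma$). None of this is carried out in your sketch. The overall plan is plausible and, if completed, would give an argument that avoids the principal series machinery of \cite{VYprincipal} --- a real gain in self-containedness --- but as written the central new lemma is asserted, not proven, and the one concrete citation supporting it is for the wrong tensor leg.
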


\begin{proof}
The Yetter-Drinfeld action of $a$ on $L^2(\sE_{\mu,0})$ can be written as
$\pi_{\mu, 0}(a) = \Mult{a_{(1)}}\, \RMult{K_{\rho} \hit S(a_{(2)})}$, 
so $ a) $ follows from Proposition \ref{prop:mult_ops_in_A} 

Let $i\in\Sigma$. We have $w_i\rho = \rho-\alpha_i$, so according to Theorem \ref{thm:intertwiners} the operator $\ph(E_i) : L^2(\sE_{\rho-\alpha_i,0}) \to L^2(\sE_{\rho,0})$ is an intertwiner.  Thus, the differences in \eqref{eq:E_i-equivariance} and \eqref{eq:F_i-equivariance} are zero when $\mu = \rho - \alpha_i$.  For general $\mu$, we use Lemma \ref{lem:bundle_po1} to obtain $f_1,\ldots,f_k \in \Poly(\sE_{\mu+\alpha_i-\rho})$ 
and $ g_1, \ldots, g_k \in \Poly(\sE_{\rho-\alpha_i-\mu}) $ such that $\sum_j g_j f_j = 1$. We can then use Theorem \ref{thm:PsiDOs} and 
Lemma \ref{lem:YD_covariance} to compute
\begin{align*}
\lefteqn{  \pi_{\mu+\alpha_i,0}(a) \ph(E_i)  } \qquad \\
    &= \sum_j \pi_{\mu+\alpha_i,0}(a) \ph(E_i) \Mult{f_j}\Mult{g_j} \\
    &\equiv \sum_j \pi_{\mu+\alpha_i,0}(a)\Mult{f_j} \ph(E_i) \Mult{g_j} \qquad \pmod{\JJ_i(L^2(\sE_\mu), L^2(\sE_{\mu+\alpha_i}))} \\
    &= \sum_j \Mult{a_{(1)}f_j S(a_{(2)})} \pi_{\rho,0}(a_{(3)}) \ph(E_i) \Mult{g_j}, 
\end{align*}
noting that all operators involved belong to $\AA$. 
A similar computation yields
\begin{align*}
\sum_j \ph(E_i)\pi_{\mu,0}(a) \equiv \sum_j \Mult{a_{(1)}f_jS(a_{(2)})}& \ph(E_i) \pi_{\rho-\alpha_i,0}(a_{(3)}) \Mult{g_j} \\
  & \pmod{\JJ_i(L^2(\sE_\mu), L^2(\sE_{\mu+\alpha_i}))}. 
\end{align*}
Thus Equation \eqref{eq:E_i-equivariance} is reduced to the case $\mu = \rho-\alpha_i$ which we have just proved. Equation \eqref{eq:F_i-equivariance} follows by taking adjoints.
\end{proof}


\section{BGG elements in $K$-homology}
\label{sec:BGG}

In \cite{Yuncken:BGG} it was shown how an equivariant Fredholm module can be constructed from the geometric BGG complex for the full flag manifold of $\SU(3)$.  Given the results of the previous sections, that construction can now be applied also to the quantized flag manifold of $\SU_q(3)$. The construction carries over almost word for word, so we shall merely give an outline of the steps involved here.

\subsection{The normalized BGG complex}
\label{sec:normalized_BGG}

The reader can consult \cite{BGG} or \cite{BasEas} for the general combinatorial structure underlying the BGG complex of a complex semisimple Lie group. Since we only need a bounded version of the BGG complex for $\SL_q(3,\CC)$, we will proceed in an {\em ad hoc} manner.

\begin{lemma}
\label{lem:intertwiners}
The following is a commuting diagram of intertwining operators between $\SL_q(3,\CC)$ principal series representations:
\begin{equation}
\label{eq:intertwiners}
 \xymatrix@C=18mm@R=10mm@!0{
  & L^2(\sE_{-\alpha_2,0}) \ar[ddrr]^(0.8){\ph(E_2)^2} 
    && L^2(\sE_{\alpha_1,0}) \ar[dr]^>{\ph(E_2)} 
  \\
  L^2(\sE_{-\rho,0}) \ar[ur]^{\ph(E_1)} \ar[dr]_{\ph(E_2)} 
    &&&& L^2(\sE_{\rho,0}) \\
  & L^2(\sE_{-\alpha_1,0}) \ar[uurr]_(0.8){\ph(E_1)^2} 
    && L^2(\sE_{\alpha_2,0}) \ar[ur]_>{\ph(E_1)} 
 }
\end{equation}
\end{lemma}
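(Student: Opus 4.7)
The plan has two parts. First I would verify that each of the six arrows is an intertwiner via Theorem~\ref{thm:intertwiners}(b). Writing $w_i$ for the simple reflection associated with $\alpha_i$, the required Weyl group calculations are $w_1(-\rho) = -\alpha_2$, $w_2(-\rho) = -\alpha_1$, $w_2(-\alpha_2) = \alpha_2$, $w_1(-\alpha_1) = \alpha_1$, $w_1(\alpha_2) = \alpha_1 + \alpha_2 = \rho$, and $w_2(\alpha_1) = \alpha_1 + \alpha_2 = \rho$. In each case the weight shift $n\alpha_i$ from source to target matches exactly the exponent of $\ph(E_i)$ labelling that arrow, so Theorem~\ref{thm:intertwiners}(b) immediately supplies the intertwiner.

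For commutativity of the diagram, the two compositions from $L^2(\sE_{-\rho,0})$ to $L^2(\sE_{\rho,0})$ realize the two reduced expressions $s_1 s_2 s_1 = s_2 s_1 s_2$ of the longest element of the Weyl group $S_3$. Both endpoints are irreducible principal series by Theorem~\ref{thm:intertwiners}(a), and they are equivalent since $-\rho$ and $\rho$ lie in the same Weyl orbit. Schur's Lemma therefore forces the space of $\SL_q(3,\CC)$-intertwiners between them to be at most one-dimensional. The top path $\ph(E_1)\ph(E_2)^2\ph(E_1)$ and the bottom path $\ph(E_2)\ph(E_1)^2\ph(E_2)$ are both nonzero partial isometries (each factor being a partial isometry between irreducible principal series, hence a unitary up to scale by Schur), so they necessarily differ by at most a scalar of unit modulus.

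The hard part will be pinning down that this scalar equals exactly $1$. I would evaluate both compositions on a single conveniently chosen vector --- most naturally a Gelfand-Tsetlin basis vector of minimal $K_q$-type in $L^2(\sE_{-\rho,0})$ --- and trace its image through each application of $\ph(E_i)$ using the explicit action formulas \eqref{eq:GTs-E} and \eqref{eq:GTs-F}. A potentially cleaner alternative is to observe that the operators $\ph(E_i)$ restricted to principal series are polar-decomposition analogues of the Zhelobenko intertwiners for complex semisimple groups, which satisfy the Weyl braid relations by construction; the equality of the two compositions would then follow directly from the braid relation $T_1 T_2 T_1 = T_2 T_1 T_2$ in the braid group of $S_3$, bypassing any explicit coefficient computation.
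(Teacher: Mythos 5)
Your proposal follows exactly the paper's argument: Theorem~\ref{thm:intertwiners} supplies the intertwiners, Schur's Lemma gives commutativity up to a scalar, and tracing a vector of minimal $K_q$-type through both paths pins down the scalar as $1$. The aside about Zhelobenko intertwiners ``satisfying the braid relations by construction'' is not really a bypass --- the standard intertwiners only satisfy braid relations after normalization, and verifying the normalization is essentially the same computation as the minimal-$K_q$-type check --- but since your primary plan matches the paper, this does not affect the argument.
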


\begin{proof}
That these operators are intertwiners results from Theorem \ref{thm:intertwiners}. By Schur's Lemma, the diagram commutes up to a scalar. 
By checking on the minimal $K_q$-type, one can verify that the diagram commutes on the nose.
\end{proof}

To define the normalized BGG complex, we displace all the weights in the above construction by $\rho=\alpha_1+\alpha_2$.

\begin{lemma}
\label{lem:normalized_BGG}
The following diagram commutes modulo $\JJ_1 + \JJ_2$:
\begin{equation*} 
\label{eq:simple_BGG}
\xymatrix@C=18mm@R=10mm@!0{
  & L^2(\sE_{\alpha_1,0}) \ar[ddrr]^(0.8){\ph(E_2)^2} 
    && L^2(\sE_{2\alpha_1+\alpha_2,0}) \ar[dr]^>{\ph(E_2)} 
  \\
  L^2(\sE_{0,0}) \ar[ur]^{\ph(E_1)} \ar[dr]_{\ph(E_2)} 
    &&&& L^2(\sE_{2\rho,0}) \\
  & L^2(\sE_{\alpha_2,0}) \ar[uurr]_(0.8){\ph(E_1)^2} 
    && L^2(\sE_{\alpha_1+2\alpha_2,0}) \ar[ur]_>{\ph(E_1)} 
} 
\end{equation*}
\end{lemma}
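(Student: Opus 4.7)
My plan is to reduce the normalized diagram to the already-commuting diagram of Lemma \ref{lem:intertwiners} by conjugating with an operator partition of unity that implements the weight shift by $\rho$. Writing
\begin{equation*}
A \defeq \ph(E_1)\ph(E_2)^2\ph(E_1) \qquad \text{and} \qquad B \defeq \ph(E_2)\ph(E_1)^2\ph(E_2)
\end{equation*}
for the two composite maps $L^2(\sE_{0,0}) \to L^2(\sE_{2\rho,0})$ in the normalized diagram, and $A'$, $B'$ for the formally identical compositions $L^2(\sE_{-\rho,0}) \to L^2(\sE_{\rho,0})$ appearing in Lemma \ref{lem:intertwiners} (so that $A' = B'$ holds exactly), the task reduces to relating $A$ to $A'$ and $B$ to $B'$ modulo $\JJ_1 + \JJ_2$.

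The first step is to invoke Lemma \ref{lem:bundle_po1} at $\mu = \rho$, producing sections $f_1, \ldots, f_k \in \Poly(\sE_\rho)$ and $g_1, \ldots, g_k \in \Poly(\sE_{-\rho})$ with $\sum_j f_j g_j = 1$. This yields the resolution of the identity $\sum_j \Mult{f_j}\Mult{g_j} = \Id$ on every $L^2(\sE_\nu)$. Post-composing $A$ with this identity (inserted between $A$ and its codomain) and then commuting each $\Mult{g_j}$ rightward through the three factors $\ph(E_1), \ph(E_2)^2, \ph(E_1)$, I would appeal to Theorem \ref{thm:PsiDOs} d) (in the form of Remark \ref{rmk:phEi_versions}) together with the identity $[\ph(E_i)^2,\Mult{g_j}] = \ph(E_i)[\ph(E_i),\Mult{g_j}] + [\ph(E_i),\Mult{g_j}]\ph(E_i) \in \JJ_i$, which uses the two-sided $\AA$-module property of $\JJ_i$. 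Each commutation produces an error in either $\JJ_1$ or $\JJ_2$; after the outer $\Mult{f_j}$ and the remaining $\ph(E_?) \in \AA$ factors are absorbed into the ideal via $\AA \cdot \JJ_i \cdot \AA \subseteq \JJ_i$, I arrive at
\begin{equation*}
A \;\equiv\; \sum_j \Mult{f_j}\, A'\, \Mult{g_j} \pmod{\JJ_1 + \JJ_2}.
\end{equation*}
The intermediate bundles in the product $\Mult{f_j}\, A'\, \Mult{g_j}$ are exactly those of the intertwining diagram in Lemma \ref{lem:intertwiners}, which confirms that the middle factor is indeed $A'$.

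An identical argument applied to $B$ yields $B \equiv \sum_j \Mult{f_j}\, B'\, \Mult{g_j} \pmod{\JJ_1 + \JJ_2}$. Since $A' = B'$ by Lemma \ref{lem:intertwiners}, the two right-hand sides agree, and we conclude $A \equiv B \pmod{\JJ_1 + \JJ_2}$, which is precisely the commutativity of the normalized diagram.

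I do not expect any genuine obstacle: the combinatorial identity on the nose already resides in Lemma \ref{lem:intertwiners}, and the commutator calculus of Section \ref{sec:PsiDOs2} is designed exactly to allow such a weight shift by multiplication operators at the cost of $\JJ_i$-errors. The only part that requires real care is the bookkeeping of which ideal $\JJ_1$ or $\JJ_2$ each individual commutator contributes to, and the verification that the outer $\Mult{f_j}$ and $\ph(E_?)$ factors all lie in $\AA$ so that they preserve $\JJ_1 + \JJ_2$ on both sides.
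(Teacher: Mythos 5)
Your proof is correct and follows essentially the same route as the paper: insert the partition of unity $\sum_j \Mult{f_j}\Mult{g_j} = \Id$ from Lemma \ref{lem:bundle_po1}, commute $\Mult{g_j}$ through the compositions using Theorem \ref{thm:PsiDOs} d) to pick up $\JJ_1 + \JJ_2$ errors, and thereby reduce to the exact intertwiner identity of Lemma \ref{lem:intertwiners}. The only additional detail you supply — spelling out $[\ph(E_i)^2,\Mult{g_j}] = \ph(E_i)[\ph(E_i),\Mult{g_j}] + [\ph(E_i),\Mult{g_j}]\ph(E_i)$ and the containment $\AA\cdot\JJ_i\cdot\AA \subseteq \JJ_i$ — is left implicit in the paper but is exactly the right justification.
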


\begin{proof}
According to Lemma \ref{lem:bundle_po1} we find $f_1, \dots, f_k \in C(\sE_\rho)$, $g_1,\ldots,g_k \in C(\sE_{-\rho})$ such that $\sum_j f_j g_j = 1$. 
Consider the composition $\ph(E_1)\ph(E_2)^2\ph(E_1):L^2(\sE_{0,0}) \to L^2(\sE_{2\rho,0})$.  
By Theorem \ref{thm:PsiDOs}, 
\begin{align*}
\ph(E_1)\ph(E_2)^2\ph(E_1) &= \sum_i \Mult{f_i}\Mult{g_i} \ph(E_1)\ph(E_2)^2\ph(E_1) \\
 & \equiv \sum_i \Mult{f_i} \ph(E_1)\ph(E_2)^2\ph(E_1)\Mult{g_i} \\
 & \hspace{10ex} \mod{\JJ_1(L^2(\sE_{0}), L^2(\sE_{2\rho}))+\JJ_2(L^2(\sE_{0}), L^2(\sE_{2\rho}))}, 
\end{align*}
where the operators in the last expression are the intertwiners of Lemma \ref{lem:intertwiners}. By a similar calculation, we obtain
\begin{align*}
\ph(E_2)\ph(E_1)^2\ph(E_2) & \equiv \sum_i \Mult{f_i} \ph(E_2)\ph(E_1)^2\ph(E_2)\Mult{g_i} \\
 & \hspace{10ex} \mod{\JJ_1(L^2(\sE_{0}), L^2(\sE_{2\rho}))+\JJ_2(L^2(\sE_{0}), L^2(\sE_{2\rho}))}, 
\end{align*}
and the result then follows from Lemma \ref{lem:intertwiners}.
\end{proof}

\begin{lemma}
 \label{lem:almost_symmetry}
Let $\mu\in\bP$, $i\in\Sigma$ and $n\in\NN$.  Then $\ph(F_i)^n \ph(E_i)^n - \Id \in \JJ_i(L^2(\sE_\mu))$ and 
$\ph(E_i)^n \ph(F_i)^n - \Id \in \JJ_i(L^2(\sE_\mu))$.
\end{lemma}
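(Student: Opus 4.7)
The plan is to reduce general $n$ to $n=1$ by a telescoping induction, and to handle $n=1$ by functional calculus combined with part~(b) of Theorem~\ref{thm:lattice_of_ideals}. For $n=1$, I would first observe that $\ph(D_i)$ is the partial-isometry phase of the self-adjoint operator $D_i$, so
\[
\ph(D_i)^2 \;=\; \Id - p_{\ker D_i}
\]
on $L^2(\sE_\nu \oplus \sE_{\nu+\alpha_i})$. Reading this off in the $2\times 2$ block decomposition with respect to the two summands yields
\[
\ph(F_i)\ph(E_i) - \Id \;=\; - p_{\ker E_i \mid L^2(\sE_\nu)}, \qquad \ph(E_i)\ph(F_i) - \Id \;=\; - p_{\ker F_i \mid L^2(\sE_{\nu+\alpha_i})}.
\]

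The heart of the $n=1$ case is then to identify these kernel projections as single $K^i_q$-isotypical projections. For this I would use that $S^i_q$ and $T^{i\perp}$ are commuting and generating quantum subgroups of $K^i_q$ (as already invoked in the proof of Proposition~\ref{prop:phase_Ei_in_A}), together with the fact that the central torus $T^{i\perp}$ acts on $L^2(\sE_\nu)$ by the single character $\nu|_{T^{i\perp}}$. The $K^i_q$-isotypical decomposition of $L^2(\sE_\nu)$ is therefore indexed solely by the $S^i_q$-highest weight $m$; within the $m$-isotypical piece every vector has $T^i$-weight $\nu|_{T^i}$, and a weight-$k$ vector inside a copy of the $S^i_q$-irreducible of highest weight $m$ is annihilated by $E_i$ exactly when $k=m$. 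Thus $\ker E_i\mid L^2(\sE_\nu)$ is either zero or precisely the $K^i_q$-isotypical component with $S^i_q$-highest weight $\nu|_{T^i}$, and Theorem~\ref{thm:lattice_of_ideals}(b) places its projection in $\JJ_i(L^2(\sE_\nu))$. The statement for $\ker F_i$ follows by the mirror argument using $S^i_q$-lowest weights.

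For the inductive step I would insert the $n=1$ identity in the middle of the product on $L^2(\sE_{\mu+(n-1)\alpha_i})$:
\[
\ph(F_i)^n\ph(E_i)^n - \Id \;=\; \bigl(\ph(F_i)^{n-1}\ph(E_i)^{n-1} - \Id\bigr) \;-\; \ph(F_i)^{n-1}\, p_{\ker E_i\mid L^2(\sE_{\mu+(n-1)\alpha_i})}\, \ph(E_i)^{n-1}.
\]
The first summand lies in $\JJ_i(L^2(\sE_\mu))$ by the inductive hypothesis. For the second summand I would invoke Theorem~\ref{thm:PsiDOs}(b) to place $\ph(E_i), \ph(F_i) \in \AA$ and the multiplier characterization of $\AA$ (Lemma~\ref{lem:A_equivalent_defns}), which makes $\JJ_i = \KK_i\cap\AA$ a two-sided $\AA$-ideal; flanking the $\JJ_i$-projection by powers of $\ph(E_i), \ph(F_i)\in\AA$ therefore keeps us in $\JJ_i$. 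The identity $\ph(E_i)^n\ph(F_i)^n - \Id \in \JJ_i(L^2(\sE_\mu))$ is the symmetric statement obtained by running exactly the same telescoping with the $\ph(E_i)\ph(F_i)$ block instead.

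I do not expect any genuine obstacle given the earlier structural results: the only conceptually subtle point is the identification of $\ker E_i\mid L^2(\sE_\nu)$ as a single $K^i_q$-isotypical summand, and that rests entirely on the central torus $T^{i\perp}$ of $K^i_q$ acting on $L^2(\sE_\nu)$ by a single character.
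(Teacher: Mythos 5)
Your proof is correct and hinges on the same ingredients the paper uses: the $\SU_q(2)$-representation theory showing (for $n=1$) that the kernel of $E_i$ on a fixed weight space is a single $K^i_q$-isotypical component, together with Theorem~\ref{thm:lattice_of_ideals}~b). The only difference is organizational: you telescope from $n=1$, relying on $\JJ_i$ being an $\AA$-ideal to absorb the flanking factors $\ph(E_i)^{n-1},\ph(F_i)^{n-1}$, whereas the paper treats all $n$ in one step by directly identifying $\ker\ph(E_i)^n$ as the span of the finitely many $S^i_q$-isotypical subspaces of highest weight strictly below the relevant threshold, which is slightly more economical but amounts to the same thing.
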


\begin{proof}
Let $\mu_{\alpha_i}\in \half\NN$ be the restriction of $\mu$ to 
a weight of $S^i_q \cong \SU_q(2)$.
The operator $\ph(E_i)^n:L^2(\sE_\mu) \to L^2(\sE_{\mu+n\alpha_i})$ is a partial isometry, and its kernel is the span of those $S^i_q$-isotypical 
subspaces whose highest weight $l\in\half\NN$ satisfies $l<\mu_{\alpha_i}+n$.  Therefore $\ph(F_i)^n\ph(E_i)^n -\Id$ is a projection onto a finite number of $S^i_q$-types, and hence $K^i_q$-types, 
in $L^2(\sE_\mu)$. A similar statement can be made for $\ph(E_i)^n\ph(F_i)^n -\Id$. The result then follows from Theorem \ref{thm:lattice_of_ideals} $b)$.
\end{proof}

We now augment the diagram of Lemma \ref{lem:normalized_BGG} by adding two more operators:
\begin{equation}
\label{eq:BGG}
\xymatrix@C=18mm@R=10mm@!0{
  & L^2(\sE_{\alpha_1,0}) \ar[ddrr]^(0.8){\ph(E_2)^2} 
    \ar[rr]^{-A_1}
    && L^2(\sE_{2\alpha_1+\alpha_2,0}) \ar[dr]^>{\ph(E_2)} 
  \\
  L^2(\sE_{0,0}) \ar[ur]^{\ph(E_1)} \ar[dr]_{\ph(E_2)} 
    &&&& L^2(\sE_{2\rho,0}) \\
  & L^2(\sE_{\alpha_2,0}) \ar[uurr]_(0.8){\ph(E_1)^2} 
    \ar[rr]_{-A_2}
    && L^2(\sE_{\alpha_1+2\alpha_2,0}) \ar[ur]_>{\ph(E_1)} 
} 
\end{equation}
where
\begin{align*}
A_1 &= \ph(E_1)^2 \ph(E_2) \ph(F_1) \\
A_2 &= \ph(E_2)^2 \ph(E_1) \ph(F_2).
\end{align*}
By Lemma \ref{lem:almost_symmetry}, $A_1^*A_1 \equiv \Id$ modulo $\JJ_1(L^2(\sE_{\alpha_1})) + \JJ_2(L^2(\sE_{\alpha_1}))$ and $A_1A_1^*\equiv \Id$ modulo $\JJ_1(L^2(\sE_{2\alpha_1+\alpha_2})) + \JJ_2(L^2(\sE_{2\alpha_1+\alpha_2}))$. Similar statements hold for $A_2$.  The inclusion of the minus signs before $A_1$ and $A_2$ in the diagram \eqref{eq:BGG} ensures that all squares in the diagram {\em anti}-commute modulo $\JJ_1 + \JJ_2$.  Thus \eqref{eq:BGG} is a complex modulo $\JJ_1 + \JJ_2$.

The combinatorial structure underlying the diagram \eqref{eq:BGG} is the Bruhat graph of the group $G=\SL(3,\CC)$. Rather than detail this in generality, let us simply introduce some convenient notation.

\begin{definition}
\label{def:BGG_graph}
Let $\Gamma$ be the set of arrows in the diagram \eqref{eq:BGG} and $\Gamma^{(0)}$ the set of six vertices.  Denote by $T_\gamma$ the operator corresponding to $\gamma\in\Gamma$ in \eqref{eq:BGG}.  Also, to each arrow $\gamma$ we associate a set of simple roots, denoted $\supp(\gamma)$ and called the {\em support of $\gamma$}, according to the Weyl reflection underlying it as follows:
\begin{equation*}
 \xymatrix@C=18mm@R=10mm@!0{
  & \cdot \ar[ddrr]^(0.8){\{\alpha_2\}} 
    \ar[rr]^{\{\alpha_1,\alpha_2\}}
    && \cdot  \ar[dr]^{\{\alpha_2\}} 
  \\
  \cdot  \ar[ur]^{\{\alpha_1\}} \ar[dr]_{\{\alpha_2\}} 
    &&&& \cdot  \\
  & \cdot  \ar[uurr]_(0.8){\{\alpha_1\}} 
    \ar[rr]_{\{\alpha_1,\alpha_2\}}
    && \cdot  \ar[ur]_{\{\alpha_1\}} 
 } 
\end{equation*}
\end{definition}

\subsection{Construction of the Fredholm module}
\label{sec:Fredholm_module}

Let $\sH_\BGG$ be the $\ZZ/2\ZZ$-graded Hilbert space which is the direct sum of the six section spaces in the BGG diagram \eqref{eq:BGG} graded by even and odd Bruhat length, namely $L^2(\sE_0)$, $L^2(\sE_{2\alpha_1+\alpha_2})$ and $L^2(\sE_{\alpha_1+2\alpha_2})$ have degree $0$ and the other three summands have degree $1$.  The sum $\mathbf{T} = \sum_{\gamma} (T_\gamma + T_\gamma^*)$ is an odd $\SU_q(3)$-equivariant operator on $\sH_\BGG$.  It verifies all the axioms of an equivariant Kasparov $K$-homology cycle, but modulo $\JJ_1(\sH_\BGG)+\JJ_2(\sH_\BGG)$ instead of modulo $\KK(\sH_\BGG)$.
To refine this into a genuine Kasparov cycle we use the operator partition of unity constructed in \cite{Yuncken:BGG}, which is described in the 
following lemma.

\begin{lemma}
\label{lem:operator_po1}
Let $K_q=\SU_q(3)$.  There exist mutually commuting operators $N_\gamma \in \LL(\sH_\BGG)$, indexed by the arrows $\gamma\in\Gamma$ above, with the following properties:
\begin{bnum}
\item[a)] For each $\gamma$, $N_\gamma \JJ_i(\sH_\BGG) \subseteq \KK(\sH_\BGG)$ for all $\alpha_i\in\supp(\gamma)$.
\item[b)] For any vertex $v\in \Gamma^{(0)}$, $\sum_{\gamma \ni v} N_\gamma^2 = \Id_{\sH_\BGG}$, where the sum is over all arrows entering or leaving $v$.
\item[c)] Whenever vertices $v,v'\in \Gamma^{(0)}$ are at distance two in the graph we have $N_{\gamma_1}N_{\gamma_2} = N_{\gamma_1'}N_{\gamma_2'}$
where $(\gamma_1,\gamma_2)$ and $(\gamma_1',\gamma_2')$ are the unique two (undirected) paths of length two joining $v, v'$. 
\item[d)] Each $N_\gamma$ is $K_q$-equivariant.
\item[e)] Each $N_\gamma$ commutes modulo compact operators with the left multiplication action of $C(\sX_q)$, the Yetter-Drinfeld action of $\Poly(K_q)$ and 
all of the normalized BGG operators $T_{\gamma'}$.
\end{bnum}
\end{lemma}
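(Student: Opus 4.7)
The plan is to transport the construction of the operator partition of unity from the classical setting of \cite{Yuncken:BGG} to the quantum framework built in Sections \ref{sec:operator_ideals}--\ref{sec:PsiDOs2}. In \cite{Yuncken:BGG}, such a family of $K$-equivariant operators on the analogue of $\sH_\BGG$ is produced explicitly from a system of smooth cutoff functions on the positive Weyl chamber of $\slx_3$, via a functional calculus built from the harmonic analysis of the two root-$\alpha_i$ fibrations of the classical flag manifold. The same construction, reinterpreted in terms of the left regular $K_q$-action and the quantum harmonic analysis developed here, produces operators on $\sH_\BGG$ in the quantum case; crucially, all commutation and compactness estimates needed to verify (a)--(e) have quantum analogues in Theorems \ref{thm:lattice_of_ideals} and \ref{thm:PsiDOs} and Lemma \ref{lem:commutator_of_PsiDO_with_mult_op}.

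For the construction of the cutoff functions, decompose the positive Weyl chamber of $\slx_3$ into six closed conical ``Bruhat sectors'' indexed by the vertices $v \in \Gamma^{(0)}$. For each arrow $\gamma$ joining vertices $v$ and $v'$, one chooses a smooth, $[0,1]$-valued, asymptotically homogeneous of degree zero function $n_\gamma$ on the Weyl chamber such that $n_\gamma$ is supported in a conical neighbourhood of the union of the two sectors and vanishes in a conical neighbourhood of the hyperplane $(\lambda,\alpha_i)=0$ for every $\alpha_i \in \supp(\gamma)$; such that $\sum_{\gamma \ni v} n_\gamma^2 \equiv 1$ near each $v$; and such that the family $\{n_\gamma\}$ is symmetric across pairs of length-two paths. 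An explicit such family is constructed in \cite{Yuncken:BGG}. I then define $N_\gamma$ to act by the scalar $n_\gamma(\lambda_\sigma + \rho)$ on each $K_q$-isotypical subspace of $\sH_\BGG$ with highest weight $\lambda_\sigma$. Mutual commutativity and $K_q$-equivariance (d) are immediate, and (b), (c) follow from the corresponding properties of the $n_\gamma$.

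For (a), elements of $\JJ_i$ are norm limits of operators supported on finitely many $K^i_q$-types; the conical support condition on $n_\gamma$ forces $N_\gamma T$ to be compact for $T \in \JJ_i$, and the detailed argument (which relies only on the $\SU(3)$-to-$K^i$ branching rules, combinatorially identical in the classical and quantum cases) carries over from \cite{Yuncken:BGG} verbatim. For (e), commutation with the BGG operators $T_{\gamma'}$ is actually \emph{exact}: each factor $\ph(E_i), \ph(F_i)$ comes from the right regular $\Uq(\slx_n)$-action, which commutes with the left regular $K_q$-action, so these operators preserve the $K_q$-isotypical decomposition used to define $N_\gamma$. For a multiplication operator $\Mult{f}$ with $f$ a matrix coefficient of $K_q$-type $\tau$, formula \eqref{eq:product_of_coeffs} expresses the commutator as a finite band-diagonal sum
$$
[N_\gamma, \Mult{f}] p_\sigma = \sum_{\sigma' \leq \sigma \otimes \tau} \bigl( n_\gamma(\lambda_{\sigma'}+\rho) - n_\gamma(\lambda_\sigma+\rho) \bigr) p_{\sigma'} \Mult{f} p_\sigma,
$$
and asymptotic degree-zero homogeneity forces the scalar differences to tend to zero as $\|\lambda_\sigma\| \to \infty$. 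Combined with the matrix-coefficient estimate of Lemma \ref{lem:non-irreducible_matrix_coeff} and the band-diagonal summation argument from the proof of Lemma \ref{lem:commutator_of_PsiDO_with_mult_op}, this yields compactness of $[N_\gamma, \Mult{f}]$. The Yetter--Drinfeld action is treated in the same way, since by \eqref{eq:YD_action} it factors as a product of a left multiplication and a right multiplication operator, the latter preserving $K_q$-types and so commuting with $N_\gamma$ exactly.

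The main obstacle is the bespoke, rank-two-specific nature of the partition of unity: the geometric and combinatorial features used to balance the conical support condition (a), the partition-of-unity identity (b), the diamond identity (c), and the slow-variation estimate needed for (e) rely on the explicit structure of the Bruhat graph of $\SL(3,\CC)$. No such construction is currently available in higher rank, which is precisely why the main theorem of the paper is confined to $\SU_q(3)$, as noted in the introduction.
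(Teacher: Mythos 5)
Your proposal diverges from the paper's proof in a way that is not merely a different route but reflects a misreading of \cite{Yuncken:BGG}. In both \cite{Yuncken:BGG} (Lemma~4.14) and the present paper, the $N_\gamma$ are \emph{not} produced explicitly from cutoff functions on the Weyl chamber; they are produced abstractly via Kasparov's technical theorem applied to the $\sigma$-unital $C^*$-ideals $\JJ_1, \JJ_2 \triangleleft \AA(\sH_\BGG)$, with $K_q$-equivariance (d) obtained afterwards by averaging over the Haar state of $C(K_q)$ acting by conjugation on $\LL(\sH_\BGG)$. The remaining properties (a), (b), (c), (e) are the constraints fed into the technical theorem. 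This is an inherently non-constructive argument, and it is precisely what fails to generalize to higher rank.

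More importantly, the explicit construction you propose cannot satisfy (a) and (b) simultaneously, so the gap is not just one of attribution. Your $N_\gamma$ acts by the scalar $n_\gamma(\lambda_\sigma+\rho)$ on each $K_q$-isotypical subspace. For $N_\gamma\, p_\tau$ to be compact for a fixed $\tau\in\Irr(K_q^1)$, the scalar must tend to zero along the set of $\sigma$ containing $\tau$ as a $K_q^1$-type. Unwinding the branching $\SU_q(3)\downarrow K_q^1$ (cf.\ Section~\ref{sec:class_1} and the Gelfand--Tsetlin patterns), fixing the middle row $(a,b)$ confines $\lambda_2$ to the bounded interval $[b,a]$ but leaves $\lambda_1$ free, and with the $\SU(3)$ normalisation $\lambda_3=-\lambda_1-\lambda_2$ one finds $m_1=\lambda_1-\lambda_2\to\infty$ \emph{and} $m_2=\lambda_2-\lambda_3\to\infty$ with $m_1/m_2\to1$. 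In other words, the $K_q$-types carrying a fixed $K_q^1$-type escape to infinity along the \emph{interior diagonal} of the Weyl chamber, not near the wall $(\lambda,\alpha_1)=0$ — the class-one representations $\sigma^{(m,0,-m)}$, which carry the trivial $K_q^1$-type for every $m$, are the cleanest instance. By the $\Psi$-symmetry the same is true for $K_q^2$. Consequently, at the vertex $L^2(\sE_{0})$, both $n_{\gamma_1}$ (with $\supp=\{\alpha_1\}$) and $n_{\gamma_2}$ (with $\supp=\{\alpha_2\}$) would have to vanish asymptotically along the diagonal, contradicting $n_{\gamma_1}^2+n_{\gamma_2}^2=1$. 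The resolution, and the reason essential orthotypicality (Proposition~\ref{prop:SUq_orthotypicality}) is indispensable, is that the genuine $N_\gamma$ act \emph{non-scalarly} on each $V^{\sigma\dual}\otimes V^\sigma_\mu$: equivariance forces them to be of the form $\Id\otimes A_\sigma$, but $A_\sigma$ must discriminate between the nearly-orthogonal $K_q^1$- and $K_q^2$-isotypical subspaces inside the weight space $V^\sigma_\mu$. No scalar multiplier on the $K_q$-type lattice can do this, which is why the paper resorts to Kasparov's technical theorem rather than an explicit spectral construction.
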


\begin{proof}
Using the technical theorem of Kasparov, see \cite{Kasparovconsp}, \cite{Blackadarbook}, \cite{BSKK}, the construction of operators $N_\gamma$ satisfying 
the above properties can be performed as in the proof of \cite[Lemma 4.14]{Yuncken:BGG}. Notice that $ K_q $-invariance is obtained by averaging 
with respect to the Haar functional of $C(K_q)$, applied to the adjoint action of $K_q$ on operators on $L^2(\sH_\BGG)$. 
\end{proof}

\begin{theorem}
\label{thm:K-homology_class}
The operator $\mathbf{F} = \sum_\gamma N_\gamma (T_\gamma+T^*_\gamma)$ is an odd Fredholm operator on $\sH_\BGG$ which defines an 
equivariant $K$-homology class $[\mathbf{F}] \in KK^{\SL_q(3,\CC)}(C(\sX_q),\CC)$.
The $\SU_q(3)$-equivariant index of this class in $KK^{\SU_q(3)}(\CC,\CC) = R(\SU_q(3))$ is the class of the trivial representation.
\end{theorem}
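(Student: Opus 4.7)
The plan is to verify the Kasparov module axioms for $(\sH_\BGG,\mathbf{F})$, establish its $\SL_q(3,\CC)$-equivariance, and then compute the equivariant index by a reduction to the classical Kostant alternating sum. The construction parallels the classical argument of \cite{Yuncken:BGG}, with the quantum ingredients supplied by the earlier sections of this paper.

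For the Kasparov axioms, the commutator condition $[\mathbf{F},f]\in\KK(\sH_\BGG)$ for $f\in C(\sX_q)$ follows from Theorem \ref{thm:PsiDOs} (d), which gives $[T_\gamma,f]\in\JJ_i$ for each $\alpha_i\in\supp(\gamma)$ (treating $\ph(E_i)^2$ and the $A_j$ as products, since $\JJ_i$ is an ideal of $\AA$), combined with properties (a) and (e) of Lemma \ref{lem:operator_po1}. For the Fredholm condition $\mathbf{F}^2-\Id\in\KK(\sH_\BGG)$, I would decompose the result at each vertex $v$ into return terms and distance-two terms. The return terms are of the form $N_\gamma^2(T_\gamma^*T_\gamma + T_\gamma T_\gamma^*)$ over edges $\gamma$ incident to $v$; Lemma \ref{lem:almost_symmetry} (and the analogous statements $A_j A_j^*\equiv\Id$, $A_j^* A_j \equiv \Id$ modulo $\JJ_1+\JJ_2$) reduces each such term to $N_\gamma^2$ modulo compacts, and summing via property (b) of Lemma \ref{lem:operator_po1} yields $\Id$. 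The distance-two terms between Bruhat-comparable vertices cancel in pairs by combining the square anti-commutativity of Lemma \ref{lem:normalized_BGG} with property (c), while those between non-comparable vertices cancel through further applications of Lemma \ref{lem:almost_symmetry}. For $\SL_q(3,\CC) = \bD(\SU_q(3))$-equivariance, $\SU_q(3)$-equivariance is immediate from the $\SU_q(3)$-equivariance of $\ph(E_i)$, $\ph(F_i)$ and part (d) of Lemma \ref{lem:operator_po1}, while Yetter-Drinfeld equivariance modulo compacts follows from Theorem \ref{thm:SLq-equivariance} (b) combined with property (e) of Lemma \ref{lem:operator_po1}.

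For the equivariant index, each $L^2(\sE_\mu)$ has finite $\SU_q(3)$-multiplicities, so $\sH_\BGG$ decomposes as an orthogonal sum of finite-dimensional graded $\SU_q(3)$-subspaces $(\sH_\BGG)_\sigma$ indexed by $\sigma\in\Irr(\SU_q(3))$. The equivariant index of a graded Fredholm map between finite-dimensional $G$-representations equals $[H^+]-[H^-]$ in $R(G)$, so
\begin{equation*}
\Ind_{\SU_q(3)}(\mathbf{F}) = \sum_{v\in\Gamma^{(0)}}(-1)^{\deg(v)}\,[L^2(\sE_{\mu_v})] \quad\text{in } R(\SU_q(3)).
\end{equation*}
By Frobenius reciprocity (Peter-Weyl), the multiplicity of $V^\sigma$ in $L^2(\sE_\mu)$ equals $\dim(V^\sigma)_{-\mu}$. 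The six vertex weights are $\mu_v=\rho-w\rho$ with grading $(-1)^{\ell(w)}$, and since quantum $\SU_q(3)$-representations share weight multiplicities with their classical counterparts, the coefficient of $[V^\sigma]$ becomes
\begin{equation*}
\sum_{w\in W}(-1)^{\ell(w)}\dim(V^\sigma)_{w\rho-\rho} = \delta_{\sigma,\triv_{\SU_q(3)}},
\end{equation*}
the classical Kostant/Weyl identity (equivalent to the BGG resolution of the trivial module). This yields $\Ind_{\SU_q(3)}(\mathbf{F}) = [\triv_{\SU_q(3)}]$. The main obstacle in this program is the careful verification that $\mathbf{F}^2\equiv\Id$ modulo compacts, particularly for distance-two terms connecting non-Bruhat-comparable vertices, where the algebraic relations among the BGG operators interact in a nontrivial way with the structure of the operator partition of unity.
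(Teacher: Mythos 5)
Your proposal follows the same overall strategy as the paper: the verification that $(\sH_\BGG,\mathbf{F})$ is a Kasparov cycle is done as in Theorem~4.19 of \cite{Yuncken:BGG}, the $\SL_q(3,\CC)$-equivariance comes from Theorem~\ref{thm:SLq-equivariance} together with property~e) of Lemma~\ref{lem:operator_po1}, and the index is computed by observing that the quantum and classical weight multiplicities agree. Your explicit treatment of the index via Frobenius reciprocity and the Kostant alternating sum correctly fleshes out what the paper asserts in a single sentence.

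The one place where your sketch goes astray is the claim that the distance-two blocks of $\mathbf{F}^2$ between non-Bruhat-comparable vertices ``cancel through further applications of Lemma~\ref{lem:almost_symmetry}.'' Between $L^2(\sE_{\alpha_1})$ and $L^2(\sE_{\alpha_2})$ there are \emph{three} length-two paths in $\Gamma$ (through $L^2(\sE_0)$, through $L^2(\sE_{2\alpha_1+\alpha_2})$, and through $L^2(\sE_{\alpha_1+2\alpha_2})$), and likewise for $L^2(\sE_{2\alpha_1+\alpha_2})$ and $L^2(\sE_{\alpha_1+2\alpha_2})$. After applying Lemma~\ref{lem:almost_symmetry} together with properties~a) and e) of Lemma~\ref{lem:operator_po1} to each of these three contributions, what remains is a combination of the three pairwise products of $N$'s multiplied by $\ph(E_1)\ph(F_2)$, and the signs do not make them cancel: already at the level of $\mathbf{T}=\sum_\gamma(T_\gamma+T_\gamma^*)$ this off-diagonal block equals $-\ph(E_1)\ph(F_2)$ modulo $\JJ_1+\JJ_2$, which is not compact. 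Property~c) of Lemma~\ref{lem:operator_po1} concerns only vertex pairs joined by exactly two paths and is silent here, so Lemma~\ref{lem:almost_symmetry} alone does not close the gap. You are right to single this out as the main obstacle; the vanishing of this block is exactly the compatibility built into the Kasparov-technical-theorem construction of \cite[Lemma~4.14]{Yuncken:BGG}, to which the paper defers wholesale.
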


\begin{proof}
The fact that $\mathbf{F}$ defines a $K$-homology class in $KK^{\SL_q(3)}(C(\sX_q),\CC)$ can be proven as for Theorem 4.19 of \cite{Yuncken:BGG}. 
Note that in order to prove $\SL_q(3,\CC)$-equivariance it suffices to check that the action of $ \Poly(\SU_q(3)) $ corresponding to the Yetter-Drinfeld structure commutes with $ \mathbf{F} $ up to compact operators. This in turn follows using Theorem \ref{thm:SLq-equivariance}. 

Finally, the $\SU_q(3)$-types occur in the spaces $L^2(\sE_\mu)$ with the same multiplicities as their classical counterparts, so the index computation follows from the fact that the classical BGG complex is a resolution of the trivial representation.  
\end{proof}

\begin{remark}
\label{rmk:BGG_for_general_weight}
The same general construction can be used to make a Fredholm module $KK^{\SL_q(3,\CC)}(C(\sX_q),\CC)$ with any desired $\SU_q(3)$-equivariant index.  A BGG complex can 
be formed starting with an arbitrary weight $\mu$ (in the notation we have used here it should be an anti-dominant weight), where the weights appearing 
in the equivalent of the diagram \eqref{eq:BGG} are those in the $\rho$-shifted Weyl orbit of $\mu$.  The procedure above applies, and the equivariant 
index of the resulting $KK$-cycle is the class of the irreducible representation with lowest weight $\mu$.
\end{remark}

\section{Applications to Poincar\'e duality and the Baum-Connes conjecture}
\label{sec:bc}

In this section we explain how our previous constructions imply Poincar\'e duality in equivariant $ KK $-theory 
for the flag manifold $ \sX_q = \SU_q(3)/T $, and a certain analogue of the Baum-Connes conjecture for the dual 
of $ \SU_q(3) $. Some of the arguments will only be sketched, and for more information and background we refer to 
\cite{MNtriangulated}, \cite{MNhomalg1}, \cite{Meyerhomalg2}, \cite{NVpoincare}, \cite{Voigtbcfo}. 

Equivariant Poincar\'e duality in $ KK $-theory with respect to quantum group actions was introduced in \cite{NVpoincare}, where it was 
also shown that the standard Podle\'s sphere is equivariantly Poincar\'e dual to itself with respect to the natural action of $\SU_q(2)$. 
An important ingredient in the study of Poincar\'e duality with respect to quantum group actions is the use of braided tensor products, and 
we refer to \cite{NVpoincare} for definitions and more details. 

Our aim here is to exhibit another example of equivariant Poincar\'e duality in the sense of \cite{NVpoincare}, namely for the quantum flag 
manifold $ \sX_q = \SU_q(3)/T $. The key ingredient for this is the class $ [\mathbf{F}] \in KK^{\SL_q(3,\CC)}(C(\sX_q),\CC) $ obtained in 
Theorem \ref{thm:K-homology_class}. It yields a class in $ KK^{\SL_q(3,\CC)}(C(\sX_q) \boxtimes C(\sX_q),\CC) $ by precomposing the representation 
of $ C(\sX_q) $ with the $ * $-homomorphism $ C(\sX_q) \boxtimes C(\sX_q) \rightarrow C(\sX_q) $ induced by multiplication. Here $ \boxtimes $ denotes 
the braided tensor product over $ \SU_q(3) $, and we write $\bD(\SU_q(3)) = \SL_q(3,\CC)$ for the quantum double of $\SU_q(3)$. 

\begin{theorem} \label{PD}
The quantum flag manifold $ \sX_q $ is $ \SU_q(3) $-equivariantly Poincar\'e dual to itself. That is, there is a natural isomorphism
$$
KK^{\bD(\SU_q(3))}_*(C(\sX_q) \boxtimes A, B) \cong KK^{\bD(\SU_q(3))}_*(A, C(\sX_q) \boxtimes B)
$$
for all $ \bD(\SU_q(3)) $-$ C^* $-algebras $ A $ and $ B $.
\end{theorem}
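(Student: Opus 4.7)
The plan is to establish Poincar\'e duality through the categorical framework of \cite{NVpoincare}. Concretely, I would produce an evaluation class $\alpha \in KK^{\bD(\SU_q(3))}(C(\sX_q) \boxtimes C(\sX_q), \CC)$ and a coevaluation class $\beta \in KK^{\bD(\SU_q(3))}(\CC, C(\sX_q) \boxtimes C(\sX_q))$, and then verify the zig-zag identities
$$
(\beta \boxtimes 1_{C(\sX_q)}) \otimes (1_{C(\sX_q)} \boxtimes \alpha) = 1_{C(\sX_q)}
$$
together with its mirror image. Once both identities hold, the natural isomorphism of the theorem is realized by Kasparov product with $\alpha$ in one direction and with $\beta$ in the other, which are then formally mutually inverse.

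The evaluation class $\alpha$ I would obtain directly from Theorem \ref{thm:K-homology_class}: pulling back $[\mathbf{F}]$ along the $\bD(\SU_q(3))$-equivariant multiplication $m: C(\sX_q) \boxtimes C(\sX_q) \to C(\sX_q)$ yields a class in the required $KK$-group. It is precisely here that the $\bD(\SU_q(3)) = \SL_q(3,\CC)$-equivariance of $[\mathbf{F}]$, and not merely its $\SU_q(3)$-equivariance, becomes essential, since the braided tensor product $\boxtimes$ is only functorial in the Drinfeld double setting. For the coevaluation class $\beta$, the natural candidate is a $\bD(\SU_q(3))$-equivariant finitely generated projective module over $C(\sX_q) \boxtimes C(\sX_q)$ representing the class of the diagonal $\sX_q \hookrightarrow \sX_q \times \sX_q$, constructed from the braided bimodule structure of $C(\sX_q)$, in direct analogy with the Podle\'s sphere self-duality proved in \cite{NVpoincare}.

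The verification of the zig-zag identities then reduces to explicit Kasparov product computations whose principal analytic input is the index calculation of Theorem \ref{thm:K-homology_class}: since the $\SU_q(3)$-equivariant index of $[\mathbf{F}]$ is the class of the trivial representation, the partial Kasparov product of $\alpha$ along one tensor factor should be identifiable, modulo compacts, with the orthogonal projection onto the constant functions in $L^2(\sX_q)$. Pairing this projection with $\beta$ via the braided diagonal structure then produces the class of the identity $*$-homomorphism on $C(\sX_q)$.

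The hard part will be pinning down the coevaluation class $\beta$ in an explicit enough form for the braided Kasparov products to be computed by hand, and for the zig-zag identities to be verified on the nose rather than only modulo lower-order terms. This step will likely require invoking the analogue of Lemma \ref{lem:bundle_po1} for the diagonal embedding and using the harmonic-analytic technology developed in Sections \ref{sec:operator_ideals}--\ref{sec:PsiDOs2}. Once the two classes and the zig-zag identities are in place, the naturality of the resulting isomorphism in $A$ and $B$ follows automatically from the functoriality of the Kasparov product and the monoidal structure of the braided tensor product, yielding the stated equivariant Poincar\'e duality.
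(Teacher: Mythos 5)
Your overall strategy — produce an evaluation class $\alpha$ by pulling back $[\mathbf{F}]$ along the multiplication $C(\sX_q)\boxtimes C(\sX_q)\to C(\sX_q)$, find a coevaluation class $\beta$, and verify the zig-zag identities — is the standard framework of \cite{NVpoincare}, and the first step matches the paper exactly. But your plan for the rest diverges from the paper in a way that leaves a substantial gap.

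The paper deliberately does \emph{not} attempt to construct $\beta$ explicitly or to verify the zig-zag identities by direct Kasparov product computation. Instead it invokes the continuous field structure of the $q$-deformations $\{C(\sX_q)\}_{q\in(0,1]}$ (see \cite{NTpoisson}, \cite{Yamashitaequivariantcomp}): the coevaluation class $\eta_q\in KK^{\bD(\SU_q(3))}(\CC, C(\sX_q)\boxtimes C(\sX_q))$ is uniquely determined from the classical class at $q=1$, and the zig-zag identities at parameter $q$ are then obtained from classical Poincar\'e duality for $\sX_1$ by continuity. This is the mechanism of \cite[Theorem~6.5]{NVpoincare}, and it is precisely what lets the paper avoid the hard Kasparov product computations you flag as ``the hard part.'' Without this reduction, you would indeed be obliged to exhibit a $\bD(\SU_q(3))$-equivariant diagonal class and compute braided Kasparov products by hand, and you give no concrete plan for doing so.

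A second, more local issue: your claim that the index computation of Theorem~\ref{thm:K-homology_class} identifies the partial Kasparov product of $\alpha$ over one tensor factor with (modulo compacts) the projection onto constants in $L^2(\sX_q)$ does not follow. The index statement concerns only the full product $[p]\otimes_{C(\sX_q)}[\mathbf{F}]\in KK^{\SU_q(3)}(\CC,\CC)$ where $p\colon\CC\to C(\sX_q)$ is the unit; it says nothing directly about the partially contracted class $\alpha$ regarded as an element of $KK^{\bD}(C(\sX_q), C(\sX_q))$-type groups, which is what the zig-zag identities require. That identification is exactly what the continuous-field argument supplies for free. So the missing ingredient in your proposal is not more harmonic analysis from Sections~\ref{sec:operator_ideals}--\ref{sec:PsiDOs2}, but rather the deformation argument that transfers the whole duality from $q=1$.
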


\begin{proof} 
With the class $ [\mathbf{F}] \in KK^{\SL_q(3)}(C(\sX_q) \boxtimes C(\sX_q),\CC) $ at hand, the argument is completely analogous to the proof 
of Theorem 6.5 in \cite{NVpoincare}, reducing it to Poincar\'e duality for the classical flag manifold $ \sX_1 $. We shall 
therefore not go into the details. 

Let us remark that we do not need an explicit description of the element $ \eta_q \in KK^{\bD(\SU_q(3))}_*(\mathbb{C}, C(\sX_q) \boxtimes C(\sX_q)) $ 
corresponding to the unit of the adjunction. 
In fact, this element is uniquely determined from the classical case $ q = 1 $ due to the continuous field structure of $ q $-deformations, 
see \cite{NTpoisson}, \cite{Yamashitaequivariantcomp}. 
\end{proof} 

Let us now come to the Baum-Connes conjecture. We continue to write $ K_q = \SU_q(3) $ and denote by $ \hat{K}_q $ the discrete quantum 
group dual to $ K_q $. 
The starting point of the approach in \cite{MNtriangulated} is to view equivariant Kasparov theory as a triangulated category.  
More precisely, if $ \Gamma $ is a discrete quantum group we consider the category $ KK^\Gamma $ which has as objects all separable
$ \Gamma$-$ C^* $-algebras, and $ KK^\Gamma(A,B) $ as the set of morphisms between two objects $ A $ and $ B $.
Composition of morphisms is given by the Kasparov product. For a description of the structure of $ KK^\Gamma $ as 
a triangulated category we refer to \cite{NVpoincare}, \cite{Voigtbcfo}. Suffice it to say that 
this extra structure allows one to do homological algebra in the context of Kasparov theory. 

In fact, there is one further ingredient needed in the definition of the Baum-Connes assembly map. Namely, 
one has to identify the category $ \sCI_\Gamma $ of compactly induced actions within $ KK^\Gamma $. 
Classically, the objects of $ \sCI_\Gamma $ are the $ C^* $-algebras induced from finite subgroups of the discrete group $ \Gamma $. 
If $ \Gamma $ is torsion-free the situation is particularly simple, in the sense that only the trivial subgroup 
has to be taken into account in this case. 

It turns out that the dual of $ K_q $ behaves like a torsion-free group. More precisely, the quantum group $ \hat{K}_q $ is torsion-free in the sense 
that any ergodic action of $ K_q $ on a finite dimensional $ C^* $-algebra is $ K_q $-equivariantly Morita equivalent 
to the trivial action on $ \mathbb{C} $, see \cite{Meyerhomalg2}, \cite{Goffeng}. 

For a torsion-free quantum group $ \Gamma $ we define the full subcategory $ \sCI_\Gamma $ of $ KK^\Gamma $ by 
\begin{align*}
\sCI_{\Gamma} &= \{C_0(\Gamma) \otimes A| A \in KK \}, 
\end{align*}
where the coaction on $ C_0(\Gamma) \otimes A $ is given by comultiplication on the first tensor factor. Similarly, we let 
$ \sCC_{\Gamma} \subset KK^{\Gamma} $ be the full subcategory of all objects which become isomorphic to $ 0 $ in $ KK $ under the 
obvious forgetful functor. 
The subcategory $ \sCC_{\Gamma} $ is localising, and we denote by $ \langle \sCI_{\Gamma} \rangle $ the localising 
subcategory generated by $ \sCI_{\Gamma} $. Moreover, the pair of localising subcategories 
$ (\langle \sCI_{\Gamma} \rangle, \sCC_{\Gamma}) $ in $ KK^\Gamma $ is complementary, compare \cite{Meyerhomalg2}. That is, $ KK^\Gamma(P,N) = 0 $ for 
all $ P \in \langle \sCI_{\Gamma} \rangle $ and $ N \in \sCC_{\Gamma} $, and every object $ A \in KK^\Gamma $ fits into an exact triangle 
$$
\xymatrix{
\Sigma N \; \ar@{->}[r] & \tilde{A} \ar@{->}[r] & A \ar@{->}[r] & N
}
$$
with $ \tilde{A} \in \langle \sCI_{\Gamma} \rangle $ and $ N \in \sCC_{\Gamma} $. 
Such a triangle is called a Dirac triangle for $ A $, it is uniquely determined up to isomorphism in $ KK^\Gamma $ and depends functorially on $ A $. 

\begin{definition}
Let $ \Gamma $ be a torsion-free discrete quantum group and let $ A $ be a $ \Gamma $-$ C^* $-algebra. 
The Baum-Connes assembly map for $ \Gamma $ with coefficients in $ A $ is the map 
$$ 
\mu_A: K_*(\Gamma \ltimes_\red \tilde{A}) \rightarrow K_*(\Gamma \ltimes_\red A) 
$$
induced from a Dirac triangle for $ A $. If $ \mu_A $ is an isomorphism we shall say that $ \Gamma $ satisfies the Baum-Connes conjecture 
with coefficients in $ A $. 
\end{definition}

By the work of Meyer and Nest \cite{MNtriangulated}, this terminology is consistent with the usual definitions 
in the case that $ \Gamma $ is a torsion-free discrete group. 

Using the Fredholm module for the quantum flag manifold $ \SU_q(3)/T $ in Theorem \ref{thm:K-homology_class} we obtain the following result. 

\begin{theorem} \label{thm:BC}
The dual of $ \SU_q(3) $ for $ q \in (0,1] $ satisfies the Baum-Connes conjecture with trivial coefficients $ \mathbb{C} $. 
\end{theorem}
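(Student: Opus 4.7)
The plan is to apply the triangulated category framework of Meyer and Nest \cite{MNtriangulated, Meyerhomalg2} in order to show that $\CC$, regarded as a $\hat{K}_q$-$C^*$-algebra with trivial action, lies in the localising subcategory $\langle \sCI_{\hat{K}_q} \rangle$ of $KK^{\hat{K}_q}$. Once this is proved, the Dirac triangle for $\CC$ may be chosen with $\tilde{\CC} \cong \CC$ and structure map $\Id$, whence the assembly map $\mu_\CC$ is tautologically an isomorphism.

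First, I would show that $C(\sX_q) \in \langle \sCI_{\hat{K}_q} \rangle$. Since $\sX_q = K_q/T$ is the quantum homogeneous space of the maximal torus, $C(\sX_q)$ arises as the induced algebra $\Ind^{\hat{K}_q}_{\hat{T}}(\CC)$ associated with the inclusion of closed quantum subgroups $\hat{T} \subset \hat{K}_q$. The Pontryagin dual $\hat{T} \cong \ZZ^2$ is a torsion-free discrete abelian group satisfying the Baum-Connes conjecture with trivial coefficients, which is equivalent to $\CC \in \langle \sCI_{\hat{T}} \rangle$. Because induction along a closed quantum subgroup is exact, continuous, and sends compactly induced algebras to compactly induced ones, it maps $\langle \sCI_{\hat{T}} \rangle$ into $\langle \sCI_{\hat{K}_q} \rangle$, whence $C(\sX_q) \in \langle \sCI_{\hat{K}_q} \rangle$. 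This parallels the argument for the Podle\'s sphere given in \cite{Voigtbcfo}.

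Second, I would use Theorem \ref{thm:K-homology_class} to exhibit $\CC$ as a retract of $C(\sX_q)$ in $KK^{\hat{K}_q}$. The unit homomorphism $\iota : \CC \to C(\sX_q)$ is equivariant with respect to the Drinfeld double $\bD(K_q)$, and the class $[\mathbf{F}] \in KK^{\bD(K_q)}(C(\sX_q), \CC)$ of Theorem \ref{thm:K-homology_class} restricts to an element of $KK^{\hat{K}_q}(C(\sX_q), \CC)$. By the definition of the equivariant index, the Kasparov product $[\iota] \otimes_{C(\sX_q)} [\mathbf{F}]$ equals the $K_q$-equivariant index of $[\mathbf{F}]$, which by Theorem \ref{thm:K-homology_class} is the class of the trivial representation, i.e., the identity in $R(K_q)$. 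Compatibility of the forgetful functors on equivariant $KK$-theory then identifies this Kasparov product with the identity morphism of $\CC$ in $KK^{\hat{K}_q}$, so $[\mathbf{F}]$ splits $\iota$ and $\CC$ is a retract of $C(\sX_q)$ in $KK^{\hat{K}_q}$. Since localising subcategories are closed under retracts, we conclude that $\CC \in \langle \sCI_{\hat{K}_q} \rangle$, which yields the theorem.

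The main obstacle will be the first step: although the induction-restriction adjunction for closed quantum subgroups is well-established and $\hat{T} \cong \ZZ^2$ trivially satisfies Baum-Connes, verifying carefully that $\Ind^{\hat{K}_q}_{\hat{T}}$ carries $\langle \sCI_{\hat{T}} \rangle$ into $\langle \sCI_{\hat{K}_q} \rangle$ requires an explicit compatibility check on the generators of $\sCI$. In rank one this is handled in \cite{Voigtbcfo} via Soibelman's decomposition of $C(\SU_q(2))$; for $\SU_q(n)$ the analogous Bruhat-cell decomposition of $C(K_q)$ supplies the required building blocks, but the combinatorial bookkeeping in higher rank is more intricate and its extension to the present quantum setting merits careful verification.
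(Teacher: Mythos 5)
Your overall strategy differs from the paper's in a genuinely interesting way. The paper passes through Baaj--Skandalis duality and then works entirely in $KK^{K_q}$: it establishes a $KK^{K_q}$-equivalence $C(K_q/T)\simeq \CC^{|W|}$ using the classes $\alpha_q$ (built from Theorem~\ref{thm:K-homology_class} and Remark~\ref{rmk:BGG_for_general_weight}) and $\beta_q$ (induced line bundles), and deduces $C(K_q/T)\in\langle\CC\rangle$. You instead stay in $KK^{\hat K_q}$, first placing $C(\sX_q)\in\langle\sCI_{\hat K_q}\rangle$ via induction from $\hat T\cong\ZZ^2$, and then exhibiting $\CC$ as a retract of $C(\sX_q)$. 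That reorganization is attractive because it would only require a one-sided inverse rather than a $KK$-equivalence, so it has the potential to bypass the paper's continuous-field argument for $\alpha_q\circ\beta_q=\Id$.

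Unfortunately there is a genuine gap precisely at the step where you would earn that saving. You assert that the Kasparov product $[\iota]\otimes_{C(\sX_q)}[\mathbf F]$ equals $\Id_\CC$ in $KK^{\hat K_q}(\CC,\CC)$, citing ``compatibility of the forgetful functors.'' But Theorem~\ref{thm:K-homology_class} only identifies the image of $[\iota]\otimes[\mathbf F]\in KK^{\bD(K_q)}(\CC,\CC)$ under the forgetful functor to $KK^{K_q}(\CC,\CC)=R(K_q)$. The forgetful functors to $KK^{K_q}$ and to $KK^{\hat K_q}$ are restrictions along two different (quantum) subgroups of the Drinfeld double $\bD(K_q)$; there is no natural map between their targets, and knowing the $K_q$-equivariant index is the trivial representation does not by itself determine the class in $KK^{\hat K_q}(\CC,\CC)$. (Note also that since $\hat K_q$ is non-compact one cannot simply average $\mathbf F$ to make its kernel and cokernel honest $\hat K_q$-modules, so there is not even an evident candidate for a ``$\hat K_q$-equivariant index'' to compare with.) To close this you would need either a proof that the forgetful map $KK^{\hat K_q}(\CC,\CC)\to KK(\CC,\CC)=\ZZ$ detects units (or is an isomorphism --- a non-trivial fact about torsion-free discrete quantum groups that should not be taken for granted here), or a rigidity argument of the same kind the paper uses: the $C^*$-algebras $C(K_q/T)$ form a $T$-equivariant continuous field over $(0,1]$, and the class $[\iota]\otimes[\mathbf F]$ at $q=1$ \emph{can} be computed to be $\Id_\CC$ in $KK^{\SL(3,\CC)}(\CC,\CC)$ since the classical BGG complex is a resolution of the trivial $\SL(3,\CC)$-module. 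In other words, you have not avoided the continuous-field input; you have only relocated the place where it is needed.

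Finally, the obstacle you flag at the end --- a quantum Bruhat-cell decomposition to verify that $\Ind^{\hat K_q}_{\hat T}$ preserves the compactly induced bootstrap class --- is not where the difficulty lies. The paper handles exactly this point painlessly via Baaj--Skandalis duality: $\CC\in\langle\sCI_{\hat K_q}\rangle$ is equivalent to $C(K_q)\in\langle\mathcal T_{K_q}\rangle$, and then $C(K_q)=\ind_T^{K_q}C(T)\in\langle C(K_q/T)\rangle$ follows from the Baum--Connes conjecture for $\ZZ^2$ together with the adjunction between induction and restriction, with no Soibelman- or Bruhat-type decomposition of $C(K_q)$ required. The real work in the theorem is to show $C(K_q/T)\in\langle\CC\rangle$ (equivalently, in your setup, that $\CC$ is a retract of $C(\sX_q)$ over $\hat K_q$), and that is precisely the step your argument leaves open.
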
 

\begin{proof} 
We shall follow the arguments in \cite{MNcompact}and show $ \mathbb{C} \in \langle \sCI_{\hat{K}_q} \rangle $. This clearly implies 
that $ \mu_\mathbb{C} $ is an isomorphism. 
Using Baaj-Skandalis duality, it is enough to prove $ C(K_q) \in \langle \mathcal{T}_{K_q} \rangle $, where $ \mathcal{T}_{K_q} \subset KK^{K_q} $ 
denotes the category of all trivial $ K_q $-$ C^* $-algebras. 

We have $ C(T) \subset \langle \mathcal{T}_T \rangle $ by the Baum-Connes conjecture for the abelian group $ \hat{T} $, where 
$ \mathcal{T}_T \subset KK^T $ is the category of trivial $ T $-$ C^* $-algebras. 
This implies $ C(K_q) = \ind_T^{K_q}(C(T)) \in \langle C(K_q/T) \rangle $. Hence it suffices to 
show $ C(K_q/T) \in \langle \mathbb{C} \rangle $. 

In the case $ q = 1 $ one obtains inverse isomorphisms $ \alpha_1: C(K_1/T) \rightarrow \mathbb{C}^{|W|} $ 
and $ \beta_1: \mathbb{C}^{|W|} \rightarrow C(K_1/T) $ in $ KK^{K_1} $ using Poincar\'e duality, where $ |W| = 6 $ is the order of the Weyl 
group of $ K_1 = SU(3) $, see \cite{RS}, \cite{MNcompact}. For general $ q $ we could argue in a similar way by invoking Theorem \ref{PD}. 
Alternatively we may proceed as follows, avoiding the use of braided tensor products. 

The element $ \beta_1 $ is given by induced vector bundles over the flag manifold, and one obtains a corresponding 
class $\beta_q \in KK^{K_q}(\mathbb{C}^{|W|}, C(K_q/T))$ for any $ q \in (0,1] $ using the induction 
isomorphism $KK^{K_q}(\mathbb{C}, C(K_q/T)) \cong KK^T(\mathbb{C}, \mathbb{C})$. 
Similarly, the element $\alpha_1$ is given by twisted Dolbeault operators. Using theorem \ref{thm:K-homology_class} we 
obtain a corresponding class $\alpha_q$ in $KK^{K_q}(C(K_q/T), \mathbb{C}^{|W|})$. 
From $ K_q $-equivariance it is immediate that we have $\beta_q \circ \alpha_q = \Id$ in $ KK^{K_q}(\mathbb{C}^{|W|}, \mathbb{C}^{|W|}) $
for the classes thus obtained. To check $ \alpha_q \circ \beta_q = \Id$ in $ KK^{K_q}(C(K_q/T), C(K_q/T)) $ 
we may use the canonical isomorphism $ KK^{K_q}(C(K_q/T), C(K_q/T)) \cong KK^T(C(K_q/T), \mathbb{C}) $ 
and the fact that the $ C^* $-algebras $ C(K_q/T) $ form a $ T $-equivariant continuous field, implementing a $ KK^T $-equivalence between $ C(K_q/T) $ 
and $ C(K/T) $, see \cite{NTpoisson}, \cite{Yamashitaequivariantcomp}. 
It therefore suffices to consider the effect of $ \alpha_q \circ \beta_q $ on $ K^T_*(C(K_q/T)) \cong R(K) \otimes_{R(T)} R(K) $, which is the same 
for all $ q \in (0,1] $. 
\end{proof} 

We remark that Theorem \ref{thm:BC} is of rather theoretical value. In particular, it does not lead 
to $ K $-theory computations similar to the ones for free orthogonal quantum groups in \cite{Voigtbcfo}.


\appendix

\section{Some results from $q$-calculus}

\subsection{Proof of the change of basis formula of Proposition \ref{prop:GTs_comparison2_for_SU3}}
\label{sec:change_of_basis_proof}

The vectors
\begin{eqnarray*}
  \ket{\xi_j} \defeq \bigket{\smallGTs{m}{j}^\upp},  \qquad
    \ket{\eta_k} \defeq \bigket{\smallGTs{m}{k}^\low},
\end{eqnarray*}
are the basis vectors for the $0$-weight space of $\mu=(m,0,-m)$ in the upper and lower Gelfand-Tsetlin bases, respectively.  
Our calculation of the change-of-basis coefficients avoids the use of raising and lowering operators from \cite{MSK}, instead using a recurrence relation which arises by considering the bracket
\begin{equation}
\label{eq:EF-bracket}
  \bra{\eta_k} E_1^* E_1 \ket{\xi_j}.
\end{equation}

Letting $E_1^* E_1$ act on $\ket{\xi_j}$ first, the Gelfand-Tsetlin formulae \eqref{eq:GTs-E}, \eqref{eq:GTs-F} give
\begin{equation}
 \label{eq:EF-bracket1}
 \bra{\eta_k} E_1^* E_1 \ket{\xi_j} = [j][j+1] \braket{\eta_k}{\xi_j}.
\end{equation}
On the other hand, in the lower Gelfand-Tsetlin basis $E_1$ acts according to the formula \eqref{eq:GTs-E} for $\Psi(E_1)=E_2$ (see the definition of the lower basis in Section \ref{sec:alternative_GTs_bases}).  We get
\begin{eqnarray*}
  E_1^* E_1 \ket{ \eta_k } &=&   
    \frac{[m+k+2] [m-k] [k+1]^2}{[2k+1]^\half [2k+2] [2k+3]^\half} \ket{\eta_{k+1}} \\
    && + \left(
      \frac{[m+k+2] [m-k] [k+1]^2}{[2k+1] [2k+2]} + \frac{[m+k+1] [m-k+1] [k]^2}{[2k] [2k+1]}
      \right) \ket{\eta_{k}} \\
    && +\frac{[m+k+1] [m-k+1] [k]^2}{[2k-1]^\half [2k] [2k+1]^\half} \ket{\eta_{k}}
\end{eqnarray*}
Taking the inner product of this with $\ket{\xi_j}$ and equating with \eqref{eq:EF-bracket1} yields a three-term recurrence relation for $\braket{\eta_k}{\xi_j}$.  The result is simplified if we introduce the non-unit vectors
\begin{equation}
 \label{eq:non-unit_GTS_basis}
 \xket{j} \defeq [2j+1]^{-\half} \ket{\xi_j}, \qquad 
   \yket{k} \defeq [2k+1]^{-\half} \ket{\eta_k}.
\end{equation}
One obtains
\begin{equation}
 \label{eq:recurrence_relation2}
 [j][j+1] \yxbraket{k}{j} =
 a(k) \yxbraket{k+1}{j} + (a(k)+c(k)) \yxbraket{k}{j} + c(k) \yxbraket{k-1}{j},
\end{equation}
where
\begin{eqnarray*}
 a(k) &=& \frac{[m+k+2][m-k][k+1]^2}{[2k+1][2k+2]}, \\
 c(k) &=& \frac{[m+k+1][m-k+1][k]^2}{[2k][2k+1]}.
\end{eqnarray*}

We claim that the solution of Equation \eqref{eq:recurrence_relation2} is given by $q$-Racah coefficients.  Unfortunately, the $q$-Racah polynomials are typically written in terms of the non-symmetric $q$-numbers $[[n]]\defeq\frac{1-q^n}{1-q}$, so we must rewrite the recurrence relation as
\begin{multline}
 \label{eq:recurrence_relation2a}
 (1-q^{2j})(1-q^{2(j+1)}) \yxbraket{k}{j} \\ =
 A(k) \yxbraket{k+1}{j} + (A(k)+C(k)) \yxbraket{k}{j} + A(k) \yxbraket{k-1}{j},
\end{multline}
where
\begin{eqnarray*}
  A(k) &=& \frac{(1-q^{2(m+k+2)})(1-q^{2(k-m)})(1-q^{2(k+1)})^2}{(1-q^{2(2k+1)})(1-q^{2(2k+2)})}, \\
  C(k) &=& \frac{q^2(1-q^{2(k+m+1)})(1-q^{2(k-m-1)})(1-q^{2(k)})^2}{(1-q^{2(2k)})(1-q^{2(2k+1)})}.
\end{eqnarray*}
These are precisely the coefficients in the recurrence relation for the $q^2$-Racah polynomials described in Equation (14.2.3) of \cite{KLS}, with parameters $\alpha=q^{2(m+1)}$, $\beta=q^{-2(m+1)}$, $\gamma=\delta=1$ and $N=m$.  The initial condition for the recurrence relation is fixed by Equation \eqref{eq:GTs_comparison1_for_SU3}, which gives
$$
  \yxbraket{0}{j} = \frac{(-1)^{j+m}}{[m+1]},
$$
and formula of Proposition \ref{prop:GTs_comparison2_for_SU3} follows.

\subsection{A $q$-integral identity for little $q$-Legendre polynomials}
\label{sec:integral_identity}

We recall the definitions of the standard $q$-differentiation and $q$-integration operators:
\begin{eqnarray*}
  \Dq f(x) &\defeq& \frac{f(qx) - f(x)}{(qx-x)}, \\
 \int_0^x f(y) \dq y &\defeq& x (1-q) \sum_{j=0}^\infty q^j f(q^jx) .
\end{eqnarray*}
We also recall the following basic $q$-derivatives, where $[[n]]\defeq\frac{1-q^n}{1-q}$:
\begin{eqnarray}
 \label{eq:monomial_derivative}
 \Dq x^\alpha &=& [[\alpha]]_q x^{\alpha-1}, \qquad \text{ for all } \alpha\in\RR, \\
 \label{eq:Pochhammer_derivative}
 \Dq (x;q^{-1})_n &=& -[[n]]_{q^{-1}} (x;q^{-1})_{n-1} \\
   &=&  -q^{-(n-1)}[[n]]_{q} (x;q^{-1})_{n-1}, \qquad \text{ for all } n\in\NN. \nonumber
\end{eqnarray}

\begin{proposition}
 \label{prop:integral_identity}
 $$ \int_0^1 x^{-\half} \legendre_k(x | q^2) \, d_{q^2} x = q^\half \left[ \frac{2k+1}{2} \right]_q^{-1}.$$
\end{proposition}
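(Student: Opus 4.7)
The plan is to reduce the $q$-integral to a terminating, balanced ${}_3\phi_2$ and then apply $q$-Saalschütz. Since $\legendre_k(x|q^2)$ is a polynomial of degree $k$ given by a terminating ${}_2\phi_1$ in the variable $q^2 x$, I would expand it explicitly and interchange the (finite) sum with the $q^2$-integral. Each monomial term is evaluated by the elementary formula $\int_0^1 x^\alpha d_{q^2}x = (1-q^2)/(1-q^{2(\alpha+1)})$, giving $\int_0^1 x^{j-\frac{1}{2}} d_{q^2}x = (1-q^2)/(1-q^{2j+1})$.

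The next step is to massage this ratio into $q$-shifted-factorial form. Writing
$$\frac{1-q^2}{1-q^{2j+1}} = (1+q) \, \frac{(q;q^2)_j}{(q^3;q^2)_j},$$
the integral becomes
$$\int_0^1 x^{-\frac{1}{2}} \legendre_k(x|q^2) \, d_{q^2}x = (1+q)\sum_{j=0}^k \frac{(q^{-2k},\, q^{2(k+1)},\, q;\, q^2)_j}{(q^2,\, q^2,\, q^3;\, q^2)_j} q^{2j}.$$
The key observation is that this terminating ${}_3\phi_2$ is Saalschützian: with $a=q$, $b=q^{2(k+1)}$, $c=q^2$ and $n=k$, one verifies the balancing relation $a b c^{-1} q^{2(1-n)} = q^3$, so the $q$-Saalschütz summation (in base $q^2$) evaluates the sum as $(q,q^{-2k};q^2)_k / (q^2,q^{-2k-1};q^2)_k$.

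To simplify this closed form, I would use the reflection identity $(a;q^2)_n = (-a)^n q^{n(n-1)} (a^{-1} q^{2(1-n)}; q^2)_n$ applied to $(q^{-2k};q^2)_k$ and $(q^{-2k-1};q^2)_k$; the powers of $q$ and $(-1)$ will cancel neatly and what remains telescopes via $(q;q^2)_k/(q^3;q^2)_k = (1-q)/(1-q^{2k+1})$. Combined with the prefactor $(1+q)$, this yields $q^k(1-q^2)/(1-q^{2k+1})$. Finally, the right-hand side of the proposition unwinds from the definition of the symmetric $q$-number as
$$q^{\frac{1}{2}} \left[ \tfrac{2k+1}{2}\right]_q^{-1} = q^{\frac{1}{2}}\cdot \frac{q-q^{-1}}{q^{k+\frac{1}{2}} - q^{-k-\frac{1}{2}}} = \frac{q^k(1-q^2)}{1-q^{2k+1}},$$
matching the integral exactly.

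The only step requiring any care is the bookkeeping of negative exponents in the $q$-Pochhammer symbols after $q$-Saalschütz; this is routine but easy to mis-sign. No deeper obstacle is expected, since the identity is essentially a repackaging of the Saalschütz summation.
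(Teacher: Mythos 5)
Your proof is correct: the term-by-term $q^2$-integration, the recognition of the resulting sum as a terminating balanced ${}_3\phi_2$, the application of $q$-Saalschütz, and the final simplification all check out (I verified the reflection steps; the powers of $q$ and signs do cancel as you claim, giving $q^k(1-q^2)/(1-q^{2k+1})$, which matches the unpacked right-hand side). However, this is a genuinely different route from the paper's. The paper's proof starts from the Rodrigues-type formula
$$\legendre_k(x|r) = \frac{1}{[[k]]_r!}\,D_r^k\bigl[x^k(x;r^{-1})_k\bigr], \qquad r=q^2,$$
and then performs $k$ rounds of $q$-integration by parts against $x^{-1/2}$ to move the $D_r^k$ onto the monomial, using boundary vanishing of $D_r^i x^k$ at $0$ and $D_r^i(x;r^{-1})_k$ at $1$ for $i<k$; a second round of $k$ integrations by parts then kills the factor $(x;r^{-1})_k$ and leaves an elementary $q$-integral of a power of $x$. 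The paper's method is self-contained, using only the $q$-Leibniz/integration-by-parts calculus and never invoking a summation theorem; yours reduces the statement to a single application of a black-box identity ($q$-Saalschütz) at the cost of some Pochhammer bookkeeping. Both are standard and of comparable length; the Rodrigues approach tends to generalize more transparently to integrals $\int_0^1 x^\alpha \legendre_k(x|q^2)\,d_{q^2}x$ for other exponents $\alpha$, while the ${}_3\phi_2$ approach makes the ``$q$-Saalschütz in disguise'' nature of the identity explicit.
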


\begin{proof}
 Let us put $r\defeq q^2$. The little $q$-Legendre polynomials satisfy the following Rodrigues-type Formula (see \cite{KLS}):
$$
  \legendre_k(x|r) 
  	= \frac{1}{[[k]]_r!} \, D_r^k \left[x^k (x;r^{-1})_k \right].
$$
From Equations \eqref{eq:monomial_derivative} and \eqref{eq:Pochhammer_derivative} one has that for all $0\leq i < k$, $\Dq^i x^k = 0$ at $x=0$ and $\Dq^i (x;r^{-1})_k = 0$ at $x=1$.  Thus, by $k$ applications of $q$-integration by parts, we get
\begin{align*}
  &\int_0^1  x^{-\half} \legendre_k(x | r) \, d_r x \\
    &=  \frac{(-1)^k}{[[k]]_r!} 
     {\textstyle \bigqqnumber[r]{-\frac12} r^{\frac12}  \bigqqnumber[r]{-\frac32}  r^{\frac32} \cdots \bigqqnumber[r]{-\frac{(2k-1)}2} r^{\frac{(2k-1)}2} }
     \int_0^1 x^{-\frac{(2k+1)}2} x^k (x;r^{-1})_k d_rx \\
    &= \frac{1}{[[k]]_r!}  {\textstyle \bigqqnumber[r]{\frac12}  \bigqqnumber[r]{\frac32} \cdots \bigqqnumber[r]{\frac{(2k-1)}2}  } \int_0^1 x^{-\frac12}(x;r^{-1})_k d_rx \\    
\end{align*}
where in the last equality we have used $[[\alpha]]_r = -r^{\alpha}[[-\alpha]]_r$.  The last $q$-integral can be computed by $q$-integrating by parts $k$ more times, giving
\begin{align*}
    \frac{(-1)^k}{[[k]]_r!} r^{\frac12} r^{\frac32} \cdots r^{\frac{2k-1}2}&
      r^k [[-k]]_r [[-k+1]]_r \cdots [[-1]]_r \int_0^1 x^{\frac{(2k-1)}2} d_rx \\
    &=  r^{\frac{k}2} \int_0^1 x^{\frac{(2k-1)}2} d_rx \\
    &=  q^\half\left[ \frac{2k+1}{2} \right]_q^{-1}.
\end{align*}
\end{proof}


\nocite{VV,Voigtbcfo}

\bibliographystyle{alpha}

\bibliography{SU_q}

\newcommand{\etalchar}[1]{$^{#1}$}
\begin{thebibliography}{vSDL{\etalchar{+}}05}

\bibitem[Ara]{Aranospherical}
Yuki Arano.
\newblock Unitary spherical representations of {D}rinfeld doubles.
\newblock preprint. \url{http://arXiv:1410.6238}.

\bibitem[BCH94]{BCH}
Paul Baum, Alain Connes, and Nigel Higson.
\newblock Classifying space for proper actions and {$K$}-theory of group
  {$C^\ast$}-algebras.
\newblock In {\em {$C^\ast$}-algebras: 1943--1993 ({S}an {A}ntonio, {TX},
  1993)}, volume 167 of {\em Contemp. Math.}, pages 240--291. Amer. Math. Soc.,
  Providence, RI, 1994.

\bibitem[BE89]{BasEas}
Robert~J. Baston and Michael~G. Eastwood.
\newblock {\em The {P}enrose transform}.
\newblock Oxford Mathematical Monographs. The Clarendon Press, Oxford
  University Press, New York, 1989.
\newblock Its interaction with representation theory, Oxford Science
  Publications.

\bibitem[BGG75]{BGG}
I.~Bernstein, I.~Gel'fand, and S.~Gel'fand.
\newblock Differential operators on the base affine space and a study of
  $\mathfrak{g}$-modules.
\newblock In {\em Lie groups and their representations (Proc. Summer School,
  Bolyai J{\'a}nos Math. Soc., Budapest, 1971)}, pages 21--64, New York, 1975.
  Halsted.

\bibitem[Bla98]{Blackadarbook}
Bruce Blackadar.
\newblock {\em {$K$}-theory for operator algebras}, volume~5 of {\em
  Mathematical Sciences Research Institute Publications}.
\newblock Cambridge University Press, Cambridge, second edition, 1998.

\bibitem[BS89]{BSKK}
Saad Baaj and Georges Skandalis.
\newblock {$C^\ast$}-alg\`ebres de {H}opf et th\'eorie de {K}asparov
  \'equivariante.
\newblock {\em $K$-Theory}, 2(6):683--721, 1989.

\bibitem[Con94]{Connesbook}
Alain Connes.
\newblock {\em Noncommutative geometry}.
\newblock Academic Press, Inc., San Diego, CA, 1994.

\bibitem[Con96]{Connesgravmat}
Alain Connes.
\newblock Gravity coupled with matter and the foundation of non-commutative
  geometry.
\newblock {\em Comm. Math. Phys.}, 182(1):155--176, 1996.

\bibitem[Con04]{Connessuq2}
Alain Connes.
\newblock Cyclic cohomology, quantum group symmetries and the local index
  formula for {${\rm SU}_q(2)$}.
\newblock {\em J. Inst. Math. Jussieu}, 3(1):17--68, 2004.

\bibitem[CP95]{CPbook}
Vyjayanthi Chari and Andrew Pressley.
\newblock {\em A guide to quantum groups}.
\newblock Cambridge University Press, Cambridge, 1995.
\newblock Corrected reprint of the 1994 original.

\bibitem[CP03]{CPspectraltriple}
Partha~Sarathi Chakraborty and Arupkumar Pal.
\newblock Equivariant spectral triples on the quantum {${\rm SU}(2)$} group.
\newblock {\em $K$-Theory}, 28(2):107--126, 2003.

\bibitem[{\v{C}}S00]{CS:Parabolic}
Andreas {\v{C}}ap and Hermann Schichl.
\newblock Parabolic geometries and canonical {C}artan connections.
\newblock {\em Hokkaido Math. J.}, 29(3):453--505, 2000.

\bibitem[{\v{C}}SS01]{CSS}
Andreas {\v{C}}ap, Jan Slov{\'a}k, and Vladim{\'\i}r Sou\v{c}ek.
\newblock Bernstein-{G}elfand-{G}elfand sequences.
\newblock {\em Ann. of Math. (2)}, 154(1):97--113, 2001.

\bibitem[DLS{\etalchar{+}}05]{DLSSVdirac}
Ludwik D{\c{a}}browski, Giovanni Landi, Andrzej Sitarz, Walter van Suijlekom,
  and Joseph~C. V{\'a}rilly.
\newblock The {D}irac operator on {${\rm SU}_q(2)$}.
\newblock {\em Comm. Math. Phys.}, 259(3):729--759, 2005.

\bibitem[DS03]{DS:Podles}
Ludwik Dabrowski and Andrzej Sitarz.
\newblock Dirac operator on the standard {P}odle{\'s} quantum sphere.
\newblock In {\em Noncommutative geometry and quantum groups ({W}arsaw, 2001)},
  volume~61 of {\em Banach Center Publ.}, pages 49--58. Polish Acad. Sci.,
  Warsaw, 2003.

\bibitem[Gof12]{Goffeng}
Magnus Goffeng.
\newblock A remark on twists and the notion of torsion-free discrete quantum
  groups.
\newblock {\em Algebr. Represent. Theory}, 15(5):817--834, 2012.

\bibitem[HK04]{HK1}
I.~Heckenberger and S.~Kolb.
\newblock The locally finite part of the dual coalgebra of quantized
  irreducible flag manifolds.
\newblock {\em Proc. London Math. Soc. (3)}, 89(2):457--484, 2004.

\bibitem[HK07a]{HK2}
I.~Heckenberger and S.~Kolb.
\newblock On the {B}ernstein-{G}elfand-{G}elfand resolution for {K}ac-{M}oody
  algebras and quantized enveloping algebras.
\newblock {\em Transform. Groups}, 12(4):647--655, 2007.

\bibitem[HK07b]{HK3}
Istv{\'a}n Heckenberger and Stefan Kolb.
\newblock Differential forms via the {B}ernstein-{G}elfand-{G}elfand resolution
  for quantized irreducible flag manifolds.
\newblock {\em J. Geom. Phys.}, 57(11):2316--2344, 2007.

\bibitem[Kas95]{Kasparovconsp}
G.~G. Kasparov.
\newblock {$K$}-theory, group {$C^*$}-algebras, and higher signatures
  (conspectus).
\newblock In {\em Novikov conjectures, index theorems and rigidity, {V}ol.\ 1
  ({O}berwolfach, 1993)}, volume 226 of {\em London Math. Soc. Lecture Note
  Ser.}, pages 101--146. Cambridge Univ. Press, Cambridge, 1995.

\bibitem[KLS10]{KLS}
Roelof Koekoek, Peter~A. Lesky, and Ren{\'e}~F. Swarttouw.
\newblock {\em Hypergeometric orthogonal polynomials and their
  {$q$}-analogues}.
\newblock Springer Monographs in Mathematics. Springer-Verlag, Berlin, 2010.
\newblock With a foreword by Tom H. Koornwinder.

\bibitem[Kr{\"a}04]{Krahmer}
Ulrich Kr{\"a}hmer.
\newblock Dirac operators on quantum flag manifolds.
\newblock {\em Lett. Math. Phys.}, 67(1):49--59, 2004.

\bibitem[KS97]{KS}
Anatoli Klimyk and Konrad Schm{\"u}dgen.
\newblock {\em Quantum groups and their representations}.
\newblock Texts and Monographs in Physics. Springer-Verlag, Berlin, 1997.

\bibitem[Luu05]{Luu}
Viet-Trung Luu.
\newblock {\em A large-scale approach to {K}-homology}.
\newblock ProQuest LLC, Ann Arbor, MI, 2005.
\newblock Thesis (Ph.D.)--The Pennsylvania State University.

\bibitem[Maj05]{Majid}
S.~Majid.
\newblock Noncommutative {R}iemannian and spin geometry of the standard
  {$q$}-sphere.
\newblock {\em Comm. Math. Phys.}, 256(2):255--285, 2005.

\bibitem[Mey08]{Meyerhomalg2}
Ralf Meyer.
\newblock Homological algebra in bivariant {$K$}-theory and other triangulated
  categories. {II}.
\newblock {\em Tbil. Math. J.}, 1:165--210, 2008.

\bibitem[MN06]{MNtriangulated}
Ralf Meyer and Ryszard Nest.
\newblock The {B}aum-{C}onnes conjecture via localisation of categories.
\newblock {\em Topology}, 45(2):209--259, 2006.

\bibitem[MN07]{MNcompact}
Ralf Meyer and Ryszard Nest.
\newblock An analogue of the {B}aum-{C}onnes isomorphism for coactions of
  compact groups.
\newblock {\em Math. Scand.}, 100(2):301--316, 2007.

\bibitem[MN10]{MNhomalg1}
Ralf Meyer and Ryszard Nest.
\newblock Homological algebra in bivariant {$K$}-theory and other triangulated
  categories. {I}.
\newblock In {\em Triangulated categories}, volume 375 of {\em London Math.
  Soc. Lecture Note Ser.}, pages 236--289. Cambridge Univ. Press, Cambridge,
  2010.

\bibitem[MS99]{MS}
E.~F. M{\"u}ller and H.-J. Schneider.
\newblock Quantum homogeneous spaces with faithfully flat module structures.
\newblock {\em Israel J. Math.}, 111:157--190, 1999.

\bibitem[MSK95]{MSK}
A.~A. Malashin, Yu.~F. Smirnov, and Yu.~I. Kharitonov.
\newblock Transformation brackets relating the canonical {$T$}, {$U$}, and
  {$V$} bases of {$u\_q(3)$} irreducible representations.
\newblock {\em Yadernaya Fiz.}, 58(6):1105--1119, 1995.

\bibitem[NT05]{NT:Index}
Sergey Neshveyev and Lars Tuset.
\newblock A local index formula for the quantum sphere.
\newblock {\em Comm. Math. Phys.}, 254(2):323--341, 2005.

\bibitem[NT10]{NT:Deformation}
Sergey Neshveyev and Lars Tuset.
\newblock The {D}irac operator on compact quantum groups.
\newblock {\em J. Reine Angew. Math.}, 641:1--20, 2010.

\bibitem[NT12]{NTpoisson}
Sergey Neshveyev and Lars Tuset.
\newblock Quantized algebras of functions on homogeneous spaces with {P}oisson
  stabilizers.
\newblock {\em Comm. Math. Phys.}, 312(1):223--250, 2012.

\bibitem[NV10]{NVpoincare}
Ryszard Nest and Christian Voigt.
\newblock Equivariant {P}oincar\'e duality for quantum group actions.
\newblock {\em J. Funct. Anal.}, 258(5):1466--1503, 2010.

\bibitem[Owc01]{Owczarek}
Robert Owczarek.
\newblock Dirac operator on the {P}odle{\'s} sphere.
\newblock {\em Internat. J. Theoret. Phys.}, 40(1):163--170, 2001.
\newblock Clifford algebras and their applications (Ixtapa, 1999).

\bibitem[PW90]{PWlorentz}
P.~Podle{\'s} and S.~L. Woronowicz.
\newblock Quantum deformation of {L}orentz group.
\newblock {\em Comm. Math. Phys.}, 130(2):381--431, 1990.

\bibitem[Roe97]{Roe:dual_control}
John Roe.
\newblock An example of dual control.
\newblock {\em Rocky Mountain J. Math.}, 27(4):1215--1221, 1997.

\bibitem[Roe03]{Roe:coarse_book}
John Roe.
\newblock {\em Lectures on coarse geometry}, volume~31 of {\em University
  Lecture Series}.
\newblock American Mathematical Society, Providence, RI, 2003.

\bibitem[Ros91]{Rosso:BGG}
Marc Rosso.
\newblock An analogue of {B}.{G}.{G}. resolution for the quantum {${\rm
  SL}(N)$}-group.
\newblock In {\em Symplectic geometry and mathematical physics
  ({A}ix-en-{P}rovence, 1990)}, volume~99 of {\em Progr. Math.}, pages
  422--432. Birkh\"auser Boston, Boston, MA, 1991.

\bibitem[RS]{RS:Index}
Adam Rennie and Roger Senior.
\newblock The resolvent cocycle in twisted cyclic cohomology and a local index
  formula for the podles sphere.
\newblock preprint. \url{http://arxiv.org/abs/1111.5862}.

\bibitem[RS86]{RS}
Jonathan Rosenberg and Claude Schochet.
\newblock The {K}\"unneth theorem and the universal coefficient theorem for
  equivariant {$K$}-theory and {$KK$}-theory.
\newblock {\em Mem. Amer. Math. Soc.}, 62(348):vi+95, 1986.

\bibitem[Sch04]{SHopfgalois}
Peter Schauenburg.
\newblock Hopf-{G}alois and bi-{G}alois extensions.
\newblock In {\em Galois theory, {H}opf algebras, and semiabelian categories},
  volume~43 of {\em Fields Inst. Commun.}, pages 469--515. Amer. Math. Soc.,
  Providence, RI, 2004.

\bibitem[SD99]{DStok}
Jasper~V. Stokman and Mathijs~S. Dijkhuizen.
\newblock Quantized flag manifolds and irreducible {$*$}-representations.
\newblock {\em Comm. Math. Phys.}, 203(2):297--324, 1999.

\bibitem[Sto03]{St}
Jasper~V. Stokman.
\newblock The quantum orbit method for generalized flag manifolds.
\newblock {\em Math. Res. Lett.}, 10(4):469--481, 2003.

\bibitem[SW04]{SW}
Konrad Schm{\"u}dgen and Elmar Wagner.
\newblock Dirac operator and a twisted cyclic cocycle on the standard
  {P}odle{\'s} quantum sphere.
\newblock {\em J. Reine Angew. Math.}, 574:219--235, 2004.

\bibitem[Voi11]{Voigtbcfo}
Christian Voigt.
\newblock The {B}aum-{C}onnes conjecture for free orthogonal quantum groups.
\newblock {\em Adv. Math.}, 227(5):1873--1913, 2011.

\bibitem[vSDL{\etalchar{+}}05]{DLSSVindex}
Walter van Suijlekom, Ludwik D{\c{a}}browski, Giovanni Landi, Andrzej Sitarz,
  and Joseph~C. V{\'a}rilly.
\newblock The local index formula for {${\rm SU}_q(2)$}.
\newblock {\em $K$-Theory}, 35(3-4):375--394 (2006), 2005.

\bibitem[VV13]{VV}
Roland Vergnioux and Christian Voigt.
\newblock The {$K$}-theory of free quantum groups.
\newblock {\em Math. Ann.}, 357(1):355--400, 2013.

\bibitem[VY]{VYprincipal}
Christian Voigt and Robert Yuncken.
\newblock On the principal series representations of quantized complex
  semisimple groups.
\newblock In preparation.

\bibitem[Wag09]{Wagner}
Elmar Wagner.
\newblock On the noncommutative {S}pin geometry of the standard {P}odle{\'s}
  sphere and index computations.
\newblock {\em J. Geom. Phys.}, 59(7):998--1016, 2009.

\bibitem[Yam13]{Yamashitaequivariantcomp}
Makoto Yamashita.
\newblock Equivariant comparison of quantum homogeneous spaces.
\newblock {\em Comm. Math. Phys.}, 317(3):593--614, 2013.

\bibitem[Yun10]{Yuncken:PsiDOs}
Robert Yuncken.
\newblock Products of longitudinal pseudodifferential operators on flag
  varieties.
\newblock {\em J. Funct. Anal.}, 258(4):1140--1166, 2010.

\bibitem[Yun11a]{Yuncken:BGG}
Robert Yuncken.
\newblock The {B}ernstein-{G}elfand-{G}elfand complex and {K}asparov theory for
  {${\rm SL}(3,\Bbb C)$}.
\newblock {\em Adv. Math.}, 226(2):1474--1512, 2011.

\bibitem[Yun11b]{Yuncken:coarse}
Robert Yuncken.
\newblock On coarse spectral geometry in even dimension.
\newblock {\em C. R. Math. Acad. Sci. Soc. R. Can.}, 33(2):57--64, 2011.

\bibitem[Yun13]{Yuncken:orthotypical}
Robert Yuncken.
\newblock Foliation {$C^*$}-algebras on multiply fibred manifolds.
\newblock {\em J. Funct. Anal.}, 265(9):1829--1839, 2013.

\end{thebibliography}

\end{document}